\def\rk{{\rm rk}}
\def\tiltau{{\widetilde{\tau}}}
\def\bfF{{\bf F}}
\def\bfN{{\bf N}}
\def\bfQ{{\bf Q}}
\def\bfZ{{\bf Z}}
\def\bfR{{\bf R}}
\def\bfA{{\bf A}}
\def\calO{{\mathcal O}}
\def\calM{{\mathcal M}}
\def\gp{{\rm gp}}
\def\ol{\overline}
\def\ot{{\ol{t}}}
\def\ov{{\ol{v}}}
\def\oM{{\ol{M}}}
\def\oK{{\ol{K}}}
\def\oX{{\ol{X}}}
\def\oR{{\ol{R}}}
\def\ocalO{{\ol\calO}}
\def\orho{{\ol{\rho}}}
\def\osigma{{\ol{\sigma}}}
\def\otau{{\ol{\tau}}}
\def\oZ{{\ol{Z}}}
\def\oQ{{\ol{Q}}}
\def\oS{{\ol{S}}}
\def\bfP{{\bf P}}
\def\oY{{\ol{Y}}}
\def\of{{\ol{f}}}
\def\oh{{\ol{h}}}
\def\opsi{{\ol{\psi}}}
\def\ophi{{\ol{\phi}}}
\def\oT{{\ol{T}}}
\def\oP{{\ol{P}}}
\def\oL{{\ol{L}}}
\def\sat{{\rm sat}}
\def\..{{,\dots,}}
\def\:{{\colon}}
\def\GL{{\rm GL}}
\def\int{{\rm int}}
\def\sat{{\rm sat}}
\def\Mor{{\rm Mor}}
\def\Spec{{\rm Spec}}
\def\oSpec{{\ol{\rm Spec}}}
\def\toisom{{\xrightarrow\sim}}
\def\onto{\twoheadrightarrow}
\def\into{\hookrightarrow}
\newtheorem{theor}[subsubsection]{Theorem}
\newtheorem{prop}[subsubsection]{Proposition}
\newtheorem{lem}[subsubsection]{Lemma}
\newtheorem{cor}[subsubsection]{Corollary}
\newtheorem{conjecture}[subsubsection]{Conjecture}
\def\conv{{\rm conv}}
\def\Cay{{\mathcal{C}}}
\def\st{{\rm st}}
\theoremstyle{definition}
\newtheorem{rem}[subsubsection]{Remark}
\newtheorem{exam}[subsubsection]{Example}
\begin{document}

\author{Karim Adiprasito}
\author{Gaku Liu}
\author{Igor Pak}
\author{Michael Temkin}

\date{\today}

\title{Log smoothness and polystability over valuation rings}

\address{Einstein Institute of Mathematics, The Hebrew University of Jerusalem, Giv'at Ram, Jerusalem, 91904, Israel}
\email{adiprasito@math.huji.ac.il}

\address{Max Planck Institute for Mathematics in the Sciences, 04103 Leipzig, Germany}
\email{gakuliu@gmail.com}

\address{Department of Mathematics, UCLA, 90095
Los Angeles, USA}
\email{pak@math.ucla.edu}

\address{Einstein Institute of Mathematics, The Hebrew University of Jerusalem, Giv'at Ram, Jerusalem, 91904, Israel}
\email{michael.temkin@mail.huji.ac.il}

\keywords{Logarithmic smoothness, polystability, mixed subdivisions, valuation rings, Berkovich analytic spaces.}
\thanks{K.A. was supported by ERC StG 716424 - CASe and ISF Grant 1050/16. G.L. was supported by NSF grant 1440140. I.P. was supported by NSF grant 1700444.
M.T. was supported by ERC Consolidator Grant 770922 - BirNonArchGeom and ISF Grant 1159/15.
}
\begin{abstract}
Let $\calO$ be a valuation ring of height one of residual characteristic exponent $p$ and with algebraically closed field of fractions. Our main result provides a best possible resolution of the monoidal structure $M_X$ of a log variety $X$ over $\calO$ with a vertical log structure: there exists a log modification $Y\to X$ such that the monoidal structure of $Y$ is polystable. In particular, if $X$ is log smooth over $\calO$, then $Y$ is polystable with a smooth generic fiber. As a corollary we deduce that any variety over $\calO$ possesses a polystable alteration of degreee $p^n$. The core of our proof is a subdivision result for polyhedral complexes satisfying certain rationality conditions.
\end{abstract}

\maketitle

\section{Introduction}

\subsection{Motivation}
The original motivation for undertaking this project was the following question: can any log smooth scheme (or formal scheme) over a valuation ring $\calO$ of height one be modified to a polystable one? It seems that the expert community did not have a clear consensus if the answer should be yes or no, though at least few experts hoped the answer might be positive. Our main result proves that this is indeed the case. Moreover, we develop the theory in the more general form of ``resolving the monoidal structure`` of an arbitrary log variety over $\calO$ with a vertical log structure (by which we mean a log scheme over $\calO$, flat and of finite presentation, and such that the log structure is trivial on the generic fiber, see \S\ref{logvarsec}). In particular, our main result extends the absolute monoidal resolution of fine log schemes of Gabber, see \cite[Theorem~3.3.16]{Illusie-Temkin}, to this relative (and non-fine) case.

\subsection{Background}
To explain what the above question means, why it is important, and where it comes from we shall recall some history.

\subsubsection{Semistable modification}
To large extent, toroidal geometry and related combinatorial methods in algebraic geometry originate from the seminal work \cite{KKMS}. Its aim was to prove the semistable reduction (or modification) theorem over a discrete valuation ring $\calO$ of residual characteristic zero: if $X$ is an $\calO$-variety with smooth generic fiber $X_\eta$ then there exists a modification $X'\to X$ such that $X'_\eta=X_\eta$ and $X'$ is semistable. Recall, that the latter means that \'etale-locally $X'$ admits an \'etale morphism to standard semistable varieties of the form $$\Spec(\calO[t_0\..t_n]/(t_0\dots t_l-\pi)),$$ where $\pi\in\calO$ (sometimes one also wants $\pi$ to be a uniformizer). The proof goes in two steps: (1) using Hironaka's theorem one reduces to the case when $X$ is toroidal over $\calO$, (2) for toroidal $\calO$-varieties the question is translated to geometry of polytopes with rational vertices and is then resolved by combinatorial methods. Certainly, part (1) is much deeper, but, once Hironaka's theorem is available, the reduction is easy. So, \cite{KKMS} is mainly occupied with constructing the translation (that is, building foundations of toroidal geometry, see \cite[Chapters~I and II]{KKMS}), and solving the (still rather difficult) combinatorial problem about subdividing polytopes with rational vertices into simplices of especially simple form, see \cite[Chapter~III]{KKMS}.

\subsubsection{Log geometry}
This work is written within the framework of log geometry, which nowadays gradually subsumes toroidal geometry. In this setting the above strategy receives the following natural interpretation: first one resolves $X$ obtaining a log smooth variety $Y$ over $\calO$, and then one improves the monoidal structure of $Y$ by log smooth modifications only (e.g. log blow ups). In particular, we will work with log structures which are finitely generated over the log structure of $\calO$, but are not fine.

\subsubsection{Classical combinatorics}\label{classcombsec}
In toric and toroidal geometries, combinatorics is used to model various properties of certain varieties. In a nutshell, on the side of functions one considers rational polyhedral cones $\sigma$ in a vector space $M$ with a fixed lattice $\Lambda$, and on the geometric side one considers dual cones $\sigma^*$ in $N=M^*$ and fans obtained by gluing such cones. If one works over a discrete valuation ring $\calO$, then any uniformizer $\pi$ induces a canonical element $v_\pi\in\Lambda$ and one can also restrict to the hyperplane $N_1$ given by $v_\pi=1$. In this way, one passes from cones to polytopes. Finally, one can also use the equivalent language of toric monoids $P$, their spectra $\Spec(P)$ and fans obtained by gluing the latter. Then $\sigma$ and $\sigma^*$ correspond to $P=\sigma\cap\Lambda$ and $\Spec(P)$. The latter language is essentially due to Kato, and it is convenient for applications to log geometry of fine schemes.

\subsubsection{Limitations of semistability}
Assume now that $\calO$ is an arbitrary valuation ring of height one. In addition, we assume that its fraction field $K$ is algebraically closed. (Since semistable reduction is studied up to finite extensions of $K$, this does not really restrict the generality.) In particular, we can view the group of values of $K$ as a $\bfQ$-vector subspace $V$ of $\bfR$. Semistability over $\calO$ can be studied in two similar contexts: (1) for schemes of finite type over $\calO$, (2) for formal schemes over $\calO$, if the latter is complete. Let $X$ be a (formal) $\calO$-scheme of finite type with smooth generic fiber $X_\eta$. It was known to experts for a long time that semistable modification of $X$ might not exist when $\dim(X_\eta)>1$ and $\dim_\bfQ(V)>1$ (for example, see \cite[Remark~3.4.2]{temst}).

Hear is a quick explanation. Combinatorics of log smooth $\calO$-schemes can be encoded by $(\bfZ,V)$-polytopes, or simply $V$-polytopes, introduced by Berkovich in \cite{bercontr} (loc.cit. uses exponential notation). These are polytopes in $\bfR^n$ whose vertices are in $V^n$ and edge slopes are rational. The main combinatorial result of \cite{KKMS} describes triangulations of such polytopes when $\dim_\bfQ(V)=1$. However, if $\dim_\bfQ(V)>1$ then no triangulation might exist even in dimension 2, see also Section~\ref{ssec:semis} for a specific example.

\subsubsection{Polystable reduction conjecture}
Polystable (formal) schemes over $\calO$ were introduced by Berkovich in \cite[Section 1]{bercontr}; \'etale locally they are products of semistable ones. The above example shows that over a general base the best one can hope for is a polystable modification. In fact, it was conjectured in \cite[Conjecture~0.2]{AK} (the terminology is different!) and \cite[Remark~3.4.2]{temst} that polystable modification does exist. Over $\calO$ the conjecture reads as follows:

\begin{conjecture}\label{polyconj}
Let $\calO$ be as above and assume that $X$ is a flat $\calO$-scheme of finite type whose generic fiber $X_\eta$ is smooth. Then there exists a modification $Y\to X$ such that $Y_\eta=X_\eta$ and $Y$ is polystable over $\calO$.
\end{conjecture}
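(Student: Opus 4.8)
The plan is to follow the two-step strategy of \cite{KKMS}: first replace $X$ by a log smooth $\calO$-variety with vertical log structure, and then improve its monoidal structure to a polystable one by log modifications. The first step is a resolution problem that is handled by existing technology. Since $X_\eta$ is already smooth one only needs to resolve $X$ along its special fiber: by resolution of singularities when the residue characteristic is zero ($p=1$), and by the theory of alterations (de Jong, Gabber) in general, there is a proper surjective morphism $X'\to X$ --- a modification when $p=1$, and otherwise an alteration of degree $p^n$ --- such that $X'$ is regular with strict normal crossings special fiber; equipping $X'$ with the vertical log structure cut out by $X'_s$ makes it log smooth over $\calO$, and when $p=1$ one still has $X'_\eta=X_\eta$. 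We are thus reduced to proving: if $X$ is log smooth over $\calO$ with vertical log structure, then there is a log modification $Y\to X$ with $Y$ polystable over $\calO$. (When $p=1$ this yields the conjecture as stated; when $p>1$ it yields only a polystable alteration of $X$ of degree $p^n$, the corollary advertised in the abstract.)

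As recalled above, the combinatorics of a log smooth $\calO$-variety with vertical log structure is encoded by a polyhedral complex $\Sigma_X$ of $V$-polytopes --- polytopes in $\bfR^n$ with vertices in $V^n$ and rational edge slopes --- which one may think of as the Berkovich skeleton, or ``dual complex'', of $X$. Log modifications of $X$ are automatically isomorphisms over $X_\eta$ (the log structure being vertical), hence modifications, and they correspond precisely to subdivisions of $\Sigma_X$; moreover $X$ is polystable exactly when every cell of $\Sigma_X$ is a product of unimodular simplices (each factor being a dilate of a standard simplex, the dilation parameter lying in $V$). Thus it suffices to prove the combinatorial statement: every polyhedral complex of $V$-polytopes admits a subdivision, still a complex of $V$-polytopes with rational slopes, all of whose cells are products of unimodular simplices.

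The subdivision step is where the real work lies, and where the classical KKMS picture breaks down: when $\dim_\bfQ V>1$ a $V$-polytope need not admit \emph{any} $V$-triangulation --- the simplest obstruction being a rectangle with incommensurable side lengths in $V$, whose diagonals have irrational slope (see Section~\ref{ssec:semis}) --- so one cannot reach semistability and must target polystability directly. The key observation is that a cell of a \emph{mixed} subdivision of a Minkowski sum $P_1+\dots+P_k$ is combinatorially a product $F_1\times\dots\times F_k$ of faces of the summands; hence if each cell of $\Sigma_X$ can be presented as a Minkowski sum admitting a mixed subdivision with \emph{simplex} summands, the mixed cells become polysimplices, and a unimodular subdivision inside each simplex factor finishes the job. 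The difficulty --- the technical core of the paper --- is to carry this out over the whole complex simultaneously: the Minkowski decompositions and the lifting (weight) functions producing the mixed subdivisions must be chosen compatibly on shared faces, and with values and slopes constrained by the $(\bfZ,V)$-rationality, so that the global output is again a complex of $V$-polytopes. I expect this coherent, rationality-preserving construction of mixed subdivisions to be the crux, and the paper's combinatorial machinery (mixed subdivisions, the Cayley trick, and regular subdivisions via lifting functions) is presumably designed precisely to manage it.
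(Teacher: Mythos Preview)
The statement you are attempting is labeled \emph{Conjecture}~\ref{polyconj} in the paper, and the paper does \emph{not} prove it. What the paper proves is the combinatorial Step~2 of your outline (Theorem~\ref{monoidalth}) and, as a corollary, a polystable $p$-alteration (Theorem~\ref{appth}); the full conjecture --- producing a polystable model without touching the smooth generic fiber --- remains open even in equal characteristic zero, as the paper states explicitly in the remark following \S\ref{appsec}.

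The gap in your proposal is Step~1. You assert that when $p=1$ resolution of singularities yields a modification $X'\to X$ with $X'_\eta=X_\eta$ and $X'$ log smooth over $\calO$. But $\calO$ here is a non-noetherian valuation ring (since $K$ is algebraically closed it is never a DVR), so Hironaka's theorem does not apply to $X$ as a scheme; nor is there a known relative resolution over such a base that makes $X'$ log smooth over $\calO$ while fixing $X_\eta$. The available tools --- \cite[Theorem~2.1]{AK} in characteristic zero, or \cite[1.2.9]{tame-distillation} in general --- work by approximation: one descends to a finite-type base, resolves or alters there, and pulls back. This inevitably modifies $X_\eta$, which is exactly the ``weak point'' the paper flags. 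Your parenthetical ``when $p=1$ one still has $X'_\eta=X_\eta$'' is thus not justified, and with it the deduction of Conjecture~\ref{polyconj} collapses. (The paper notes that the forthcoming logarithmic desingularization of \cite{ATW} may eventually close this gap in characteristic zero.)

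Your description of Step~2 is broadly faithful to what the paper does: decompose each $V$-polytope as a Minkowski sum of dilated rational polytopes (Proposition~\ref{decomp}), use the Cayley trick and fine mixed subdivisions to get a polysimplicial refinement (Lemma~\ref{polysimp}), and then repair the lattice indices to reach polystability (Lemmas~\ref{decompCayley}--\ref{decomp2}), all done coherently across a complex (Theorem~\ref{compsubdv}). So the combinatorial half of your plan matches the paper; it is only the resolution half that overreaches.
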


\subsection{Main results}

\subsubsection{The monoidal resolution theorem}
As in the case of a DVR, it is natural to attack Conjecture~\ref{polyconj} in two steps: (1) find a modification $Y\to X$ with a log smooth $Y$, (2) make the log structure of $Y$ polystable using log modifications. Similarly to \cite{KKMS} in the case of DVR's, this paper establishes step (2), which is the combinatorial part of the problem. In fact, Theorem~\ref{monoidalth} even proves that the log structure of an arbitrary log variety over $\calO$ can be made polystable by a log modification.

\begin{rem}
\begin{compactenum}[(i)]
\item Our result provides a good evidence to expect that polystable modification always exists, at least, there are no combinatorial obstacles and the question reduces to establishing step (1). See, also \S\ref{appsec} below.

\item In addition, our result highlights importance of the class of polystable formal schemes for Berkovich geometry.
\end{compactenum}
\end{rem}

\subsubsection{Application to polystable modification/alteration}\label{appsec}
In general, question (1) is very hard, at the very least, not easier than resolution of singularities and semistable reduction in positive characteristic. However, if the residual characteristic is zero then an affirmative answer immediately follows from the semistable reduction of Abramovich-Karu \cite[Theorem~2.1]{AK}, and in general this can be achieved by the $p$-alteration theorem \cite[1.2.9]{tame-distillation} (which improves on results of de Jong, Gabber and Illusie-Temkin). As a corollary we will prove in Theorem~\ref{appth} that any log variety $X$ over $\calO$ possesses an alteration $X'\to X$ of degree $p^n$, where $p$ the residual characteristic exponent of $\calO$, such that $X'$ is polystable over $\calO$. In particular, in the equal characteristic zero case, polystable modification does exists.

\begin{rem}
\begin{compactenum}[(i)]
\item Even in the equal characteristic zero case, the weak point of the cited results and our Theorem~\ref{appth} is that the modification $X'\to X$ modifies the generic fiber $X_\eta$ even when it is smooth. In particular, it does not provide polystable models of varieties over $K$, whose existence still remains a (now very plausible) conjecture. However, it seems very probable that this finer problem will be solved soon by a new logarithmic desingularization algorithm developed in \cite{ATW} (so far, for log varieties over a field). This would settle Conjecture~\ref{polyconj} in the equal characteristic zero case.

\item
We shall also mention that in \cite[Theorem~1.7]{Forre} Patrick Forr\'e claims a strong form of polystable reduction over valuation rings of height $h$ with group of values $\bfN^h$, although we were not able to verify all details
\end{compactenum}
\end{rem}

\subsubsection{Combinatorics of log varieties over $\calO$}\label{combOsec}
As in the classical case, there are several equivalent ways to describe combinatorics of toric and toroidal geometries over $\calO$. In fact, we extend all classical approaches outlined in \S\ref{classcombsec} to this case, and prove their equivalence. On the side of functions, we introduce categories of $R$-toric monoids, where $R=V\cap\bfR_{\ge 0}$, and $V$-special cones, and establish their equivalence in Theorem~\ref{conemonoidth}. On the geometric side, we introduce the categories of $R$-toric fans, $V^\bot$-special cone complexes, and $V$-affine polyhedral complexes, establish their equivalence in Theorem~\ref{nonembeqth}, and prove that the latter preserves various geometric properties, such as simpliciality and polystability, see \S\ref{equivsec}.

\begin{rem}
\begin{compactenum}[(i)]
\item We decided to work out foundations in such detail because each approach has advantages of its own. In particular, fans are most naturally related to monoidal structure of log varieties over $\calO$, the language of $V$-affine polytopes is most convenient to prove the subdivision theorem, and the two theories are naturally compared through $V$-special and $V^\bot$-special cones. Also, we expect that these foundations will be useful for further research of (formal) varieties over $\calO$.

\item Gubler-Soto used $V^\bot$-special complexes in \cite[Section~4]{Gubler-Soto} to study toric $\calO$-varieties. In loc.cit. they were called
 ``$V$-admissible fans''.

\item The language of $V$-affine polytopes (and its analogue for PL spaces) was used by Berkovich to describe a natural structure on skeletons of analytic spaces over $\calO$.
\end{compactenum}
\end{rem}

\subsubsection{Future directions}
In \cite{AK} Abramovich and Karu conjectured that any morphism $P\to Q$ of rational polyhedral complexes can be made a polystable one by an appropriate subdivision of $P$ and $Q$ and altering the lattice of the base, see \cite[Section 8]{AK} for details. In fact, our main combinatorial result, Theorem~\ref{compsubdv}, can be viewed as a local on the base version of that conjecture, and the two problems are tightly related. In particular, Dan Abramovich pointed out that the first stage of our construction, existence of a polysimplicial subdivision, was established in the global setting in \cite{AR}. The conjecture of Abramovich-Karu can be treated by our method as well, and is the subject of \cite{ALT}.

\subsection{Structure of the paper}
Finally, let us give an overview of the paper. In Section~\ref{monsec} we recall basic facts about monoids and their spectra. The results are rather standard but hard to find in the literature, especially in the generality of monoids which are not finitely generated. In \S\ref{toricmonsec} we introduce $R$-toric monoids over a sharp valuative monoid $R$ of height one and $R$-toric fans obtained by gluing spectra of $R$-toric monoids. This extends Kato's approach to combinatorics of fine log schemes to log varieties over $\calO$.

In Section~\ref{fansec} we study the related cone and polyhedral complexes, and prove the equivalence theorems mentioned in \S\ref{combOsec}. This extends the combinatorics of the classical toroidal/toric geometry to the case of $\calO$-varieties.

Section~\ref{combsec} is devoted to the proof of our main combinatorial result, Theorem~\ref{compsubdv} on polystable subdivisions of $V$-affine polytopes, and then extends it to polyhedral complexes.

Finally, in Section~\ref{lastsec} we apply the results of \S\S\ref{monsec}--\ref{combsec} to study log varieties over $\calO$. As a preparation, we prove in Theorem~\ref{zarth} that any log variety $X$ possesses a log modification $Y\to X$ such that the log structure of $Y$ is Zariski and $Y$ possesses a global fan $F$. Then, we show that a polystable subdivison of $F$ constructed in the earlier sections induces the desired polystable resolution of the log structure of $X$.

\subsection*{Acknowledgments}
We would like to thank D. Abramovich, W. Gubler and M. Ulirsch for pointing out various inaccuracies and suggesting to apply our method to altered resolution of log varieties over $\calO$. We followed this suggestion by proving Theorem~\ref{appth}.

\section{Monoschemes and fans over valuative monoids}\label{monsec}
In this section we collect needed material on integral monoids and their spectra. Some results are perhaps well known to experts, but miss in the literature.

\subsection{Monoids}

\subsubsection{Conventiones}
A {\em monoid} in this paper means a commutative monoid $M=(M,+)$. As usually, $M^\times$ is the subgroup of units, and $\oM=M/M^\times$ is the sharpening of $M$. We will use notation $M\oplus\bfN^n=M[t_1\..t_n]=M[t]$, where $t=(t_1\..t_n)$ is the basis of $\bfN^n$.

Recall that $M^\int$ is the image of $M$ in its Grothendieck group $M^\gp$, and $M$ is integral if $M=M^\int$. Although we will mainly work within the category of integral monoids, we will use full notation to avoid confusions. For example, an integral pushout will be denoted $(M_1\oplus_MM_2)^\int$.

\subsubsection{Splittings}
Studying of toric monoids easily reduces to the sharp case because units can be split off. In general, one should be more careful.

\begin{lem}\label{unitmonlem}
An integral monoid $P$ splits as $P=P^\times\oplus\oP$ if and only if the associated short exact sequence of abelian groups $P^\times\to P^\gp\to\oP^\gp$ splits.
\end{lem}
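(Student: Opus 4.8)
The statement is a standard splitting criterion, so the plan is to prove the two implications directly. The forward direction is immediate: if $P = P^\times \oplus \oP$ as monoids, then applying the Grothendieck group functor (which commutes with direct sums) gives $P^\gp = (P^\times)^\gp \oplus \oP^\gp = P^\times \oplus \oP^\gp$, and the projection $P^\gp \to P^\times$ restricts to the identity on $P^\times$, so it provides the desired splitting of the short exact sequence $P^\times \to P^\gp \to \oP^\gp$.

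For the reverse direction, suppose $s \colon \oP^\gp \to P^\gp$ is a group-theoretic section of the projection $\pi \colon P^\gp \to \oP^\gp$, i.e. $\pi \circ s = \mathrm{id}$. Then every element $x \in P^\gp$ can be written uniquely as $x = u + s(\ol{x})$ with $u = x - s(\ol{x}) \in \ker(\pi) = P^\times$ and $\ol{x} = \pi(x) \in \oP^\gp$; this is exactly the direct sum decomposition $P^\gp = P^\times \oplus s(\oP^\gp)$ of abelian groups. The point to check is that this decomposition restricts to one on the submonoid $P \subseteq P^\gp$. First I would observe that $s$ maps $\oP$ into $P$: given $\ol{x} \in \oP$, lift it to some $y \in P$ with $\pi(y) = \ol{x}$; then $s(\ol{x}) - y \in \ker(\pi) = P^\times \subseteq P$, hence $s(\ol{x}) = y + (s(\ol{x}) - y) \in P$. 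Consequently, for any $x \in P$ we have $x = u + s(\ol{x})$ with $s(\ol{x}) \in P$ and thus $u = x - s(\ol{x}) \in P^\gp$; but also $u \in P^\times$, and $P^\times \subseteq P$, so this expresses $x$ as a sum of an element of $P^\times$ and an element of $s(\oP)$. Uniqueness of the representation is inherited from $P^\gp$. This shows $P = P^\times \oplus s(\oP)$ as monoids, and $s$ restricts to an isomorphism $\oP \toisom s(\oP)$ since $\oP \to \oP^\gp$ is injective (as $\oP$ is sharp, hence its own integralization inside $\oP^\gp$ — here one uses that $P$, and therefore $\oP$, is integral).

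The only genuinely non-formal point, and the step I expect to require the most care, is verifying that the abelian-group splitting genuinely lands inside the monoid $P$ rather than merely inside $P^\gp$ — that is, the argument that $s(\oP) \subseteq P$, which rests on $P^\times \subseteq P$ and on choosing monoid lifts of elements of $\oP$. Everything else is a routine unwinding of definitions, using only that $P$ is integral so that $P \into P^\gp$ and $\oP \into \oP^\gp$.
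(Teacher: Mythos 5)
Your proof is correct and follows essentially the same route as the paper: the reverse direction hinges on the observation that $P$ is exactly the preimage of $\oP$ under $P^\gp\onto\oP^\gp$ (which is how you show $s(\oP)\subseteq P$), and the paper's one-line argument is just a compressed version of your verification. No gaps.
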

\begin{proof}
The direct implication is clear. Conversely, if $\phi\:\oP^\gp\into P^\gp$ splits the sequence, then $P=P^\times\oplus\phi(\oP)$ since $P$ is the preimage of $\oP$ under $P^\gp\onto\oP^\gp$.
\end{proof}

\begin{rem}
If $P$ is saturated then $\oP^\gp$ is torsion-free. In general, this is not enough for splitting, but in the toric case $\oP^\gp$ is finitely generated, and hence free.
\end{rem}

Another useful way to simplify a monoid $P$ is to split off a free part. We say that an element $t\in P$ is {\em prime} if the ideal $(t)=P+t$ is prime. The following lemma is obvious.

\begin{lem}\label{freeofflem}
An element $t\in P$ is prime if and only if $P$ splits as $P=Q[t]$. In addition, $Q=P\setminus (t)$, so the splitting is uniquely determined by $t$.
\end{lem}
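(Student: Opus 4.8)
The plan is to prove the two directions separately, the first being essentially a formal unwinding and the second requiring a short argument that the complement of a principal prime ideal generated by a prime element is closed under addition.

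First I would treat the easy direction: if $P$ splits as $P=Q[t]=Q\oplus\bfN t$, then I claim the ideal $(t)=P+t$ is prime. Indeed, under the splitting an element of $P$ is written uniquely as $q+nt$ with $q\in Q$ and $n\in\bfN$, and it lies in $(t)$ exactly when $n\ge 1$. If $(q_1+n_1t)+(q_2+n_2t)=(q_1+q_2)+(n_1+n_2)t$ lies in $(t)$, then $n_1+n_2\ge 1$, hence $n_1\ge 1$ or $n_2\ge 1$, so one of the factors lies in $(t)$. Thus $t$ is prime. Moreover the description $Q=P\setminus(t)$ is immediate from the splitting, since $P\setminus(t)$ consists exactly of the elements with $n=0$; this also shows $Q$, hence the whole splitting, is determined by $t$.

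For the converse, suppose $t\in P$ is prime, i.e.\ $(t)=P+t$ is a prime ideal. Set $Q=P\setminus(t)$. The key point is that $Q$ is a submonoid of $P$: it contains $0$ (since $0\notin(t)$, as $0\in(t)$ would force $(t)=P$ and then $(t)$ would not be a prime ideal — here one uses the convention that $P$ itself is not a prime ideal), and it is closed under addition precisely because $(t)$ is prime: if $q_1,q_2\in Q$ but $q_1+q_2\in(t)$, then primality of $(t)$ gives $q_1\in(t)$ or $q_2\in(t)$, a contradiction. Now I would check that the map $Q\oplus\bfN\to P$ sending $(q,n)\mapsto q+nt$ is an isomorphism of monoids. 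It is a monoid homomorphism by construction. For surjectivity, take $x\in P$; let $n$ be the largest integer with $x\in P+nt$, which is well defined since $P$ is integral (in $P^\gp$ the element $x-nt$ cannot stay in $P$ for all $n$, as otherwise $-t$ would be a unit, contradicting that $(t)$ is a proper ideal), and write $x=q+nt$; then $q\notin(t)$ by maximality of $n$, so $q\in Q$. For injectivity, suppose $q+nt=q'+n't$ in $P$ with, say, $n\le n'$; cancelling in $P^\gp$ (legitimate since $P$ is integral) gives $q=q'+(n'-n)t$, and if $n'>n$ this would put $q$ in $(t)$, contradicting $q\in Q$; hence $n=n'$ and then $q=q'$. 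Therefore $P\cong Q[t]$, and combined with the first paragraph this shows $Q=P\setminus(t)$ and the uniqueness of the splitting.

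The only genuinely delicate points are the well-definedness of the exponent $n$ in the surjectivity argument — i.e.\ that the chain $x\in P+nt$ terminates — and the implicit use that $P$ is integral so that cancellation in $P^\gp$ is available; both are handled by the standing hypothesis that all monoids in play are integral. I do not expect any serious obstacle here; the statement is flagged as obvious in the text, and the proof is a routine verification once one isolates the claim that $P\setminus(t)$ is a submonoid.
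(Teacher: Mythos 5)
The paper gives no argument for this lemma (it is declared obvious), so your write-up stands on its own. The easy direction, the verification that $Q=P\setminus(t)$ is a submonoid when $(t)$ is prime, and the injectivity argument via cancellation in $P^\gp$ are all correct. The step that does not hold up is the existence, for each $x\in P$, of a largest $n$ with $x\in P+nt$. Your justification --- that if $x-nt\in P$ for all $n$ then $-t$ would be a unit --- fails for a general integral monoid. Take $P\subset\bfZ^2$ to be the integral, sharp, saturated (but not finitely generated) submonoid consisting of all $(a,b)$ with $a\ge 1$ together with all $(0,b)$ with $b\ge 0$, and let $t=(0,1)$. Then $(t)=P+t=P\setminus\{0\}$ is the maximal ideal, hence prime, so $t$ is a prime element; yet for $x=(1,0)$ one has $x-nt=(1,-n)\in P$ for every $n$, while $t$ is not a unit, and indeed $P\ne Q[t]$ since $Q=P\setminus(t)=\{0\}$. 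So the chain need not terminate, and the lemma as literally stated even fails for arbitrary integral monoids; the missing finiteness is exactly the content of the surjectivity step.

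The gap is repairable precisely where the paper uses the lemma (fine monoids, or the $R$-toric monoids of \S\ref{toricmonsec}, cf.\ Lemma~\ref{basiclem}), because there a finiteness hypothesis is available. If $P$ is generated over $R$ by finitely many elements and $x-nt=p_n\in P$ for all $n$, then $p_n/n=x/n-t$ converges to $-t$, and since the $p_n/n$ lie in the finitely generated, hence closed, cone $P_\bfR$, one gets $-t\in P_\bfR\cap P^\gp$. For saturated $P$ (e.g.\ $R$-toric, by Theorem~\ref{conemonoidth}) this set is $P$ itself, so $t\in P^\times$ and $(t)=P$ is not prime --- the contradiction you were after; for fine non-saturated $P$ one gets $mt\in P^\times$ for some $m\ge 1$ and hence again $(t)\supseteq(mt)=P$. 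I would either build such a hypothesis into the statement or replace the ``$-t$ would be a unit'' sentence by this cone argument; as written, that sentence is a genuine gap, and the remainder of your proof is fine.
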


\subsubsection{Finite presentation}
An $M$-monoid $N$ is a monoid provided with a homomorphism $\phi\:M\to N$. We say that $N$ is {\em finitely generated} over $M$ if it is generated over $\phi(N)$ by finitely many elements. In other words, $\phi$ can be extended to a surjective homomorphism $\psi\:M[t_1\..t_n]\onto N$. If, in addition, one can choose $\psi$ defined by an equivalence relation generated by finitely many relations $f_i=g_i$ with $f_i,g_i\in M[t]$, then we say that $N$ is of {\em finite presentation} over $M$.

\begin{lem}\label{fplem}
Assume that $\phi\:M\to N$ is an injective homomorphism of integral monoids, and assume that $N$ is finitely generated over $M$. Then $N$ is finitely presented over $M$ as an integral monoid, in the sense that there exists a finitely presented $M$-monoid $L$ such that $L^\int=N$.
\end{lem}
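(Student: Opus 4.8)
The plan is to exploit injectivity of $\phi$ to see that, although $M$ itself may be arbitrarily large, the set of relations one needs is governed by a subgroup of $\bfZ^n$ and is therefore finite. Concretely, since $N$ is finitely generated over $M$, first I would fix a surjection $\psi\:P:=M[t_1\..t_n]\onto N$ extending $\phi$, with $x_i:=\psi(t_i)$ generating $N$ over $\phi(M)$, and pass to Grothendieck groups to obtain a surjection $\psi^\gp\:P^\gp=M^\gp\oplus\bfZ^n\onto N^\gp$. Set $K=\ker\psi^\gp$.

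The crucial step is that $K$ is finitely generated. Since $\phi$ is an injective homomorphism of integral monoids, $\phi^\gp\:M^\gp\into N^\gp$ is injective as well (an equality $\phi(a)-\phi(b)=\phi(a')-\phi(b')$ unwinds to $\phi(a+b')=\phi(a'+b)$, hence $a+b'=a'+b$). Therefore $K\cap M^\gp=0$, so the projection $P^\gp\to P^\gp/M^\gp\cong\bfZ^n$ restricts to an injection on $K$; thus $K$ is a subgroup of a free abelian group of finite rank, hence finitely generated. Choose generators $k_1\..k_m$ of $K$ and write $k_j=a_j-b_j$ with $a_j,b_j\in P$. Since $\psi^\gp(a_j)=\psi^\gp(b_j)$ and $N$ is integral (so $N\into N^\gp$), we actually have $\psi(a_j)=\psi(b_j)$ in $N$.

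Now I would set $L=P/E_0$, where $E_0$ is the congruence on $P$ generated by the $m$ pairs $(a_j,b_j)$; by construction $L$ is a finitely presented $M$-monoid, so it only remains to prove $L^\int=N$. On one hand $E_0$ is contained in the congruence $\ker\psi=\{(a,b):\psi(a)=\psi(b)\}$, so $\psi$ descends to a surjection $L\onto N$. On the other hand, for a congruence generated by the pairs $(a_j,b_j)$ the subgroup of $P^\gp$ spanned by all differences $a-b$ with $(a,b)\in E_0$ is precisely the subgroup spanned by the $a_j-b_j=k_j$, i.e. $K$; hence $L^\gp=P^\gp/K$, and the induced surjection $L^\gp\onto N^\gp$ is the isomorphism coming from $\psi^\gp$. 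Chasing the commutative square given by $P\to L\to N$ and $P\xrightarrow{\psi}N$ then shows that the composite $L\to L^\gp=N^\gp$ equals $L\onto N\into N^\gp$, the last arrow being injective because $N$ is integral. Consequently $L^\int$, the image of $L$ in $L^\gp$, coincides with the image of $N$ in $N^\gp$, which is $N^\int=N$. Hence $L^\int=N$.

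The one genuine point is the finite generation of $K$, which is exactly where the injectivity of $\phi$ is used in an essential way: it is what allows us to bypass R\'edei's theorem on finite presentability of general finitely generated commutative monoids. Everything else — the identification $L^\gp=P^\gp/K$ and the concluding diagram chase — is routine bookkeeping.
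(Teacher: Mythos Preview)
Your proof is correct and follows essentially the same approach as the paper's: both arguments use injectivity of $\phi$ to conclude that the relevant subgroup of $M^\gp\oplus\bfZ^n$ meets $M^\gp$ trivially, hence injects into $\bfZ^n$ and is finitely generated, and then take $L$ to be the quotient of $M[t_1\..t_n]$ by finitely many relations generating that subgroup, concluding via $L^\gp\toisom N^\gp$ that $L^\int=N$. The only cosmetic difference is that the paper begins with a full (possibly infinite) generating set of relations and extracts a finite subset, whereas you work directly with $K=\ker\psi^\gp$ and lift a finite generating set of $K$ back to relations.
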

\begin{proof}
Choose a presentation $N=M[t_1\..t_n]/R$, where $R$ is generated by relations $f_i=g_i$ for $i\in I$. Let $R^\gp$ be the subgroup of $M[t]^\gp=M^\gp\oplus\bfZ^n$ generated by the elements $f_i-g_i$. Since $\phi$ is injective, $R^\gp\cap M^\gp=0$ and hence there exists a finite subset $I'\subseteq I$ such that $R^\gp$ is generated by the elements $f_i-g_i$ with $i\in I'$. Let $R'$ be the equivalence relation generated by equalities $f_i=g_i$ with $i\in I'$. Then $L=M[t_1\..t_n]/R'$ admits a surjective map $L\onto N$ inducing an isomorphism $L^\gp=N^\gp$. In particular, $L^\int=N$.
\end{proof}

\subsubsection{Approximation}
A finitely presented $M$-monoid $N$ is defined already over a fine submonoid of $M$. This is a classical result in the case of rings, and the case of monoids is similar:

\begin{lem}\label{approxlem}
Let $\phi\:M\to N$ be as in Lemma~\ref{fplem} and let $\{M_i\}_{i\in I}$ be the family of fine submonoids of $M$. Then there exist $i\in I$ and a finitely generated integral $M_i$-monoid $N_i$ with an isomorphism $(N_i\oplus_{M_i}M)^\int=N$. Moreover, if $N'_i$ is another such $M_i$-monoid then there exists $j\ge i$ and an isomorphism $(N_i\oplus_{M_i}M_j)^\int=(N'_i\oplus_{M_i}M_j)^\int$ compatible with $(N_i\oplus_{M_i}M)^\int=N=(N'_i\oplus_{M_i}M)^\int$.
\end{lem}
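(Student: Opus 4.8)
The plan is to mimic the standard finite-presentation/approximation argument (e.g.\ \cite[IV, \S8]{EGA}) in the category of integral monoids, using Lemma~\ref{fplem} to pass to a finite presentation. First I would use Lemma~\ref{fplem} to fix a finitely presented $M$-monoid $L$ with $L^\int=N$, say $L=M[t_1\..t_n]/R'$ with $R'$ the equivalence relation generated by finitely many relations $f_\alpha=g_\alpha$, $\alpha\in A$ finite, and $f_\alpha,g_\alpha\in M[t]$. Each $f_\alpha,g_\alpha$ involves only finitely many elements of $M$ as ``coefficients'' (more precisely, $f_\alpha=m_\alpha+\sum_k a_{\alpha k}t_k$ for some $m_\alpha\in M$ and $a_{\alpha k}\in\bfN$, and similarly for $g_\alpha$). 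Since $M=\varinjlim_i M_i$ is the filtered union of its fine submonoids, there is an index $i\in I$ such that all the finitely many elements $m_\alpha$, and those appearing in $g_\alpha$, lie in $M_i$. Then set $L_i=M_i[t_1\..t_n]/R'_i$, where $R'_i$ is the equivalence relation on $M_i[t]$ generated by the (now meaningful) relations $f_\alpha=g_\alpha$, and let $N_i=L_i^\int$; this is a finitely generated integral $M_i$-monoid. By construction $L_i\oplus_{M_i}M=L$ (pushout of the presentation along $M_i\into M$ just extends scalars), hence $(N_i\oplus_{M_i}M)^\int=(L_i\oplus_{M_i}M)^\int=L^\int=N$, which gives the first assertion.

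For the uniqueness (second) assertion, suppose $N'_i$ is another finitely generated integral $M_i$-monoid with a chosen isomorphism $(N'_i\oplus_{M_i}M)^\int\cong N$, compatible with the given one for $N_i$. Composing, we get an isomorphism $\theta\colon(N_i\oplus_{M_i}M)^\int\toisom(N'_i\oplus_{M_i}M)^\int$ over $M$. Pick finitely many generators $x_1\..x_r$ of $N_i$ over $M_i$; then $\theta$ sends each image of $x_s$ to an element of $(N'_i\oplus_{M_i}M)^\int$ that is represented by a fraction of elements of $N'_i\oplus_{M_i}M$, each of which involves only finitely many elements of $M$. Likewise for $\theta^{-1}$ on a finite generating set of $N'_i$. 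Enlarging $i$ to some $j\ge i$ so that $M_j$ contains all these finitely many elements of $M$, the assignments define homomorphisms $(N_i\oplus_{M_i}M_j)^\int\to(N'_i\oplus_{M_i}M_j)^\int$ and back over $M_j$. The relations ``$\theta\circ\theta^{-1}=\mathrm{id}$'' and ``$\theta^{-1}\circ\theta=\mathrm{id}$'' hold after pushing out to $M$, hence — being finitely many identities among elements involving finitely many elements of $M$ — already hold after enlarging $j$ further. This yields the desired isomorphism $(N_i\oplus_{M_i}M_j)^\int\cong(N'_i\oplus_{M_i}M_j)^\int$ compatible with extension to $M$.

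The main obstacle I expect is bookkeeping rather than a conceptual difficulty: one must be careful that pushouts, being taken in the integral category, interact well with the filtered colimit $M=\varinjlim M_i$ — i.e.\ that $(-\oplus_{M_i}M)^\int=\varinjlim_{j\ge i}(-\oplus_{M_i}M_j)^\int$ and that integralization commutes with filtered colimits. The latter holds because a filtered colimit of integral monoids is integral and $(-)^\int$ is left adjoint to the inclusion of integral monoids, hence preserves colimits; once this is recorded, every ``spread out finitely many elements/relations to a large enough index'' step is routine. A secondary subtlety is that $\phi\colon M\to N$ is injective, which is needed so that Lemma~\ref{fplem} applies and so that $N_i\to N$ behaves well, but injectivity of $M_i\to N_i$ itself is not asserted and need not be arranged.
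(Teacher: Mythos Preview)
Your proposal is correct and follows essentially the same approach as the paper: use Lemma~\ref{fplem} to obtain a finite presentation of $N$ over $M$, descend the finitely many coefficients of the relations to some $M_i$, and for the uniqueness clause spread out the finitely many generators (and the equations expressing one set in terms of the other) to a large enough $M_j$. Your extra care about possibly enlarging $j$ once more so that the two maps become mutually inverse, and your remark that $(-)^\int$ commutes with filtered colimits, make explicit what the paper leaves implicit, but the argument is the same.
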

\begin{proof}
By Lemma~\ref{fplem} we have that $N=(M[t_1\..t_n]/R)^\int$, where $R$ is generated by equalities $f_j=g_j$ with $1\le j\le n$ and $f_j,g_j\in M[t]$. The elements $f_j,g_j$ lie in $M_i[t]$ for a large enough $i$. The equalities $f_j=g_j$ generate an equivalence relation $R_i$ on $M_i[t]$, and $N_i=(M_i[t]/R_i)^\int$ is as required. If $N'_i=(M_i[x_1\..x_{n'}]/R'_i)^\int$ is another such $R_i$-monoid, then we have equalities $t_k=m'_k+\sum_i a_{kj}x_j$ and $x_j=m''_j+\sum_i b_{jk}t_k$ in $N$ with $m'_k,m''_j\in M$. Therefore, we can take any $M_j$ containing the elements $m'_1\..m'_n,m''_1\..m''_{n'}$.
\end{proof}

\subsubsection{Valuative monoids}
By a {\em valuative monoid} we mean an integral monoid $M$ such that $M^\gp=M\cup(-M)$. Let us list a few basic facts. Justification is analogous to the parallel theory of valuation rings, so we only indicate it. Setting $m_1\ge m_2$ if $m_1-m_2\in M$ defines an order on $M^\gp$, and $m_1\sim m_2$ if and only if $m_1-m_2\in M^\times$. In particular, $\oM^\gp=M^\gp/M^\times$ acquires a total order and the homomorphism $\nu\:M\to\oM^\gp$ can be viewed as a valuation. In fact, an integral monoid $M$ is a valuative monoid if and only if $\oM$ is of the form $A_{\ge 0}$, where $A$ is a totally ordered abelian group and $A_{\ge 0}=\{a\in A|a\ge 0\}$. Ideals of a valuative monoid $M$ are totally ordered by inclusion, and prime ideals correspond to convex subgroups $H\subseteq\oM^\gp$ via $H\mapsto M\setminus\nu^{-1}(H)$. A valuative monoid is of height $n$ if it has exactly $n$ non-empty prime ideal, and this happens if and only if $\oM^\gp$ is of height $n$. Equivalently, $\oM^\gp$ admits an ordered embedding into $\bfR^l$ if and only if $l\ge n$.

\subsection{Fans, monoschemes, and subdivisions}

\subsubsection{Conventiones}
We will use the same definition as in \cite[Section~2.1]{Illusie-Temkin}, but applied to arbitrary integral monoids, which do not have to be fine. So, throughout this paper a {\em monoscheme} is a monoidal space $X=(X,M_X)$ locally of the form $\Spec(M)$ with an integral monoid $M$. A {\em fan} is a monoidal space which is locally isomorphic to the sharpening $\oX=(X,\oM_X)$ of a monoscheme. We will write $\oSpec(M)$ instead of $\ol{\Spec(M)}$, it depends only on $\oM$.

\begin{rem}
\begin{compactenum}[(i)]
\item Clearly, $\oX$ and the morphism of monoidal spaces $\oX\to X$ depend on $X$ functorially. In some aspects, the sharpening morphism $\oX\to X$ can be viewed as an analog of the reduction morphisms $\mathrm{Red}(X)\into X$ in the theory of schemes. In particular, both only replace the structure sheaf by its quotient.

\item In the theory of schemes, some types of morphisms to $\mathrm{Red}(X)$ can be lifted canonically to morphisms to $X$. For example, this is the case for the class of \'etale morphisms. In the theory of fans and monoschemes, we will now introduce a class of birational morphisms, which possesses a similar lifting property from sharpenings.
\end{compactenum}
\end{rem}

\subsubsection{Morphisms of finite type}
A morphism $f\:Y\to X$ of monoschemes or fans is called {\em locally of finite type} if locally it is of the form $\Spec(N)\to\Spec(M)$ or $\oSpec(N)\to\oSpec(M)$ with $N$ finitely generated over $M$. We say that $f$ is {\em of finite type} if it is quasi-compact and locally of finite type.

\begin{rem}
\begin{compactenum}[(i)]
\item The definition we gave is local. We will not need this, but it is easy to see that in the affine case it suffices to check the global sections: $\Spec(N)\to\Spec(M)$ is of finite type if and only if $N$ is finitely generated over $M$.

\item In view of Lemma~\ref{fplem}, in the category of (integral) monoschemes there is no need to introduce a finer class of finitely presented morphisms.
\end{compactenum}
\end{rem}

\subsubsection{Birational morphisms of monoschemes}
Let $f\:Y\to X$ be a morphism of monoschemes. We say that $f$ is {\em birational} if it induces a bijection between the sets of generic points and $f^\#_y\:M_{X,f(y)}^\gp\toisom M_{Y,y}^\gp$ for any point $y\in Y$.

\begin{rem}
Since $M_X^\gp$ and $M_Y^\gp$ are locally constant, it suffices to check that $f^\#_\eta$ are isomorphisms only for generic points $\eta\in Y$. Since $M_{Y,\eta}^\gp=M_{Y,\eta}$, we see that $f$ is birational if and only if it induces an isomorphism of the monoschemes of generic points of $Y$ and $X$.
\end{rem}

\subsubsection{Birational morphisms of fans}
Passing to fans preserves only the surjectivity of the maps $f^\#_y$, so we say that a morphism $f\:Y\to X$ of fans is {\em birational} if the maps $f^\#_y$ are surjective for any $y\in Y$.

\begin{rem}
In the case of fans, one has to check the surjectivity at all closed points of $Y$, while it is vacuously true for generic points $\eta$ because $\oM_\eta=0$.
\end{rem}

\subsubsection{The lifting property}
We will now prove that birational morphisms of fans lift to monoschemes uniquely, and characterise this lifting by a universal property.

\begin{lem}\label{birlem}
Let $X$ be a monoscheme with associated fan $\oX$ and let $g\:Z\to\oX$ be a birational morphism of fans, then

\begin{compactenum}[(i)]
\item There exists a birational morphism $f\:Y\to X$ such that $\of=g$, and it is unique up to unique isomorphism. In addition, $f$ is of (locally) finite type if and only if $g$ is of (locally) finite type.

\item For any morphism of monoschemes $T\to X$, the natural map $\Mor_X(T,Y)\to\Mor_\oX(\oT,\oY)$ is bijective.
\end{compactenum}
\end{lem}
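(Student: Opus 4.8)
The plan is to reduce the statement to the affine case and then exhibit $Y$ by a direct monoid-theoretic construction together with its universal property. First I would observe that a birational morphism of fans $g\colon Z\to\oX$ restricts over each affine open $\oSpec(\oM)\subseteq\oX$ to a birational morphism $Z'\to\oSpec(\oM)$, so if we can solve the problem affine-locally and with enough canonicity, the local lifts will glue. Thus the heart of the argument is: given an integral monoid $M$ and a birational morphism $g\colon Z\to\oSpec(M)$ of fans, produce a birational morphism $f\colon Y\to\Spec(M)$ with $\of\cong g$, functorially in $M$.

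For the construction over an affine base, I would use the fact that $Z$ is covered by affines $\oSpec(\oN_j)$ where each $\oN_j$ is the sharpening of a monoid equipped with a surjection $M_{\oSpec(M),z}^\gp=M^\gp\onto \oN_{j}^\gp$ on stalks — equivalently, the birational condition says $\oN_j^\gp$ is a quotient of $M^\gp$ and $\oN_j$ sits inside it. The natural lift is to take $N_j$ to be the preimage of $\oN_j$ under $M^\gp\onto \oN_j^\gp$; concretely, $N_j=\{m\in M^\gp : \bar m\in \oN_j\}$, which is an integral monoid containing $M^\times$ with $\overline{N_j}=\oN_j$ and $N_j^\gp=M^\gp$. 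This is exactly the recipe already used in the proof of Lemma~\ref{unitmonlem}, and it is manifestly functorial and compatible with localization, so the $\Spec(N_j)$ glue to a monoscheme $Y$ over $X=\Spec(M)$ with $\oY=Z$, and $f$ is birational by construction. That $f$ is of (locally) finite type iff $g$ is follows from Lemma~\ref{fplem} and the fact that finite generation can be checked after sharpening since we are only adjoining units, which are absorbed into $M^\times$; more precisely, a generating set for $\oN_j$ over $\oM$ lifts to a generating set for $N_j$ over $M$ together with generators of $M^\times$, but $M^\times\subseteq M$ already, so no new generators are needed. Uniqueness up to unique isomorphism is forced by part (ii), so I would prove (ii) and deduce the uniqueness clause of (i) from it.

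For (ii), the universal property, I would argue that a morphism $T\to X$ of monoschemes and a morphism $\oT\to\oY=Z$ over $\oX$ determine, on each stalk, a commutative square of monoid homomorphisms $M_{X,x}\to M_{T,t}$ and $\oM_{X,x}\to \oM_{Y,y}\to \oM_{T,t}$, and the claim is that there is a unique lift $M_{Y,y}\to M_{T,t}$. Since $M_{Y,y}$ is generated by $M_{X,x}$ and units of $M_{Y,y}^\gp=M_{X,x}^\gp$, and since $M_{Y,y}\subseteq M_{X,x}^\gp$, the map $M_{Y,y}\to M_{T,t}$ is forced to be the restriction of $M_{X,x}^\gp\to M_{T,t}^\gp$; one only has to check this restriction lands in $M_{T,t}$, which holds because an element $m\in M_{Y,y}$ has image $\bar m\in \oM_{Y,y}$ mapping into $\oM_{T,t}$, and an element of $M_{T,t}^\gp$ whose class lies in $\oM_{T,t}$ lies in $M_{T,t}$ (as $M_{T,t}$ is the preimage of $\oM_{T,t}$). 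Then one checks these stalkwise lifts are compatible and assemble to a morphism of monoschemes $T\to Y$, and that the assignment is inverse to the restriction map $\Mor_X(T,Y)\to\Mor_{\oX}(\oT,\oY)$.

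The main obstacle I anticipate is not any single hard step but the bookkeeping of globalization: verifying that the local construction $N_j=$ (preimage of $\oN_j$) is independent of the chosen affine chart up to canonical isomorphism and glues on overlaps, and that the resulting $Y$ does not depend on choices. The universal property in (ii) is the clean way to handle this — once (ii) is in place for affine $T$, the gluing and the uniqueness in (i) are formal — so I would actually establish (ii) first at the level of affine pieces, use it to glue the local $Y_j$'s into a global $Y$, and only then record (i) as a consequence. A secondary technical point is the finite-type equivalence in the non-fine setting, where I would lean on Lemma~\ref{fplem} to avoid worrying about finite presentation separately.
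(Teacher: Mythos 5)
Your proposal is correct and takes essentially the same route as the paper's proof: reduce to the affine case, define $Y$ locally as the spectrum of the preimage $N=\varphi^{-1}(\ol{N})$ of the sharp monoid under the surjection $\varphi\:M^\gp\onto\ol{N}^\gp$, establish the universal property (ii) by observing that any lift is forced to be the restriction of the map on Grothendieck groups and lands in the target monoid because an integral monoid is the preimage of its sharpening, and then deduce the uniqueness and gluing in (i) from (ii). The one imprecise spot is the finite-type claim: the units of your $N_j$ form the full kernel of $M^\gp\to\ol{N}_j^\gp$, not merely $M^\times$, so "no new generators are needed" is not quite the right justification — but the paper dismisses this same point as clear, so this does not constitute a divergence from its argument.
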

\begin{proof}
We should construct $f$ and $h$ in the right diagram below and prove that they are unique. It suffices to do this locally on $X$, $Z$ and $T$, since the local liftings then glue by the uniqueness. Also, the condition on being of finite type can be checked locally because the sharpening morphisms are homeomorphisms.

Thus, assuming that $X=\Spec(P)$ and $T=\Spec(L)$ are affine, and $g$ corresponds to a homomorphism $\lambda\:\oP\to R$ such that $\oP^\gp\onto R^\gp$, we can switch from the left diagram below to the right one.
$$
\xymatrix{
\oT\ar[dd]\ar[dr]_\oh\ar[drrr]&&& &&&\oL&&&\\
& Z\ar[rr]_{g=\of}\ar[dd] &&\oX\ar[dd]&&&& R\ar[ul]^\opsi&&\oP\ar[ll]^{\lambda=\ophi}\ar[ulll]\\
T\ar@{-->}[dr]_h\ar[drrr]&&& &&& L\ar[uu]\\
& Y\ar@{-->}[rr]^{f} && X&&& & Q\ar[uu]\ar@{-->}[ul]^\psi && P\ar@{-->}_\phi[ll]\ar[uu]\ar[ulll]_\rho.
}
$$
The induced map $\varphi\:P^\gp\to R^\gp$ is surjective, hence setting $Q=\varphi^{-1}(R)$ we obtain a homomorphism $\phi\:P\to Q$ such that $P^\gp=Q^\gp$ and $\ophi=\lambda$. In particular, $f\:Y=\Spec(Q)\to X$ is birational and $g=\of$. In addition, it is clear that $Q$ is finitely generated over $P$ if and only if $R$ is finitely generated over $\oP$.

Furthermore, we claim that given this $\phi$, there is a unique $\psi\:Q\to L$ making the left diagram commutative. Indeed, since $Q\subseteq Q^\gp=P^\gp$, it suffices to check that $\rho^\gp(Q)$ lies in $L$. Therefore, it suffices to check that $P^\gp\to\oL^\gp$ takes $Q$ to $\oL$, and the latter is clear since $Q^\gp=P^\gp\to\oL^\gp$ is compatible with the homomorphism $Q\to R\to\oL$.

It remains to show that $\phi$ is unique. If $\phi'\:P\to Q'$ is another such homomorphism, then applying (ii) to $L=Q'$ and the identity $\oQ=R=\oQ'$, we obtain a canonical map $Q'\to Q$. Its inverse, $Q'\to Q$ is obtained in the same way, hence $\phi$ is unique.
\end{proof}

Since the sharpening functor preserves birationality, Lemma~\ref{birlem}(i) implies the following result.

\begin{cor}\label{bircor}
For any monoscheme $X$ the sharpening functor induces an equivalence between the categories of birational morphisms $Y\to X$ and birational morphisms $Z\to\oX$. This equivalence preserves the property of morphisms to be of (locally) finite type.
\end{cor}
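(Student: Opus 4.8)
The statement to prove is Corollary~\ref{bircor}, which packages Lemma~\ref{birlem}(i) into an equivalence of categories.

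The plan is to deduce the corollary formally from Lemma~\ref{birlem}(i). First I would fix a monoscheme $X$ and consider the sharpening functor from the category $\mathcal{B}_X$ of birational morphisms $Y\to X$ to the category $\mathcal{B}_{\oX}$ of birational morphisms $Z\to\oX$; it sends $(Y\to X)$ to $(\oY\to\oX)$, which is birational because sharpening preserves surjectivity of the maps $f^\#_y$ (indeed sharpening does not change the monoid-theoretic cokernel of $f^\#_y$, and over a fan the generic stalks are zero so only surjectivity at closed points matters). On morphisms, given a commutative triangle over $X$ with top arrow $T\to Y$, one applies the sharpening functor to get a triangle over $\oX$; this is well-defined and functorial since sharpening is a functor on monoidal spaces.

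Next I would check essential surjectivity and full faithfulness. Essential surjectivity is exactly Lemma~\ref{birlem}(i): every birational $g\:Z\to\oX$ lifts to a birational $f\:Y\to X$ with $\of=g$. For full faithfulness, let $f\:Y\to X$ and $f'\:Y'\to X$ be birational morphisms of monoschemes; I must show that the sharpening map $\Mor_X(Y,Y')\to\Mor_{\oX}(\oY,\oY')$ is a bijection. This is precisely Lemma~\ref{birlem}(ii) applied with the test monoscheme $T=Y$ (which carries a morphism to $X$, namely $f$) and the target $Y'$ in the role of the lift of $g'=\of'$: the cited bijection reads $\Mor_X(Y,Y')\toisom\Mor_{\oX}(\oY,\oY')$. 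Here one uses that, by the uniqueness in part (i), $Y'$ really is \emph{the} lift of $\of'$, so part (ii) applies verbatim.

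Finally, the statement about finite type: the last sentence of Lemma~\ref{birlem}(i) says that $f$ is of (locally) finite type if and only if $g=\of$ is, so the equivalence restricts to the full subcategories of (locally) finite type birational morphisms on both sides. I do not expect any genuine obstacle here — the content is entirely in Lemma~\ref{birlem}, and the only thing to be careful about is correctly matching the variables: part (ii) must be invoked with $T=Y$, and one must first record that sharpening does land in birational morphisms of fans so that the functor is well-defined.
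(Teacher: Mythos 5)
Your proposal is correct and follows essentially the same route as the paper, which simply records that the corollary follows from Lemma~\ref{birlem} once one notes that sharpening preserves birationality. You spell out the details more fully (in particular invoking part (ii) with $T=Y$ for full faithfulness, where the paper's one-line proof only cites part (i), whose uniqueness clause implicitly carries that content), but there is no substantive difference in approach.
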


\subsubsection{Subdivisions}
A morphism of fans or monoschemes $f\:Y\to X$ is called a {\em subdivision} (resp. a {\em partial subdivision}) if it is birational, of finite type, and the following condition is satisfied:

($S$) For any valuative monoid $R$ and the associated monoscheme $S=\Spec(R)$, the map $\lambda_S\:\Mor(S,Y)\to\Mor(S,X)$ is bijective (resp. injective).

\begin{rem}
\begin{compactenum}[(i)]
\item Kato used in \cite[Section 9]{Kato-toric} the notions of proper subdivisions and subdivisions. Our terminology follows \cite[Section~3.5]{Gabber-Ramero}, which seems to be more precise. These notions are analogous to the notions of separated birational and proper birational morphisms of schemes, with ($S$) being an analog of the valuative criteria.

\item In \cite[Section 9]{Kato-toric} Kato considered the case of fine fans, and then it sufficed to consider the valuative monoid $\bfN$ only. This is analogous to the fact that for noetherian schemes it suffices to consider traits in the valuative criteria.
\end{compactenum}
\end{rem}

\begin{lem}\label{subdivlem}
In the definition of (partial) subdivisions one can replace condition (S) by the following condition ($\oS$): For any sharp valuative monoid $R$ and the associated fan $\oS=\oSpec(R)$, the map $\lambda_\oS\:\Mor(\oS,Y)\to\Mor(\oS,X)$ is bijective (resp. injective).
\end{lem}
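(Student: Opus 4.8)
The plan is to show that conditions $(S)$ and $(\oS)$ are equivalent for a birational morphism $f\colon Y\to X$ of finite type, at which point the statement for both subdivisions and partial subdivisions follows by reading off the bijective/injective clauses. The key observation is that every valuative monoid $R$ sits in a short exact sequence $R^\times\to R^\gp\to\oR^\gp$, and the associated monoscheme $S=\Spec(R)$ sharpens to $\oS=\oSpec(R)$; so morphisms from $S$ and from $\oS$ into fans/monoschemes should be related by exactly the kind of lifting property established in Lemma~\ref{birlem} and Corollary~\ref{bircor}. Indeed, since $f$ is birational and of finite type, Corollary~\ref{bircor} identifies $f$ with its sharpening $\of\colon\oY\to\oX$ up to the canonical equivalence, and under this equivalence the comparison maps $\lambda_S$ and $\lambda_\oS$ will be matched.

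Concretely, I would argue as follows. First reduce to the affine case: both $(S)$ and $(\oS)$ are conditions about maps out of $S$ (resp.\ $\oS$), whose source is local (a valuative monoid has a unique closed point), so any morphism $S\to X$ factors through an affine open $\Spec(P)\subseteq X$; likewise for $\oS\to\oX$. Hence it suffices to treat $X=\Spec(P)$ and $Y=\Spec(Q)$ with $Q$ birational and finitely generated over $P$. Now, a morphism $S=\Spec(R)\to\Spec(P)$ is a homomorphism $P\to R$; composing with $R\to\oR$ gives $\oP\to\oR$, i.e.\ a morphism $\oS\to\oSpec(P)$. Conversely, given $\oP\to\oR$, I claim it lifts uniquely to $P\to R$ compatible with the structure maps to $Y$: this is precisely Lemma~\ref{birlem}(ii) applied with $T=S=\Spec(R)$, using that $f$ is birational so that $P^\gp=Q^\gp$ and that $R$, being valuative, has $R^\gp=R\cup(-R)$ which makes the relevant preimage computation go through. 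Thus for a \emph{fixed} base point, $\Mor_{\Spec(P)}(\Spec(R),\Spec(Q))\to\Mor_{\oSpec(P)}(\oSpec(R),\oSpec(Q))$ is a bijection; the same holds on the target side for $\Mor(\Spec(R),\Spec(P))$. Since the square relating $\lambda_S$ to $\lambda_\oS$ through these two bijections commutes, $\lambda_S$ is bijective (resp.\ injective) if and only if $\lambda_\oS$ is.

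The main point to be careful about — and the step I expect to carry the real content — is the unique lifting of a homomorphism $\oP\to\oR$ to $P\to R$: one needs that a morphism $\oS\to\oSpec(P)$ does not merely lift set-theoretically but lifts uniquely and compatibly with the birational structure of $f$, and this is where the hypotheses "$f$ birational" and "$R$ valuative" are both used (the valuative hypothesis guarantees $R=R^\gp\cap(\text{something})$-type statements, mirroring the definition of a valuation ring). Once this lifting is in hand, everything else is bookkeeping. One should also note the trivial direction — a morphism $\oS\to Z$ of fans always lifts the closed point correctly because $\oM_\eta=0$ at generic points — so no subtlety arises there. Finally, I would remark that this lemma is the fan-theoretic analogue of the fact that in the valuative criteria for schemes one may test with the residue field killed, and that it is what lets all subsequent arguments about subdivisions be phrased entirely on the level of fans.
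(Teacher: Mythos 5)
Your overall strategy---comparing $\lambda_S$ and $\lambda_\oS$ through a commutative square built from sharpening---is reasonable, but the step you yourself single out as carrying the real content is false as stated. You claim that a homomorphism $\oP\to\oR$ lifts \emph{uniquely} to $P\to R$, equivalently that the target-side map $\Mor(\Spec(R),\Spec(P))\to\Mor(\oSpec(R),\oSpec(P))$ is a bijection. Take $P=\bfN$ and $R=\bfZ\oplus\bfN$ (a valuative monoid with $\oR=\bfN$): the homomorphisms $P\to R$ given by $1\mapsto(a,1)$ for $a\in\bfZ$ are pairwise distinct but all induce the same map $\oP\to\oR$. So for non-sharp valuative $R$ neither vertical map in your square is a bijection, and ``two bijections in a commuting square'' does not deliver the conclusion. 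This matters precisely because condition $(S)$ quantifies over \emph{all} valuative monoids while $(\oS)$ quantifies only over sharp ones; the non-sharp case is exactly where the lemma has content. Note also that Lemma~\ref{birlem}(ii) does not say what you invoke it for: it compares $\Mor_X(T,Y)$ with $\Mor_\oX(\oT,\oY)$ for a \emph{given} morphism $T\to X$; it neither produces nor counts liftings of a morphism $\oT\to\oX$ to a morphism $T\to X$.

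The argument is repairable, and the repair separates the two directions. For $(\oS)\Rightarrow(S)$ you only need the fiberwise statement you do quote correctly: for a fixed $g\:S\to X$, the fiber $\Mor_X(S,Y)$ of $\lambda_S$ over $g$ is in bijection with the fiber of $\lambda_\oS$ over $\ol{g}$ by Lemma~\ref{birlem}(ii), and $\oSpec(R)\cong\oSpec(\oR)$ with $\oR$ sharp valuative; so ``all fibers are singletons'' (resp.\ ``have at most one element'') transfers without any claim about the target-side map. For $(S)\Rightarrow(\oS)$ one restricts to sharp $R$, where the target-side map \emph{is} a bijection for the trivial reason that a local homomorphism into a sharp monoid kills all units and hence factors uniquely through the sharpening. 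The paper's own proof sidesteps this bookkeeping entirely: it reduces to the single claim that for $M\to N$ with $M^\gp=N^\gp$, a homomorphism $M\to R$ factors through $N$ if and only if $M\to\oR$ does, which follows from $M^\gp\times_{R^\gp}R=M^\gp\times_{\oR^\gp}\oR$ (valid for any integral $R$, since $R=R^\gp\times_{\oR^\gp}\oR$). That formulation also covers the fan case of the lemma uniformly, which your reliance on Lemma~\ref{birlem}(ii) (stated for monoschemes) does not immediately do.
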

\begin{proof}
This follows easily from the following claim: if $M\to N$ is a homomorphism of monoids such that $M^\gp=N^\gp$, and $M\to R$ is a homomorphism to a valuative monoid, then $M\to R$ factors through $N$ if and only if $M\to\oR$ factors through $N$. The latter is true for any integral monoid $R$. Indeed, $M^\gp\times_{R^\gp}R=M^\gp\times_{\oR^\gp}\oR$ because $R=R^\gp\times_{\oR^\gp}\oR$, and it remains to use that $M\to R$ factors through $N$ if and only if $N\subseteq M^\gp\times_{R^\gp}R$, and similarly for $M\to\oR$.
\end{proof}

\begin{cor}\label{subdivcor}
Let $X$ be a monoscheme with associated fan $\oX$. Then the sharpening functor induces equivalence between the categories of subdivisions (resp. partial subdivisions) of $X$ and $\oX$.
\end{cor}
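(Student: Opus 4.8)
The plan is to deduce Corollary~\ref{subdivcor} directly from the machinery already assembled, combining the lifting equivalence of Corollary~\ref{bircor} with the intrinsic reformulation of condition $(S)$ given by Lemma~\ref{subdivlem}. First I would recall that a (partial) subdivision of $X$ is by definition a birational morphism $f\:Y\to X$ of finite type satisfying $(S)$, and likewise for $\oX$; so the candidate equivalence is simply the restriction of the sharpening functor $Y\mapsto\oY$ to these subcategories. By Corollary~\ref{bircor}, sharpening already gives an equivalence between birational morphisms of finite type over $X$ and those over $\oX$, so the only thing to check is that, under this equivalence, $f\:Y\to X$ satisfies condition $(S)$ (resp. its injective variant) if and only if $g=\of\:\oY\to\oX$ does.

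Next I would verify this matching of conditions. By Lemma~\ref{subdivlem}, condition $(S)$ for $f$ is equivalent to condition $(\oS)$ for $f$, i.e. to the assertion that for every sharp valuative monoid $R$ with $\oS=\oSpec(R)$ the map $\Mor(\oS,Y)\to\Mor(\oS,X)$ is bijective (resp. injective). But $\oS$ is a fan, and by the lifting property of Lemma~\ref{birlem}(ii) (applied with $T=\Spec(R)$, whose sharpening is $\oS$) we have canonical bijections $\Mor_X(\Spec(R),Y)=\Mor_{\oX}(\oS,\oY)$ and trivially $\Mor_X(\Spec(R),X)=\Mor_{\oX}(\oS,\oX)$ compatible with the two structure maps; hence $\Mor(\oS,Y)\to\Mor(\oS,X)$ may be identified with $\Mor(\oS,\oY)\to\Mor(\oS,\oX)$. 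Thus condition $(\oS)$ for $f\:Y\to X$ is literally the same statement as condition $(\oS)$ for $g\:\oY\to\oX$, which by (the fan version of) Lemma~\ref{subdivlem} is equivalent to condition $(S)$ for $g$. Therefore $f$ is a (partial) subdivision of $X$ iff $g$ is a (partial) subdivision of $\oX$, and the equivalence of Corollary~\ref{bircor} restricts to the desired equivalence.

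I do not expect a genuine obstacle here; the corollary is a bookkeeping consequence of Lemmas~\ref{birlem} and~\ref{subdivlem}. The one point deserving a line of care is making sure Lemma~\ref{subdivlem} is available in the fan setting as well as the monoscheme setting — its proof is phrased for monoids $M\to N$ with $M^\gp=N^\gp$, but a birational morphism of fans only gives surjections $f^\#_y$, so one should instead invoke directly that $(\oS)$ is the natural condition to use for fans, or equivalently note that for a sharp valuative $R$ the relevant factorization statement only needs surjectivity of $M^\gp\to N^\gp$ together with $N\subseteq M^\gp\times_{R^\gp}R$. With that remark in place the argument is complete.
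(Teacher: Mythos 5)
Your argument is correct and follows essentially the same route as the paper's own proof: reduce to matching the subdivision condition via Corollary~\ref{bircor}, pass to sharp valuative monoids via Lemma~\ref{subdivlem}, and identify the map $\Mor(\oS,Y)\to\Mor(\oS,X)$ with $\Mor(\oS,\oY)\to\Mor(\oS,\oX)$. The only (cosmetic) difference is that the paper obtains this last identification directly from the observation that any morphism from a fan to a monoscheme factors uniquely through the sharpening, whereas you route it through Lemma~\ref{birlem}(ii); your closing remark about the fan version of Lemma~\ref{subdivlem} is the same point the paper implicitly relies on.
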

\begin{proof}
By Corollary~\ref{bircor} we should only prove the following claim: assume that $f\:Z\to X$ is birational, then $f$ is a subdivision (resp. a partial subdivision) if and only if $\of$ is. The claim, in its turn, follows from Lemma~\ref{subdivlem} and the obvious observation that, given a fan $F$, any morphism $F\to Z$ factors uniquely through $\oZ$, that is, $\Mor(F,\oZ)=\Mor(F,Z)$.
\end{proof}

\subsubsection{Local isomorphisms}
Let $f\:Y\to X$ be a morphism of monoschemes (resp. fans). We say that $f$ is a {\em local isomorphism} if it is of finite type and for any $y\in Y$ with $x=f(y)$ one has that $M_{X,x}=M_{Y,y}$ (resp. $\oM_{X,x}=\oM_{Y,y}$). As in the case of schemes, the following result holds:

\begin{lem}\label{locisomlem}
Assume that $f$ is a local isomorphism between monoschemes or fans. Then $f$ is birational, and $f$ is a subdivision (resp. a partial subdivision) if and only if it is bijective (resp. injective).
\end{lem}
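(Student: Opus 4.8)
The statement packages two claims: (a) a local isomorphism $f\colon Y\to X$ is birational, and (b) for such $f$, being a (partial) subdivision is equivalent to being bijective (injective). For (a), I would argue that a local isomorphism induces isomorphisms $M_{X,x}^\gp\toisom M_{Y,y}^\gp$ at each point — in the monoscheme case this is immediate from $M_{X,x}=M_{Y,y}$, and in the fan case from $\oM_{X,x}=\oM_{Y,y}$ — and that it is a bijection on generic points locally, since over an affine $\Spec(M)$ a local isomorphism is $\Spec(N)\to\Spec(M)$ with $M=N$ on stalks and both generic points corresponding to the minimal prime. By Corollary~\ref{subdivcor} (together with Corollary~\ref{bircor}) it suffices to treat either the monoscheme or the fan case, so I would reduce to fans and then to the affine situation $\oS=\oSpec(R)\to\oX$ with $R$ a sharp valuative monoid, using Lemma~\ref{subdivlem}.

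For (b), the content is the valuative-type criterion $(\oS)$. Work locally: $f$ restricts to $\oSpec(N)\to\oSpec(M)$ with $M=N$ on every stalk along the image. The key point is that a morphism $\oS=\oSpec(R)\to\oX$ from a sharp valuative monoscheme is determined by the image point $x\in X$ together with the induced local homomorphism $\oM_{X,x}\to R$ — and since $\oM_{Y,y}=\oM_{X,x}$ whenever $f(y)=x$, lifting such a morphism through $f$ amounts precisely to choosing a point $y\in f^{-1}(x)$ with the compatible (same) local homomorphism. Hence $\lambda_\oS\colon\Mor(\oS,Y)\to\Mor(\oS,X)$ is injective exactly when $f$ has the property that points over a given $x$ are separated by this data, which here (as $\oM_{Y,y}=\oM_{X,x}$ carries no extra information) reduces to $f$ being injective on points; and $\lambda_\oS$ is surjective exactly when every $x$ in the image-relevant locus is hit, i.e.\ when $f$ is also surjective. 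So I would show: $f$ a subdivision $\iff$ $\lambda_\oS$ bijective for all sharp valuative $R$ $\iff$ $f$ bijective, and similarly with "injective" in place of "bijective" for partial subdivisions. Finiteness of type is built into the definition of local isomorphism, so nothing extra is needed there.

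The main obstacle, and the place to be careful, is the reduction of the abstract condition $(\oS)$ to a genuinely pointwise statement: one must check that a map out of $\oSpec(R)$ for a sharp valuative $R$ really is pinned down by its image point and the local homomorphism on stalks, with no room for the map to "see" more of $Y$ than of $X$ — this is exactly where the hypothesis $\oM_{X,x}=\oM_{Y,y}$ does its work, ruling out the phenomenon that makes a general birational morphism fail to be a subdivision. I expect that once this local bijection of Hom-sets is set up cleanly (using that $\oSpec(R)$ has a unique closed point and $(\oS)$ need only be tested there, cf.\ the remark after the definition of birational morphisms of fans), the equivalences fall out formally. The rest is bookkeeping via Corollaries~\ref{bircor} and~\ref{subdivcor} to pass freely between monoschemes and fans.
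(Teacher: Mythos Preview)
Your approach is essentially the paper's: both reduce to the fact that a morphism from $\Spec(R)$ (or $\oSpec(R)$) with $R$ valuative is determined by the image of its closed point together with a local homomorphism $M_{X,x}\to R$, and that for a local isomorphism the stalk condition $M_{X,f(y)}=M_{Y,y}$ means lifting such a morphism through $f$ is exactly a choice of preimage of $x$. The paper works directly with monoschemes rather than passing to fans via Corollaries~\ref{bircor} and~\ref{subdivcor}, but the content is the same.

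There is one ingredient you do not mention, and it is precisely the step the paper singles out. To prove that a (partial) subdivision must be bijective (injective), you need the contrapositive: if $f(y_1)=f(y_2)=x$ with $y_1\neq y_2$, or if $x\notin f(Y)$, then some $\lambda_S$ fails. For this you must produce, for an \emph{arbitrary} point $x\in X$, a valuative monoid $R$ and a local homomorphism $M_{X,x}\to R$ (equivalently, a morphism $\Spec(R)\to X$ sending the closed point to $x$). This is not automatic---$\oM_{X,x}$ need not itself be valuative---and the paper obtains it by a Zorn's lemma argument, exactly as one extends a domain to a valuation ring. Your phrase ``every $x$ in the image-relevant locus is hit'' hints that you felt this, but you then identify that locus with all of $X$ without justification; supplying the Zorn step closes the gap.
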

\begin{proof}
For concreteness, we consider only the case of monoschemes. Let $R$ be a valuative monoid. Then $S=\Spec(R)$ is a local monoscheme with closed point $s$, and giving a morphism $f\:S\to X$ is equivalent to choosing a point $x=f(s)$ and giving a local homomorphism $M_{X,x}\to R$. In addition, as in the case of rings, using Zorn's lemma it is easy to prove that for any point $x\in X$ there exists a local homomorphism $M_{X,x}\to R$ where $R$ is a valuative monoid. (In addition, one can achieve that $R^\gp=M_{X,x}^\gp$.) The lemma follows.
\end{proof}

\subsubsection{Blow ups}
Let $X$ be a monoscheme and $I\subseteq\calO_X$ a quasi-coherent sheaf of ideals. Then there is a universal morphism $f\:X'\to X$ such that $f^*(I)$ is invertible (that is, locally generated by a single element). One calls $X'$ the {\em blow up} of $X$ along $I$ and denotes it $X_I$, see \cite[Theorem~II.1.7.2]{Ogus-logbook}. The theory of blow ups of monoschemes is developed in \cite[Section~II.1.7]{Ogus-logbook} and it is simpler than its scheme-theoretic analogue. Note also that $M_X$ and $\oM_X$ have the same ideals and hence applying the sharpening functor one obtains a similar theory for fans. For example, $\of\:\overline{X_I}\to\oX$ is the universal morphism of fans such that the pullback of $I\oM_X$ is invertible.

Let us list a few simple facts from \cite[Section~II.1.7]{Ogus-logbook}. Blow ups form a filtered family, since $X_I\times_XX_J=X_{I+J}$. Blow ups are compatible with base changes $h\:Y\to X$, that is, $Y_{h^*(I)}=X_I\times_XY$. If $X=\Spec(M)$ and $I=(f_1\..f_n)$ then $X_I$ is glued from the charts $X_{I,a}=\Spec(A[I-a])$ for $a\in I$, where $A[I-a]$ is the submonoid of $A^\gp$ generated by elements $b-a$ with $b\in I$.

\begin{lem}\label{properblowup}
If $X$ is a monoscheme (resp. a fan) and $I\subseteq M_X$ (resp. $I\subseteq\oM_X$) is a coherent ideal then the blow up $X_I\to X$ is a subdivision.
\end{lem}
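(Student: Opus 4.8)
The plan is to reduce the statement to a local computation on affine charts of a blow up, using the criterion for subdivisions provided by Lemma~\ref{locisomlem} and the explicit description of blow-up charts recalled just above. First I would note, via Corollary~\ref{subdivcor}, that it suffices to treat the case of monoschemes, since sharpening induces an equivalence between subdivisions of $X$ and of $\oX$, and the sharpening of a blow up along $I$ is the blow up of $\oX$ along $I\oM_X$. So assume $X$ is a monoscheme and $I\subseteq M_X$ a coherent ideal. Since being a subdivision can be checked locally on $X$ (the defining property and finite type are local), I may assume $X=\Spec(M)$ is affine and $I=(f_1\..f_n)$ is finitely generated. Then $X_I$ is covered by the charts $X_{I,a}=\Spec(M[I-a])$ for $a\in I$, and each transition $M\to M[I-a]$ induces an isomorphism on Grothendieck groups (both sit inside $M^\gp$), so $X_I\to X$ is birational; it is of finite type because each $M[I-a]$ is finitely generated over $M$. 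The real content is condition $(S)$.

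Next I would verify $(S)$ directly. Let $R$ be a valuative monoid, $S=\Spec(R)$ with closed point $s$, and let $\phi\:M\to R$ be a morphism, i.e.\ an element of $\Mor(S,X)$ landing at some point $x$; by composing with a localization I may assume $\phi$ is local, hitting the closed point. I need to show $\phi$ lifts uniquely to a morphism $S\to X_I$. Because $R$ is valuative, the finitely many values $\nu(\phi(f_1))\..\nu(\phi(f_n))\in\oR^\gp$ are totally ordered, so there is an index $i$ (a minimal one) with $\phi(f_i)\mid\phi(f_j)$ in $R$ for all $j$, i.e.\ $\phi(f_j)-\phi(f_i)\in R$. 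Taking $a=f_i\in I$, this says exactly that $\phi^\gp$ carries every generator $b-a$ of $M[I-a]$ into $R$, hence $\phi$ factors through $M\to M[I-a]$, giving a morphism $S\to X_{I,a}\to X_I$ lifting $\phi$. For uniqueness: two lifts correspond to factorizations of $\phi$ through $M[I-a]$ and $M[I-a']$ inside the common group $M^\gp=M[I-a]^\gp$; since $\phi^\gp$ is fixed on $M^\gp$, the two factorizations agree as homomorphisms $M^\gp\to R^\gp$, and since a factorization through a submonoid of $M^\gp$ is determined by the underlying group map together with the image, the two lifts glue to the same morphism $S\to X_I$. (Concretely, the locus in $X_I$ where the image of $s$ lands is forced: it must be the unique point where the pullback of $I$ is generated by an element dividing all others under $\phi$.)

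Finally I would assemble these observations: birational plus finite type plus $(S)$ is precisely the definition of a subdivision, so $X_I\to X$ is a subdivision, and by the reduction in the first paragraph the same holds for fans. I expect the main obstacle to be the \emph{uniqueness} half of $(S)$ rather than existence: existence is the familiar ``valuative monoids see a total order on the values of the $f_i$'' argument, but pinning down that different blow-up charts cannot yield genuinely different lifts requires carefully using that all the monoids involved share the group $M^\gp$ and that $\phi^\gp$ is rigid there — essentially the same bookkeeping that appears in the proof of Lemma~\ref{birlem}. One should also be slightly careful that $a\in I$ need not be one of the chosen generators $f_i$ in general, but it suffices to work with the generating set, since the charts $X_{I,f_i}$ already cover $X_I$; alternatively one invokes the universal property of the blow up from \cite[Theorem~II.1.7.2]{Ogus-logbook} to get the lift abstractly and only checks $(S)$ by the chart computation.
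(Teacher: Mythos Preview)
Your proof is correct and follows essentially the same approach as the paper: reduce to the affine case $X=\Spec(M)$, $I=(f_1,\dots,f_n)$, note birationality and finite type, and verify condition $(S)$ by using the total order on $\oR^\gp$ to select a minimal $\phi(f_i)$ and factor through the corresponding chart. You are more explicit than the paper about uniqueness of the lift (which the paper leaves implicit), and your opening reference to Lemma~\ref{locisomlem} is a red herring since you never actually use it.
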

\begin{proof}
The claim is local on the base, so we can assume that $X=\Spec(A)$ and $I=(f_1\..f_n)$. The birationality is clear. If $R$ is a valuative monoid and $\phi\:A\to R$ is a homomorphism, then choosing $i$ so that $\phi(f_i)$ is minimal in $R$ we see that $\phi$ factors through $A[I-f_i]$. It follows that $\Spec(R)\to X$ factors through $X_I$ via the $f_i$-chart, in particular, $X_I\to X$ is a subdivision.
\end{proof}

\subsubsection{Exactification}
A homomorphism of monoids $P\to Q$ is {\em exact} if $P=Q\times_{Q^\gp}P^\gp$. A morphism of monoschemes (resp. fans) $f\:Y\to X$ is {\em exact} if the homomorphisms $M_{X,f(y)}\to M_{Y,y}$ (resp. $\oM_{X,f(y)}\to\oM_{Y,y}$) are exact. In particular, an exact birational morphism is nothing else but a local isomorphism. It is easy to see that exact morphisms are preserved by base changes. The following result is a (very simple) analogue of the flattening theorem of Raynaud-Gruson.

\begin{theor}\label{exactlem}
Assume that $X=\Spec(P)$ is an affine monoscheme (resp. a fan) and $f\:Y\to X$ is a morphism of finite type. Then there exists a finitely generated ideal $I\subseteq P$ such that the base change $f_I\:Y_I\to X_I$ of $f$ with respect to the blowing up $X_I\to X$ is an exact morphism. In particular, if $f$ is birational, then $f_I$ is a local isomorphism.
\end{theor}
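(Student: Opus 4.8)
\emph{Reductions.}
The plan is to reduce to an affine target with finitely generated $Q$, and then produce $I$ as the blow‑up ideal of a finite set of ``obstruction elements'' read off from the generators of $Q$ over $P$. We argue for monoschemes; the case of fans is obtained by the same argument applied to sharp monoids. Since $f$ is of finite type and $X$ is affine, $Y$ is covered by finitely many affine opens $Y_j=\Spec(Q_j)$ with $Q_j$ finitely generated over $P$. If for each $j$ we find a finitely generated $I_j\subseteq P$ with $(Y_j)_{I_j}\to X_{I_j}$ exact, then $I:=I_1+\dots+I_k$ works for all of $Y$: the blow up $X_I$ equals $X_{I_1}\times_X\cdots\times_X X_{I_k}$, hence admits a morphism to each $X_{I_j}$, so $Y_j\times_X X_I=(Y_j)_{I_j}\times_{X_{I_j}}X_I\to X_I$ is exact by base‑change stability of exactness, and these opens cover $Y_I$. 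Thus we may assume $Y=\Spec(Q)$, $\phi\colon P\to Q$, with $Q$ generated over $P$ by $x_1\..x_n\in Q\subseteq Q^\gp$. (Alternatively, Lemma~\ref{approxlem} lets one assume in addition that $P$ is fine and then invoke exactification of fine monoids, but we proceed directly.)

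\emph{The obstruction elements.}
Exactness of $f_I$ at a point $y\in Y_I$ over $x\in X_I$ says: every $u\in M_{X_I,x}^\gp=P^\gp$ whose image $\phi^\gp(u)$ lies in the stalk $M_{Y_I,y}$ already lies in $M_{X_I,x}$. One checks that the $u$ which can occur (as $X_I$, $y$ vary) always lie in the submonoid of $P^\gp$ generated by $M_{X_I,x}$ together with finitely many \emph{fixed} elements $w_1\..w_N\in P^\gp$ depending only on $\phi$ and the $x_i$. Namely, $\ker(\phi^\gp\colon P^\gp\to Q^\gp)$ is finitely generated (by Lemma~\ref{fplem}, $Q^\gp$ is a finitely presented quotient of $P^\gp\oplus\bfZ^n$, so this kernel is a subgroup of a finitely generated group), the group $\Lambda:=\{m\in\bfZ^n:\sum_i m_i x_i\in\operatorname{im}\phi^\gp\}$ is finitely generated, and one takes for $w_1\..w_N$ a finite generating set of $\ker\phi^\gp$ together with elements $v^{(1)}\..v^{(r)}\in P^\gp$ with $\phi^\gp(v^{(l)})=\sum_i m^{(l)}_i x_i$ for generators $m^{(l)}$ of $\Lambda$. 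The crucial property, used below, is that each $\phi^\gp(w_a)$ in fact lies in the stalk $M_{Y_I,y}$: for a generator of $\ker\phi^\gp$ it is $0$, and for $v^{(l)}$ it is an integral combination of the $x_i\in Q$ whose ``negative part'' becomes invertible in $M_{Y_I,y}$ precisely at the points where it could cause trouble.

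\emph{The blow up and the chart computation.}
Write $w_a=c_a-d_a$ with $c_a,d_a\in P$ and put $I:=\sum_a(c_a,d_a)$, a finitely generated ideal, so $X_I=X_{(c_1,d_1)}\times_X\cdots\times_X X_{(c_N,d_N)}$. On any chart $U=\Spec(P')$ of $X_I$ the monoid $P'\subseteq P^\gp$ contains, for each $a$, either $w_a$ or $-w_a$. Fix $y\in Y_I$ over $x\in U$; by the previous paragraph it suffices to show $w_a\in M_{X_I,x}=P'_x$ for every $a$. If $w_a\in P'$ this is immediate. Otherwise $-w_a\in P'\subseteq P'_x$, and if $w_a\notin(P'_x)^\times$ then $-w_a$ lies in the maximal ideal $\mathfrak m_x$, so $\phi^\gp(-w_a)\in\mathfrak m_y$ because the stalk homomorphism is local and $x=f_I(y)$; but $\phi^\gp(w_a)\in M_{Y_I,y}$, whence $0=\phi^\gp(w_a)+\phi^\gp(-w_a)\in\mathfrak m_y$, a contradiction. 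Thus $w_a\in P'_x$ in every case, $f_I$ is exact over each chart, hence exact. The final assertion is then immediate: $f_I$ is birational (blow ups do not change $M^\gp$), and an exact birational morphism is a local isomorphism.

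\emph{Main obstacle.}
The heart of the argument is the claim that the finite list $w_1\..w_N$ captures \emph{all} failures of exactness of $f_I$ — including after base change to a chart and localization — equivalently that $\phi^\gp(w_a)$ always lands in the stalk $M_{Y_I,y}$. This comes down to analyzing how the monoid $\{m\in\bfN^n:\sum_i m_i x_i\in\operatorname{im}\phi^\gp\}$, its group $\Lambda$, and $\ker\phi^\gp$ interact with the faces of $(Q\oplus_P P')^\int$ (equivalently, with the localizations producing the stalks of $Y_I$), and it is the only place where finite generation of $Q$ over $P$ is genuinely used, via Gordan's lemma. One must also keep track of the fact that $\phi$ need not be injective, which is exactly why $\ker\phi^\gp$ contributes obstruction elements.
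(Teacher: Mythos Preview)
The paper's proof is entirely different and much shorter: after the same reduction to affine $Y=\Spec(Q)$, it invokes Lemma~\ref{approxlem} to descend $f$ to a morphism of \emph{fine} monoschemes and then cites Ogus~\cite[Theorem~II.1.8.1]{Ogus-logbook} for the fine case. You mention this route in passing but choose to argue directly; unfortunately the direct argument, as written, has a real gap.

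The gap is exactly the claim you flag as the ``main obstacle'': that every $u\in P^\gp$ with $\phi^\gp(u)\in M_{Y_I,y}$ lies in the submonoid generated by $M_{X_I,x}$ and a \emph{fixed} finite list $w_1\..w_N$. You assert this (``One checks that\dots'') without proof, and the natural attempt does not give it. Writing $\phi^\gp(u)=q_1-q_2$ with $q_1\in Q'$ and $q_2$ a unit in $Q'_y=M_{Y_I,y}$, and expressing $q_j=\phi^\gp(p'_j)+\sum_i a_{ji}x_i$ with $p'_j\in P'$, $a_{ji}\ge 0$, one gets $u=p'_1-p'_2+v+k$ with $k\in\ker\phi^\gp$ and $\phi^\gp(v)=\sum_i(a_{1i}-a_{2i})x_i$. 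But $-p'_2$ need not lie in $P'_x$: you only know that $\phi^\gp(p'_2)+\sum_i a_{2i}x_i$ is a unit in $Q'_y$, not that $p'_2$ is a unit in $P'_x$. So the ``$M_{X_I,x}$ part'' of your decomposition is not under control. Likewise, the ``crucial property'' that $\phi^\gp(v^{(l)})\in M_{Y_I,y}$ for the fixed $v^{(l)}$ is justified only by the phrase ``whose negative part becomes invertible in $M_{Y_I,y}$ precisely at the points where it could cause trouble''; this is not an argument, and it is hard to see how it could become one, since the $v^{(l)}$ are chosen once and for all while the integer vector $(a_{1i}-a_{2i})$ actually arising depends on $u$ and on $y$. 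Your chart computation for the $w_a$ coming from $\ker\phi^\gp$ is fine (there $\phi^\gp(w_a)=0$), but for the $v^{(l)}$ it rests on this unproven property.

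In short, the reduction step and the treatment of $\ker\phi^\gp$ are correct, but the heart of the argument---that finitely many \emph{fixed} obstruction elements suffice after blow up and localization---is asserted rather than proved, and the sketch you give does not close. The paper sidesteps all of this by descending to the fine case.
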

\begin{proof}
Since the family of blow ups is filtered, we can fix a finite covering of $Y$ by affines $Y_i$ and prove the claim separately for each $Y_i$. So, we can assume that $Y$ is affine, and then $f$ is a base change of a morphism of fine monoschemes $Y_0\to X_0$ by Lemma~\ref{approxlem}. This reduces us to the case when $X$ is fine, and the latter is proved in \cite[Theorem~II.1.8.1]{Ogus-logbook}.
\end{proof}

Note that the same result holds when $X$ is quasi-compact, but we prefer not to work this out here.

\subsection{Toric monoids over valuative monoids}\label{toricmonsec}

\subsubsection{Notation}\label{Rsec}
In the sequel, $R$ is a sharp valuative monoid of height one. For simplicity we assume that $R$ is divisible, and hence $R^\gp$ is a vector space over $\bfQ$. In addition, we fix an ordered embedding $R^\gp\into V_\bfR:=\bfR$ and denote its image by $V$, thus identifying $R$ and $V_{\ge 0}$. Note that $\emptyset$ and $\mathfrak{m}_R=R\setminus\{0\}$ are the only prime ideals of $R$, so $\Spec(R)$ consists of a generic point $\eta=\Spec(V)$ and a closed point $s$. %Their images in $\oSpec(R)$ will be denoted $\oeta$ and $\os$.
For any $R$-monoscheme $f\:X\to\Spec(R)$ we set $X_s=f^{-1}(X_s)$ and $X_\eta=f^{-1}(\eta)$.

%Our goal is to extend classical results about toric monoids, cones and fans to $R$-monoids. The classical results correspond to the absolute case, when $R=0$ is of height zero, and they will be used to prove the generalizations to the relative case of $R$-monoids.

\subsubsection{Faithful $R$-monoids}\label{faithmonsec}
Let $M$ be an $R$-monoid with the structure homomorphism $\phi\:R\to M$. In this case, one of the following possibilities holds:

\begin{compactenum}[(1)]
\item The homomorphism $\phi$ is local: $\phi^{-1}(M^\times)=0$. In this case we say that $M$ is {\em faithful} over $R$. This happens if and only if the closed fiber of $\Spec(M)\to\Spec(S)$ is non-empty.

\item The monoid $M$ is not faithful, and then $\phi^{-1}(M^\times)=R$. This can happen in two cases:
\begin{compactenum}[(a)]
\item $\phi$ is not injective, in which case we say that $M$ is {\em non-flat} over $R$.

\item $\phi$ factors as $R\into V\into M$, in which case we say that $M$ is flat but not faithful over $R$.
\end{compactenum}
\end{compactenum}

\subsubsection{Faithful monoschemes}
We say that an $R$-monoscheme or an $R$-fan $X$ is {\em faithful} if any its point has a specialization in $X_s$. In other words, $X$ is glued from spectra of $R$-faithful monoids.

\subsubsection{$R$-toric monoids}
By an {\em $R$-toric} monoid we mean a flat saturated torsion-free $R$-monoid $P$ finitely generated over $R$. In particular, if $R=0$ then $P$ is nothing else but a classical toric monoid. In the same venue, by an $R$-toric fan or a monoscheme we mean the one glued from finitely many spectra of $R$-toric monoids. Products in the categories of $R$-toric fans and monoschemes correspond to the saturated $R$-pushout. For example, the fiber product of $\Spec(M)$ and $\Spec(N)$ is $\Spec((M\oplus_RN)^\sat)$.

\subsubsection{$V$-lattices}
By a {\em $V$-lattice} we mean a torsion-free abelian group $L$ which contains $V$ and is finitely generated over it. In particular, $L$ is an $R$-toric monoid. Note also that if $P$ is an $R$-toric monoid then $P^\gp$ is a $V$-lattice. We define the {\em $V$-rank} $\rk_V(L)$ of $L$ to be the $\bfZ$-rank of the lattice $L/V$.

\begin{lem}\label{Rtoriclem}
Let $L$ be a $V$-lattice and let $L_0$ be a subgroup of $L$.

\begin{compactenum}[(i)]

\item If $V\subseteq L_0$, then $L_0$ is a $V$-lattice, and $L/L_0$ is a lattice if and only if $L_0$ is saturated in $L$. In the latter case, $L$ splits non-canonically as $L_0\oplus L/L_0$. In particular, $L\toisom V\oplus \bfZ^n$, where $n=\rk_V(L)$.

\item If $L_0\cap V=0$, then $L_0$ is a lattice, and $L/L_0$ is a $V$-lattice if and only if $L_0$ is saturated in $L$. In the latter case, $L$ splits non-canonically as $L_0\oplus L/L_0$.
\end{compactenum}
\end{lem}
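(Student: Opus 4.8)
The plan is to reduce both parts to two standard facts about abelian groups — a subgroup of a finitely generated abelian group is finitely generated, and an extension of abelian groups with free quotient splits — together with the one special feature of $V$: being a $\bfQ$-vector space it is divisible, hence saturated in every torsion-free group containing it. Concretely, if $V\subseteq L$ with $L$ torsion-free and $nx\in V$ for some $x\in L$, $n\ge 1$, then writing $nx=ny$ with $y\in V$ by divisibility and using that $L$ is torsion-free gives $x=y\in V$; so $V$ is saturated in $L$ and $L/V$ is torsion-free. Since $L/V$ is also finitely generated (a $V$-lattice is, by definition, generated over $V$ by finitely many elements), $L/V$ is a genuine lattice, $n:=\rk_V(L)$ is well defined, and splitting $0\to V\to L\to L/V\to 0$ (possible since $L/V$ is free) yields $L\toisom V\oplus\bfZ^n$. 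This already disposes of the last sentence of (i).

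For the rest of (i): given $V\subseteq L_0\subseteq L$, the group $L_0/V$ is a subgroup of the finitely generated group $L/V$, hence finitely generated, and $L_0$ is torsion-free, so $L_0$ is a $V$-lattice. Since $V\subseteq L_0$, the quotient $L/L_0$ is a quotient of $L/V$, hence finitely generated, so it is a lattice precisely when it is torsion-free, i.e. precisely when $L_0$ is saturated in $L$; when this holds, $0\to L_0\to L\to L/L_0\to 0$ splits because $L/L_0$ is free, giving the asserted non-canonical decomposition $L\cong L_0\oplus L/L_0$.

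For (ii): if $L_0\cap V=0$ then $L_0$ injects into the finitely generated group $L/V$, so $L_0$ is a lattice; moreover $V$ injects into $L/L_0$, and $L/L_0$ is finitely generated over $V$ (its quotient by $V$ is a quotient of $L/V$), so $L/L_0$ is a $V$-lattice exactly when it is torsion-free, i.e. exactly when $L_0$ is saturated in $L$. For the splitting I would \emph{not} try to split $0\to L_0\to L\to L/L_0\to 0$ directly — this is the one place that needs care, since a $V$-lattice such as $L/L_0$ need not be a free $\bfZ$-module. Instead: assuming $L_0$ saturated, check that $L_0\oplus V$ (the sum is direct because $L_0\cap V=0$) is again saturated in $L$ — if $nx=a+v$ with $a\in L_0$, $v\in V$, write $v=nv'$, so $n(x-v')=a\in L_0$, hence $x-v'\in L_0$ and $x\in L_0\oplus V$. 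Now $L_0\oplus V$ is a saturated subgroup containing $V$, so part (i) gives $L=(L_0\oplus V)\oplus L'$ with $L'$ a lattice; then $V\oplus L'$ is a complement to $L_0$ in $L$ which maps isomorphically onto $L/L_0$, yielding $L\cong L_0\oplus L/L_0$. Apart from that subtlety the whole argument is routine bookkeeping with the structure theory of finitely generated abelian groups.
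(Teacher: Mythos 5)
Your proof is correct, and for the preliminary observation (divisibility of $V$ forces $V$ to be saturated in $L$, so $L/V$ is a lattice and $L\cong V\oplus\bfZ^n$) and for all of part (i) it coincides with the paper's argument. The one place you genuinely diverge is the splitting in (ii): the paper produces a section of $L\onto L/L_0\cong V\oplus\bfZ^n$ by finding sections of the two composed projections onto $V$ and onto $\bfZ^n$ separately (the first canonical, via the inclusion $V\into L$; the second from freeness of $\bfZ^n$), whereas you first verify that $L_0\oplus V$ is saturated in $L$ and then invoke part (i) for that larger subgroup. Both arguments rest on the same two ingredients --- the canonical copy of $V$ inside $L$ and the freeness of the residual $\bfZ$-quotient --- but your packaging is a bit tighter: assembling the paper's two partial sections into an honest section of $L\onto L/L_0$ needs the extra (easy) remark that the resulting endomorphism of $V\oplus\bfZ^n$ is unipotent, e.g.\ because every homomorphism $V\to\bfZ^n$ vanishes, a point your route through the saturated subgroup $L_0\oplus V$ sidesteps entirely. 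Your explicit warning that one cannot split $0\to L_0\to L\to L/L_0\to 0$ by citing freeness of the quotient --- since a $V$-lattice is not free --- identifies exactly the subtlety the lemma is about, and you resolve it correctly.
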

\begin{proof}
For (i), note that $L_0$ is a $V$-lattice and $L/L_0$ is a finitely generated. So, $L/L_0$ is a lattice if and only if it is torsion-free, which happens if and only if $L_0$ is saturated in $L$. The splitting is clear since $L/L_0$ is free.

For (ii), we note that everything except splitting is, again, very simple. To prove splitting, we should find a section of $L\onto L/L_0$. Since $L/L_0=V\oplus \bfZ^n$ by (i), it suffices to find sections of the composed maps $L\onto L/L_0\onto V$ and $L\onto L/L_0\onto\bfZ^n$. The first one is canonical, and the second one exists by the freeness of $\bfZ^n$.
\end{proof}

\begin{cor}\label{Rtoriccor}
Any $R$-toric monoid $P$ non-canonically splits as $P=P^\times\oplus\oP$. In addition,
\begin{compactenum}[(i)]
\item If $P$ is faithful, then $P^\times$ is a lattice disjoint from $V$ and $\oP$ is an $R$-toric monoid.

\item If $P$ is not faithful, then $P^\times$ is a $V$-lattice and $\oP$ is a toric monoid.
\end{compactenum}
\end{cor}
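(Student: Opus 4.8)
The plan is to reduce the splitting statement to Lemma~\ref{unitmonlem}, producing the required splitting of $P^\times\to P^\gp\to\oP^\gp$ from Lemma~\ref{Rtoriclem}, with faithfulness of $P$ deciding which half of that lemma applies. Write $L=P^\gp$; since $P$ is $R$-toric, $L$ is a $V$-lattice. The one general observation I would record first is that $P^\times$ is saturated in $L$: if $x\in L$ and $nx\in P^\times$ with $n\ge 1$, then $nx$ and $n(-x)=-nx$ both lie in $P$, so saturatedness of $P$ gives $x\in P$ and $-x\in P$, i.e. $x\in P^\times$.

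Suppose first that $P$ is faithful. By the dichotomy of \S\ref{faithmonsec}, flatness makes $\phi\:R\to P$ injective and faithfulness means $\phi^{-1}(P^\times)=0$. Since $R$ is valuative, $V=R^\gp=R\cup(-R)$, so any $v\in V\cap P^\times$ may be taken (after replacing $v$ by $-v$) to lie in $R$, and then $\phi(v)\in P^\times$ forces $v=0$; thus $P^\times\cap V=0$. Now I would apply Lemma~\ref{Rtoriclem}(ii) to $L_0=P^\times\subseteq L$: it gives that $P^\times$ is a lattice (automatically disjoint from $V$), that $\oP^\gp=L/P^\times$ is a $V$-lattice, and — using that $P^\times$ is saturated in $L$ — that $L$ splits as $P^\times\oplus\oP^\gp$. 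By Lemma~\ref{unitmonlem} this splitting of the unit sequence yields $P=P^\times\oplus\oP$. It remains to check that $\oP$ is $R$-toric: the composite $R\to P\to\oP$ is injective because $\phi^{-1}(P^\times)=0$, so $\oP$ is flat over $R$; $\oP$ is finitely generated over $R$ as a quotient of $P$; $\oP^\gp=L/P^\times$ is torsion-free, so $\oP$ is torsion-free; and $\oP$ is saturated, since $x\in L$ with $nx\in P+P^\times=P$ forces $x\in P$ by saturatedness of $P$.

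If instead $P$ is not faithful, then $\phi$ is still injective (flatness) but $\phi^{-1}(P^\times)=R$, so $\phi$ factors through $V$ and $V\subseteq P^\times\subseteq L$. Applying Lemma~\ref{Rtoriclem}(i) to $L_0=P^\times$ gives that $P^\times$ is a $V$-lattice, that $\oP^\gp=L/P^\times$ is a lattice, and (again using saturatedness of $P^\times$ in $L$) that $L=P^\times\oplus\oP^\gp$; Lemma~\ref{unitmonlem} then gives $P=P^\times\oplus\oP$, where $\oP$ is sharp, saturated, torsion-free and finitely generated with $\oP^\gp$ a lattice, i.e. a toric monoid. I do not expect a genuinely hard step here: the whole content is bookkeeping, and the only points requiring care are extracting $P^\times\cap V=0$ versus $V\subseteq P^\times$ from the faithfulness dichotomy of \S\ref{faithmonsec}, and verifying that $\oP$ inherits flatness and saturatedness — both of which are immediate from the definitions.
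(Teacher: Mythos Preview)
Your proof is correct and follows exactly the paper's approach: use the faithfulness dichotomy to see that $P^\times$ is either a lattice disjoint from $V$ or a $V$-lattice, apply the appropriate part of Lemma~\ref{Rtoriclem} (together with saturatedness of $P^\times$ in $P^\gp$) to split $P^\times$ off from $P^\gp$, and then invoke Lemma~\ref{unitmonlem}. The paper's own proof is a terse three-line version of this; you have simply supplied the details the paper leaves implicit, including the verification that $P^\times$ is saturated in $P^\gp$ and that $\oP$ inherits the required properties.
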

\begin{proof}
Only the splitting requires an argument. In particular, it is clear that $P^\times$ is either a lattice or a $V$-lattice. So, $P^\times$ splits off $P^\gp$ by Lemma~\ref{Rtoriclem}, and it remains to use Lemma~\ref{unitmonlem}.
\end{proof}

In particular, the corollary reduces studying $R$-toric monoids to studying sharp ones. Furthermore, if $P$ is not faithful then $\oP$ is toric, and the theory of such objects is classical. For this reason, the non-faithful case will be excluded in the sequel each time it requires a special consideration.

\subsubsection{Basic and smooth monoids}
Next, we will split toric monoids to further simpler blocks. We say that a monoid $P$ is {\em basic} if it does not contain prime elements.

\begin{lem}\label{basiclem}
Any $R$-toric monoid $P$ splits as $P^b[t_1\..t_n]$, where $P^b$ is basic. Moreover, $(t_1)\..(t_n)$ are precisely the principal prime ideals of $P$ and $P^b=P\setminus\cup_{i=1}^n(t_i)$. In particular, $P^b$ is uniquely determined and the elements $t_i$ are unique up to units.
\end{lem}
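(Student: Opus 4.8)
The plan is to induct on the number of principal prime ideals of $P$, using Lemma~\ref{freeofflem} as the engine that peels off one free generator at a time. First I would observe that by Corollary~\ref{Rtoriccor} we may split off $P^\times$ and reduce to the case where $P$ is sharp; a prime element of $P^\times\oplus\oP$ is the same datum as a prime element of $\oP$ (units contribute nothing to an ideal), and $R$-toricity is preserved by Lemma~\ref{Rtoriclem}. So assume $P$ is a sharp $R$-toric monoid. If $P$ has no prime element it is basic and we are done with $n=0$. Otherwise pick a prime $t_1\in P$; by Lemma~\ref{freeofflem}, $P=Q_1[t_1]$ with $Q_1=P\setminus(t_1)$ uniquely determined. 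Here $Q_1$ is again an $R$-monoid, flat (a retract of $P$), saturated and torsion-free (these pass to the direct summand $Q_1$ of $P=Q_1\oplus\bfN$), and finitely generated over $R$ (a set of generators of $P$ over $R$, with $t_1$ removed, generates $Q_1$); hence $Q_1$ is $R$-toric.

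The termination of the induction is the one point that needs a genuine argument: I must show the process stops, i.e. that $P$ has only finitely many principal primes, equivalently that one cannot split off free generators indefinitely. The cleanest way is to pass to groups: each splitting $P=Q_1[t_1]$ gives $P^\gp=Q_1^\gp\oplus\bfZ$, so after $k$ steps $P^\gp\cong Q_k^\gp\oplus\bfZ^k$ with $Q_k\ne 0$ sharp, whence $\rk_\bfZ(P^\gp/V)\ge k$. Since $P^\gp$ is a $V$-lattice, $\rk_V(P^\gp)=\rk_\bfZ(P^\gp/V)$ is finite, bounding $k$; so after finitely many steps we reach a basic $P^b$ and obtain $P=P^b[t_1\..t_n]$.

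It remains to identify the $(t_i)$ as \emph{all} the principal primes and to pin down $P^b=P\setminus\bigcup_i(t_i)$. Writing $P=P^b\oplus\bfN^n$ with $t_i$ the $i$-th standard generator, the complement of $\bigcup_i(t_i)$ is exactly the set of elements whose $\bfN^n$-coordinates all vanish, namely $P^b$; this also shows each $t_i$ is prime, since $P/(t_i)\cong P^b\oplus\bfN^{n-1}$ is integral, and that $P^b$ is basic by construction. Conversely, if $t=t^b+\sum a_i t_i$ is prime in $P$, then the ideal $(t)$ meets $P^b\oplus\bfN^n$ in elements dominating $t$; one checks directly that if some $a_i\ge 1$ then $t_i\mid t$ forces $(t)\subseteq(t_i)$, and primality of $(t)$ together with $t\in(t_i)$ gives $(t)=(t_i)$ (as $t_i\notin(t)$ would make $(t)$ non-prime unless $t\sim t_i$), while if all $a_i=0$ then $t=t^b\in P^b$ is a prime element of the basic monoid $P^b$, contradiction. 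Hence the principal primes of $P$ are precisely $(t_1)\..(t_n)$, and uniqueness of $P^b$ and of the $t_i$ up to units follows from Lemma~\ref{freeofflem} applied successively. The main obstacle is the finiteness/termination step; everything else is a routine bookkeeping of direct-sum decompositions.
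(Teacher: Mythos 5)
Your proof is correct and follows essentially the same route as the paper: iterate Lemma~\ref{freeofflem} to peel off prime elements one at a time, and bound the number of iterations. The only point of divergence is the termination argument — the paper bounds $n$ by observing that $\Spec(P)$ is finite because $P$ is finitely generated over $R$, whereas you bound it by $\rk_V(P^\gp)$; both work, and your verification of the uniqueness claims is simply a more detailed version of what the paper dismisses as "easily checked."
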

\begin{proof}
Note that $\Spec(P)$ is finite because $P$ is finitely generated over $R$, and hence $n$ is bounded. So, a decomposition exists by Lemma~\ref{freeofflem}, and the uniqueness claim is easily checked.
\end{proof}

Combining Lemma~\ref{basiclem} with Corollary~\ref{Rtoriccor} we obtain a splitting $P=P^\times\oplus \oP^b[t_1\..t_n]=P^\times\oplus\bfN^n\oplus\oP^b$. The fact that $Q^\times=(Q[t])^\times$ for any monoid $Q$ implies that $(\oP)^b=\overline{P^b}$, so there is no ambiguity in the notation $\oP^b$. We call $P^b$ and $\oP^b$ the basic component of $P$ and the sharp basic component of $P$, respectively. If $P$ is faithful then $P^\times=\bfZ^l$ and the above splitting can be written in the polynomial form as $P=\oP^b[t_1\..t_n,\pm s_1\..\pm s_l]$.

An $R$-toric monoid $P$ will be called {\em smooth} if $\oP^b=0$. In particular, the splitting of $P$ can be recombined as $P^{sm}\oplus\oP^b$, where $P^{sm}$ is the largest smooth factor of $P$.

\subsubsection{Polystable monoids}\label{polymonsec}
We have reduced study of $R$-toric monoids to study of their sharp basic component. A simplest example of a sharp basic monoid is $P_{n,\pi}=R[t_0\..t_n]/(\sum_{i=0}^nt_i=\pi)$, where $\pi\in \mathfrak{m}_R$ and $n\ge 0$. The monoids with $n>0$ are non-smooth (and non-isomorphic), while $P_{0,\pi}=R$. An $R$-toric monoid $P$ is called {\em semistable} if $\oP^b=P_{n,\pi}$. An $R$-pushout of semistable monoids is called {\em polystable}. A faithful $R$-toric monoid $P$ is polystable if and only if $\oP$ is the $R$-pushout of a free monoid and few $P_{n_i,\pi_i}$. Explicitly,
$$\oP=R[t_1\..t_n,t_{1,0}\..t_{1,n_1}\..t_{l,0}\..t_{l,n_l}]/\left(\sum_{j=0}^{n_1}t_{1,j}=\pi_1\..\sum_{j=0}^{n_l}t_{l,j}=\pi_l\right).$$

The following simple facts will not be used, so we omit a justification.

\begin{rem}
\begin{compactenum}[(i)]
\item The isomorphism class of a faithful polystable $P$ is determined by the rank of $P^\times$, the rank $n$ of the free part, and the unordered tuple of pairs $((n_1,\pi_1)\..(n_l,\pi_l))$ with $n_i>0$.

\item Since $P_{n,\pi}\oplus_RV=V\oplus\bfZ^n$, a non-faithful $P$ is polystable if and only if it is smooth.
\end{compactenum}
\end{rem}

\subsubsection{Simplical monoids}
An embedding of saturated monoids $P\into Q$ is called a {\em Kummer extension} if $Q^\gp/P^\gp$ is finite and $Q$ is the saturation of $P$ in $Q^\gp$. A sharp $R$-toric monoid $Q$ is called {\em simplicial} if it contains a semistable submonoid $P$ such that $Q/P$ is a Kummer extension.

\subsubsection{Types of monoschemes}
Let $\bfP$ be one of the following properties: {\em smooth}, {\em semistable}, {\em polystable}, {\em simplicial}. We say that an $R$-toric monoscheme (resp. a fan) $X$ is $\bfP$ at a point $x\in X$ if the monoid $M_{X,x}$ (resp. $\oM_{X,x}$) is $\bfP$, and $X$ is $\bfP$ if it so at all its points. All these properties are not sensitive to units, hence $X$ is $\bfP$ at $x$ if and only $\oX$ is $\bfP$ at $x$.

\section{Fans and polyhedral complexes}\label{fansec}
Throughout Section~\ref{fansec}, $R$, $V$ and $V_\bfR$ are as in Section~\ref{Rsec}. We develop a basic theory of cones and fans associated with $R$-toric monoids, which is analogous to the usual theory of toric monoids, cones and fans. The latter will be referred to as the ``classical case".

\subsection{$R$-toric monoids and special $V$-cones}%\Michael{Check if some references to Gubler-Soto can be in place.}

\subsubsection{$V$-lattices in real vector spaces}
By a {\em $V$-lattice} in a real vector space $W$ we mean a subgroup $L\subset W$ such that $L$ is a $V$-lattice, $V\subset L$ spans a line $V_\bfR$ in $W$ satisfying $V=L\cap V_\bfR$, and $L/V$ is a lattice in $W/V_\bfR$. This happens if and only if $W=\bfR^{1+n}$ and $L=V\oplus\bfZ^n$ for an appropriate basis of $W$. Note that once a $V$-lattice in $W$ is fixed, we obtain an embedding $R\into W$. For any $\pi\in \mathfrak{m}_R$ its image in $W$ will be denoted $v_\pi$.

Any abstract $V$-lattice $L$ determines a canonical real vector space $L_\bfR$ in which it embeds. For example, using a splitting $L=V\oplus\bfZ^n$ we can take $L_\bfR=V_\bfR\oplus\bfR^n$, and this is easily seen to be independent of the splitting. Alternatively, one can take $L_\bfR=(L\otimes\bfR)\oplus_{V\otimes\bfR}V_\bfR$, where $V\otimes\bfR\to V_\bfR$ is the map induced by the embedding $V\into V_\bfR=\bfR$.

\subsubsection{Special $V$-cones}
By a {\em polyhedral cone with a $V$-integral structure} we mean a real vector space $W_\sigma$ provided with a $V$-lattice $L_\sigma$ and a polyhedral cone $\sigma\subseteq L_\bfR$ with the vertex at 0, a non-empty interior, and edges spanned by elements of $L_\sigma$. If, in addition, $v_\pi\in\sigma$ then we say that $\sigma$ is a {\em special $V$-cone}. This property is independent of the choice of $\pi\in \mathfrak{m}_R$. A morphism of special $V$-cones $\sigma\to\tau$ is a linear map $f\:W_\sigma\to W_\tau$ such that $f(v_\pi)=v_\pi$, $f(L_\sigma)\subseteq L_\tau$ and $f(\sigma)\subseteq\tau$.

\subsubsection{Cones and monoids}
As in the classical case, to any $R$-toric monoid $M$ one can assign a special $V$-cone $\sigma=\sigma_M$ as follows: $L_\sigma=M^\gp$, $W_\sigma=(L_\sigma)_\bfR$, and $\sigma=M_\bfR$ is the convex hull of $M$ in $W_\sigma$. It is spanned by the generators of $M$, and hence is a special $V$-cone. Clearly, the construction $M\mapsto\sigma_M$ is functorial.

Conversely, to any special $V$-cone $\sigma$ one can functorially associate the $R$-monoid $M_\sigma=M(\sigma):=\sigma\cap L_\sigma$. As in the classical case, these constructions are inverse:

\begin{theor}\label{conemonoidth}
The functors $\sigma\mapsto M_\sigma$ and $M\mapsto\sigma_M$ are essentially inverse equivalences between the categories of $R$-toric monoids and special $V$-cones. In particular, for any special $V$-cone $\sigma$, the monoid $M_\sigma$ is $R$-toric.
\end{theor}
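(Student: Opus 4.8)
The plan is to show the two constructions $M\mapsto\sigma_M$ and $\sigma\mapsto M_\sigma$ are mutually inverse by checking the natural isomorphisms $M_{\sigma_M}\cong M$ and $\sigma_{M_\sigma}\cong\sigma$ separately, and only then observing that this formally forces $M_\sigma$ to be $R$-toric. The genuinely new input compared to the classical case is the presence of the non-finitely-generated valuative monoid $R$ sitting inside everything, so the main point is to reduce to the classical statement by splitting off $V$. I would first use Corollary~\ref{Rtoriccor} to write $P=P^\times\oplus\oP$ and Lemma~\ref{basiclem}/the ensuing discussion to split off the free part, reducing the monoid side to the study of a sharp basic $R$-toric monoid; on the cone side a special $V$-cone $\sigma$ decomposes compatibly, since $\sigma$ contains the line $V_\bfR$ and one can quotient by it. Concretely, fixing a splitting $L_\sigma=V\oplus\bfZ^n$ and noting $v_\pi$ lies in $\sigma$, the cone $\sigma$ is the preimage of its image $\osigma$ in $W_\sigma/V_\bfR=\bfR^n$, and $\osigma$ is an ordinary rational polyhedral cone of full dimension; this matches the splitting $P=V\oplus\oP$ on the monoid side.

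Next I would verify $M_{\sigma_M}\cong M$. Given an $R$-toric monoid $M$, by definition $\sigma_M$ has lattice $M^\gp$ and underlying cone the convex hull of $M$, so $M_{\sigma_M}=(\text{conv}(M))\cap M^\gp$ is the saturation of $M$ in $M^\gp$; since $M$ is assumed saturated and torsion-free this is exactly $M$. For the other direction, $\sigma_{M_\sigma}$ has lattice $M_\sigma^\gp$ and cone $\text{conv}(M_\sigma)$; one must check $M_\sigma^\gp=L_\sigma$ and $\text{conv}(\sigma\cap L_\sigma)=\sigma$. The first holds because $\sigma$ has nonempty interior, so $\sigma\cap L_\sigma$ spans $W_\sigma$ and its group of differences, being a subgroup of $L_\sigma$ containing $V$ of full rank (one picks interior lattice points, using that $\sigma$ has edges spanned by $L_\sigma$ and contains $v_\pi$), is all of $L_\sigma$ after checking saturation of the generated subgroup — this is where I would reduce modulo $V_\bfR$ to the classical fact that a full-dimensional rational cone is the convex hull of its lattice points. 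The second, $\text{conv}(\sigma\cap L_\sigma)=\sigma$, is again the classical statement about rational polyhedral cones applied to $\osigma\subseteq\bfR^n$, lifted back along the affine fibration $W_\sigma\to W_\sigma/V_\bfR$ using $v_\pi\in\sigma\cap L_\sigma$ to populate the $V$-direction.

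Finally, once the equivalence of categories is established, $M_\sigma\cong M_{\sigma_{M_\sigma}}$ so $M_\sigma$ is in the essential image of $M\mapsto\sigma_M$, hence $R$-toric; alternatively one checks directly that $M_\sigma=\sigma\cap L_\sigma$ is saturated and torsion-free by construction (it is a submonoid of the torsion-free group $L_\sigma$, and saturated because $\sigma$ is a cone and $L_\sigma$ is saturated in $W_\sigma$ in the appropriate sense), is flat over $R$ since $R=V_{\ge0}\subseteq\sigma$ embeds with $R^\gp\cap M_\sigma = R$, and is finitely generated over $R$ by Gordan's lemma applied to $\osigma$ together with the observation that $v_\pi$ generates the missing $V$-direction. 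Functoriality of both constructions and compatibility of the unit/counit maps are routine. The main obstacle I anticipate is the finite generation of $M_\sigma$ over $R$: Gordan's lemma gives finite generation of $\osigma\cap\bfZ^n$ over $\bfN$, but lifting a finite generating set back to $M_\sigma$ and controlling the $V$-component requires care — one needs that the fiber of $M_\sigma\to M_{\osigma}$ over each generator, a coset of $R$ shifted by a rational vector, is generated over $R$ by a single element, which follows from $v_\pi\in\sigma$ and divisibility of $R$. Everything else is bookkeeping around the splitting $L=V\oplus\bfZ^n$ of Lemma~\ref{Rtoriclem}.
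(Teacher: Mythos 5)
Your overall architecture (check the unit and counit separately, then deduce that $M_\sigma$ is $R$-toric) is fine, and your verification of $M_{\sigma_M}=M$ and of $\sigma_{M_{\sigma}}=\sigma$ is essentially sound. But the central reduction you propose --- quotienting by $V_\bfR$ and declaring that ``the cone $\sigma$ is the preimage of its image $\osigma$ in $W_\sigma/V_\bfR$'' --- is false exactly in the case that matters. A special $V$-cone is only required to contain the ray $\bfR_{\ge 0}v_\pi$, not the line $V_\bfR$; when $\sigma\cap V_\bfR=R_\bfR$ (the faithful case, e.g.\ the first quadrant in $\bfR^2$ with $v_\pi=(1,0)$), the preimage of $\osigma$ strictly contains $\sigma$, and the map $\sigma\to\osigma$ is a fibration whose fibers are rays $[a(\ol m),\infty)$ with a piecewise-linear lower boundary $a$. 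Your proposal silently assumes $V_\bfR\subseteq\sigma$ in one place (``matches the splitting $P=V\oplus\oP$'') and $V\cap M_\sigma=R$ in another, which are mutually exclusive; the paper explicitly separates these two cases and notes that the one you actually treat is the uninteresting one that reduces to the classical theory.

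Your fallback remark about the fibers of $M_\sigma\to M_{\osigma}$ being cosets of $R$ ``shifted by a rational vector'' does not close the gap, for two reasons. First, one must prove that the shift $a(\ol m)$ lies in $V$; this uses that the face of $\sigma$ on which $(a(\ol m),\ol m)$ sits is spanned by elements of $L_\sigma$, not merely divisibility of $R$. Second, even granting one generator per fiber, lifting a Gordan generating set of $\osigma\cap\bfZ^n$ need not generate $M_\sigma$ over $R$: the function $a$ is only superadditive, so $\sum n_i\bigl(a(\ol m_i),\ol m_i\bigr)$ sits \emph{above} $\bigl(a(\ol m),\ol m\bigr)$ and you would need to subtract an element of $R$ to reach it. The paper's proof avoids both problems by writing a faithful $\sigma$ as the finite union of subcones $\sigma_i$ spanned by $R_\bfR$ and a face $F_i$ not containing $R_\bfR$, and then using Lemma~\ref{Rtoriclem} to change coordinates so that $F_i\subseteq\bfZ^n$; on each piece one gets an honest splitting $\sigma_i=R_\bfR\oplus\sigma_i'$, hence $M_{\sigma_i}=R\oplus M_{\sigma_i'}$ with $M_{\sigma_i'}$ classical toric, and $M_\sigma=\cup_iM_{\sigma_i}$ is finitely generated and saturated as a finite union of such. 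You need this (or an equivalent device) to make the faithful case work.
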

\begin{proof}
First, let us prove the following claim for a special $V$-cone $\sigma$: (1) $M_\sigma$ is $R$-toric, (2) if $M\subseteq M_\sigma$ is a submonoid, which is saturated in $L_\sigma$ and contains $V\cap M_\sigma$ and non-zero elements on all edges of $\sigma$, then necessarily $M=M_\sigma$.

Choosing an appropriate basis we can assume that $L_\sigma=V\oplus\bfZ^n$ and $W_\sigma=V_\bfR\oplus\bfR^{n}$. Assume first that $\sigma$ contains $V_\bfR$. Then $\sigma$ splits as $V_\bfR\oplus \sigma'$, where $\sigma'$ is a rational polyhedral cone in $\bfR^n$. In addition, $V\subset M$ and hence $M$ splits as $V\oplus M'$, where $M'$ is saturated in $\bfZ^n$. By the classical results, $M_{\sigma'}=\sigma'\cap\bfZ^n$ is a toric monoid and $M'=M_{\sigma'}$. In particular, $M_\sigma=V\oplus M_{\sigma'}$ is $R$-toric and $M=M_\sigma$.

Assume now that $\sigma$ does not contain $V_\bfR$, and hence $\sigma\cap V_\bfR=R_\bfR$. Note that $\sigma$ is the union of cones $\sigma_i$ spanned by $R_\bfR$ and a face $F_i$ of $\sigma$ which does not contain $R_\bfR$. Therefore, $M_\sigma=\cup_i M_{\sigma_i}$, and it suffices to prove that each monoid $M_{\sigma_i}$ is toric and coincides with $M_i:=M\cap\sigma_i$. This reduces the question to the case of a single cone $\sigma_i$, and to simplify notation we replace $\sigma$ by $\sigma_i$, achieving that $R_\bfR$ is an edge of $\sigma$ and all other edges are contained in a face $F$. Let $L'$ be the intersection of $V\oplus\bfZ^n$ with the hyperplane spanned by $F$. Clearly, $L'$ is a lattice of rank $n$ saturated in $V\oplus\bfZ^n$, and hence $V\oplus\bfZ^n=V\oplus L'$ by Lemma~\ref{Rtoriclem}. So, changing coordinates we can assume that $L'=\bfZ^n$ and $F$ lies in $\bfR^n$. Then $\sigma=R_\bfR\oplus\sigma'$, and hence $M_\sigma=R\oplus M_{\sigma'}$ for $M_{\sigma'}=\sigma'\cap L'$. By the classical results, $M_{\sigma'}$ is toric and coincides with $M\cap L'$. In particular, $M_\sigma=M$ is $R$-toric.

The rest is easy. If $\sigma$ is a special $V$-cone then $M_\sigma$ contains non-zero vectors on the edges of $\sigma$, and hence $\sigma_{M(\sigma)}=\sigma$. Conversely, if $M$ is an $R$-toric monoid then the inclusion $M\subseteq M(\sigma_M)$ is an equality by claim (2).
\end{proof}

\subsubsection{Sums of special $V$-cones}\label{sumconesec}
The usual direct sum $L_1\oplus L_2$ of $V$-lattices is not a $V$-lattice because it contains $V^2$. Naturally, this problem should be remedied by switching to $V$-sums $L=L_1\oplus_V L_2$, which is easily seen to be a $V$-lattice. (In fact, $L=(L_1\oplus L_2)\oplus_{V^2}V$, so it is obtained from $L_1\oplus L_2$ by replacing $V^2$ with $V$.) Similarly, instead of direct sums of special $V$-cones $\sigma$ and $\tau$ one should consider $\rho=\sigma\oplus_V\tau$ given as follows: $L_\rho=L_\sigma\oplus_V L_\tau$ and $\tau$ is the image of $\sigma\oplus\tau$ under the projection $W_\sigma\oplus W_\tau\to W_\rho$. One immediately checks that the equivalence $M\mapsto\sigma_M$ respects sums: $\sigma(M\oplus_R N)=\sigma_N\oplus_V\sigma_N$.

\subsubsection{Faithful cones}
Similarly to faithful monoids (\S\ref{faithmonsec}), we say that a special $V$-cone $\sigma$ is {\em faithful} if $\sigma\cap V=R$. Clearly the equivalences of Theorem~\ref{conemonoidth} take faithful monoids to faithful cones. In fact, the case of non-faithful objects in this theorem is not so interesting because, as we saw in the proof of the theorem, the claim reduces to the classical case after factoring by $V$ and $V_\bfR$.

\subsubsection{Sharp cones}
We say that a polyhedral cone $\tau$ is {\em sharp} if it does not contain lines (often one says that $\tau$ is strongly convex). If $\tau^\times$ denotes the maximal vector subspace of $\tau$, then $\otau:=\tau/\tau^\times$ is sharp and $\tau$ non-canonically splits as $\tau^\times\oplus\otau$. Note that a splitting $M=M^\times\oplus\oM$ of an $R$-toric monoid gives rise to such a splitting $M_\bfR=M^\times_\bfR\oplus\oM_\bfR$.

\subsubsection{Simplicial cones}\label{simpconesec}
Let $\sigma$ be a $V$-special cone. We call $\sigma$ {\em simplicial} if the sharpening $\osigma$ is a simplicial polyhedral cone.

\begin{lem}\label{simpconelem}
Let $\sigma$ be a faithful $V$-special cone. Then

\begin{compactenum}[(i)]
\item $n=\dim(W_\osigma)-1\ge 0$,

\item $\sigma$ is simplicial if and only if it is spanned by $\sigma^\times$ and $n+1$ elements $t_0\..t_n$.

\item The elements $t_i$ in (ii) are linearly independent over $\sigma^\times$, and one can choose them so that $t_i$ are not divisible by integers $m>1$ and $\sum_{i=0}^na_it_i=v_\pi$ for $\pi\in \mathfrak{m}_R$ and $a_i\in\bfN$ such that $(a_0\..a_n)=1$. Furthermore, $a_i$ are unique up to permutation and $t_i$ are unique up to permutation and shifting by elements of $L_{\sigma^\times}=L_\sigma\cap\sigma^\times$.
\end{compactenum}

\end{lem}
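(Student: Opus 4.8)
The plan is to transfer all four assertions to the sharp cone $\osigma$, dispose of (i) and (ii) by elementary convexity, and extract the arithmetic normalization of (iii) through a single device: a lattice that contains $V$ need not be of finite rank, so one replaces it by its \emph{canonical} quotient by $V$, which by Lemma~\ref{Rtoriclem}(i) is an honest finite‑rank lattice.

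\emph{Reduction and (i).} Fix a splitting $\sigma=\sigma^\times\oplus\osigma$; it induces $L_\sigma=L_{\sigma^\times}\oplus L_{\osigma}$ with $L_{\sigma^\times}=L_\sigma\cap\sigma^\times=\ker(L_\sigma\to L_{\osigma})$ (Corollary~\ref{Rtoriccor} together with Theorem~\ref{conemonoidth}), and $\osigma$ is again a faithful special $V$‑cone: faithfulness is inherited because $\sigma^\times\cap V_\bfR$ is a linear subspace of $\sigma\cap V_\bfR=R_\bfR$, hence $0$, so $V_\bfR$ embeds in $W_{\osigma}$ and meets $\osigma$ in the ray that is the image of $R_\bfR$. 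As $v_\pi\in\sigma$ while $-v_\pi\notin\sigma$ (again $\sigma\cap V_\bfR=R_\bfR$), $\sigma$ is not a linear subspace, so $\osigma\neq 0$ and $n:=\dim W_{\osigma}-1\ge 0$; write $\bar v_\pi$ for the image of $v_\pi$ in $W_{\osigma}$, a nonzero point of $\osigma$ spanning the line $V_\bfR'\subseteq W_{\osigma}$.

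\emph{(ii) and linear independence.} A full‑dimensional sharp cone in a space of dimension $n+1$ is simplicial iff it has exactly $n+1$ extremal rays iff it is the cone on $n+1$ vectors, which are then necessarily a basis. If $\osigma$ is simplicial, pick generators $\bar t_i\in L_{\osigma}$ of its extremal rays (available since $\osigma$ is a special $V$‑cone) and view them in $L_\sigma$ via $L_{\osigma}\subseteq L_\sigma$; for $x\in\sigma$ with image $\sum c_i\bar t_i$ ($c_i\ge 0$) the difference $x-\sum c_it_i$ lies in $\ker(W_\sigma\to W_{\osigma})=\sigma^\times$, so $\sigma=\sigma^\times+\sum\bfR_{\ge0}t_i$. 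Conversely, if $\sigma=\sigma^\times+\sum_{i=0}^n\bfR_{\ge0}t_i$ then $\osigma=\sum\bfR_{\ge0}\bar t_i$ is the cone on $n+1$ vectors, which being full‑dimensional are a basis; hence $\osigma$ is simplicial and the $t_i$ are linearly independent modulo $\sigma^\times$. This gives (ii) and the first assertion of (iii).

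\emph{Normalization in (iii).} Now take each $\bar t_i$ to be the primitive generator of the corresponding extremal ray of $\osigma$ in $L_{\osigma}$, with the single exception that an extremal ray equal to $V_\bfR'$ carries no primitive vector ($V$ being divisible); for that ray one instead puts $\bar t_i:=v_\pi$ for the $\pi$ produced below, with $a_i:=1$ (and in that degenerate case the free choice of $v_\pi$ is the only ambiguity). Expand $\bar v_\pi=\sum_i c_i\bar t_i$, $c_i\ge0$, and set $S=\{i:c_i>0\}$, so $V_\bfR'$ meets the relative interior of the face $F=\sum_{i\in S}\bfR_{\ge0}\bar t_i$. The key claim is $(\ast)$: \emph{the $(c_i)_{i\in S}$ are pairwise rationally proportional}. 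Granting it, write $(c_i)_{i\in S}=\lambda(a_i)_{i\in S}$ with $a_i\in\bfZ_{>0}$, $\gcd(a_i)=1$, $\lambda>0$, and $a_i:=0$ otherwise; then $\sum_i a_i\bar t_i=\lambda^{-1}\bar v_\pi$ is an element of $L_{\osigma}$ lying on $V_\bfR'$, hence is $v_{\pi'}$ for some $\pi'\in\mathfrak{m}_R$ (using $V=L_{\osigma}\cap V_\bfR'$ and divisibility of $R$). Lifting the $\bar t_i$ back to $L_\sigma$ yields the required $t_i$. Uniqueness is then formal: the primitive ray generators $\bar t_i$ of $\osigma$ are unique up to permutation, a lift to $L_\sigma$ is unique up to adding an element of $\ker(L_\sigma\to L_{\osigma})=L_{\sigma^\times}$, and $(a_i)$ is the primitive positive generator of the line $\bfR(c_i)$, so it is unique up to permutation.

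\emph{Proof of $(\ast)$ — the main obstacle.} One cannot argue directly that $\bar v_\pi$ and the $\bar t_i$ sit in a common finite‑rank lattice, since $L_{\osigma}$ contains $V$ and $\dim_\bfQ V$ may exceed $1$. The remedy is to pass to $L_F:=L_{\osigma}\cap W_F$, which is again a $V$‑lattice in $W_F$ ($V\subseteq W_F$ because $V_\bfR'\subseteq F$, and $V$ is saturated in $L_{\osigma}$, so Lemma~\ref{Rtoriclem}(i) applies), and then to the canonical quotient $q\colon W_F\to W_F/V_\bfR'$: by Lemma~\ref{Rtoriclem}(i), $q(L_F)\cong L_F/V$ is a genuine lattice of rank $|S|-1$ and $\ker(q|_{L_F})=V$. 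The $|S|$ vectors $q(\bar t_i)$, $i\in S$, lie in this lattice and satisfy $\sum_{i\in S}c_i\,q(\bar t_i)=q(\bar v_\pi)=0$. Thus $(c_i)_{i\in S}$ solves a homogeneous integer linear system (whose matrix has the coordinate vectors of the $q(\bar t_i)$ as columns), so it lies in a rational subspace of $\bfR^S$; and that subspace is one‑dimensional, because the $\bar t_i$ are linearly independent in $W_F$, whence the preimage under $(x_i)\mapsto\sum x_i\bar t_i$ of the line $V_\bfR'$ is a line. A one‑dimensional rational subspace is spanned by an integer vector, which is exactly $(\ast)$. Recognizing that the right quotient to take is the canonical $L_F\onto L_F/V$ — converting an a priori ``irrational'' configuration into a lattice one — is the only step that is not routine.
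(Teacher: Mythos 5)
Your proof is correct and follows the same route as the paper's: split off $\sigma^\times$, reduce (i)--(ii) to the sharp full-dimensional cone $\osigma$, take primitive generators $\ot_i$ of its edges, and normalize the relation with $\ov_\pi$. Your claim $(\ast)$ addresses the one step the paper asserts without justification --- that $\ov_\pi=\sum_i a_i\ot_i$ can be achieved with $a_i\in\bfN$ even though $L_\osigma$ is not a finite-rank lattice --- and your argument via the genuine lattice $L_F/V$ is valid; the only point you leave implicit (as does the paper) is that upgrading $\sum_i a_it_i\equiv v_\pi \pmod{L_{\sigma^\times}}$ to an exact equality in $L_\sigma$ uses $\gcd(a_0\..a_n)=1$ to distribute the correcting element of $L_{\sigma^\times}$ among the lifts $t_i$.
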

\begin{proof}
By the faithfulness of $\sigma$, the image of $V_\bfR$ in $W_\osigma=W_\sigma/\sigma^\times$ is non-zero, and hence (i) holds. Choose  $\ot_0\..\ot_n\in L_\osigma$ that generate the edges of $\osigma$, then their lifts $t_i\in L_\sigma$ satisfy (ii) and are linearly independent over $\sigma^\times$. The image of $R_\bfR$ lies in $\osigma$, hence there exists $\pi\in \mathfrak{m}_R$ such that $\sum_{i=0}^na_i\ot_i=\ov_\pi$ for some choice of $a_i\in\bfN$. Dividing the equation by the gcd of the coefficients, we can also achieve that $(a_0\..a_n)=1$. Shifting $t_0$ by an element of $L_{\sigma^\times}$, we achieve that also $\sum_{i=0}^na_it_i=v_\pi$. The uniqueness is clear from the construction.
\end{proof}

\subsubsection{Semistable cones}\label{semiconesec}
A $V$-special cone $\sigma$ is {\em semistable} if it is faithful and simplicial, and an equality $\sum_{i=0}^na_it_i=v_\pi$ from Lemma~\ref{simpconelem}(iii) satisfies the following two conditions: $a_i\in\{0,1\}$ and $t_i$ generate $L_\sigma$ over $L_{\sigma^\times}$. In particular, renumbering $t_i$ we can achieve that $t_0+\dots+t_m=v_\pi$.

\begin{lem}\label{semiconelem}
A faithful $R$-toric monoid $M$ is simplicial (resp. semistable) if and only if the associated $V$-special cone $\sigma$ is simplicial (resp. semistable).
\end{lem}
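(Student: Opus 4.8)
The plan is to deduce the statement from the equivalence of categories of Theorem~\ref{conemonoidth}, under which $M^\gp=L_\sigma$, the cone $\sigma=\sigma_M$ is generated by $M$, faithful monoids correspond to faithful cones, and a splitting $M=M^\times\oplus\oM$ (Corollary~\ref{Rtoriccor}) corresponds to a splitting $\sigma=\sigma^\times\oplus\osigma$ with $\osigma=\sigma_{\oM}$. Since each of the four properties in question is insensitive to units on both sides, and faithfulness --- which is part of the definition of a semistable cone --- is assumed and is preserved by sharpening, I would first reduce to the case that $M$ and $\sigma$ are sharp; then $L_{\sigma^\times}=0$, $M^\gp=L_\sigma$ and $\overline\sigma=\sigma$.

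For the ``semistable'' equivalence I would compare the two explicit descriptions. If $M$ is sharp, faithful and semistable, then $M=\oM=P_{n,\pi}[t_1\..t_k]$ with $P_{n,\pi}=R[t_0\..t_n]/(t_0+\dots+t_n=\pi)$. A direct computation gives $P_{n,\pi}^\gp\cong V\oplus\bfZ^n$ with $t_0=v_\pi-t_1-\dots-t_n$, and shows that $t_0\..t_n$ are linearly independent, primitive in $P_{n,\pi}^\gp$, and generate $P_{n,\pi}^\gp$ together with $V$; hence $\sigma_{P_{n,\pi}}$ is a semistable cone with defining relation $t_0+\dots+t_n=v_\pi$. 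Writing $M=P_{n,\pi}\oplus_R R[t_1\..t_k]$ and using that $\sigma_{P\oplus_R N}=\sigma_P\oplus_V\sigma_N$ (\S\ref{sumconesec}), we see that $\sigma_M$ is obtained from $\sigma_{P_{n,\pi}}$ by adjoining $k$ free edges, and so remains semistable. Conversely, given a semistable cone $\sigma$, choose generators $t_0\..t_n$ as in Lemma~\ref{simpconelem}(iii) with, after renumbering, $t_0+\dots+t_m=v_\pi$ and with $t_0\..t_n$ generating $L_\sigma$ together with $V$; then $M_\sigma=\sigma\cap L_\sigma=R[t_0\..t_n]/(t_0+\dots+t_m=\pi)$, the elements $t_{m+1}\..t_n$ are its only prime elements, so $\oM_\sigma^b=P_{m,\pi}$, i.e.\ $M_\sigma$ is semistable.

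For the ``simplicial'' equivalence the forward direction is then immediate: if $P\subseteq M$ is semistable with $M/P$ a Kummer extension, then $[M^\gp:P^\gp]<\infty$ and $M$ is the saturation of $P$ in $M^\gp$, so $\sigma_P$ and $\sigma_M$ have the same underlying real cone; by the previous step $\sigma_P$ is a semistable, hence simplicial, polyhedral cone, and therefore so is $\sigma_M=\overline{\sigma_M}$. For the converse, starting from a simplicial $\sigma$ I would invoke Lemma~\ref{simpconelem} to fix edge generators $t_0\..t_n$ with $\sum_i a_it_i=v_\pi$ and $(a_0\..a_n)=1$, and then pass to the $V$-sublattice $L'\subseteq L_\sigma$ generated by $V$ and by suitable positive integer multiples $s_i$ of the $t_i$, chosen so that the $s_i$ are $L'$-primitive and the relation takes the form $\sum_{i\in S}s_i=v_\pi$; then $(\sigma,L')$ is a semistable cone, $P:=\sigma\cap L'\subseteq M$ is a semistable submonoid by the ``semistable'' step, and $M/P$ is Kummer since $[M^\gp:P^\gp]<\infty$ and $M=\sigma\cap M^\gp$ is the saturation of $P=\sigma\cap L'$. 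The main obstacle I anticipate is precisely this last lattice bookkeeping: arranging the rescaling so that the $s_i$ are primitive in $L'$ while keeping the single relation $\sum a_it_i=v_\pi$ in the required $\{0,1\}$-coefficient form and $L'$ of finite index in $L_\sigma$. The explicit analysis of $\sigma_{P_{n,\pi}}$ and its edges --- in particular the degenerate case $n=0$, where $v_\pi$ lies on an edge of $\sigma$ --- is the other point that needs care.
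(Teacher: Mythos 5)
Your proof is correct and takes essentially the same route as the paper's: the semistable equivalence by direct comparison of the explicit descriptions (which the paper dismisses as ``obvious''), and the simplicial case by noting that a Kummer extension changes only the lattice and not the underlying real cone, then rescaling the edge generators $t_i$ to $a_it_i$ to produce a finite-index $V$-sublattice $L'$ for which $(\sigma,L')$ is semistable. This is exactly the paper's construction, with more of the lattice bookkeeping spelled out.
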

\begin{proof}
The case of semistable monoids and cones is obvious. To deal with the simplicial case note that if $M\subseteq N$ is a Kummer extension then $M_\bfR=N_\bfR$, and $\sigma_M$ and $\sigma_N$ are only distinguished by the $V$-lattices $M^\gp$ and $N^\gp$. This reduces the lemma to the following claim: a special $V$-cone $\sigma=(\sigma,L_\sigma,W_\sigma)$ is simplicial if and only if there exists a $V$-sublattice $L'\subseteq L_\sigma$ of finite index such that the special $V$-cone $\sigma'=(\sigma,L',W_\sigma)$ is semistable. To prove the latter it suffices to choose $t_i$ as in Lemma~\ref{simpconelem}(iii) and take $L'$ to be the $V$-lattice generated over $V\oplus L_{\sigma^\times}$ by $t'_i$, where $t'_i=t_i$ if $a_i=0$ and $t'_i=a_it_i$ otherwise.
\end{proof}

\subsection{$R$-monoschemes and special $V^\bot$-cones}
Our next goal is to study geometry of cones in dual spaces.

\subsubsection{Special $V^\bot$-cones}
By a special $V^\bot$-cone $\tau$ we mean a vector space $W_\tau$, a $V$-lattice $L_\tau$ in the dual space $W^*_\tau$, and a sharp cone $\tau$ in $W_\tau$, which is defined by finitely many inequalities $l_i(x)\ge 0$ with $l_i\in L_\tau$. A morphism of special $V^\bot$-cones $\tau\to\rho$ is a linear map $f\:W_\tau\to W_\rho$ such that $f(\tau)\subseteq\rho$ and the dual map $f^*$ takes $L_\rho$ to $L_\tau$.

For any special $V$-cone $\sigma$ we define its dual $\sigma^*$ as follows: $W_{\sigma^*}=W_\sigma^*$, $L_{\sigma}=L_{\sigma^*}$, and $\sigma^*$ is given in $W^*_\sigma$ by the inequalities $l_i(x)\ge 0$ with $l_i\in M_\sigma$. It suffices to take a set $\{l_i\}$ of generators of $M_\sigma$, hence $\sigma^*$ is a special $V^\bot$-cone. In the same way, one defines duals of special $V^\bot$-cones, which are special $V$-cones. The usual duality between cones with non-empty interior and sharp cones in $\bfR^n$ implies the following result:

\begin{lem}\label{duallem}
The duality functors establish essentially inverse equivalences between the category of special $V^\bot$-cones and the category opposite to the category of special $V$-cones.
\end{lem}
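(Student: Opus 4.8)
The plan is to deduce this from the classical duality between polyhedral cones with non-empty interior and sharp (strongly convex) cones in a finite-dimensional real vector space, carrying the $V$-lattice and the vector $v_\pi$ along as decorations. Recall that $\sigma^*$ is given by $W_{\sigma^*}=W_\sigma^*$, $L_{\sigma^*}=L_\sigma$ (viewed inside $W_\sigma=(W_{\sigma^*})^*$), and $\sigma^*=\{f\in W_\sigma^*:\ f|_\sigma\ge 0\}$, and symmetrically $\tau^*$ is attached to a special $V^\bot$-cone $\tau$.

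First I would check that both functors are well defined. If $\sigma$ is a special $V$-cone then it is the cone generated by finitely many elements $u_i\in L_\sigma$ (for instance the $R$-generators of $M_\sigma$ together with $v_\pi$, by Theorem~\ref{conemonoidth}); hence $\sigma^*=\{f:\ f(u_i)\ge 0\ \text{for all}\ i\}$ is cut out by finitely many functionals $u_i\in L_\sigma=L_{\sigma^*}$, and since $\sigma$ has non-empty interior $\sigma^*$ is sharp, so $\sigma^*$ is a special $V^\bot$-cone; moreover $v_\pi\in\sigma$ says exactly that $v_\pi$, viewed in $L_{\sigma^*}$, is non-negative on $\sigma^*$. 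Conversely, if $\tau=\{x:\ l_j(x)\ge 0,\ j=1\..k\}$ with $l_j\in L_\tau$ is a special $V^\bot$-cone, then by Farkas' lemma $\tau^*$ is the cone generated by $l_1\..l_k\in L_\tau=L_{\tau^*}$, so its edges are spanned by elements of $L_{\tau^*}$, and $\tau$ sharp forces $\tau^*$ to have non-empty interior. On morphisms one passes to the transpose $f\mapsto f^*$; unwinding the definitions, the conditions cutting out a morphism $\sigma\to\tau$ of special $V$-cones become exactly the conditions cutting out a morphism $\tau^*\to\sigma^*$ of special $V^\bot$-cones, and symmetrically, so we obtain a contravariant functor in each direction.

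Next I would establish biduality. The canonical isomorphism $W_\sigma\toisom(W_\sigma^*)^*$ identifies $L_\sigma$ with $L_{\sigma^{**}}$ and, by the classical fact that a closed convex polyhedral cone equals its own double dual, identifies $\sigma$ with $\sigma^{**}$; this isomorphism is natural in $\sigma$, and likewise one gets $\tau\toisom\tau^{**}$ naturally, compatibly with transposition of morphisms (using $f^{**}=f$). Therefore $\sigma\mapsto\sigma^*$ and $\tau\mapsto\tau^*$ are mutually quasi-inverse contravariant equivalences, i.e.\ essentially inverse equivalences between the category of special $V^\bot$-cones and the opposite of the category of special $V$-cones.

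I expect no substantial obstacle: all the geometric input is classical (Farkas' lemma and biduality of polyhedral cones). The only points that need care are the bookkeeping in the well-definedness step --- checking that the edge condition defining special $V$-cones and the facet condition defining special $V^\bot$-cones are genuinely dual to one another, and that $v_\pi$ together with the conditions defining morphisms transport correctly under transposition.
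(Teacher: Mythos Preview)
Your approach is exactly what the paper does: the lemma is stated immediately after the sentence ``The usual duality between cones with non-empty interior and sharp cones in $\bfR^n$ implies the following result,'' with no further proof given. Your expansion of this into an explicit argument via Farkas' lemma and biduality, carrying the $V$-lattice and $v_\pi$ along as decorations, is the intended argument and is correct.
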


\begin{rem}
In the classical toric case, dual cones are also rational, so their geometry is similar. For $V$-cones this is not the case. For example, special $V$-cones possess subdivisions by simplicial special $V$-cones, as follows easily from the argument in the proof of Theorem~\ref{conemonoidth}, but special $V^\bot$-cones may not have such a subdivision. In fact, the main result we will prove about them later states that they possess subdivisions by polystable special $V^\bot$-cones.
\end{rem}

\subsubsection{Products}
Products/sums is another feature that distinguishes $V$-cones and $V^\bot$-cones. As in the case of $V$-cones (see \S\ref{sumconesec}), the direct product of $V^\bot$-cones $\sigma\times\tau$ is not a $V^\bot$-cone. The natural replacement this time is the $V$-product $\rho=\sigma\times_V\tau$ defined as follows: $L_\rho=L_\sigma\oplus_V L_\tau$, in particular, $W_\rho$ is the hyperplane in $W_\sigma\times W_\tau$ given by equalizing the functionals induced from  $v_\pi$ on $W_\sigma$ and  $v_\pi$ on $W_\tau$, and $\rho=(\sigma\times\tau)\cap W_\rho$. Clearly, this operation is dual to $V$-sums: if $\sigma$ and $\tau$ are $V$-special cones then $\sigma^*\times_V\tau^*=(\sigma\oplus_V\tau)^*$.

\subsubsection{Faithfulness}
We say that a special $V^\bot$-cone $\tau$ is {\em faithful} if $\tau^*$ is so. Note that $\tau$ lies in the halfspace given by $v_\pi(x)\ge 0$, and $\tau$ is faithful if and only if it is not contained in the hyperplane $W_0$ given by $v_\pi(x)=0$. The theory of non-faithful special $V^\bot$-cones thus reduces to the theory of rational cones in $W_0$.

\subsubsection{Cones and affine monoschemes}
Combining Theorem~\ref{conemonoidth} and Lemma~\ref{duallem} we obtain that the category of special $V^\bot$-cones is equivalent to the category dual to the category of $R$-toric monoids. Therefore, we obtain a natural equivalence between the categories of affine $R$-toric monoschemes and special $V^\bot$-cones, which associates to $X=\Spec(M)$ the special $V^\bot$-cone $\tau=\tau_X$ with $W_\tau=(M^\gp_\bfR)^*$, $L_\tau=M^\gp$, and $\tau=\sigma_M^*$. Note that localizations $\Spec(M_p)\into X$ correspond to embeddings $\tau_p\into\tau$, where $\tau_p$ is the face in $\tau$ cut off by the conditions $f=0$ for $f\notin p$. In particular, it is easy to see that this establishes a one-to-one correspondence between the faces of $\tau$ and points of $X$, and the points of $X_\eta$ correspond to the faces contained in $W_0$.

\subsubsection{Embedded cone complexes and $R$-monoschemes}
The functor $X\to \tau_X$ takes localizations to face embeddings and hence can be globalized, once we define global cone complexes. Loosely speaking, such a complex is obtained by gluing finitely many special $V^\bot$-cones along face embeddings. However, the procedure is a little subtle, in particular, cones may intersect at few faces.

By a {\em locally embedded $V^\bot$-cone complex} $\tau$ we mean the following datum: a real vector space $W_\tau$, a $V$-lattice $L_\tau$ in $W^*_\tau$, and a topological space $\tau$ with a finite {\em face set} $\{\tau_i\}$ of closed subspaces provided with a structure $(W_\tau,L_\tau,\tau_i)$ of special $V^\bot$-cones so that the following conditions are satisfied: (1) each inclusion $\tau_i\into\tau_j$ of faces agrees with a face embedding of cones in $W_\tau$, (2) all faces of each $\tau_i$ are in the face set, (3) each intersection $\tau_i\cap \tau_j$ is a union of faces of both $\tau_i$ and $\tau_j$. Morphisms of such complexes are defined in the natural way and using Lemma~\ref{duallem} and a gluing argument one immediately obtains the following result.

\begin{theor}\label{embconicth}
The functor $X\mapsto\tau_X$ uniquely extends to an equivalence between the categories of $R$-toric monoschemes and locally embedded $V^\bot$-cone complexes, that will also be denoted $X\mapsto\tau_X$. The maps of cones to affine monoschemes glue to global maps $\tau_X\to X$ such that face interiors of $\tau_X$ are precisely preimages of points of $X$.
\end{theor}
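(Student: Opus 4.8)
The plan is to bootstrap Theorem~\ref{embconicth} from the affine equivalence already in hand, namely the composite of Theorem~\ref{conemonoidth} with Lemma~\ref{duallem}, by a gluing argument entirely parallel to the passage from affine toric varieties to fans. Any such extension is automatically unique: affine $R$-toric monoschemes form a basis of opens, a morphism of monoschemes is determined by its restrictions to an affine cover, and a locally embedded $V^\bot$-cone complex is likewise determined by its faces together with their face embeddings; so fixing the functor, and its action on morphisms, on the affine subcategories forces the global functor.

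The essential local input is a description of open immersions of affine $R$-toric monoschemes. If $M$ is $R$-toric, every open subscheme of $\Spec(M)$ is a union of principal opens $\Spec(M_f)$; each $M_f$ is again $R$-toric (localization preserves injectivity of $R\to M$, saturation and torsion-freeness, and only adds the generator $-f$), and by the discussion of cones and affine monoschemes preceding the theorem the open immersion $\Spec(M_f)\into\Spec(M)$ corresponds to the face embedding $\tau_{M_f}\into\tau_M$ onto the face on which $f$ vanishes. Since the bijection between points of $\Spec(M)$ and faces of $\tau_M$ preserves the inclusion order, open subschemes of $\Spec(M)$ correspond precisely to the unions of faces of $\tau_M$ closed under passing to subfaces, i.e. to subcomplexes. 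Now let $X$ be an $R$-toric monoscheme with a finite affine cover $X=\bigcup_i\Spec(M_i)$, overlaps $X_{ij}$, and gluing isomorphisms $\phi_{ij}$ satisfying the cocycle condition. Each $X_{ij}$ is, viewed inside either $\Spec(M_i)$ or $\Spec(M_j)$, such a subcomplex; the $\phi_{ij}$ transport to face-preserving identifications of these subcomplexes; and gluing the cones $\tau_{M_i}$ along them produces a space $\tau_X$ whose face set is the union of the face sets of the $\tau_{M_i}$. Conditions (1) and (2) in the definition of a locally embedded $V^\bot$-cone complex are built into the construction, and (3) is exactly the fact, just recorded, that each $\tau_{M_i}\cap\tau_{M_j}$ is a union of common faces. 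Conversely, one reconstructs a monoscheme from a locally embedded $V^\bot$-cone complex $\tau$ by gluing the affine monoschemes $\Spec(M_{\tau_i})$ attached to the maximal faces $\tau_i$ along the affine monoschemes attached to the faces of the intersections $\tau_i\cap\tau_j$; the cocycle condition for this gluing holds automatically because the affine functor is an equivalence and hence transports the trivially valid cocycle on the combinatorial side. The two constructions are mutually inverse and functorial, which gives the asserted equivalence.

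Finally, the maps $\tau_{M_i}\to\Spec(M_i)$ are compatible over the $X_{ij}$, so they glue to a map $\tau_X\to X$; and since on each chart the face interiors are exactly the preimages of the points (again the discussion preceding the theorem), the same holds for $\tau_X\to X$. The only genuinely delicate point, as the authors hint, is condition (3): two affine opens of a monoscheme can meet in a locus with several components, so two cones of $\tau_X$ may share more than one face, and one must check that the gluing recipe produces exactly the locally embedded complexes --- in particular that cones are allowed to be glued along a union of several faces, as (3) permits, but nothing more exotic. Everything else --- stability of $R$-toricity under localization, the match between the Zariski topology and the face combinatorics, the cocycle bookkeeping, and uniqueness of the extension --- is routine once this affine dictionary is in place.
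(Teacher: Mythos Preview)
Your proposal is correct and follows exactly the approach the paper indicates: the paper's own ``proof'' is the single clause preceding the theorem, ``using Lemma~\ref{duallem} and a gluing argument one immediately obtains the following result,'' and you have supplied that gluing argument in detail. One point you could make slightly more explicit is why there is a \emph{single} ambient pair $(W_\tau,L_\tau)$ for the whole complex, as the definition of locally embedded $V^\bot$-cone complex demands: this is because $M_X^\gp$ is locally constant on a monoscheme, so on each connected component all the $M_i^\gp$ are canonically identified via the overlap isomorphisms --- but this is already implicit in your remark that localization preserves $M^\gp$.
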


\begin{rem}
\begin{compactenum}[(i)]
\item By definition, the embeddings $\tau_i\into W_\tau$ agree and give rise to a continuous map $\tau\to W_\tau$, whose image is a $V^\bot$-cone complex $\tiltau$ in $W_\tau$. We will call $\tiltau$ {\em embedded $V^\bot$-cone complex}. In a sense, $\tau$ is obtained from $\tiltau$ by multiplying few its cones. Similarly to the theory of schemes, one can naturally define the notion of separated monoschemes, and it is easy to check that $\tau_X$ is embedded if and only if $X$ is separated.

\item Products of $R$-toric monoschemes correspond to $V$-products of complexes. Indeed, this reduces to the affine case, and then products correspond to $R$-pushouts of monoids, which are taken by the functor $M\mapsto \sigma_M$ to $V$-sums of special $V$-cones.
\end{compactenum}
\end{rem}

\subsubsection{Non-embedded cone complexes and $R$-fans}
Next, we would like to prove an analog of Theorem~\ref{embconicth} for $R$-toric fans. In the affine setting this means that we restrict to the subcategory of sharp $R$-toric monoids. Clearly, the corresponding subcategories of cones are the category of sharp special $V$-cones, and the category of special $V^\bot$-cones with non-empty interior, that is, special $V^\bot$-cones such that $\tau$ spans $W_\tau$. Objects of the latter category will be denoted $\otau=(W_\otau,L_\otau,\otau)$. The main difference with the category of arbitrary special $V^\bot$-cones is that for a face embeddings $\rho\into\tau$, the map $W_\orho\into W_\otau$ is injective and the map $L_\otau\to L_\orho$ is surjective, rather than isomorphisms. In addition, a new feature is that a morphism $(W_\otau,L_\otau,\otau)\to(W_\orho,L_\orho,\orho)$ is determined already by the set-theoretical map $\otau\to\orho$.

A {\em (non-embedded) $V^\bot$-cone complex} is a topological space $\otau$ with a finite {\em face set} $\{\otau_i\}$ of closed subspaces provided with a structure $(W_i,L_i,\otau_i)$ of special $V^\bot$-cones with non-empty interior so that the following conditions are satisfied: (1) each inclusion $\otau_i\into \otau_j$ of faces underlies a face embedding of special $V^\bot$-cones, (2) all faces of each $\otau_i$ are in the face set, (3) each intersection $\otau_i\cap \otau_j$ is a union of faces of both $\otau_i$ and $\otau_j$. If each $\otau_i\cap \otau_j$ is a single face (maybe empty) then we say that the complex is {\em strict}.

\begin{rem}
One often works only with strict complexes because this is technically easier. On the algebra-geometric side this corresponds to working with toroidal structures without self-intersections (as in \cite{KKMS}). On the other hand, arbitrary complexes can be easily subdivided to strict ones. For example, in the classical case this can is achieved by the barycentric subdivision.
\end{rem}

\subsubsection{Equivalence with fans}
Given an affine $R$-toric fan $\oX=\oSpec(M)$ consider the special $V^\bot$-cone $\otau_\oX=\otau_{\Spec(\oM)}$. Clearly, $\oM_\bfR$ is sharp and hence $\otau_\oX$ has a non-empty interior. So, $\oX\mapsto\otau_\oX$ is a functor to the category of special $V^\bot$-cones with non-empty interior. It is easy to see that it takes localizations to face embeddings and hence globalizes, yielding the following analog of Theorem~\ref{embconicth}

\begin{theor}\label{conicth}
The functor $\oX\mapsto\otau_\oX$ uniquely extends to an equivalence between the categories of $R$-toric fans and non-embedded $V^\bot$-cone complexes, that will also be denoted $\oX\mapsto\otau_\oX$.
\end{theor}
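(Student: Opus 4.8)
The plan is to follow the template of the proof of Theorem~\ref{embconicth}: reduce first to the affine situation, then globalize by gluing. On affine objects, combining the sharp case of Theorem~\ref{conemonoidth} with the duality of Lemma~\ref{duallem} already gives an equivalence between the category of sharp $R$-toric monoids and the category of special $V^\bot$-cones with non-empty interior, sending $\oSpec(M)$ to $\otau_{\oSpec(M)}$. The first step is to check, exactly as in the non-sharp situation discussed just before Theorem~\ref{embconicth}, that under this equivalence a localization $\oSpec(M_p)\into\oSpec(M)$ corresponds to the face embedding $\otau_p\into\otau_{\oSpec(M)}$ cut out by the conditions $f=0$ for $f\notin p$, and that conversely every face of $\otau_{\oSpec(M)}$ arises this way. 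Here one uses the observation recorded in the remark preceding the theorem: for such a face embedding the underlying linear map $W_{\otau_p}\into W_{\otau}$ is injective while $L_\otau\onto L_{\otau_p}$, which matches the behaviour of $\oM^\gp_p$ as a quotient of $\oM^\gp$.

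Next I would globalize. An $R$-toric fan $\oX$ is glued from finitely many affine charts $\oX_i=\oSpec(M_i)$, the overlaps $\oX_i\cap\oX_j$ being open and covered by common affine localizations of $\oX_i$ and $\oX_j$. Applying the affine dictionary, the cones $\otau_{\oX_i}$ together with all their faces form a finite collection of special $V^\bot$-cones with non-empty interior glued along face embeddings; I would take this collection as the face set and verify conditions (1)--(3) of the definition of a non-embedded $V^\bot$-cone complex. Condition (2) is built in, condition (1) is the content of the previous paragraph, and condition (3) translates the fact that $\oX_i\cap\oX_j$ is a union of affine localizations of both charts. This produces $\otau_\oX$ and the assignment is visibly functorial. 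Conversely, given a non-embedded $V^\bot$-cone complex with face set $\{\otau_i\}$, I would reconstruct a fan by gluing the affine fans corresponding to the $\otau_i$ under the affine equivalence along the open immersions dual to the face embeddings among the cones; conditions (1)--(3) say precisely that this is an effective gluing datum producing an $R$-toric fan, and the two constructions are inverse on objects.

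For full faithfulness I would argue that a morphism $\oX\to\oY$ of $R$-toric fans is determined by, and can be assembled from, a compatible system of morphisms on affine charts, and dually for complexes. The point that makes this clean in the sharp setting is the remark before the theorem that a morphism of special $V^\bot$-cones with non-empty interior is already determined by the underlying set-theoretic map, so there is no ambiguity in gluing local morphisms, and the compatibility required of a morphism of complexes matches exactly the cocycle condition for gluing morphisms of fans. Uniqueness of the extension of the functor follows since any fan is covered by affine opens and morphisms out of it are determined locally.

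The main obstacle I anticipate is bookkeeping in the non-strict case: when two charts $\oX_i,\oX_j$ meet along several faces, one must take the face set of the glued complex to be the full collection of images of faces of \emph{all} charts, not just the charts themselves, and make sure the resulting identifications are mutually consistent. Once the face set is set up correctly, checking (1)--(3) and the gluing of morphisms is routine and parallels \cite{Illusie-Temkin} and the proof of Theorem~\ref{embconicth}.
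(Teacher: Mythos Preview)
Your proposal is correct and follows essentially the same approach as the paper, which gives no formal proof but simply notes that the affine functor takes localizations to face embeddings and hence globalizes exactly as in Theorem~\ref{embconicth}. Your write-up is in fact more detailed than the paper's own treatment; the extra care you take with the non-strict case and with full faithfulness (using that morphisms of special $V^\bot$-cones with non-empty interior are determined by the underlying set-theoretic map) is appropriate and matches the remarks preceding the theorem.
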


\subsection{$V$-polyhedral complexes}
In the case of faithful fans or monoschemes, one can replace special $V^\bot$-cones by polyhedral sections. In this section we will work this out in detail.

\subsubsection{The canonical polyhedral section}
A faithful special $V^\bot$-cone $\tau$ is determined by the canonical section $\tau_1=\tau\cap W_1$, where $W_1$ is the affine hyperplane given by $v_\pi(x)=\pi$. Clearly, $\tau_1$ is a sharp polyhedron (i.e. a polyhedron not containing lines), satisfying a natural $V$-integrality condition that we are going to formalize below.

\subsubsection{$V$-affine spaces}
By a $(\bfZ,V)$-affine or simply {\em $V$-affine} real vector space we mean an $n$-dimensional affine real vector space $U$ (i.e. a torsor of $\bfR^n$) provided with a subgroup $A_U$ of the group of affine functions $U\to\bfR$ such that the elements of $V$ are precisely the constant functions in $A_U$, the group $A_U/V=\bfZ^n$ is a lattice of rank $n$ (in particular, $A_U$ is a $V$-lattice), and the functions of $A_U$ distinguish any pair of points of $U$. A $V$-affine map $f\:U\to W$ between $V$-affine real spaces is an affine map $f$ such that $f^*(A_W)\subseteq A_U$. In particular, the elements of $A_U$ are precisely the $V$-affine functions $U\to\bfR$.

\begin{rem}
\begin{compactenum}[(i)]
\item After an appropriate choice of coordinates $t=(t_1\..t_n)$, we have $U=\bfR^n$ and $A_U=V\oplus\bfZ^n$ with $\bfZ$ spanned by the dual basis. The $V$-affine functions are then of the form $f=v+\sum_{i=1}^na_it_i$ with $a_i\in\bfZ$ and $v\in V$. The freedom to choose coordinates is limited to a transformation $t'=At+v$, where $A\in\GL_n(\bfZ)$ and $v\in V^n$. A similar theory of $(S,V)$-affine spaces can be developed for any subring $S\subseteq\bfR$ and $S$-submodule $V\subseteq\bfR$. See also Remark~\ref{polytoprem} below.

\item For any point $u\in U$ the set of functions $f\in A_U$ such that $f(u)=0$ is a lattice of rank bounded by $n$. We say that $u$ is {\em $V$-rational} if the rank is precisely $n$. It is easy to see the set of $V$-rational points is a torsor for the subgroup $V^n$ of $\bfR^n$. For example, use that rational points correspond to splittings $\bfZ^n\to A_U$ of the sequence $V\into A_U\onto\bfZ^n$.
\end{compactenum}
\end{rem}

\subsubsection{Special polyhedra}
By a {\em special $V$-polyhedron} $P$ we mean a $V$-affine real space $U=U_P$ and a sharp polyhedron $P\subseteq U$ defined by finitely many inequalities $f_j\ge 0$, with $f_j\in A_U$. A morphism of special $V$-polyhedrons $P\to Q$ is any $V$-affine map $U_P\to U_Q$ that takes $P$ to $Q$. If $P$ is bounded then we call it a $V$-affine polytope.

\begin{rem}\label{polytoprem}
(i) Let $S\subseteq\bfR$ be a subring, $L\subseteq\bfR$ an $S$-module, $F={\rm Frac}(S)$ the fraction field and $V=L\otimes_SF$. Berkovich introduced in \cite[Section~1]{bercontr2} $(S,L)$-polytopes in $\bfR^n$ (the notation in \cite{bercontr2} are multiplicative). On the set-theoretical level, it is a polytope in $\bfR^n$ whose vertexes lie in $V^n$ and edge slopes lie in $F$. In particular, $(S,L)$-polytopes and $(F,V)$-polytopes are not distinguished set-theoretically, but they do have different groups of affine functions.

(ii) If $V$ is a vector space over $\bfQ$ then a $(\bfZ,V)$-polytope is a $V$-polytope in our sense, where one takes $A_U=V\oplus\bfZ^n$. For comparison, groups of affine functions on $(\bfQ,V)$-polytopes are of the form $V\oplus\bfQ^n$.
\end{rem}

\subsubsection{Products}
Products in the category of special polyhedra correspond to usual products. Namely, $R=P\times Q$ is defined as the usual set-theoretical product (or Minkowski sum of polyhedra) and, in addition, $U_R=U_P\times U_Q$ and $A_R=A_P\oplus_V A_Q$. This agrees with the $V$-product of $V^\bot$-cones: $(\sigma\times_V\tau)_1=\sigma_1\times\tau_1$.

\subsubsection{Embedded polyhedral complexes}\label{embpolsec}
By a {\em locally embedded $V$-polyhedral complex} we mean a $V$-affine real space $U=U_P$ and a topological space $P$ covered by closed subspaces $P_i$ called {\em faces}, which are provided with the structure of special $V$-polyhedra in $U$ and satisfy the following conditions: (1) each inclusion $P_i\into P_j$ of faces underlies a face embedding of special $V$-polyhedra, (2) all faces of each $P_i$ are in the face set, (3) each intersection $P_i\cap P_j$ is a union of faces of both $P_i$ and $P_j$. As in the cone case, $P$ covers a $V$-polyhedral complex in $U_P$ and is obtained from the latter by multiplying some faces.

A morphism $P\to Q$ consists of a continuous map $P\to Q$ and a $V$-affine map $U_P\to U_Q$ compatible with the projections $P\to U_P$ and $Q\to U_Q$. Clearly the canonical section functor $\tau\mapsto\tau_1$ on special $V^\bot$-cones is compatible with face maps hence globalize to an equivalence between the categories of faithful embedded $V^\bot$-cone complexes and embedded $V$-polyhedral complexes. Combining this with Theorem~\ref{embconicth} we obtain

\begin{theor}\label{embeqth}
The functors $X\mapsto\tau_X$ and $\tau\mapsto\tau_1$ induce equivalences of the following three categories: (1) faithful $R$-toric monoschemes, (2) faithful embedded $V^\bot$-cone complexes, (3) embedded $V$-polyhedral complexes.
\end{theor}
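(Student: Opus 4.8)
The plan is to deduce Theorem~\ref{embeqth} by chaining together equivalences already in hand. The equivalence of (1) with (2) is Theorem~\ref{embconicth} restricted to the faithful objects on both sides: for an affine chart $X=\Spec(M)$ one has $\tau_X=\sigma_M^*$, and $M$ is faithful over $R$ precisely when $\sigma_M$, hence its dual $\sigma_M^*$, is a faithful special $V^\bot$-cone; since faithfulness of a monoscheme is a condition on charts, the equivalence $X\mapsto\tau_X$ carries faithful $R$-toric monoschemes to faithful embedded $V^\bot$-cone complexes and back, and the structure maps $\tau_X\to X$ are unchanged. The equivalence of (2) with (3) was, in effect, already asserted in the paragraph preceding the theorem; I would spell it out as follows.

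First, on a single cone: $\tau\mapsto\tau_1:=\tau\cap W_1$, with $W_1=\{v_\pi=\pi\}$ and $\pi\in\mathfrak{m}_R$ nonzero, is an equivalence between faithful special $V^\bot$-cones and special $V$-polyhedra. In the forward direction, $W_1$ is an affine hyperplane, a torsor under $\ker v_\pi\subset W_\tau$, not through the origin (as $\pi\ne0$), so no nonzero linear functional on $W_\tau$ vanishes identically on $W_1$; restriction therefore injects $L_\tau$ onto a group $A_{W_1}$ of affine functions on $W_1$. This is a $V$-affine structure: the constants in $A_{W_1}$ are exactly $V$ because $L_\tau\cap V_\bfR=V$ (part of the definition of a $V$-lattice in a real vector space), $A_{W_1}/V\cong L_\tau/V$ is a lattice of the right rank, and $L_\tau$ separates points. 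Since $\tau$ is sharp, not contained in $W_0$, and cut out by inequalities from $L_\tau$, the slice $\tau_1$ is a sharp special $V$-polyhedron. In the reverse direction I would homogenize: given $P\subset U_P$, form the canonical real vector space $(A_{U_P})_\bfR$ of the $V$-lattice $A_{U_P}$, put $W_\tau:=\bigl((A_{U_P})_\bfR\bigr)^*$ so that $A_{U_P}$ sits as a $V$-lattice $L_\tau\subset W_\tau^*$ (the constant function $\pi$ going to $v_\pi$), identify $U_P$ with $W_1$ by evaluation, and set $\tau:=\{v_\pi\ge0\}\cap\bigcap_j\{\tilde f_j\ge0\}$ for $P=\{f_j\ge0\}$, i.e. the cone over $P$. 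One then verifies that the two constructions are mutually inverse on objects and that morphisms correspond: $V$-affine maps of sections are exactly the linear maps of ambient spaces fixing $v_\pi$ and pulling $L_\tau$ back into $L_\tau$.

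To globalize, note that slicing sends a face $F$ of $\tau$ to $F\cap W_1$, which is empty exactly when $F\subseteq W_0$ --- in particular for the apex $\{0\}$ --- and is a face of $\tau_1$ otherwise, with every face of $\tau_1$ obtained uniquely from a faithful face of $\tau$. Moreover $\tau\cap W_0$ is itself a face of $\tau$, equal to the recession cone of $\tau_1$, and the non-faithful faces of $\tau$ are precisely its faces; so the entire face poset of $\tau$, together with its vector-space and lattice data, is recovered from $\tau_1$. Hence $\tau\mapsto\tau_1$ is compatible with face embeddings, with the conditions that intersections be unions of faces, and with all morphisms, so it glues over a finite face set to an equivalence between the categories in (2) and (3). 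Composing it with the equivalence (1)$\simeq$(2) proves the theorem.

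I do not anticipate a conceptual obstacle; the effort is all in the single-cone step --- most of all checking that homogenization carries the group of $V$-affine functions on $P$ precisely onto a $V$-lattice in $W_\tau^*$ and conversely --- and in the face/recession-cone bookkeeping that makes precise why slicing a faithful $V^\bot$-cone complex (whose individual faces are typically not faithful) loses no information.
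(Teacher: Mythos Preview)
Your proposal is correct and follows exactly the paper's approach: the paper deduces Theorem~\ref{embeqth} in one sentence by combining Theorem~\ref{embconicth} (restricted to faithful objects) with the assertion, made in the paragraph immediately preceding the theorem, that the section functor $\tau\mapsto\tau_1$ globalizes to an equivalence between faithful embedded $V^\bot$-cone complexes and embedded $V$-polyhedral complexes. You have simply unpacked that sentence, supplying the details of the single-cone slicing/homogenization correspondence and the face bookkeeping that the paper leaves implicit; there is no divergence in strategy.
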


\begin{rem}
\begin{compactenum}[(i)] \item One can also easily describe inverse equivalences. In particular, $\tau$ is reconstructed from $P=\tau_1$ as the cone over $P$ with $L_\tau=A_P$, and if $P$ is a polyhedron then $X=\Spec(M)$ is reconstructed by setting $M$ to be the monoid of all non-negative $V$-affine functions on $P$.

\item The correspondences respect products: products of monoschemes over $R$ correspond to $V$-products of $V^\bot$-cone complexes and products of $V$-polyhedral complexes. All languages are equivalent, but it is often more natural to work within the framework of polyhedral complexes because then products become the most natural ones.
\end{compactenum}
\end{rem}

\subsubsection{Non-embedded polyhedral complexes}
Similarly to cone complexes, one can also define a non-embedded version as follows. First, one restricts to the subcategory of special $V$-polyhedra with non-empty interior and modifies the notion of face embedding accordingly. Then one copies the definition of cone complexes: a (non-embedded) $V$-polyhedral complex is a topological space $P$ with a finite {\em face set} $\{\oP_i\}$ of subspaces provided with a structure $(W_i,L_i,\oP_i)$ of special $V$-polyhedra so that conditions (1)--(3) as in \S\ref{embpolsec} are satisfied.

As in the embedded case, the functor $\otau\mapsto\otau_1$ preserves face embeddings, and using Theorem~\ref{conicth} we obtain

\begin{theor}\label{nonembeqth}
The functors $\oX\mapsto\otau_\oX$ and $\otau\mapsto\otau_1$ induce equivalences of the following three categories: (1) faithful $R$-toric fans, (2) faithful non-embedded $V^\bot$-cone complexes, (3) non-embedded $V$-polyhedral complexes.
\end{theor}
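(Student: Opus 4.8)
The statement is essentially a "strictification and desharpening" upgrade of Theorem~\ref{conicth}, combined with the already-established dictionary between faithful special $V^\bot$-cones and special $V$-polyhedra via the canonical section $\otau\mapsto\otau_1$. So the plan is to leverage Theorem~\ref{conicth} rather than redo the gluing from scratch. First I would observe that among the equivalences asserted, the one between (1) and (2) is exactly the restriction of Theorem~\ref{conicth} to the full subcategory of \emph{faithful} objects: a sharp $R$-toric monoid $M$ is faithful iff the associated special $V^\bot$-cone $\otau_{\oSpec(M)}$ is faithful (by the definition in \S on faithful $V^\bot$-cones, i.e. $\otau^*=\sigma_M$ is faithful iff $\sigma_M\cap V=R$ iff $M$ is faithful over $R$), and faithfulness is clearly preserved under the face embeddings used in the gluing, so an $R$-toric fan is faithful in the sense of \S on faithful monoschemes precisely when every cone in $\otau_\oX$ is faithful. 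Hence the functor $\oX\mapsto\otau_\oX$ already restricts to an equivalence between categories (1) and (2), with no new work.

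The second step is to promote the local equivalence $\otau\mapsto\otau_1$ (faithful special $V^\bot$-cone $\rightsquigarrow$ special $V$-polyhedron with non-empty interior) to an equivalence of the corresponding complex categories (2) $\simeq$ (3). Here I would argue exactly as in the embedded case treated just before Theorem~\ref{embeqth}: slicing by the affine hyperplane $W_1=\{v_\pi(x)=\pi\}$ is functorial, it carries face embeddings of faithful $V^\bot$-cones to face embeddings of special $V$-polyhedra (a face of $\otau$ meets $W_1$ in a face of $\otau_1$, and conversely every face of $\otau_1$ arises this way since $\otau$ is faithful so every face other than the apex meets $W_1$), and it carries the $V$-lattice $L_\otau\subseteq W_\otau^*$ to the group of $V$-affine functions $A_{\otau_1}$ on the slice. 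The inverse functor takes a special $V$-polyhedron $P$ to the cone over $P$ inside $U_P\oplus\bfR$ (or equivalently inside the vector space with functional coordinates $A_P$), equipped with the $V$-lattice $A_P$; this is spelled out in the first remark after Theorem~\ref{embeqth} and works verbatim for single polyhedra, and the point is simply that the three combinatorial gluing conditions (1)--(3) defining a non-embedded complex are literally the same on both sides and are matched bijectively by the slice/cone operations. Since both functors are defined on faces compatibly with the inclusions, they glue to mutually inverse equivalences of the complex categories; composing with (1) $\simeq$ (2) gives the full chain.

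The only genuinely non-formal point — and the step I expect to require the most care — is checking that the \emph{face poset data} really does transport correctly across the slice in the non-embedded (and possibly non-strict) setting: one must verify that each intersection $\otau_i\cap\otau_j$, which is only required to be a union of common faces (not a single face), slices to $\oP_i\cap\oP_j$ as a union of common faces of the two polyhedra, and that no face collapses under slicing. The first part is immediate once one knows slicing commutes with finite unions of faces and with taking faces, which follows from the faithfulness hypothesis (the apex is the unique face disjoint from $W_1$, and it is not among the faces of a cone with non-empty interior unless the cone is the apex itself, which is excluded); the non-collapsing is where faithfulness is used essentially. I would isolate this as a short lemma: for a faithful special $V^\bot$-cone $\otau$ with non-empty interior, the map $\{\text{non-apex faces of }\otau\}\to\{\text{faces of }\otau_1\}$, $F\mapsto F\cap W_1$, is an isomorphism of posets carrying the $V^\bot$-structure to the $V$-affine structure. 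Granting that lemma, everything else is the same bookkeeping that proved Theorems~\ref{embconicth}, \ref{conicth}, and \ref{embeqth}, and the theorem follows.
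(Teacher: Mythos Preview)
Your approach is exactly the paper's: the proof there is a single sentence (``the functor $\otau\mapsto\otau_1$ preserves face embeddings, and using Theorem~\ref{conicth} we obtain\ldots''), and you have spelled out precisely that strategy with more care.

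One claim in your isolated lemma is not quite right, however. It is \emph{not} true that for a faithful special $V^\bot$-cone $\otau$ every non-apex face meets $W_1$. Take $\oM=R[t]$: the associated $\otau$ is a $2$-dimensional cone with one edge lying in $W_0$ (corresponding to the point of $\oSpec(\oM)$ lying over $\eta$), and that edge does not meet $W_1$. In general a faithful $R$-toric fan may have points in $X_\eta$ other than generic ones, and under the equivalence of Theorem~\ref{conicth} these correspond exactly to faces of the cone complex contained in $W_0$; under slicing they appear as the recession data of the resulting (possibly unbounded) special $V$-polyhedra, not as faces of $\otau_1$. So your poset-isomorphism lemma should instead say that $F\mapsto F\cap W_1$ is a poset isomorphism from the \emph{faithful} faces of $\otau$ onto all faces of $\otau_1$, while the non-faithful faces are recovered uniquely as faces of the recession cone of $\otau_1$ (equivalently, from the closure of the cone over $\otau_1$). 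With that correction your argument goes through, and is in fact more explicit than what the paper records.
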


\subsection{Equivalence of properties}\label{equivsec}
In this section we study how different properties are transformed by the equivalences we have constructed.

\subsubsection{Subdivisions}
Subdivisions are respected by these equivalences.

\begin{theor}\label{equivsubdiv}
Assume that $f$ is a morphism of faithful $R$-toric monoschemes (resp. fans) and let $\phi$ and $\phi_1$ be the corresponding morphisms between (resp. non-embedded) cone and polyhedral complexes. Then the morphisms $f$, $\phi$ and $\phi_1$ are subdivisions or partial subdivisions if and only if at least one of them is a subdivision or a partial subdivision, respectively.
\end{theor}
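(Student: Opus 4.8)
The plan is to verify that each of the three conditions defining a (partial) subdivision — birationality, finite type, and the valuative criterion $(S)$ (equivalently $(\oS)$, by Lemma~\ref{subdivlem}) — has a description that is manifestly preserved by the equivalences of Theorems~\ref{embeqth} and \ref{nonembeqth}, so that it holds for $f$ precisely when it holds for $\phi$ and precisely when it holds for $\phi_1$; the theorem then follows by conjoining the three.

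The two easy conditions. Finite type is automatic in our setting: morphisms of $R$-toric monoschemes (or fans) are $R$-morphisms, an $R$-toric monoid is finitely generated over $R$ hence over any intermediate $R$-monoid, and the corresponding (non-)embedded cone- and polyhedral complexes have finite face sets; so all morphisms in all six categories are of finite type. Birationality of $f$ means that $f$ is a bijection on generic points and that $f^\#$ is an isomorphism (resp., in the fan case, a surjection) on the locally constant sheaves $M^\gp$ (resp. $\oM^\gp$); under $X\mapsto\tau_X$ this translates into the evident condition on $\phi$ — a linear isomorphism, component by component, of the ambient $V$-lattices $L_{\tau_X}\toisom L_{\tau_Y}$ matching the minimal faces (resp. the surjectivity of $L_{\otau_X^{(f(y))}}\to L_{\otau_Y^{(y)}}$ for non-embedded complexes) — and under $\tau\mapsto\tau_1$ into the same condition with the groups $A_P$ of $V$-affine functions in place of the $L_\tau$. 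In each of the three guises this is visibly one and the same requirement.

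The heart of the matter is the valuative criterion, for which the plan is to compute the test sets explicitly. Fix a sharp valuative monoid $R'$. Since $\oSpec(R')$ is a fan and $(\oR')^\times=0$, any homomorphism out of a monoid to $R'$ factors through its sharpening, so $\Mor(\oSpec(R'),X)=\Mor(\oSpec(R'),\oX)$, and as in the proof of Lemma~\ref{locisomlem} a morphism $\oSpec(R')\to X$ is a pair consisting of a point $x\in X$ together with a local homomorphism $\oM_{X,x}\to R'$. Unwinding the latter — extend to $\oM_{X,x}^\gp\to R'^\gp$, use that positivity on $\oM_{X,x}$ is equivalent to landing in the cone $\otau^{(x)}$ dual to $\sigma_{\oM_{X,x}}$ and that the locality condition is equivalent to landing in the relative interior of that face — identifies $\Mor(\oSpec(R'),X)$, functorially in $X$, with the set $|\otau_\oX|(R')$ of "$R'$-points of the realization": points of $|\otau_\oX|$ lying in the relative interior of some face $\otau_i$ and having coordinates in $R'^\gp$ with respect to the $V$-lattice $L_{\otau_i}$. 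Slicing at $v_\pi=\pi$ (and using that the polyhedral section remembers the recession cones $\otau_i\cap W_0$, so no $R'$-point is lost or created along $W_0$) identifies this further with the analogous set of $R'$-points of the polyhedral complex $\otau_{\oX,1}$; the same discussion with $\Spec(R')$ in place of $\oSpec(R')$ and the lineality spaces retained handles the embedded picture and, together with Corollary~\ref{subdivcor}, reconciles the monoscheme and fan branches. Granting this, the map $\lambda_{\oSpec(R')}$ attached to $f$ is, under the equivalences, literally the map on $R'$-points induced by $\phi$, and also the one induced by $\phi_1$; hence it is bijective (resp. injective) for every such $R'$ for $f$ iff for $\phi$ iff for $\phi_1$. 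Combining with the previous paragraph gives the claim.

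The main obstacle is precisely this identification $\Mor(\oSpec(R'),X)\cong|\otau_\oX|(R')$ and its compatibility with all of the functors in play: one must be careful that $R'$ need not be divisible (so one cannot simply tensor with $\bfR$ and argue on real points), that "local homomorphism" corresponds exactly to the relative interior of the correct face rather than to the face itself, and that the passages cone complex $\rightsquigarrow$ polyhedral section and embedded $\rightsquigarrow$ non-embedded neither lose nor add $R'$-points — each of these is routine but must be spelled out before the conclusion can be drawn.
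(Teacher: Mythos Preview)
Your approach is genuinely different from the paper's, and there is a real gap.

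You propose to carry the valuative criterion $(\oS)$ across the equivalences by identifying $\Mor(\oSpec(R'),X)$ with a set of ``$R'$-points'' of the cone complex. Two problems arise. First, a sharp valuative monoid $R'$ may have height $\ge 2$, in which case $R'^\gp$ does not embed in $\bfR$; your description of $|\otau_\oX|(R')$ as literal points of the real space $|\otau_\oX|$ ``having coordinates in $R'^\gp$'' is then not well-defined. One can repair this by defining $R'$-points abstractly (a face $\otau_i$ together with an element of $\mathrm{Hom}(L_i,R'^\gp)$ satisfying the evident positivity and interiority conditions), and then the identification with $\Mor(\oSpec(R'),X)$ is indeed tautological and functorial. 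But this only shows that the valuative criterion for $f$ is equivalent to the tautologically transferred valuative criterion for $\phi$ --- which is no content.

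Second, and this is the crux: the notion of ``subdivision'' for $\phi$ and $\phi_1$ is the \emph{geometric} one (a refinement of the cell structure with the same support, as used in Section~\ref{combsec}), not a valuative one. The entire content of Theorem~\ref{equivsubdiv} is that the algebraic condition $(\oS)$ on $f$ matches this geometric condition on $\phi$. Your argument would still owe the implication ``geometric subdivision $\Rightarrow$ $\lambda_{\oSpec(R')}$ bijective for every sharp valuative $R'$'' (and its converse), and you do not address it; the forward direction in particular is not routine once $R'$ has height $\ge 2$.

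The paper sidesteps a head-on comparison entirely. It first checks by hand that a blow-up $X_I\to X$ induces a geometric subdivision of cone complexes. Then, given an arbitrary birational $f$, it invokes the exactification theorem (Theorem~\ref{exactlem}) to find a blow-up $X_I\to X$ whose base change $f_I\:Y_I\to X_I$ is a \emph{local isomorphism}. Since blow-ups are subdivisions in both senses (Lemma~\ref{properblowup} on the algebraic side, direct inspection on the geometric side), one may replace $f$ by $f_I$; and for local isomorphisms, being a (partial) subdivision is simply being (in/bi)jective on underlying sets (Lemma~\ref{locisomlem}), a condition trivially preserved by the equivalences. The key device you are missing is this reduction via exactification.
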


\begin{proof}
The two cases are similar, so we will work with fans for concreteness. The equivalence is obvious for $\phi$ and $\phi_1$, hence it suffices to compare a morphism of fans $f\:Y\to X$ and the induced morphism $\phi\:\tau\to\rho$ of cone complexes. Furthermore, the property of being a (partial) subdivision is local on the target, hence we can assume that $X=\oSpec(M)$. Also, we claim that $f$ is birational if and only if $\phi$ is an embedding locally on $\tau$. Indeed, this is a local claim, hence we can assume that $Y=\oSpec(N)$ is also affine. Then $Y\to X$ is birational if and only if $M^\gp\onto N^\gp$. By duality, this happens if and only if $W_\tau\into W_\rho$, that is $\phi$ is an embedding of polyhedra. So, we can assume in the sequel that $f$ is birational.

Assume first that $Y=X_I$ is the blow up of $X$ along an ideal $I=(f_1\..f_n)$. Then a direct inspection shows that $\phi$ is a subdivision. Indeed, $\tau$ is covered by the polytopes $\tau_i$ corresponding to the charts $\oSpec(M[I-f_i])$. So, $\tau_i$ is cut off from $\rho$ by the inequalities $f_1\ge f_i\..f_n\ge f_i$, and hence $\tau$ is a subdivision of $\rho$.

Returning to the general case, assume that $X'=X_I$ is a blow up, $Y'=Y_I$ its pullback to $Y$, and $\phi'\:\tau'\to\rho'$ the induced morphism of cone complexes. Since $X'\to X$ and $Y'\to Y$ are subdivisions by Lemma~\ref{properblowup}, $f$ is a (partial) subdivision if and only if $f'\:Y'\to X'$ is so. Since $\tau'\to\tau$ and $\rho'\to\rho$ are subdivisions, $\phi$ is a (partial) subdivision if and only if $\phi'$ is so. Therefore, it suffices to prove the theorem for $f'$ instead of $f$, and by use of Theorem~\ref{exactlem} we can assume that $f$ (and hence also $\phi$) is a local isomorphism. By Lemma~\ref{locisomlem}, in the latter case the assertion reduces to the obvious observation that $f$ is injective or bijective if and only if $\phi$ is so.
\end{proof}

\begin{rem}
In fact, the faithfulness assumption is only needed to compare $f$ and $\phi$ to $\phi_1$. Our comparison of $f$ and $\phi$ applies in the non-faithful case too.
\end{rem}

\subsubsection{Polystable complexes}
Let $\tau$ be a faithful special $V^\bot$-cone and $P=\tau_1$ the corresponding special $V$-polyhedron $P$. We say that $P$ and $\tau$ are {\em semistable} (resp. {\em simplicial}) if the dual $V$-special cone $\tau^*$ is so. We say that $P$ (resp. $\tau$) is {\em polystable} if it is a product (resp. a $V$-product) of semistable ones.

\begin{rem}\label{simpolrem}
\begin{compactenum}[(i)] \item Clearly, $\tau$ is simplicial if and only if the underlying cone is simplicial. Note that $\tau$ may have edges (or even a facet) parallel to the $V$-affine space of $P$, and in this case $P$ is an ``unbounded simplex''.

\item Unravelling the definitions one can describe the semistability condition for $\tau$ and $P$ explicitly. For example, $P$ is semistable if and only if there exist $V$-affine functions $t_0\..t_n,s_1\..s_l\in U_P$ such that $t_0+\dots+t_m=\pi$ for some $m\le n$ and $\pi\in \mathfrak{m}_R$, the polyhedron $P$ is given by the conditions $t_i\ge 0$ and $s_j=0$ for $0\le i\le n$ and $0\le j\le l$, and $A_U=V\oplus\bfZ^{n+l}$ with the second factor spanned by $t_1\..t_n,s_1\..s_l$.
\end{compactenum}
\end{rem}

A $V^\bot$-cone complex (resp. a $V$-polyhedral complex) is called {\em simplicial}, {\em semistable} or {\em polystable} if all its cones (resp. polyhedra) are so. This applies both to the locally embedded and non-embedded versions.

\begin{lem}\label{simpollem}
Let $X$ be an $R$-toric monoscheme or fan, let $\tau$ be the associated special $V^\bot$-complex, and let $P$ be the associated $V$-affine polyhedral complex. Then $X$ is simplicial, semistable or polystable if and only if $\tau$ is so.
\end{lem}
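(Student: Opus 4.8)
The plan is to localize the problem and then feed it into the monoid--cone dictionary already established; I will spell out the monoscheme case, the fan case being identical after replacing $M_{X,x}$ by $\oM_{X,x}$ and the embedded complex $\tau_X$ by its non-embedded counterpart. Both conditions to be compared are pointwise: ``$X$ is $\bfP$'' means that $M_{X,x}$ is $\bfP$ for every $x\in X$, while ``$\tau$ is $\bfP$'' means that every face of $\tau=\tau_X$ is $\bfP$. By Theorems~\ref{embconicth} and~\ref{conicth} the points $x\in X$ are in bijection with the faces of $\tau$, and the special $V^\bot$-cone carried by the face attached to $x$ is the one associated to the localization $\Spec(M_{X,x})\into X$; in particular its dual $V$-cone is $\sigma_{M_{X,x}}$. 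So the whole statement reduces to the following assertion for a single $R$-toric monoid $M$, with cone $\sigma=\sigma_M$ and dual special $V^\bot$-cone $\tau=\sigma^*$: the monoid $M$ is simplicial / semistable / polystable if and only if $\tau$ is.

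For the simplicial and semistable cases this is immediate. By definition $\tau$ is simplicial (resp.\ semistable) exactly when its dual $V$-cone is so, and by Lemma~\ref{duallem} that dual is $\sigma_M$; and by Lemma~\ref{semiconelem} the $V$-cone $\sigma_M$ is simplicial (resp.\ semistable) exactly when $M$ is. For the polystable case I would unwind the definition of \S\ref{polymonsec}: $M$ is polystable iff $M=(M_1\oplus_R\dots\oplus_R M_k)^\sat$ with every $M_i$ semistable. Under the equivalence $M\mapsto\sigma_M$ of Theorem~\ref{conemonoidth}, which by \S\ref{sumconesec} turns (saturated) $R$-pushouts into $V$-sums, this becomes $\sigma_M=\sigma_{M_1}\oplus_V\dots\oplus_V\sigma_{M_k}$; dualizing, and using that $V$-sums of $V$-cones are dual to $V$-products of $V^\bot$-cones, it becomes $\tau=\sigma_{M_1}^*\times_V\dots\times_V\sigma_{M_k}^*$. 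By the semistable case already proved, each factor $\sigma_{M_i}^*$ is a semistable special $V^\bot$-cone precisely when $M_i$ is a semistable monoid; hence $M$ is polystable iff $\tau$ is a $V$-product of semistable special $V^\bot$-cones, i.e.\ iff $\tau$ is polystable. Every step here is an equivalence, so reading the chain backwards gives the converse. (The corresponding statement for the polyhedral complex $P=\tau_1$ is then automatic, since $P$ is simplicial/semistable/polystable by definition precisely when $\tau$ is.)

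It remains to handle the points $x$ at which $M_{X,x}$ is not faithful over $R$ --- equivalently, the faces of $\tau$ contained in the hyperplane $W_0$ --- since Lemma~\ref{semiconelem} was stated for faithful monoids only. There $M_{X,x}$ splits, by Corollary~\ref{Rtoriccor}, as the direct sum of a $V$-lattice and a classical toric monoid, the corresponding face of $\tau$ is an ordinary rational polyhedral cone, and all three properties degenerate to the usual simpliciality and smoothness of toric monoids; the equivalence there is the classical one (and was already invoked, after quotienting by $V$ and $V_\bfR$, in the proof of Theorem~\ref{conemonoidth}). The one point that genuinely requires care is the first one: that the dictionary of Theorems~\ref{embconicth}--\ref{conicth} really does identify, face by face, the special $V^\bot$-cone of the face attached to $x$ with $\sigma_{M_{X,x}}^*$. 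This, however, is just the statement that a localization $\Spec(M_{X,x})\into X$ of an affine $R$-toric monoscheme corresponds to a face embedding of cone complexes, which is built into the construction of those equivalences; so the expected main obstacle is purely this bookkeeping, and no input is needed beyond Lemmas~\ref{duallem} and~\ref{semiconelem} together with the classical toric facts.
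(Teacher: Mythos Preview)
Your proof is correct and follows essentially the same route as the paper's: localize to a single point/affine, then feed the resulting monoid into Lemma~\ref{semiconelem} via duality (Lemma~\ref{duallem}), with the passage to $P=\tau_1$ being definitional. The paper's proof is a two-line version of this; you have simply unpacked the polystable case explicitly (using the correspondence between $R$-pushouts, $V$-sums, and $V$-products from \S\ref{sumconesec} and the paragraph after Lemma~\ref{duallem}) and added a paragraph on non-faithful points, both of which the paper leaves implicit.
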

\begin{proof}
For concreteness we consider the case of monoschemes. The claim is local, hence we can assume that $X=\Spec(M)$. By Lemma~\ref{semiconelem}, $M$ satisfies any of these properties if and only if $\sigma_M$ does, and by definition the latter happens if and only if $\tau=(\sigma_M)^*$ and $P=\tau_1$ satisfy the same property.
\end{proof}

\subsubsection{Rational polyhedrons}
Our construction of polystable subdivisions will be done by first achieving a weaker property that we are going to define now. Note that any special $V$-polyhedron $P$ can also be viewed as a $(\bfQ,V)$-polyhedron or a rational $V$-polyhedron, that we denote $P_\bfQ$. The functor $P\mapsto P_\bfQ$ is faithful and essentially surjective but far from begin full. This happens because there are many rational but non-integral affine maps, for example, $A_{P_\bfQ}=A_P\otimes\bfQ$, so one usually has that $\Mor(P,\bfR)\subsetneq\Mor(P_\bfQ,\bfR)$.

\subsubsection{Rational splittings}\label{ratsplitsec}
The functor $P\mapsto P_\bfQ$ preserves products, but it may freely happen that a non-invertible morphism $Q\times R\to P$ induces an isomorphism $Q_\bfQ\times R_\bfQ\toisom P_\bfQ$. Thus, if $P$ splits rationally into a product, there might be obstacles to lift it to a splitting of $P$ itself. In fact, if $P_\bfQ=\prod_{i=1}^nQ_i$ for rational $V$-polyhedra, then $A_P\otimes\bfQ=A_{Q_1}\oplus_V\dots\oplus_VA_{Q_n}$ and projections of the $V$-lattice $A_P$ onto the summands give rise to $V$-lattices $A_i$ that refine $Q_i$ to $V$-polyhedra $P_i=(Q_i,A_i)$. The induced map $A_1\oplus_V\dots\oplus_VA_n\to A_P$ is an injective map between $V$-lattices of the same rank, hence it is of finite index $r$ which we call the {\em index} of the rational splitting. Note that $r$ measures how far the rational splitting is from being liftable to an integral one, in particular, $r=1$ if and only if the map $P\to\prod _{i=1}^nP_i$ is an isomorphism. To compute the index one can also switch to the usual $\bfZ$-lattices of affine functions modulo the constant ones: $r$ is the index of $\oplus_{i=1}^nA_i/V$ in $A_P/V$.

\begin{exam}\label{ratsplitexam}
Let $q,r\in\bfR$ be linearly independent over $\bfQ$ and $V=q\bfZ\oplus r\bfZ$. In the standard $V$-affine plane consider the $V$-polytope $P$ with vertices $(0,0)$, $(q,q)$, $(r,-r)$, $(q+r,q-r)$, and let $Q$ and $R$ be the intervals between the origin and the points $(q,q)$ and $(r,-r)$, respectively. Clearly, $P=Q\times R$ rationally, but the index is two because $(1,1)$ and $(1,-1)$ generate a sublattice of index 2 in $\bfZ^2$.
\end{exam}

\begin{rem}
An analog of the above phenomenon exists in the theory of abelian varieties. It usually happens that an abelian variety $A$ can be split into a product of abelian varieties only after replacing it with an isogeneous one. The rank in our case is an analog of the degree of a minimal isogeny needed for splitting.
\end{rem}

\subsubsection{Polysimplicial polyhedrons}
We say that a special $V$-polyhedron $P$ is {\em polysimplicial} if $P=\prod_{i=1}^nP_i$, where $P_i$ are simplicial $V$-polyhedrons. If such a decomposition exists only on the level of rational $V$-polyhedra and $r$ is its index, then we say that $P$ is {\em rationally polysimplicial of index} $r$.

\section{Polystable subdivisions of $(\bfZ,V)$-polytopes and $(\bfZ,V)$-polyhedral complexes}\label{combsec}

We now come to the proof of the main result. To help the reader, we repeat the definitions of the last sections in a combinatorial setting.

\subsection{Polytopes and Minkowski sums}
A \emph{polytope} is the convex hull of a finite set of points in $\bfR^n$. Alternatively, it is a bounded set of the form $\{ x \in \bfR^n : Ax \le b \}$ where $A \in \bfR^{m \times n}$ and $b \in \bfR^m$.

We use the standard lattice in $\bfR^n$ and to define the group of $V$-affine functions, where $V\subset \bfR$ is any $\bfQ$ vectorspace. With this convention, $(\bfZ,V)$-polytopes are simply polytopes with vertices in $V$ and rational facet slopes.

The \emph{dimension} of a polytope is the dimension of the smallest affine subspace containing the polytope. A \emph{simplex} is the convex hull of a set of affinely independent points. The \emph{Minkowski sum} of sets $A_1$, \dots, $A_r \subset \bfR^n$ is the set
\[
A_1 + \dotsb + A_r := \{ x_1 + \dotsb + x_r : x_i \in A_i \text{ for all } i \}.
\]
A \emph{product of simplices}, also \emph{polysimplex}, is a polytope of the form $P = A_1 + \dotsb + A_r$ where each $A_i$ is a simplex and  $\dim(A_1) + \dotsb + \dim(A_r) = \dim(P)$.

A \emph{face} of a polytope $P \subset \bfR^n$ is a set $F \subset P$ such that there exists a linear functional $\phi : \bfR^n \to \bfR$ and $c \in \bfR$ such that $\phi(x) = c$ when $x \in F$ and $\phi(x) < c$ when $x \in P \setminus F$. When this is true, we say that such a pair $(\phi,c)$ \emph{supports} $F$. By this definition, the empty set and $P$ are both faces of $P$. A \emph{vertex} is a face of dimension 0, an \emph{edge} is a face of dimension 1, and a \emph{facet} is a face of dimension $\dim(P)-1$. The \emph{graph} of $P$ is the 1-skeleton of $P$. We use $S(P)$, $E(P)$, and $G(P)$ to denote the vertex set, edge set, and graph of $P$, respectively.

A \emph{polyhedral complex} is a finite collection of $k$-dimensional polytopes in $\bfR^n$, called \emph{cells}, such that the intersection of any two cells is a face of both. Given a polyhedral complex $X$, we let $S(X)$ and $E(X)$ denote the union of the vertex sets and edge sets, respectively, over all cells of $X$. A \emph{subdivision} of a polytope $P$ is a polyhedral complex whose union of cells equals $P$.

\newcommand{\KK}{\bfQ}
\subsubsection{Polystable subdivisions of $(\bfZ,V)$-polytopes}
Let us first note that the class of $(\bfZ,V)$-polytopes is closed under Minkowski sum.

\begin{prop} \label{Vsum}
If $P_1$, \dots, $P_r$ are $(\bfZ,V)$-polytopes, then $P_1 + \dotsb + P_r$ is a $(\bfZ,V)$-polytope.
\end{prop}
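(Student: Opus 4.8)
The plan is to verify the two conditions defining a $(\bfZ,V)$-polytope — that all vertices lie in $V^n$ and that all facet slopes are rational — directly, using the standard description of the faces of a Minkowski sum. Write $P=P_1+\dotsb+P_r$. For a linear functional $\phi\:\bfR^n\to\bfR$ let $F_i^\phi\subseteq P_i$ and $F^\phi\subseteq P$ denote the faces on which $\phi$ attains its maximum over $P_i$, resp.\ over $P$. The first step is the elementary observation that $\max_P\phi=\sum_i\max_{P_i}\phi$ and that a point $x=x_1+\dotsb+x_r$ with $x_i\in P_i$ lies in $F^\phi$ if and only if each $x_i$ lies in $F_i^\phi$ (since $\phi(x_i)\le\max_{P_i}\phi$ for all $i$, the sum is maximal only when every term is). Hence $F^\phi=F_1^\phi+\dotsb+F_r^\phi$, and since every face of a polytope is exposed by some $\phi$, every face of $P$ is a Minkowski sum of faces of the $P_i$.

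Next I would handle the vertices. If $v$ is a vertex of $P$, choose $\phi$ with $F^\phi=\{v\}$; then $\{v\}=F_1^\phi+\dotsb+F_r^\phi$, and a Minkowski sum of nonempty sets is a singleton only when each summand is a singleton, so every $F_i^\phi=\{v_i\}$ is a vertex of $P_i$ and $v=v_1+\dotsb+v_r$. Since each $P_i$ is a $(\bfZ,V)$-polytope, $v_i\in V^n$; and $V$ is a $\bfQ$-vector space, in particular an additive subgroup of $\bfR$, so $v\in V^n$.

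For the facet slopes I would argue through edges: recall that if every edge of a polytope has rational direction, then every facet has rational slope, because the direction space of the affine hull of a facet is spanned by the directions of the edges it contains (the graph of the facet being connected), so a facet passing through a vertex in $V^n$ is then cut out by an inequality $\sum a_it_i\ge c$ with $a_i\in\bfZ$ and $c\in V$. So let $e$ be an edge of $P$, $e=F^\phi$. Then $e=F_1^\phi+\dotsb+F_r^\phi$, so the one-dimensional direction space of $e$ equals the sum of the direction spaces of the $F_i^\phi$; in particular each $F_i^\phi$ lies on a line parallel to $e$, and at least one $F_i^\phi$ must be one-dimensional, i.e.\ an edge of $P_i$ with the same direction as $e$. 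That direction is rational because $P_i$ is a $(\bfZ,V)$-polytope, hence $e$ has rational direction.

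Combining the two steps shows that $P$ has all vertices in $V^n$ and all edge directions rational, hence is a $(\bfZ,V)$-polytope. I do not expect a real obstacle here: the only point that needs a moment's care is the edge step, where one must rule out that an edge of the sum is a ``diagonal'' of some higher-dimensional summand face; but this is automatic, since the faces $F_i^\phi$ are all cut out by the same functional $\phi$ and their Minkowski sum is one-dimensional, which forces one of them to be an edge parallel to $e$.
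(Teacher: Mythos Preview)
Your argument is correct. The paper states this proposition without proof, evidently regarding it as an elementary observation; your verification via the standard decomposition $F^\phi=F_1^\phi+\dotsb+F_r^\phi$ of faces of a Minkowski sum is exactly the kind of direct check one would supply, and every step (vertices, edge directions, and the passage from rational edge directions to rational facet normals via connectivity of the facet graph) is sound.
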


Recall that a \emph{polystable subdivision} (or polystable refinement) is a $(\bfZ,V)$-subdivision all of whose cells are polystable as $(\bfZ,V)$-polytopes, that is, each cell is a $(\bfZ,V)$-polysimplex and finally the facet normals of the facets of a cell around any vertex of the same form a unimodular matrix. We call such a polysimplex also simply polystable.

Our goal is to prove the following.

\begin{theor} \label{polysubdv}
Every $(\bfZ,V)$-polytope has a polystable subdivision.
\end{theor}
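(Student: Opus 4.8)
The plan is to reduce to a single polysimplicial cell and then resolve it by a controlled sequence of rational cuts, in two stages.

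\emph{Stage 1 (polysimplicial subdivision).} First I would produce a $(\bfZ,V)$-subdivision of $P$ all of whose cells are $(\bfZ,V)$-polysimplices, i.e.\ Minkowski sums $\Delta_1+\dots+\Delta_k$ of simplices of complementary dimensions; by Proposition~\ref{Vsum} such a sum is again a $(\bfZ,V)$-polytope. The natural route is the Cayley trick: fix a $\bfQ$-basis $v_1,\dots,v_d$ of the finite-dimensional $\bfQ$-subspace of $V$ generated by the coordinates of the vertices of $P$, present $P$ over $\bfQ$ together with the bookkeeping data of the $v_j$, and build an auxiliary rational polytope whose sufficiently generic rational triangulations --- which exist by the classical theory of \cite{KKMS} --- push forward to subdivisions of $P$ whose cells are precisely the mixed cells and hence products of simplices. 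Alternatively, the existence of a polysimplicial (even only rationally polysimplicial, in the sense of \S\ref{ratsplitsec}) subdivision can be imported from the global construction of Abramovich--Rydh in \cite{AR}. This stage does not yet control any lattices.

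\emph{Stage 2 (polystabilization).} The core is to refine a polysimplicial subdivision until each cell is polystable, i.e.\ each simplicial factor becomes semistable and the facet normals around each vertex form a $\bfZ$-basis of $A/V$. Working cell by cell does not suffice, since refinements must match on shared faces, so I would run an induction on the dimension of the subcomplex already made polystable, with an inner induction on a complexity function attached to the remaining cells --- for instance the product of the normalized volumes of the simplicial factors times the index $r$ of the rational product decomposition of \S\ref{ratsplitsec}. For one cell $P=\prod_iP_i$ there are two defects to kill: (a) each $P_i$ must be subdivided into unimodular simplices, which mirrors the classical KKMS situation and is handled by stellar/barycentric techniques applied relative to the lattice $A_{P_i}/V$ (cuts can always be taken $(\bfZ,V)$-rational because $V$ is a $\bfQ$-vector space, and ``polystable'' may be tested freely on polytopes, $V^\bot$-cones, or $R$-toric fans via Theorem~\ref{conemonoidth} and Lemma~\ref{simpollem}); and (b) the index $r$ must be brought down to $1$, which --- unlike (a) --- cannot be achieved by cuts respecting the product structure and requires cuts transverse to the factors, chosen so that at every newly created vertex the facet normals are unimodular.

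\emph{The main obstacle.} The hard part is (b), the genuine interaction between the Minkowski/product structure and the $V$-integral lattice. Example~\ref{ratsplitexam} already exhibits a $(\bfZ,V)$-polysimplex (a parallelogram) that is not polystable and that cannot be repaired by subdivisions adapted to its two factors: one is forced to introduce cuts of ``new'' rational slopes transverse to both factors, and the combinatorial heart of the proof is to organize these cuts so that (i) the complexity function strictly decreases, guaranteeing termination, and (ii) the cuts agree across all cells of the complex --- which is exactly where the rationality hypotheses (integrality of $A/V$, bounded denominators) enter. Packaging this carefully is what yields the stronger compatible statement needed in the sequel, Theorem~\ref{compsubdv}.
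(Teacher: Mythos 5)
Your overall strategy coincides with the paper's: Stage~1 is exactly Lemma~\ref{polysimp} (decompose $P=\beta_1P_1+\dots+\beta_rP_r$ over a $\bfQ$-linearly independent set $\{\beta_i\}\subset V_{>0}$ with rational summands $P_i$, then pull back a rational triangulation of the Cayley polytope to a fine mixed, hence polysimplicial, subdivision), and your Stage~2 correctly isolates the two defects --- non-unimodularity of the factors and the index $r>1$ of the rational splitting --- together with the correct diagnosis that the second defect cannot be repaired by cuts respecting the product structure. You even flag Example~\ref{ratsplitexam} as the prototype obstruction. Note, however, that your Stage~1 glosses over how the Minkowski decomposition is actually produced: the paper needs the $2$-balanced edge-length functions of Propositions~\ref{2bal}--\ref{decomp} to show that such a decomposition with \emph{rational} summands exists at all; ``presenting $P$ over $\bfQ$ with bookkeeping data'' is not yet an argument.

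The genuine gap is in Stage~2(b). Saying that ``the combinatorial heart of the proof is to organize these cuts so that the complexity function strictly decreases'' names the problem rather than solving it: you give no construction of the transverse cuts and no reason why any choice of them lowers your complexity function. The paper's mechanism (Lemmas~\ref{decompCayley}, \ref{decompint}, \ref{decomp2}) has three ingredients that are absent from your sketch and that I do not see how to avoid. First, one must \emph{dilate} each rational summand $P_i$ by a positive integer $c_i$ (compensating by replacing $\beta_i$ with $\beta_i/c_i$, which is harmless since $V$ is a $\bfQ$-vector space); without this ``distortion'' there may be no lattice point $\mathbf{m}$ representing a nonzero class of $\bfZ^d/\Lambda$ inside the relevant Minkowski sub-sums, so there is nothing to cut through. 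Second, the index is reduced one step at a time by retriangulating with Cayley polytopes of the auxiliary bodies $A_s$, $B_s$ whose vertices generate the enlarged lattice $\langle\Lambda,\mathbf{m}\rangle$; this is what makes the index strictly decrease and the induction terminate. Third, the index-reduction lemma only applies when the summands are co-compact, which the summands produced by Proposition~\ref{decomp} need not be; the paper fixes this by a small generic perturbation of the basis $\beta_1,\dots,\beta_r$ preserving the combinatorics of the mixed subdivision. Without these three points your induction has no base mechanism, so the argument as written does not close.
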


\subsubsection{A counterexample for semistability}\label{ssec:semis}
Before we continue with the proof of Theorem~\ref{polysubdv}, let us quickly discuss why we cannot hope to obtain a triangulation (into simplices) of rational facet slopes in general. We have the following lemma

\begin{lem}
Consider a $d$-simplex in $\bfR^d$ with rational facet slopes. If $d$ vertices of the simplex are rational, then all vertices of the simplex are rational.
\end{lem}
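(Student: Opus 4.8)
The plan is to show that the possibly-irrational vertex is forced to be $V$-rational because it is cut out as the unique solution of a linear system whose coefficient matrix is integral and whose right-hand side lies in $V^d$; the $\bfQ$-vector space structure of $V$ then does the rest.

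First I would set up notation. Let the vertices of the $d$-simplex $\Delta\subset\bfR^d$ be $v_0,v_1\..v_d$, and assume after relabeling that $v_1\..v_d\in V^d$ are the given rational vertices; the goal is to deduce $v_0\in V^d$. One may assume $d\ge 2$, which is the case of interest (for $d\le 1$ there is no genuine facet-slope condition to exploit). For each $i=1\..d$ let $F_i$ denote the facet of $\Delta$ opposite $v_i$, and let $H_i=\{x\in\bfR^d : \langle n_i,x\rangle=c_i\}$ be its affine hull. Since $\Delta$ has rational facet slopes we may choose $n_i\in\bfZ^d$, and then $c_i\in\bfR$ is determined.

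Next I would pin down the two key facts about this system. On one hand, $v_0$ lies on every facet except $F_0$, hence $v_0\in H_i$, i.e. $\langle n_i,v_0\rangle=c_i$ for all $i=1\..d$. On the other hand, $F_i$ contains the vertices $v_1\.., \widehat{v_i}\.., v_d$, and since $d\ge 2$ at least one such vertex $v_j$ ($j\ne i$, $1\le j\le d$) belongs to $V^d$; because $n_i$ is integral and $V$ is a $\bfQ$-vector space, we get $c_i=\langle n_i,v_j\rangle\in V$. Thus the vector $c=(c_1\..c_d)$ lies in $V^d$. Finally, the matrix $A$ with rows $n_1\..n_d$ is invertible: the hyperplanes $H_1\..H_d$ intersect exactly in the single point $v_0$ because $\Delta$ is a nondegenerate $d$-simplex. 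Hence $v_0=A^{-1}c$, and since $A\in\bfZ^{d\times d}$ is invertible its inverse has rational entries, so $A^{-1}c\in V^d$ as $V$ is a $\bfQ$-vector space. Therefore $v_0\in V^d$, which completes the argument.

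The only point requiring any care — and the closest thing to an obstacle — is verifying that each constant term $c_i$ lies in $V$: this uses precisely that every one of the $d$ chosen facet hyperplanes passes through at least one of the prescribed rational vertices (which is exactly why the hypothesis "$d$ vertices rational" rather than "$d-1$" is needed), combined with the integrality of the facet normals. Everything else is linear algebra over $\bfZ$ and the $\bfQ$-linearity of $V$.
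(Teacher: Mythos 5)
Your argument is correct, and in fact the paper states this lemma without any proof at all, so your write-up supplies the missing verification. The structure is sound: $v_0$ lies on the $d$ facet hyperplanes $H_1,\dots,H_d$ opposite the rational vertices; each $H_i$ has an integral normal $n_i$ by the rational-slope hypothesis and passes through at least one rational vertex $v_j$ with $j\neq i$, $1\le j\le d$ (this is exactly where $d\ge 2$ and the count of $d$ rational vertices enter), so its constant term is rational; and the normals $n_1,\dots,n_d$ are linearly independent because in barycentric coordinates $\bigcap_{i=1}^d H_i$ is cut out by $\lambda_1=\dots=\lambda_d=0$, i.e.\ equals $\{v_0\}$. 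Inverting the integer matrix then gives the claim. One small remark: in this lemma the paper's ``rational'' means coordinates in $\bfQ$ (it is used in the next proposition about dilations of rational polytopes), not $V$-rational as you phrase it; your argument is insensitive to this, since it only uses that the coordinate set is a $\bfQ$-vector subspace of $\bfR$, so replacing $V$ by $\bfQ$ throughout gives the intended statement.
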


We conclude:

\begin{prop}
A $d$-polytope $P$ has a triangulation by simplices with rational facet slopes if and only if it is a dilation of a rational polytope.
\end{prop}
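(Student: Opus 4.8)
The two implications are handled separately, and essentially all of the content is in the ``only if'' direction. If $P$ is a dilation of a rational polytope, say $P=\lambda Q+v$ with $Q$ a polytope having vertices in $\bfQ^d$, $\lambda\neq 0$, and $v\in\bfR^d$, then I would take any triangulation of $Q$ introducing no new vertices (for instance a pulling triangulation); its simplices have vertices in $\bfQ^d$ and hence rational facet slopes, and the affine automorphism $x\mapsto\lambda x+v$ carries this to a triangulation of $P$ whose simplices still have rational facet slopes, since a translation followed by a dilation preserves the direction of every affine hyperplane.

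Conversely, suppose $\mathcal{T}$ is a triangulation of $P$ into $d$-simplices with rational facet slopes. The plan is to record two facts about a single $d$-simplex $\Delta$ with rational facet slopes and then propagate rationality through $\mathcal{T}$. First, each edge of $\Delta$ is the intersection inside $\Delta$ of the $d-1$ facet hyperplanes containing it, and any $d-1$ of the $d+1$ facet hyperplanes of a $d$-simplex are in general position, meeting in an affine line; so that line --- the affine span of the edge --- is rational, i.e.\ the edge has rational direction. Second, suppose in addition that one vertex of $\Delta$ is the origin, with remaining vertices $u_i=t_ie_i$, where by the first fact one may take $e_i$ to be a primitive integer vector along the edge from $0$ to $u_i$. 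If $n$ is a rational normal of the facet opposite $0$, then $n\cdot u_i$ equals a common constant $c$, which is nonzero since $0$ does not lie on that hyperplane; hence $t_i(n\cdot e_i)=c$ and $t_i/t_1=(n\cdot e_1)/(n\cdot e_i)\in\bfQ$, so $t_1^{-1}\Delta$ has all its vertices in $\bfQ^d$.

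With these in hand, I would fix a vertex $v^*$ of $\mathcal{T}$, choose a maximal cell $\Delta_0$ of $\mathcal{T}$ having $v^*$ as a vertex, and translate so that $v^*=0$; the second fact then gives a positive scalar $\lambda:=t_1$ with $\lambda^{-1}\Delta_0$ having all vertices in $\bfQ^d$. The dual graph of $\mathcal{T}$ --- maximal cells as nodes, an edge whenever two cells share a $(d-1)$-face --- is connected, since for generic interior points of two maximal cells the segment joining them lies in the interior of $P$ and meets only relative interiors of cells and of shared $(d-1)$-faces. Along this graph I propagate rationality: if $\lambda^{-1}\Delta$ has all vertices rational and $\Delta'$ shares a facet $G$ with $\Delta$, then the $d$ vertices of $\lambda^{-1}G$ are rational vertices of the $d$-simplex $\lambda^{-1}\Delta'$, which has rational facet slopes, so by the preceding lemma all $d+1$ vertices of $\lambda^{-1}\Delta'$ are rational. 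By connectedness, $\lambda^{-1}\Delta$ has rational vertices for every cell $\Delta$ of $\mathcal{T}$, and since each vertex of $P$ is a vertex of some cell, $\lambda^{-1}P$ is a polytope with rational vertices; hence $P$ is a dilation of a rational polytope (up to the initial translation).

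The genuinely delicate points are the general position of the facet hyperplanes of a $d$-simplex invoked in the first fact, and the connectedness of the dual graph of $\mathcal{T}$; both are standard, the latter for triangulations of a convex polytope. I expect the preceding lemma to be needed only in the propagation step --- moving rationality across a shared facet --- while the base cell $\Delta_0$ through the translated origin is handled directly by the computation with $n$.
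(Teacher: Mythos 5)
Your proof is correct and follows essentially the same route as the paper's: normalize by a single translation and dilation so that one cell (in the paper, one edge) becomes rational, then propagate rationality across shared $(d-1)$-faces through the connected dual graph of the triangulation using the preceding lemma. Your facet-normal computation for the base cell ($t_i(n\cdot e_i)=c$, hence $t_i/t_1\in\bfQ$) in fact supplies exactly the detail the paper leaves implicit when it asserts that all vertices of simplices incident to the rationalized edge are rational.
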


\begin{proof}
If $P$ is a dilation of a rational polytope, providing a rational triangulation is an easy exercise. If, on the other hand, $Q$ is a subdivision of $P$ into simplices of rational facet slopes, dilate and translate it and $P$ so that one of the edges has only rational vertices. Then all the remaining vertices of incident simplices are rational as well. Since any two simplices of $Q$ are connected by a path of simplices such that subsequent elements intersect in a simplex of dimension $d-1$. But then all vertices in this dilation are rational. Hence $P$ is the dilation of a rational polytope itself.
\end{proof}

It follows that, for instance, every rectangle with irrational ratio of side lengths cannot be triangulated with triangles of rational slope.

\subsection{Mixed subdivisions}

\subsubsection{Mixed subdivisions}
We now follow \cite[Section~9]{LRS}. Let $P_1$, \dots, $P_r \subset \bfR^n$ be polytopes and $P = P_1 + \dotsb + P_r$. A \emph{mixed subdivision} of $P$ with respect to $P_1$, \dots, $P_r$ is a subdivision of $P$ where each cell $C$ is given a label $(C_1,\dotsc,C_r)$ such that the following hold:
\begin{compactenum}[(i)]
\item  For each $i$, $C_i$ is a polytope with $S(C_i) \subseteq S(P_i)$.
\item $C = C_1 + \dotsb + C_r$.
\item If $C$, $C'$ are two cells labeled $(C_1,\dotsc,C_r)$ and $(C_1',\dotsc,C_r')$, then for each $i$, $C_i \cap C_i'$ is a face of both $C_i$ and $C_i'$.
\end{compactenum}
A mixed subdivision is \emph{fine} if for every cell $C$ with label $(C_1,\dotsc,C_r)$, each $C_i$ is a simplex and $\dim(C_1) + \dotsb + \dim(C_r) = \dim C$.

\begin{exam}
Assume that $P$ is a simplex in $\bfR^n$ with vertices $A_0\.. A_n$. Then the Minkowski sums $Q_i$, $0\le i\le n$ of the simplex with vertices $A_0\.. A_i$ and the simplex with vertices $A_{i}\.. A_{n}$ form a fine mixed subdivision of $2P=P+P$.
\end{exam}

Mixed subdivisions arise naturally from subdivisions of Cayley polytopes, that we introduce in the next section.

\subsubsection{Cayley polytopes} \label{ssec:cayley}
The geometric construction of the previous section is easily generalized to
higher dimensions.  For a family $P_{[r]} = (P_1,\dots,P_r)$ of $r$
polytopes in $\bfR^d$, we define the \emph{Cayley polytope} as
\[
    \Cay(P_{[r]}) \ := \ \conv\Biggl(\,\bigcup_{i=1}^r P_i \times
    e_{i}\,\Biggr) \ \subseteq \ \bfR^d \times \bfR^r.
\]
The coordinate projection $\bfR^d \times \bfR^r \rightarrow \bfR^r$ restricts to a
linear projection
\begin{equation}\label{eq:projection}
    \pi : \Cay(P_{[r]}) \ \longrightarrow \ \Delta_{r} \ = \ \conv\{e_1,\dots,e_r\}
\end{equation}
of the Cayley polytope to the (geometric) standard $(r-1)$-simplex $\Delta_{r}$. It is easy
to see that for $\lambda = (\lambda_1,\dots,\lambda_r) \in \Delta_{r}$, we
have
\begin{equation}\label{eq:fib}
 \pi^{-1}(\lambda) \ \cong \ \lambda_1P_1 + \cdots + \lambda_r P_r.
\end{equation}
Triangulations of Cayley polytopes using only the vertices of summands restrict therefore to fine mixed subdivisions of slices. Moreover, any triangulation of a Cayley polytope that only uses vertices lying in the individual summands induces a subdivision into polysimplices. 

The proposition suggests that the boundary of the Cayley polytope
$\Cay(P_{[r]})$ is stratified along the facial structure of the
cardinality $r$ simplex $\Delta_{r}$. We define the \emph{Cayley complex} $\mathrm{T}_{[r]} =
\mathrm{T}(P_{[r]})$ as the closure of $\pi^{-1}(\mathrm{relint} \Delta_{r}) \cap \partial
\Cay(P_{[r]})$.

\subsubsection{Regular subdivisions} \label{sec:regsubdv}

Let $P \subset \bfR^n$ be a polytope and let $S \subset \bfR^n$ be a finite set such that $\conv(S) = P$, where ``$\conv$'' denotes convex hull. Let $f : S \to \bf R$ be any function. The \emph{lift} of $P$ with respect to $f$ is the polytope
\[
P^f := \conv \{ (s,f(s)) \in \bfR^{n+1} : s \in S \}.
\]
A face $F$ of a polytope is a \emph{lower face} if it is supported by $(\phi,c)$ where $\phi(0,\dotsc,0,1) < 0$. Let $\pi : \bfR^{n+1} \to \bfR$ be the projection map onto the first $n$ coordinates. Then for any lift $P^f$, the set
\[
\{ \pi(F) : F \text{ is a lower facet of } P^f \}
\]
is the set of cells of a subdivision of $P$. We say that this subdivision is \emph{induced} by $f$. A subdivision is \emph{regular} if it is induced by some $f$.

Let $P_1$, \dots, $P_r \subset \bfR^n$ be polytopes and let $P = P_1 + \dotsb + P_r$. For each $i$, let $f_i : S(P_i) \to \bfR$ be any functions. Then the set
\[
\{ \pi(F) : F \text{ is a lower facet of } P_1^{f_1} + \dotsb + P_r^{f_r} \}
\]
is a regular mixed subdivision of $P$. We say that this mixed subdivision is \emph{induced} by $f_1$, \dots, $f_r$. If the $f_i$ are generic, then the induced mixed subdivision is fine. In particular, this implies that there exists a fine mixed subdivision of $P$ with respect to $P_1$, \dots, $P_r$.

\subsection{Proof of Theorem~\ref{polysubdv}, preparation:}

\subsubsection{Construction of polytopes from edges}

We first prove the following weaker Lemma

\begin{lem}\label{polysimp}
Every $(\bfZ,V)$-polytope has a polysimplicial subdivision.
\end{lem}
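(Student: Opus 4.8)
The plan is to produce the polysimplicial subdivision as a slice of a triangulation of a Cayley polytope, using the fact that a triangulation of $\Cay(P_{[r]})$ using only vertices of the summands restricts on fibers to a fine mixed subdivision, and that fine mixed subdivisions are exactly decompositions into polysimplices (cf.\ \S\ref{ssec:cayley} and \S\ref{sec:regsubdv}). First I would reduce to the problem of realizing $P$ itself as a fiber of a Cayley-type projection: choose a finite generating set $S(P)$ of vertices, and for each edge $e\in E(P)$ record the primitive integral direction vector $w_e\in\bfZ^n$. The idea is to build an auxiliary family of polytopes $P_e$, indexed by the edges (or by a spanning subset of them), each $P_e$ being a $(\bfZ,V)$-segment in direction $w_e$, chosen so that the $V$-lengths add up correctly along each edge of $P$; one wants $P$ to appear, up to a $V$-affine isomorphism, as a Minkowski sum $\sum_e P_e$ after an appropriate choice of base vertex. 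Concretely, fix a spanning tree $T$ of the graph $G(P)$ rooted at a chosen vertex $v_0$; every vertex $v$ is then reached by a unique path in $T$, and expressing the position of $v$ as $v_0$ plus a $V$-linear combination of the primitive edge directions along that path gives the data needed to define the segments $P_e$.

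Next I would set up the Cayley polytope $\Cay(P_{[r]})$ of this edge family and take a \emph{regular} triangulation of it induced by generic lifting functions $f_e$ on the (two-point) vertex sets $S(P_e)$, as in \S\ref{sec:regsubdv}; genericity guarantees the induced mixed subdivision of the relevant fiber is fine, hence is a decomposition into polysimplices, and each polysimplex is a Minkowski sum of simplices spanned by vertices of the $P_e$, so in particular has vertices in $V^n$ and rational facet slopes, i.e.\ is a $(\bfZ,V)$-polysimplex. The key point to check is that all the combinatorial data can be chosen rationally/$V$-rationally so that the resulting subdivision is genuinely a $(\bfZ,V)$-polyhedral complex: the segments $P_e$ have $V$-rational endpoints and rational direction, and the lifting functions, while generic, can be taken rational so that the lower facets of the lifted Minkowski sum have rational slopes and $V$-rational vertices. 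I would then verify that the fiber over the appropriate $\lambda$ in the base simplex recovers $P$ up to $V$-affine isomorphism — this is where the spanning-tree bookkeeping pays off — so that the induced fine mixed subdivision is a polysimplicial subdivision of $P$ in the required sense. Finally one notes (via Proposition~\ref{Vsum}) that Minkowski sums of $(\bfZ,V)$-polytopes are again $(\bfZ,V)$-polytopes, so every cell is legitimately a $(\bfZ,V)$-polysimplex.

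The main obstacle I anticipate is \emph{not} the existence of a fine mixed subdivision (that is standard, \S\ref{sec:regsubdv}) but rather arranging the edge family $P_{[r]}$ so that the Minkowski sum $\sum_e P_e$ reproduces $P$ and not merely some polytope with the same edge directions: a general polytope is \emph{not} a zonotope, so one cannot hope to write $P$ literally as a sum of its own edge segments. The fix is to allow the summands to be simplices of higher dimension rather than just segments, or — following the construction in the excerpt's surrounding text — to pass through the Cayley polytope and take the correct \emph{non-generic} fiber, accepting a finer subdivision than strictly necessary; the delicate part is then checking that the chosen fiber $\pi^{-1}(\lambda)$ with $\lambda$ possibly on the boundary of $\Delta_r$ still inherits a fine mixed (hence polysimplicial) subdivision, and that all vertices produced lie in $V^n$. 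A secondary technical point is handling unbounded behavior and the interaction with the lattice $A_P/V$: the subdivision must be a $(\bfZ,V)$-complex, so the simplices appearing in each polysimplex must have edge directions that are primitive integral vectors and vertices in $V^n$, which forces the lifting data and the tree-based coordinates to be chosen over $\bfZ$ and $V$ rather than over $\bfQ$ and $V_\bfQ$; keeping track of this integrality throughout the Cayley construction is the part that will require the most care.
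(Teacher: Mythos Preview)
Your overall strategy---realize $P$ as a Minkowski sum, then pull back a fine mixed subdivision from a triangulation of the associated Cayley polytope---is exactly the paper's strategy. But the decisive step is producing the Minkowski decomposition, and here your proposal has a genuine gap that you partly recognize yourself.

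You attempt to write $P$ as a sum of segments $P_e$ along the edge directions, using a spanning tree to organize the bookkeeping. As you note, this cannot work unless $P$ is a zonotope; the spanning-tree coordinates only recover the vertices along the tree, and the cycle constraints coming from 2-faces are exactly what prevents an arbitrary polytope from being a zonotope. Your suggested fixes (``allow higher-dimensional simplices as summands'', ``take a non-generic fiber of the Cayley projection'') are not concrete enough to repair this: there is no mechanism in your plan that produces summands $P_i$ whose Minkowski sum is $P$ rather than merely a polytope with the same edge directions.

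The missing idea, which is the content of the paper's Proposition~\ref{decomp}, is to decompose \emph{along a $\bfQ$-basis of $V$} rather than along the edges of $P$. Each normalized edge length $\ell_0(e)$ lies in $V$; one chooses a $\bfQ$-basis $\beta_1,\dots,\beta_r\in V_{>0}$ so that every $\ell_0(e)$ is a positive rational combination $\sum_i c_i(e)\beta_i$. The functions $\eta_i(e)=c_i(e)\ell(e_0)$ are then \emph{2-balanced} on every 2-face of $P$---this is where the $\bfQ$-linear independence of the $\beta_i$ is used, to separate a single vector equation over $V$ into $r$ rational vector equations---and Proposition~\ref{2bal} turns each $\eta_i$ into a rational polytope $P_i$ with $P=\beta_1 P_1+\dots+\beta_r P_r$. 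The summands are full-fledged rational polytopes (combinatorially equivalent to $P$), not segments, and the zonotope obstruction never arises. Once this decomposition is in hand, the Cayley/mixed-subdivision machinery applies exactly as you describe.
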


Here, a polysimplicial subdivision is a subdivision into $(\bfZ,V)$-polysimplices, that is, polysimplices with rational facet normals and vertex coordinates described by $V$.

%\subsubsection{Normal fans} Let $P \subset \bfR^n$ be a polytope and $F$ a face of $P$. The \emph{normal cone} of $P$ at $F$ is the closure of the set of all $\phi \in (\bfR^n)^\ast$ such that $(\phi,c)$ supports $F$ for some $c \in \bf R$. The collection of all normal cones of $P$ over all faces of $P$ is the \emph{normal fan} of $P$.

%\begin{prop}
%If $P_1$, \dots, $P_r$ are polytopes with the same normal fan $\mathcal N$, then $P_1+\dotsb+P_r$ has normal fan $\mathcal N$.
%\end{prop}

Let $P \subset \bfR^n$ be a polytope. Let $F$ be a 2-dimensional face of $P$ with vertices $v_1$, \dots, $v_k$ and edges $v_1v_2$, $v_2v_3$, \dots, $v_kv_1$ (here $uv$ denotes the segment with endpoints $u$, $v$). Let $\eta : E(P) \to \bfR_{>0}$ be a function. We say that $\eta$ is \emph{2-balanced on $F$} if
\[
\eta(v_1v_2) \frac{v_2-v_1}{\lVert v_2-v_1 \rVert} + \eta(v_2v_3) \frac{v_3-v_2}{\lVert v_3-v_2 \rVert} + \dotsb + \eta(v_kv_1) \frac{v_1-v_k}{\lVert v_1-v_k \rVert} = \vec{0}.
\]
We say that $\eta$ is \emph{2-balanced} (with respect to $P$) if it is 2-balanced on all 2-dimensional faces of $P$.

\begin{prop} \label{2bal}
Let $\eta : E(P) \to \bfR_{>0}$ be 2-balanced with respect to $P$. Then there exists a unique up to translation polytope $P' \subset \bfR^n$ and a graph isomorphism $\psi : G(P) \to G(P')$ such that for each edge $uv$ of $P$, we have
\[
\psi(v) - \psi(u) = \eta(uv) \frac{v-u}{\lVert v-u \rVert}.
\]
\end{prop}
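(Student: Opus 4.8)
The plan is to build the vertex map $\psi$ on $S(P)$ first and only afterwards recognize $\psi(S(P))$ as the vertex set of a polytope. For an edge $uv\in E(P)$ set the directed vector $w(u\to v):=\eta(uv)\,(v-u)/\lVert v-u\rVert$, so $w(v\to u)=-w(u\to v)$. A map $\psi\colon S(P)\to\bfR^n$ with $\psi(v)-\psi(u)=w(u\to v)$ for every edge exists, and is unique up to adding a constant vector, exactly when $\sum w$ vanishes around every cycle of $G(P)$, where $\sum w$ along an oriented closed walk is a well-defined $\bfR$-linear functional of the corresponding $1$-chain. By hypothesis $\eta$ is $2$-balanced, i.e. $\sum w=0$ around the boundary cycle of each $2$-dimensional face of $P$. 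So it suffices to know that the cycle space of $G(P)$ is generated by the boundary cycles of the $2$-faces. For $\dim P\ge 3$ this holds because the face poset of $P$ is a regular CW decomposition of the sphere $S^{\dim P-1}$, whose fundamental group — equivalently that of its $2$-skeleton — is trivial, so $H_1$ of the $1$-skeleton is killed by the $2$-cells; for $\dim P=2$ the graph $G(P)$ is a single cycle, the boundary of the unique $2$-face; for $\dim P\le 1$ there is nothing to check. This produces $\psi$, unique up to translation, and we put $P':=\conv\psi(S(P))$.

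It remains to show $P'$ is a polytope and $\psi$ induces a graph isomorphism $G(P)\toisom G(P')$; we argue by induction on $d=\dim P$. The cases $d\le1$ are immediate, and $d=2$ is the elementary fact that rescaling the edges of a convex polygon by positive factors, keeping their directions, yields a convex polygon with the vertices in the same cyclic order (the sequence of exterior angles is unchanged and still sums to $2\pi$, and $\sum w=0$ makes the boundary close up). Let $d\ge3$. For each facet $F$ of $P$, the restriction of $\eta$ to $E(F)$ is $2$-balanced with respect to $F$, since the $2$-faces of $F$ are $2$-faces of $P$; by induction $F$ has a realization of the desired type, and by its uniqueness up to translation together with the connectedness of $G(F)$ this realization agrees with $\psi|_{S(F)}$ up to a translation. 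Hence $P'_F:=\conv\psi(S(F))$ is a genuine $(d-1)$-polytope, $\psi|_{S(F)}$ is injective, $\psi(S(F))$ affinely spans a hyperplane $H_F$, and $\psi$ restricts to a combinatorial isomorphism $F\cong P'_F$. If two facets $F_1,F_2$ meet along a ridge $R$ then $\psi(S(F_1))\cap\psi(S(F_2))=\psi(S(R))$, so the $P'_F$ glue face-to-face into a polytopal $(d-1)$-sphere $\Sigma$ that is combinatorially isomorphic, via $\psi$, to the boundary complex of $P$.

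The remaining point — and the heart of the argument — is to show that $\Sigma$ bounds a convex body; then $\Sigma=\partial P'$, the $P'_F$ are its facets, $\psi$ is a combinatorial (in particular graph) isomorphism $P\cong P'$, and injectivity of $\psi$ on all of $S(P)$ follows. By the classical local-to-global principle for convexity it is enough to verify that $\Sigma$ is locally convex at every point. This is automatic in the relative interior of a facet, and along a ridge it reduces to local convexity at any vertex of the ridge, so everything comes down to showing that at each $v\in S(P)$ the cone spanned by the edges of $\Sigma$ at $\psi(v)$, with its induced face structure, is a pointed convex cone. The edges of $\Sigma$ at $\psi(v)$ are positive rescalings of the edges of $P$ at $v$, hence span the same directions; the $2$-faces of $P$ through $v$, which generate the $2$-skeleton of the cone of $P$ at $v$, are realized in $\Sigma$ as convex polygons by the planar case; and one feeds this into the same inductive machine one dimension down, the cone of $\Sigma$ at $\psi(v)$ being the cone over a realization of the vertex figure $P/v$, which carries exactly the same type of $2$-balanced data. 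I expect this last propagation to be the main obstacle: controlling the global convex structure through the facets and vertex figures, rather than merely the abstract vertex–edge incidences, is the delicate part, whereas the construction of $\psi$ in the first step is routine once the role of the $2$-faces in generating the cycle space is identified.
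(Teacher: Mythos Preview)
Your construction of $\psi$ --- defining the edge vectors $w(u\to v)$ and using that the $2$-face boundary cycles generate the cycle space of $G(P)$ --- is exactly the paper's argument, phrased there as ``the $2$-balanced condition and the fact that the $2$-skeleton of a polytope has trivial first homology group implies that $\psi(v)$ is well-defined.'' The paper then simply asserts, without further detail, that ``it is easy to check that $\psi(S(P))$ is the vertex set of a polytope $P'$'' and that $\psi$ is a graph isomorphism.

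Your attempt to supply that missing justification via induction on the dimension and a local-to-global convexity principle is considerably more laborious than necessary, and you yourself correctly flag that the vertex-figure recursion at the end is not fully closed. A much shorter argument --- presumably what the paper has in mind --- goes as follows. Since $\psi$ preserves edge \emph{directions} (only rescaling lengths by positive factors), for any linear functional $\phi$ and any edge $uv$ of $P$ one has $\phi(\psi(v))>\phi(\psi(u))$ if and only if $\phi(v)>\phi(u)$. Now in any polytope a vertex $v$ lies in the $\phi$-maximal face precisely when no edge at $v$ is $\phi$-increasing, and from any vertex one reaches that face along a $\phi$-monotone edge path. Both conditions transfer verbatim under $\psi$, so for every $\phi$ the set of $\phi$-maximizers in $\psi(S(P))$ is exactly $\psi$ of the $\phi$-maximal face of $P$. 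This shows at once that each $\psi(v)$ is a vertex of $P'=\conv\psi(S(P))$, that $\psi$ is injective, and that the face lattice of $P'$ coincides with that of $P$ under $\psi$; in particular $\psi$ is a graph isomorphism. In short, $P$ and $P'$ share a normal fan, and no induction or local convexity is required.
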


\begin{proof}
Fix a vertex $v_0 \in S(P)$, and define $\psi(v_0) := \vec{0}$. For each $v \in S(P)$, we define $\psi(v)$ as follows: Let $v_0$, $v_1$, \dots, $v_k$, $v$ be a path from $v_0$ to $v$ in $G(P)$. Define
\[
\psi(v) := \eta(v_0v_1) \frac{v_1-v_0}{\lVert v_1-v_0 \rVert} + \eta(v_1v_2) \frac{v_2-v_1}{\lVert v_2-v_1 \rVert} + \dotsb + \eta(v_kv) \frac{v_k-v}{\lVert v_k-v \rVert}.
\]
The 2-balanced condition and the fact that 2-skeleton of a polytope has trivial first homology group implies that $\psi(v)$ is well-defined. It is easy to check that $\psi(S(P))$ is the vertex set of a polytope $P'$. Moreover, $\psi$ is a graph isomorphism from $G(P)$ to $G(P')$. Finally, $P'$ is clearly the unique polytope with $\psi(v_0) = \vec{0}$ satisfying the conclusion of the Proposition.
\end{proof}

For a given function $\eta$ which is 2-balanced with respect to $P$, let $P(\eta)$ denote the polytope given by Proposition~\ref{2bal}.

\begin{prop} \label{2balsum}
If $\eta_1$, \dots, $\eta_r$ are 2-balanced with respect to $P$, then $\eta_1 + \dotsb + \eta_r$ is 2-balanced with respect to $P$ and
\[
P(\eta_1) + \dotsb + P(\eta_r) = P(\eta_1 + \dotsb + \eta_r).
\]
\end{prop}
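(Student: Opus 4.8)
The plan is to exploit that everything in Proposition~\ref{2bal} depends \emph{linearly} on $\eta$, and that the polytopes $P(\eta_j)$ all have edges parallel to those of $P$, so that their Minkowski sum can be analyzed one vertex at a time through normal cones. Write $\eta_\Sigma := \eta_1+\dotsb+\eta_r$.

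First I would check that $\eta_\Sigma$ is $2$-balanced. For a $2$-face $F$ with cyclically ordered vertices $v_1,\dots,v_k$, the left side of the balancing identity is $\sum_i \eta(v_iv_{i+1})\,w_i$ with $w_i := (v_{i+1}-v_i)/\lVert v_{i+1}-v_i\rVert$ depending only on $P$; hence $\eta\mapsto\sum_i\eta(v_iv_{i+1})w_i$ is linear and vanishes on $\eta_\Sigma$ as soon as it vanishes on each $\eta_j$, while $\eta_\Sigma$ is positive because each $\eta_j$ is. Thus $P(\eta_\Sigma)$ is defined. Next, fixing one base vertex $v_0$ for all the constructions, let $\psi_j$ and $\psi_\Sigma$ be the vertex maps of Proposition~\ref{2bal} attached to $\eta_j$ and to $\eta_\Sigma$. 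Since $\psi_j(v)$ is computed by summing $\eta_j(e)$ times the unit vector along $e$ over the edges $e$ of \emph{any} path from $v_0$ to $v$ in $G(P)$ (the choice being immaterial by well-definedness), running one and the same path for every $j$ gives $\psi_\Sigma(v)=\sum_{j=1}^r\psi_j(v)$ for all $v\in S(P)$.

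The geometric core is the claim: if $v$ is the unique maximizer on $P$ of a linear functional $\phi$, then $\psi_j(v)$ is the unique maximizer of $\phi$ on $P(\eta_j)$, and likewise $\psi_\Sigma(v)$ on $P(\eta_\Sigma)$. I would deduce this from Proposition~\ref{2bal}: $\psi_j\colon G(P)\to G(P(\eta_j))$ is a graph isomorphism sending the edge $uv$ of $P$ to an edge of $P(\eta_j)$ in the \emph{same} direction, so the edge vectors of $P(\eta_j)$ emanating from $\psi_j(v)$ are positive rescalings of those of $P$ emanating from $v$; combined with the standard fact that a vertex $w$ of a polytope $Q$ is the unique $\phi$-maximizer exactly when $\phi$ is negative on every edge vector of $Q$ at $w$ (equivalently, on all nonzero vectors of the tangent cone of $Q$ at $w$, which those edge vectors generate), the claim follows, and identically with $\psi_j,P(\eta_j)$ replaced by $\psi_\Sigma,P(\eta_\Sigma)$. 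Now let $\phi$ range over the dense open set of directions attaining a unique maximum $v=v(\phi)$ on $P$. Writing $h_Q(\phi):=\max_{x\in Q}\phi(x)$, using additivity $h_{Q_1+Q_2}=h_{Q_1}+h_{Q_2}$ of support functions under Minkowski sum, the previous claim, and $\psi_\Sigma=\sum_j\psi_j$, we obtain
\[
h_{P(\eta_1)+\dotsb+P(\eta_r)}(\phi)=\sum_{j=1}^r\phi\bigl(\psi_j(v)\bigr)=\phi\Bigl(\sum_{j=1}^r\psi_j(v)\Bigr)=\phi\bigl(\psi_\Sigma(v)\bigr)=h_{P(\eta_\Sigma)}(\phi).
\]
Since both sides are continuous in $\phi$ and agree on a dense set, they agree everywhere, so the two polytopes coincide (even on the nose: with the common base point $v_0\mapsto\vec 0$ both have $\vec 0$ as the vertex associated to $v_0$; in general the equality is up to translation).

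I expect the first two steps — linearity of the balancing condition and additivity of the $\psi_j$ — to be routine. The main obstacle is the geometric core: upgrading the combinatorial statement ``$\psi_j$ is a graph isomorphism respecting edge directions'' to the metric statement that $\psi_j$ matches up \emph{which} vertex is $\phi$-extremal, which rests on the description of the tangent cone of a polytope at a vertex by its outgoing edges. Once that is in place, the support-function computation closes the argument formally.
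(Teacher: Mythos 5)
Your proof is correct. The paper's own proof is essentially a one-liner: it asserts that $P(\eta_1)+\dotsb+P(\eta_r)$ ``satisfies the conclusion of Proposition~\ref{2bal} for the function $\eta_1+\dotsb+\eta_r$'' and then implicitly appeals to the uniqueness clause of that proposition, leaving the verification to the reader. You carry out that verification by a somewhat different device: rather than exhibiting the graph isomorphism onto the $1$-skeleton of the Minkowski sum directly, you identify, for each generic functional $\phi$, the unique $\phi$-maximizing vertex of each summand as $\psi_j(v(\phi))$ (using that the outgoing edge vectors at $\psi_j(v)$ are positive rescalings of those at $v$ and generate the tangent cone there), and then close the argument with the additivity of support functions under Minkowski sum together with $\psi_\Sigma=\sum_j\psi_j$ and a density/continuity argument. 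Both routes rest on the same underlying fact --- the $P(\eta_j)$ are strongly isomorphic to $P$, so Minkowski summation acts vertexwise --- but the support-function route buys you the equality of polytopes without having to determine the full face structure of the sum: knowing the vertices on a dense set of directions suffices. You also correctly handle the only normalization issue, namely that $P(\eta)$ is canonical only up to translation, by anchoring all constructions at a common base vertex $v_0\mapsto\vec 0$.
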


\begin{proof}
It is immediate that $\eta_1 + \dotsb + \eta_r$ is 2-balanced, and it is easy to check that $P(\eta_1) + \dotsb + P(\eta_r)$ satisfies the conclusion of Proposition~\ref{2bal} for the function $\eta_1 + \dotsb + \eta_r$.
\end{proof}

\subsubsection{Refining to polysimplicial}

Let $V_{>0} := V \cap \bfR_{>0}$. We need the following key result:

\begin{prop} \label{decomp}
For every $(\bfZ,V)$-polytope $P \subset \bfR^n$, there exists an independent set of $\KK$-elements $\{\beta_1, \dots, \beta_r\} \subset V_{>0}$ of $V$ and rational polytopes $P_1$, \dots, $P_r \subset \bfR^n$ such that $P = \beta_1 P_1 + \dotsb + \beta_r P_r$.
\end{prop}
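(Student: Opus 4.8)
The plan is to decompose $P$ using its edge vectors. For each edge $e=uv\in E(P)$ of the $(\bfZ,V)$-polytope $P$, the direction $\frac{v-u}{\lVert v-u\rVert}$ is a positive rational multiple of a primitive integer vector $w_e$, and the length is such that $v-u\in V\cdot w_e$; thus there is a well-defined ``$V$-length'' function assigning to $e$ the coefficient $\ell(e)\in V_{>0}$ with $v-u=\ell(e)w_e$ (after choosing an orientation, or working with the unoriented statement as in Proposition~\ref{2bal}). First I would pick a basis $\{\beta_1,\dots,\beta_r\}$ of the finite-dimensional $\bfQ$-subspace of $\bfR$ spanned by all the values $\ell(e)$, chosen inside $V_{>0}$ (possible after replacing an arbitrary $\bfQ$-basis by one lying in the positive cone, e.g. by adding a large common rational multiple of a fixed positive element). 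Write $\ell(e)=\sum_{k=1}^r q_{e,k}\beta_k$ with $q_{e,k}\in\bfQ$. This gives for each $k$ a candidate edge-function $\eta_k(e)=q_{e,k}\,\lVert v-u\rVert/\ell(e)$ (a rational multiple of the Euclidean length), so that $\eta_1+\dots+\eta_r$ is exactly the Euclidean edge-length function of $P$, which is trivially $2$-balanced and has $P(\eta_1+\dots+\eta_r)=P$ (up to translation).

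The key point is then to verify that each $\eta_k$ is itself $2$-balanced with respect to $P$. On a $2$-face $F$ with cyclically ordered edges, the Euclidean edge vectors sum to zero; since on $F$ these vectors are $\ell(e)w_e$ and the $w_e$ are fixed primitive integer vectors, the relation $\sum_e \ell(e) w_e=\vec 0$ holds in $\bfR^n$. Because $\{\beta_k\}$ is $\bfQ$-linearly independent and the $q_{e,k}w_e$ have rational (indeed integral, after clearing denominators) coordinates, the single real relation $\sum_e(\sum_k q_{e,k}\beta_k)w_e=0$ splits into $r$ separate rational relations $\sum_e q_{e,k}w_e=0$, one for each $k$. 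This is precisely the statement that $\eta_k$ is $2$-balanced on $F$. Applying Proposition~\ref{2bal} to each $\eta_k$ produces polytopes $P(\eta_k)$, and applying Proposition~\ref{2balsum} iteratively yields $P(\eta_1)+\dots+P(\eta_r)=P$ up to translation.

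It remains to check that $\beta_k^{-1}P(\eta_k)$ is a rational polytope, i.e. that $P_k:=\beta_k^{-1}P(\eta_k)$ has rational vertices and rational facet slopes. By construction every edge vector of $P(\eta_k)$ is $q_{e,k}\beta_k w_e$, so every edge vector of $P_k$ is $q_{e,k}w_e\in\bfQ^n$; hence, fixing one vertex at the origin and following edge paths, all vertices of $P_k$ lie in $\bfQ^n$, and the combinatorial type of $P_k$ agrees with that of $P$ (Proposition~\ref{2bal} gives a graph isomorphism, and one checks the higher faces are preserved since $2$-faces are spanned by their edge directions), so $P_k$ is a genuine rational polytope. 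Setting $P_k:=\beta_k^{-1}P(\eta_k)$ and absorbing the translation ambiguity then gives $P=\beta_1P_1+\dots+\beta_rP_r$ as required.

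The main obstacle I anticipate is the bookkeeping around orientations and the precise formulation of the ``$V$-length'' function so that the splitting of the balancing relation into $r$ rational relations is clean: one must be careful that $w_e$ is chosen primitive and consistently oriented around each $2$-face, and that the rational coefficients $q_{e,k}$ are the same regardless of which $2$-face containing $e$ one looks at (which follows since $\ell(e)$ is intrinsic to the edge). Once the definitions are set up correctly, the $\bfQ$-linear independence of $\{\beta_k\}$ does all the real work and the rest is the routine application of Propositions~\ref{2bal} and~\ref{2balsum}.
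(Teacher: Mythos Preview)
Your approach is essentially the same as the paper's: normalize each edge by a rational reference vector, express the resulting $V$-valued ``length'' in a $\bfQ$-basis $\{\beta_k\}$ of $V$, split the closing condition on every $2$-face into $r$ rational relations using the $\bfQ$-independence of the $\beta_k$, and then invoke Propositions~\ref{2bal} and~\ref{2balsum}. The paper uses an auxiliary rational segment $e_0$ parallel to each edge rather than your primitive integer vector $w_e$, but these are interchangeable.

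There is one genuine gap. You arrange $\beta_k\in V_{>0}$ but say nothing about the sign of the coefficients $q_{e,k}$. Proposition~\ref{2bal} requires $\eta_k\colon E(P)\to\bfR_{>0}$; if some $q_{e,k}\le 0$ then $\eta_k$ fails this hypothesis and the construction of $P(\eta_k)$ as a convex polytope with the promised graph isomorphism breaks down (edges collapse or reverse direction). The paper handles this by choosing the basis more carefully: since $\mathcal{L}=\{\ell(e):e\in E(P)\}$ is a finite subset of the open half-space $V_{>0}$, one can pick the basis $\{\beta_1,\dots,\beta_r\}\subset V_{>0}$ so that every $\ell(e)$ lies in the \emph{positive} $\bfQ$-span of the $\beta_k$, forcing all $q_{e,k}>0$. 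Your trick of shifting basis elements by a large positive multiple moves the $\beta_k$ into $V_{>0}$ but does the opposite of what is needed for the coefficients. Once you replace that step with the correct choice of basis, the rest of your argument goes through and matches the paper's.
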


\begin{proof}
Let $E := E(P)$. For each $e \in E$, fix a segment $e_0 \subset \bfR^n$ such that $e$ and $e_0$ are parallel and $e_0$ has endpoints in $\KK^n$. Let $\ell(s)$ denote the length of a segment $s$. Define a function $\ell_0 : E \to \bfR$ by
\[
\ell_0(e) = {\ell(e)}/{\ell(e_0)}.
\]
Then $\ell_0(e) \in V$ for all $e \in E$. Let $\mathcal{L} = \{ \ell_0(e) : e \in E \}$.

Note that $\mathcal{L} \subset V_{>0}$. Since $V_{>0}$ is an open half-space of the $\KK$-vector space $V$ and $\mathcal{L}$ is finite, we can find a $\KK$-basis $\{\beta_1, \dots, \beta_r\} \subset V_{>0}$ of $V$ such that $\mathcal{L}$ is contained in the positive $\KK$-linear span of $\{\beta_1, \dotsc, \beta_r\}$. For each $e \in E$, let $c_{1}(e)$, $c_{2}(e)$, \dots, $c_{r}(e) \in \bf Q$ be the unique positive rational numbers such that
\[
\ell_0(e) = c_{1}(e) \beta_1 + \dotsb + c_r(e) \beta_r.
\]

Now, for each $i = 1$, \dots, $r$, define $\eta_i : E \to \bf R$ by $\eta_i(e) = c_i(e) \ell(e_0)$. We claim that each $\eta_i$ is 2-balanced with respect to $P$. Indeed, suppose $F$ is a 2-face of $P$ with vertices $v_1$, \dots, $v_k$ and edges $v_1v_2$, $v_2v_3$, \dots, $v_kv_1$. For each $i$, let
\begin{align}
\epsilon_i &:= \eta_i(v_1v_2) \frac{v_2-v_1}{\lVert v_2-v_1 \rVert} + \eta_i(v_2v_3) \frac{v_3-v_2}{\lVert v_3-v_2 \rVert} + \dotsb + \eta_i(v_kv_1) \frac{v_1-v_k}{\lVert v_1-v_k \rVert} \nonumber \\
&= c_i(v_1v_2) \overrightarrow{(v_1v_2)_0} + c_i(v_2v_3) \overrightarrow{(v_2v_3)_0} + \dotsb + c_i(v_kv_1)  \overrightarrow{(v_kv_i)_0} \label{eq:sum}
\end{align}
where $\overrightarrow{(uv)_0}$ denotes the segment $(uv)_0$ oriented in the direction $v-u$. Since each term on the right hand side of \eqref{eq:sum} is in $\KK^n$, we have $\epsilon_i \in \KK^n$ for all $i$.
Now,
\begin{align*}
\sum_{i=1}^r \beta_i \epsilon_i &= \left( \sum_{i=1}^r \beta_i c_i(v_1v_2) \right) \overrightarrow{(v_1v_2)_0} + \dotsb + \left( \sum_{i=1}^r \beta_i c_i(v_kv_1) \right) \overrightarrow{(v_kv_i)_0} \\
&= \ell_0(v_1v_2) \overrightarrow{(v_1v_2)_0} + \ell_0(v_2v_3) \overrightarrow{(v_2v_3)_0} + \dotsb + \ell_0(v_kv_1) \overrightarrow{(v_kv_i)_0} \\
&= (v_2-v_1) + (v_3-v_2) + \dotsb + (v_1-v_k) \\
&= \vec{0}.
\end{align*}
On the other hand, since $\epsilon_i \in \KK^n$ for all $i$ and $\beta_1$, \dots, $\beta_r$ are linearly independent over $\bf Q$, this implies $\epsilon_i = \vec{0}$ for all $i$. Thus each $\eta_i$ is 2-balanced.

Now, from the proof of Proposition~\ref{2bal}, the polytopes $P(\eta_1)$, \dots, $P(\eta_r)$ can be translated to have vertices in $\KK^n$ and hence are $\KK$-polytopes. By Proposition~\ref{2balsum}, we have $\beta_1 P(\eta_1) + \dotsb + \beta_r P(\eta_r) = P(\eta)$, where $\eta = \beta_1\eta_1 + \dotsb + \beta_r\eta_r$. Note that $\eta(e) = \ell_0(e)\ell(e_0) = \ell(e)$ for all $e \in E$. Thus, by the uniqueness part of Proposition~\ref{2bal}, it follows that $P(\eta) = P$, as desired.
\end{proof}

\begin{proof}[Proof of Lemma~\ref{polysimp}]
Let $P = \beta_1 P_1 + \dotsb + \beta_r P_r$ be as in Proposition~\ref{decomp}. Consider a rational triangulation of $\Cay(P_{[r]})$ that uses the vertices of each summand. This induces a fine mixed subdivision of
$P_1 + \dotsb +  P_r$ into rational polysimplices. Dilating each summand of a Cayley polytope with a positive real does not change the facet slopes of the subdivision, and yields the desired polysimplicial subdivision of $P$ as a fine mixed subdivision.
\end{proof}

\subsection{From polysimplicial to polystable}

When proving the existence of polysimplicial subdivisions, we observed that polysimpliciality of the subdivision is reduced to rationality of the triangulation of the associated Cayley polytope. Similarly, providing a polystable subdivision of a $(\bfZ,V)$-polytope $P$ is true if, given a presentation of $P$ as Minkowski sum of dilated $(\bfZ,\bfZ)$-polytopes $P_1,\dots,P_r$, we can find a unimodular triangulation of the Cayley polytope of this family, we we would be allowed to dilate each summand with a positive integer. Recall: A triangulation of a lattice polytope is unimodular if it is a triangulation into lattice simplices, such that the vertices of each maximal simplex affinely generate the lattice.

We hit an obstacle:

\subsubsection{Roadblocks}
Now, we are almost ready to prove Theorem~\ref{polysubdv} as well, but are hit with a roadblock that Proposition~\ref{decomp} put us in:
{Decomposition of $P$ as Minkowski sum plays a crucial role in our approach, but a wrong choice of such a decomposition makes further polystable subdivision impossible.}
%We realized that a decomposition of $P$ as Minkowski sums is crucial, but that hits a roadblock for the polystable case.

\begin{exam}
{Let $a,b>0$ be two real numbers linearly independent over $\bfQ$ and let $V$ be their $\bfQ$-span in $\bfR$.}  Consider the rectangle $P$ obtained as the Minkowski sum of the segments $P_1=[(0,0),(a,a)]$ and $P_2=[(0,0),(b,-b)]$. Then $P$ is a $(\bfZ,V)$-polytope, and the desired polystable subdivision cannot arise from a mixed subdivision, and in particular not from a unimodular triangulation of the underlying Cayley polytope.
\end{exam}

\subsubsection{A generalization of Knudsen-Mumford-Waterman to Cayley polytopes} 
 Indeed, at this point, we would like to see a unimodular triangulation of the underlying Cayley polytope of $P_1$ and $P_2$, or at least of some of their integer multiples. The reason for this failure, exemplified above, lies in the fact that the lattices generated by the vertices of $P_1$ and $P_2$, respectively, form subgroups of infinite order in the ambient lattice $\bfZ^2$. However, we can fix this if we restrict to Minkowski sums of co-compact $(\bfZ,V)$-polytopes, where we call polytopes co-compact if they are full-dimensional. We indeed can restrict further for our purposes.

While seemingly innocent, it turns out that the key lemma is to triangulate Cayley polytopes without loss of index. Consider $(P_{[r]})=P_1, \dots, P_r$ a family of lattice polysimplices \emph{Minkowski dominated} by a polystable $d$-polysimplex $P$, that is, the normal fan of each member is refined by the normal fan of $P$). By passing to an affine span, the general case reduces to the case in which the vertices of $P$ affinely generate the lattice.

\begin{lem}\label{decompCayley}
$\Cay(P_{[r]})$ has a unimodular triangulation.
\end{lem}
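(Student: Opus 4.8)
The plan is to reduce the statement to the classical Knudsen--Mumford--Waterman (KMW) theorem on unimodular triangulations of lattice polytopes after passing to a high dilation, and to exploit the extra combinatorial structure coming from the hypothesis that all the $P_i$ are faces-compatible with a single polystable polysimplex $P$. First I would fix coordinates so that $P = \Delta_{n_0}\times\dots\times\Delta_{n_k}$ with each factor a standard lattice simplex and the product lattice equal to $\bfZ^d$; this is legitimate because the vertices of $P$ are assumed to affinely generate the lattice, and a polystable polysimplex is, up to unimodular change of coordinates, exactly such a product (cf.\ \S\ref{polymonsec} and Remark~\ref{simpolrem}(ii)). Since the normal fan of each $P_i$ is refined by that of $P$, each $P_i$ is itself (a translate of) a product $\prod_j Q_{i,j}$ with $Q_{i,j}$ a face of (a dilate of) $\Delta_{n_j}$ — in other words a lattice sub-simplex in each block. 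Thus $\Cay(P_{[r]})$ sits inside the Cayley polytope of a product, and the key point is that forming Cayley polytopes and products interact well: $\Cay$ of a family of products is, combinatorially, built from the $\Cay$'s of the individual blocks via iterated joins/products over the shared base $\Delta_r$.

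The main step is the block-wise reduction. For a single block — i.e.\ when $P=\Delta_n$ and each $P_i$ is a lattice sub-simplex of $m_i\Delta_n$ — I would show $\Cay(P_{[r]})$ has a unimodular triangulation directly: this is essentially the statement that a ``staircase'' Cayley polytope over simplices has the integer decomposition property and a pulling/lexicographic triangulation that is unimodular, which one can extract from the theory of Cayley sums of simplices together with the fact that products of simplices are compressed (every pulling triangulation is unimodular). Once each block is handled, I would assemble the global triangulation by taking, over each cell of a fixed triangulation of the base $\Delta_r$, the product of the chosen block triangulations; because the ambient lattice splits as the direct sum of the block lattices, a product of unimodular simplices across the blocks is again unimodular, and compatibility along faces of $\Delta_r$ is automatic since the constructions were regular (induced by lifting functions, cf.\ \S\ref{sec:regsubdv}). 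This yields a regular, hence coherent, unimodular triangulation of all of $\Cay(P_{[r]})$.

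I expect the main obstacle to be precisely the single-block case, i.e.\ showing that the Cayley polytope of a family of lattice sub-simplices of dilated standard simplices admits a \emph{unimodular} (not merely lattice) triangulation without having to pass to a further dilation — the naive approach via KMW would only give unimodularity after replacing each $P_i$ by some multiple $N P_i$, and although the lemma as used downstream does tolerate dilating the summands by positive integers, one still must check that a \emph{single} $N$ works simultaneously for all blocks and is compatible with the base triangulation. The cleanest route is to prove that $\Cay(P_{[r]})$ for sub-simplices of a simplex is a \emph{compressed} polytope (all its pulling triangulations are unimodular), which removes the dilation entirely; verifying compressedness amounts to checking that every facet-defining inequality has the form $0\le x_j$ or $\sum x_j\le 1$ in suitable coordinates, which follows from the simplex structure of each block together with the Cayley coordinates. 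The remaining bookkeeping — tracking how the index behaves under the product-over-$\Delta_r$ assembly, and confirming no index is lost as promised by the section heading — is then routine, since unimodular simplices contribute index $1$ in each block and the block lattices are independent.
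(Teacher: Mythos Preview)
Your block-wise strategy is sensible in outline, but the key technical claim fails: the single-block Cayley polytope is \emph{not} compressed in general. Concretely, in a single block each $P_i$ is a dilate $\lambda_i\Delta_n$, and the facet inequality $\sum_j x_j\le\sum_i\lambda_i y_i$ of $\Cay(\lambda_1\Delta_n,\dots,\lambda_r\Delta_n)$ has width $\max_i\lambda_i$; as soon as some $\lambda_i>1$ the polytope is not compressed, and indeed the face $\lambda_i\Delta_n$ itself already has width $>1$. So ``every pulling triangulation is unimodular'' is unavailable, and the appeal to products of simplices being compressed does not apply --- the Cayley sum of dilated simplices is not a product of (unit) simplices. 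The single-block case still admits a unimodular triangulation, but you have to construct it, not get it for free.

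The paper's proof proceeds quite differently and is worth comparing. Rather than splitting into blocks, it works directly with the polysimplex $P$: pick a vertex and a maximal edge-path, get the standard staircase unimodular triangulation of the spanned parallelepiped, and restrict to $P$. This tessellates to give unimodular triangulations of each $P_i=\sum_j\lambda_{ij}\sigma_j$. For the Cayley polytope, one first handles the case $P_i=cP$ for all $i$, where $\Cay(P_{[r]})$ decomposes into pieces isomorphic to $\Delta\times\Delta^{r-1}$ (these \emph{are} compressed), and then passes to arbitrary $P_i$ via an explicit piecewise-linear ``compression'' map $\phi_i\:cP\to P_i$ sending $\sum\alpha_k e_k\mapsto\sum\min(\alpha_k,c_{ik})e_k$; pushing the triangulation of $\Cay((cP)_{[r]})$ forward through the resulting map on Cayley polytopes gives the desired unimodular triangulation. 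Your assembly step would also need more care than you indicate: $\Cay(\prod_j Q_{i,j})$ is a fibre product of the block Cayley polytopes over $\Delta_r$, not an ordinary product, so ``take the product of the block triangulations over each cell of the base'' leaves a nontrivial triangulation-and-unimodularity check that the paper's direct approach sidesteps.
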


\begin{proof}
We prove this statement in several steps of increasing generality.
First, choose a distinguished vertex $v_0=0$ of $P$, and start from it a maximal path of affinely independent vertices connected by edges $(e_k)_{k\in [d]}$. Then the polystable parallelpiped obtained as the Minkowski sum $\sum e_k$ has a unimodular triangulation $T$ by the simplices
\[\conv(v_0,v_0+e_{\pi(1)}, \dots,\ v_0 + \sum_{k=1}^d e_{\pi(k)} )\]
where $\pi$ ranges over the permutations of $[d]$.
It is not hard to check that the restriction of $T$ to the support of $P$ induces a unimodular triangulation of $P$.

We now show that each summand $P_i$ can be unimodularly triangulated. Let $P = \sum_j \sigma_j$ where the $\sigma_j$ are rational simplices. With this, $P_i = \sum_j \lambda_{ij} \sigma_j$ where the $\lambda_{ij}$ are nonnegative integers (this is Minkowski equivalence). For each $k \in [d]$, set
\[
c_{ik} := \lambda_{ij} \ \text{where $j$ is such that $e_k$ is parallel to the affine span of $\sigma_j$}.
\]
By tesselating $T$ (or a face of $T$), we obtain a unimodular triangulation of $\sum c_{ik} [0,e_k]$, where $[0,e_k]$ denotes the segment from the origin to $e_k$. As before, this induces a triangulation of $\sum_j \lambda_{ij} \sigma_j = P_i$.

We next prove the Lemma when $P_i = cP$ for all $i \in [r]$ and some $c$. The above unimodular triangulation of a summand $P_i$ into unimodular simplices $\Delta$ extends to a decomposition of $\Cay(P_{[r]})$ into Cayley polytopes
$\Cay(\Delta_{[r]})$ (where $\Delta$ is taken $r$ times). Each of these Cayley polytopes is isomorphic to the polysimplex $\Delta \times \Delta^{r-1}$, and hence can be unimodularly triangulated as above.

Finally, assume that the $P_i$ are all smaller (i.e.\ contained in) than $cP$ for some $c$. Write $P_i = \sum_j \lambda_{ij} \sigma_j$ as above, and thus $\lambda_{ij} \le c$ for all $i$, $j$. Define $c_{ik}$ as before. We define a map $\phi_i : cP \to P_i$ as follows. Each point $u \in \sum ce_k$ can be uniquely written as
\[
u = v_0 + \alpha_1 e_1 + \dotsb + \alpha_d e_d
\]
where $0 \le \alpha_k \le c$ for all $\alpha_k$. We then define
\[
\phi_i(u) = v_0 + \min(\alpha_1,c_{i1}) e_1 + \dotsb + \min(\alpha_d,c_{id}) e_d
\]
which gives a point in $\sum c_{ik}e_k$. This restricts to a map $\phi_i : cP \to P_i$.

Now, given a lattice polytope of the form $\tau = \Cay(\tau_1,\dots,\tau_r)$ where $\tau_i \subset cP$ for all $i$, we define
\[
\phi(\tau) = \Cay( \phi_1(\tau_1), \dots, \phi_r(\tau_r) ).
\]
Note that $\phi(\Cay((cP)_{[r]})) = \Cay(P_{[r]})$. It is straightforward to check that for the unimodular triangulation of $\Cay((cP)_{[r]})$ described above, the image of each simplex of this triangulation under $\phi$ is again unimodular (but may degenerate), and together the images form a unimodular triangulation of $\Cay((cP)_{[r]})$. This completes the proof.
\end{proof}

%Now, if we change one of the summands $P_i$ to a smaller one, then we do so by iteratively projecting the lattice points along the vectors $(e_j)$ in their order from $1$ to $d$, to the affine span of the unique facet containing
%\[\sum_{j=1}^k e_j \]
%but not
%$\sum_{j=1}^{k-1} e_j$, and keep the combinatorics intact. The resulting simplices will degenerate or remain unimodular. Repeating this procedure with all summands yields the desired.

We call the triangulation obtained a \emph{compressed lexicographic} triangulation (short \emph{{c-lex}}) of the Cayley polytope in question (induced by the linear order as described). Note that by restricing to one of the facets of $P$, we pass to an associated Cayley polytope of that facet whose triangulation is {c-lex} as well.

Next, we show that if a polysimplex is not polystable, then we can improve the index as well, provided that the summands are all co-compact. The proof of the lemma follows ideas of \cite{Santos}, specifically their lemma 4.16. For a polyhedral complex $X$ and a face $F$, we call $\st_F X$ the star of $F$ in $X$, i.e. the minimal complex containing all faces containing $F$.

If $(c_{[r]})=(c_i)_{i=1,\dots,r}$ is a vector of $r$ positive integers, then we can consider the family $(c_{[r]})(P_{[r]})=:(c_iP_i)_{i=1,\dots,r}$. Again, we argue by increasing generality, but separate the crucial case out for convenience.

For the following, we fix the ambient lattice $\bfZ^d$ for clarity of reference.

\begin{lem}\label{decompint}
Consider a family of lattice polysimplices $(P_{[r]})$ all of which are co-compact and Minkowski dominated by a polysimplex $\widetilde P$, such that all facets of $\widetilde P$ are polystable (in the lattices of their affine spans), but $\widetilde P$ is not; its index is $N>1$.
Then there exists a distortion factor $(\widetilde{c}_{[r]})$ such that for every entrywise multiple ${c}_{[r]}$, there is a triangulation of $\Cay({c}_{[r]}P_{[r]})$ into simplices of strictly smaller index.

Moreover, the triangulation of $\Cay( cP_{[r]})$ is {c-lex} when restricted to the Cayley complex, and the refinement can be chosen to be regular.
\end{lem}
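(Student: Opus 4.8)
The plan is to run the same increasing-generality scheme as in the proof of Lemma~\ref{decompCayley}, but with ``unimodular'' replaced by ``of index a proper divisor of $N$'', the decrease being produced by a Knudsen--Mumford--Waterman type lattice enlargement at the bad vertex of $\widetilde P$. First I would pin down where the defect sits. Fix a vertex $v_0=0$ of $\widetilde P$ and a maximal sequence $e_1\..e_d$ of affinely independent edges issuing from $v_0$, grouped according to a Minkowski decomposition $\widetilde P=\sum_j\sigma_j$ into (primitive) lattice simplices, and set $\Lambda=\langle e_1\..e_d\rangle$, so that the index of $\widetilde P$ is $N=[\bfZ^d\colon\Lambda]$. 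The hypothesis that every facet of $\widetilde P$ is polystable says exactly that deleting any single $e_k$ yields the full lattice of the corresponding coordinate hyperplane. Hence the half-open fundamental parallelepiped $B=\{\sum_k\alpha_ke_k:0\le\alpha_k<1\}$, which meets $\bfZ^d$ in exactly $N>1$ points, contains a nonzero point $p$ with \emph{all} coordinates $\alpha_k>0$: a box point with some $\alpha_k=0$ lies on a polystable facet and is therefore forced into $\Lambda$, i.e.\ is $0$. Put $\Lambda'=\Lambda+\bfZ p$; then $\Lambda\subsetneq\Lambda'\subseteq\bfZ^d$, so $N':=[\bfZ^d\colon\Lambda']$ is a proper divisor of $N$. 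The distortion factor $\widetilde{c}_{[r]}$ will be taken with every entry divisible by $N$ (and, as in Lemma~\ref{decompCayley}, by the denominators of the Minkowski coefficients, which are integers here), so that after dilation $p$ and all of $\Lambda'$ are realized on genuine lattice points.

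For $c$ an entrywise multiple of $\widetilde{c}_{[r]}$ and in the diagonal case $P_i=c\widetilde P$ for all $i$, I would triangulate the dilated parallelepiped $c\Pi=\sum_k c[0,e_k]$, and hence $c\widetilde P$ inside it as in Lemma~\ref{decompCayley}, into $\Lambda'$-unimodular simplices. This is the classical Knudsen--Mumford--Waterman dilation argument carried out in the lattice $\Lambda'$, following \cite{Santos}: one inducts on dimension, using that the boundary parallelepipeds already carry such triangulations — this is where polystability of the facets is used — and pulls at a $\Lambda'$-lattice point to fill in the interior regularly. Since the vertices of any such simplex lie in $\bfZ^d$ and generate $\Lambda'$ at a vertex, it has index $N'<N$ in $\bfZ^d$. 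Exactly as in Lemma~\ref{decompCayley}, the triangulation of one summand extends to a decomposition of $\Cay((c\widetilde P)_{[r]})$ into Cayley polytopes $\Cay(\Delta_{[r]})\cong\Delta\times\Delta^{r-1}$ over the simplices $\Delta$ of that triangulation, each triangulated by the same lexicographic recipe; a product of lattice simplices of index $\le N'$ triangulates into simplices of index $\le N'$, so every cell produced has index strictly below $N$.

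For an arbitrary family of co-compact $P_i$ Minkowski dominated by $\widetilde P$ I would then descend as in Lemma~\ref{decompCayley}: writing $P_i=\sum_j\lambda_{ij}\sigma_j$ with $\lambda_{ij}\le c$, push the triangulation of $\Cay((c\widetilde P)_{[r]})$ forward along the componentwise ``$\min$'' contractions $\phi_i\colon c\widetilde P\to P_i$; these are lattice maps, so the images of cells are lattice simplices of index $\le N'$ (the degenerate images being discarded) and together they tile $\Cay(P_{[r]})$. Passing from $\widetilde{c}_{[r]}$ to a general entrywise multiple $c_{[r]}$ is one more application of the same pushforward trick. Regularity is maintained throughout: the parallelepiped/lexicographic and pulling triangulations, the product triangulations $\Delta\times\Delta^{r-1}$ and the Cayley cones are each regular, and, as in the proof of Lemma~\ref{decompCayley}, one fixes the lifting functions coherently (with sufficiently fast-growing weights) so that regularity survives products, the Cayley operation and the contractions $\phi_i$. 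Finally, the Cayley complex is the closure of the part of $\partial\Cay(c_{[r]}P_{[r]})$ over $\mathrm{relint}\,\Delta_r$; restricting the triangulation there recovers on each facet precisely the lexicographic (``compressed'') triangulation induced by the chosen order of $e_1\..e_d$, which is what c-lex means, and the same holds on restriction to a facet of $\widetilde P$.

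The hard part will be the middle step: carrying out the Knudsen--Mumford--Waterman dilation relative to the enlarged lattice $\Lambda'$ \emph{and simultaneously inside the Cayley polytope}, in such a way that the bound ``index $<N$'' holds uniformly over every cell — in particular over cells incident to the facets, where the index must not deteriorate — while keeping the whole triangulation regular and c-lex on the Cayley complex. Everything else is a fairly mechanical transcription of the corresponding arguments in Lemma~\ref{decompCayley}.
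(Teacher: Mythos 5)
Your identification of the lattice-theoretic mechanism agrees with the paper's: facet-polystability forces a nonzero class of $\bfZ^d/\Lambda$ to have a representative $\mathbf{m}$ with all coordinates strictly positive in the $e$-basis, and adjoining $\mathbf{m}$ to $\Lambda$ strictly drops the index; this is exactly the representative $\mathbf{m}$ and the minimal tuple $(c_{[k]})$ in the paper's proof. The gap is in how you turn this into a triangulation. You defer the diagonal case to ``classical KMW carried out in $\Lambda'$'' and then descend to general summands by pushing forward along the coordinatewise-$\min$ contractions $\phi_i$, claiming the images of cells are lattice simplices of index $\le N'$. That descent step fails: $\phi_i$ is only piecewise linear and, more seriously, it does not map $\Lambda'=\Lambda+\bfZ\mathbf{m}$ into $\bfZ^d$. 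A point $u=\mathbf{m}+\sum_k n_ke_k$ has fractional coordinates in the $e$-basis, and truncating some of them to the integers $c_{ik}$ while leaving others untouched produces a point lying in no coset of $\Lambda$ inside $\bfZ^d$ (e.g.\ $e_1=(1,1)$, $e_2=(1,-1)$, $\mathbf{m}=(1,0)=\tfrac12 e_1+\tfrac12 e_2$, and $\phi_i(\mathbf{m})=\tfrac12 e_2\notin\bfZ^2$ when $c_{i1}=0$). The $\phi_i$-trick is sound in Lemma~\ref{decompCayley} precisely because there every vertex of the triangulation has integral $e$-coordinates; once vertices from $\Lambda'\setminus\Lambda$ are introduced it breaks.

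The step you flag as ``the hard part'' is where the paper does all the work, and it is not a transcription of Lemma~\ref{decompCayley}. Rather than triangulating a dilated parallelepiped $\Lambda'$-unimodularly and pushing forward, the paper subdivides each $P_i$ directly into nested shells $A_s$ together with shifted inner copies $B_s$ whose vertices are placed on the coset $\Lambda+\mathbf{m}$ inside $A_s\setminus A_{s-1}$; each shell is decomposed into Cayley polytopes $\Cay(A_s^F,B_s^F)$, $\Cay(A_{s-1}^F,B_s^F)$, $\Cay(A_s^F,A_{s-1}^F,B_s^F)$ of families Minkowski dominated by faces of $\widetilde P$, to which Lemma~\ref{decompCayley} applies inside $\langle\Lambda,\mathbf{m}\rangle$, and the full Cayley polytope is then assembled by induction on the $\lambda_{ij}$, one layer at a time. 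Note also that pulling at a single interior $\Lambda'$-point over the c-lex boundary triangulation, as you suggest, does not produce $\Lambda'$-unimodular cells: the index of such a cone equals the lattice distance of the apex from the facet hyperplane, which you do not control. So the proposal is not a complete proof; the shell construction (or an equivalent device) still has to be supplied.
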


Let us explain the terminology. For a lattice (poly)simplex, the index is the index of the subgroup generated by its within the ambient lattice. The goal of the lemma is to establish that while the polytope $\Cay(P_{[r]})$ may not have a unimodular triangulation, the polytope $\Cay({c}_{[r]}P_{[r]})$, where each polytope is enlarged by a suitably large integer factor $c_i,\ i\in [r]$, does admit at least a triangulation (with vertices among the lattice points of $Cay({c}_{[r]}P_{[r]}) \cap \bfZ^d$) with better index than the index of the starting polysimplex. The index of a triangulation is measured as the maximum over the indices over its elements.

\begin{proof}
Let $\widetilde{P}=\sum_1^k \sigma_j$ where the $\sigma_j$ are affinely independent simplices, and let $\Lambda$ be the lattice generated by $\widetilde{P}$. Let $c = \prod_j (\dim \sigma_j + 1)$. With this choice, the index of $\Lambda$ in the ambient lattice $\bfZ^d$ is $N$.

%Observe that for some $c_0\le \dim \widetilde{P}$, $c_0\widetilde{P}$ has a triangulation into simplices of strictly smaller index than $N$, each adjoining an interior latticepoint $\mathbf{m}$ to the lattice $\Lambda$.
%In particular, so does the Cayley-polytope of the $r$-fold copy of $c_0\widetilde{P}$.

Consider the matrix $(\lambda_{ij})_{i\in [r], j\in [k]}$ of non-negative integers. We denote by $P[(\lambda_{ij})]$ the Cayley polytope of the $r$ summands $\sum_1^k c \lambda_{ij}\sigma_j$. It suffices to give a triangulation of $P[(\lambda_{ij})]$ with index lower than $N$. Since every $P_i$ is co-compact, we may assume all the $(\lambda_{ij})$ are positive.

%First assume that $(\lambda_{(ij)})$ is a $(0,1)$-matrix such that
%\[
%[k] = \supp(\lambda_{(1j)}) \supseteq \supp(\lambda_{(2j)}) \supseteq \dotsb \supseteq \supp(\lambda_{(rj)})
%\]
%where $\supp(\lambda)$ denotes the set of indices at which $\lambda$ is nonzero. Then $P[(\lambda_{(ij)})]$ has a unimodular triangulation in $\langle\Lambda+\mathbf{m}\rangle$ with {c-lex} boundary, as follows. Let $s$ be the largest integer such that $\supp(\lambda_{(sj)}) = [k]$. For each facet $F$ of $\tilde P$ and each fixed $i_0 \in [r]$, let $F[(\lambda_{(i_0j)})]$ be the face of of $P[(\lambda_{(i_0j)})]$ with respect to the supporting hyperplane of $F$. Then the set
%\begin{multline}
%\bigcup_{\substack{F \text{ is a} \\ \text{facet of } \tilde{P}}} \Cay( \conv(F[(\lambda_{(1j)})], \mathbf{m}), \conv(F[(\lambda_{(2j)})], \mathbf{m}), \dots, \conv(F[(\lambda_{(sj)})],\mathbf{m}),
%\\ F[(\lambda_{(s+1,j)})], F[(\lambda_{(s+2,j)})], \dots, F[(\lambda_{(r,j)})])
%\end{multline}
%is the set of maximal cells in a subdivision of $P[(\lambda_{(ij)})]$. Each cell can then be triangulated unimodularly in $\langle\Lambda+\mathbf{m}\rangle$ through Lemma~\ref{decompint}, so that the boundaries of all cells are c-lex.

Now, we want to decrease the index of $\Lambda$ in $\bfZ^d$. For this, we want to make sure that we triangulate the lattice Cayley polytope in such a way that all simplices of the triangulation generate the lattice span $\langle\Lambda , \mathbf{m} \rangle$ of $\Lambda$ and  $\mathbf{m}$ in $\bfZ^d$, where $\mathbf{m}$ is a suitable representative of a nonzero element of $\bfZ^d / \Lambda$. There is a unique minimal (in the product order) tuple $(c_{[k]})$ of positive integers such that $\sum_1^k c_j \sigma_j$ contains such a representative $\mathbf{m}$. This tuple satisfies $1 \le c_j \le \dim(\sigma_j)$ for all $j$.

Fix $i$, $j$. Recall that $c$ is divisible by $\dim \sigma_j + 1$. Let $\{\alpha_s\}_s$ range over all multiples of $\dim \sigma_j + 1$ from 0 to $c\lambda_{ij}$ inclusive. We define $A_s$ to be the translation of the polytope
\[ %\label{eq:conc1}
\alpha_s\sigma_j + \sum_{j' \in [k] \setminus \{j\}} c\lambda_{ij'}\sigma_{j'}
\]
such that $A_s$ has the same barycenter as $P_i$. (Note that for $\alpha_s = c\lambda_{ij}$, we have $A_s = P_i$.)
In addition, let $\beta_s = \alpha_s - c_j$ for $\alpha_s$ from $\dim \sigma_j + 1$ to $c\lambda_{ij}$ inclusive. Define $B_s$ to be the translation of the polytope
\[ %\label{eq:conc2}
\beta_s \sigma_j+\sum_{j' \in [k] \setminus \{j\}} (c\lambda_{ij'}-c_{j'})\sigma_{j'}
\]
such that the vertices of $B_s$ are contained in $(A_s \setminus A_{s-1}) \cap (\Lambda + \mathbf{m})$. This expression is well-defined because $c\lambda_{ij'} > c_{j'}$ for all $j'$.

Given a polytope $Q$ that is Minkowski dominated by $\widetilde{P}$, for each face $F$ of $\widetilde{P}$, there is a unique maximal face of $Q$ whose set of normal vectors contains the set of normal vectors of $F$. We denote this face by $Q^F$. Now, we observe that $P_i$ can be subdivided into polytopes with vertices in $\langle\Lambda , \mathbf{m} \rangle$ such that, when these polytopes are viewed as lattice polytopes of $\langle\Lambda , \mathbf{m} \rangle$, each polytope is isomorphic to one of
\begin{align*}
&\Cay(A_{s}^F, B_{s}^F)  \\  
&\Cay(A_{s-1}^F, B_{s}^F) \\ 
&\Cay(A_s^F, A_{s-1}^F, B_s^F)
\end{align*}
for some face $F$ of $\widetilde{P}$. (In fact, we can describe the $F$ explicitly: In the first two expressions, $F$ ranges over the set $\mathcal{S}$ of faces of $\widetilde{P}$ which contain a proper face of $\sigma_j$ as a Minkowski summand, and in the third expression, $F$ ranges over all proper faces of elements in $\mathcal{S}$.) These polytopes can then be unimodularly triangulated in $\langle\Lambda , \mathbf{m} \rangle$ by Lemma~\ref{decompCayley}. We thus have a triangulation of $P_i$ with index lower than $N$.

Finally, we triangulate $P[(\lambda_{ij})]$. First suppose $\lambda_{ij} = 1$ for all $i$, $j$. From what we just showed, each $P_i$ can be triangulated with index lower than $N$, and this triangulation is the same for all $P_i$. Then as in the proof of Lemma~\ref{decompCayley}, we can extend this to a triangulation of $P[(\lambda_{ij})]$ with index lower than $N$. Now suppose we have such a triangulation of $P[(\lambda_{ij})]$, and we increase $\lambda_{ij}$ by 1. Then the new Cayley polytope can be subdivided using the simplices of the original triangulation along with polytopes isomorphic to
\begin{align*}
&\Cay(A_{s}^F, B_{s}^F, P_1^F, \dots)  \\  
&\Cay(A_{s-1}^F, B_{s}^F, P_1^F, \dots) \\ 
&\Cay(A_s^F, A_{s-1}^F, B_s^F, P_1^F, \dots)
\end{align*}
where $A_s$, $B_s$, and $F$ are as before, and the dots in the above expressions indicate all $P_{i'}^F$ for $i' \neq i$. These Cayley polytopes can be triangulated unimodularly in $\langle\Lambda ,\mathbf{m} \rangle$ by Lemma~\ref{decompCayley}, which completes the proof. Regularity of the ensuing subdivision is a straightforward verification.
\end{proof}

%Next, assume we modify some entry, say $\lambda_{(ab)}$, from $1$ to some positive integer, say $\ell$. Then
%$\ell \sigma_b+\sum_{j \in [k]\setminus\{b\}} c_0\lambda_{aj}\sigma_j$ is decomposed into $\lfloor\frac{\ell}{2}\rfloor$ concentric translated copies of \[\alpha_k\sigma_b+\sum_{j \in [k]\setminus\{b\}} c_0\lambda_{aj}\sigma_j,\]
%where $\alpha_k$ goes over nonnegative even integers between $0$ and $\ell$
%(if $\ell$ is even) or odd integers (if $\ell$ is odd),
%with with vertices in $\Lambda$. These concentric copies are interlaced by translations of
%\[\beta_k\sigma_b+\sum_{j \in [k]\setminus\{b\}} c_0(\lambda_{aj}-1)\sigma_j,\]
%where $\beta_k$ ranges over the nonnegative odd integers (if $\ell$ is even) or even integers (if $\ell$ is even) between $0$ and $\ell$. The space between these interlaced copies is triangulated unimodularly in $\langle \Lambda + \mathbf{m} \rangle$ by Lemma~\ref{decompCayley}. This triangulation extends to the Cayley polytope by the same lemma.

\subsubsection{Proof of polystability}

We need to slightly modify the previous lemma: If the Cayley polytope of Lemma~\ref{decompint} is part of a larger complex, then we wish to not move farther away from polystability eslewhere.

\begin{lem}\label{decomp2}
Consider a Cayley triangulation $X$ of an $r$-fold family, a Cayley polytope of some dimension (say $d$) decomposed into Cayley polytopes that whose $r$ summands are Minkowski dominated by lattice polysimplices. Consider also a linear order on its cardinality $r$ Cayley simplices. Consider $F$ a non-polystable facet of $X$ (i.e. its index $j$ is larger than one in its affine span). Assume that all of the summands of $F$ are co-compact in the affine span of $F$.

Then there exists a distortion factor $(\widetilde{c}_{[r]})$ such that for every entrywise multiple ${c}_{[r]}$, there is a subdivision of the family ${c}_{[r]}X$ into Cayley polytopes of lattice polysimplices such that each facet of $S=\st_F X$ is subdivided into Cayley polytopes of strictly smaller index, and all other facets have unchanged index.

Moreover, the triangulation of $\partial \Cay( cS_{[r]})$ is {c-lex} (with respect to the linear order) and the refinement can be chosen to be regular.
\end{lem}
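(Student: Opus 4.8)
\medskip

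The plan is to reduce the statement to a relative, localized application of Lemma~\ref{decompint} and then to check that the construction there can be performed compatibly with the surrounding complex $X$. First I would set up the local picture: the non-polystable facet $F$ has index $j>1$ and all its summands are co-compact in the affine span $\langle F\rangle$, so Lemma~\ref{decompint} (applied with $\widetilde P$ a polysimplex Minkowski-dominating $F$ and $N=j$) already supplies, for a suitable distortion factor $(\widetilde c_{[r]})$ and any entrywise multiple $c_{[r]}$, a regular triangulation of $\Cay(c_{[r]}F_{[r]})$ into Cayley polytopes of strictly smaller index, with the restriction to the Cayley complex being {c-lex} with respect to the given linear order. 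The task is then to propagate this refinement through all of $S=\st_F X$ without worsening the index anywhere else.

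\medskip

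The key mechanism is that the faces of $S$ containing $F$ are themselves Cayley polytopes whose $r$-fold summands are Minkowski dominated by polysimplices that \emph{restrict} to the data over $F$; concretely, each facet $G\supseteq F$ of $S$ has the form (after the combinatorial identification of its Cayley structure) a join-type extension of the Cayley structure on $F$ by the extra vertices of $G$ not in $F$. I would exploit exactly the extension technique already used twice in the proof of Lemma~\ref{decompint}: the construction there that takes a triangulation of $\sum c\lambda_{ij}\sigma_j=P_i$ and of $P[(\lambda_{ij})]$, increments one $\lambda_{ij}$, and subdivides the enlarged Cayley polytope using the old simplices together with Cayley polytopes of the shape $\Cay(A_s^F,B_s^F,P_1^F,\dots)$, $\Cay(A_{s-1}^F,B_s^F,P_1^F,\dots)$, $\Cay(A_s^F,A_{s-1}^F,B_s^F,P_1^F,\dots)$, each of which is unimodularly triangulable in the appropriate lattice by Lemma~\ref{decompCayley}. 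Running this extension one vertex at a time along each chain from $F$ up to a facet $G$ of $S$, and doing so in the order dictated by the fixed linear order on Cayley simplices, produces a refinement of $\Cay(c_{[r]}G_{[r]})$ that agrees over $F$ with the {c-lex} triangulation from Lemma~\ref{decompint} and whose new cells all have index strictly below $j$; the cells that do not meet $F$ are left untouched, so their index is unchanged. Gluing over all facets $G$ of $S$ is consistent because all the choices are determined by the single linear order (this is the point of carrying the {c-lex} property through Lemma~\ref{decompCayley} and Lemma~\ref{decompint}), and because two facets of $S$ meet along a common face on which the construction is the same by induction on dimension. Regularity is inherited: lift $F$'s refinement by the regular function from Lemma~\ref{decompint}, extend it by a sufficiently generic/large function on the remaining vertices of $S$ (a standard ``placing'' lift), and the induced subdivision is the one constructed; this is the ``straightforward verification'' remark, carried out as in the last line of Lemma~\ref{decompint}.

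\medskip

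The main obstacle I expect is the \emph{compatibility} bookkeeping rather than any new geometric input: one must verify that, as $G$ ranges over the facets of $S$ and one builds the refinement chain-by-chain, the Cayley polytopes $\Cay(A_s^F,B_s^F,\dots)$ produced for different $G$ glue along shared faces, and that the distortion factor $(\widetilde c_{[r]})$ can be chosen uniformly (one takes the entrywise maximum, or product, of the finitely many factors demanded by Lemma~\ref{decompint} across the facets of $S$, using that any entrywise multiple still works). A secondary subtlety is ensuring that ``co-compactness of the summands of $F$'' suffices to run Lemma~\ref{decompint} in each $\langle G\rangle$: the summands of $G$ need not be co-compact in $\langle G\rangle$, but the extension construction only ever triangulates the \emph{new} directions via the explicit $\Cay(A_s^F,A_{s-1}^F,B_s^F,P_1^F,\dots)$ pieces, whose triangulability is guaranteed by Lemma~\ref{decompCayley} without a co-compactness hypothesis — so the co-compactness is genuinely needed only for $F$ itself, which is what the statement assumes. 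Once these points are pinned down, the conclusion — strict index drop on every facet of $\st_F X$, index unchanged elsewhere, {c-lex} on $\partial\Cay(cS_{[r]})$, and regularity — follows by assembling the local constructions.
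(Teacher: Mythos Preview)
Your plan is essentially the same as the paper's, only far more explicit: the paper's proof is three sentences --- refine outside $\st_F X$ via Lemma~\ref{decompCayley}, then inside the star extend the argument of Lemma~\ref{decompint} to the facets of $X$ containing $F$ --- and your proposal is a detailed unpacking of exactly that second step, including the compatibility bookkeeping and the regularity verification that the paper leaves implicit.

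One small point you gloss over: after resizing by $c_{[r]}$, the facets of $X$ \emph{outside} the star are enlarged and must themselves be subdivided (not literally ``left untouched''); the paper handles this by applying the c-lex triangulation of Lemma~\ref{decompCayley} there, which is unimodular in the lattice generated by each such facet and hence leaves its index unchanged. Your sentence ``the cells that do not meet $F$ are left untouched'' seems to refer to cells of the refinement inside a facet $G$ of $S$, which is fine, but you should also say one word about the facets outside $S$.
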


\begin{proof}
A resizing of $X$ can be refined via Lemma~\ref{decompCayley} outside of $\st_F X$. We therefore only have to discuss the situation in the star of $F$. For this, we can use the argument of the previous lemma, which we extend to facets of $X$ containing $F$.
\end{proof}

We are now ready to prove our main theorem.

\begin{proof}[Proof of Theorem~\ref{polysubdv}]
Following Lemma~\ref{polysimp}, we can assume that $P$ has a mixed subdivision into polysimplicial tiles arising from unimodular decompositions of Minkowski summands $(P_i)$. Unfortunately, corresponding summands may degenerate.

Hence, we change the summands by changing the independent set $\beta_1, \dots, \beta_r$ in $V$ to a generic small linear transformation $\beta'_i$ (small enough so as not violate the positivity condition in the proof of Lemma~\ref{polysimp}). We then obtain a representation of the polysimplicial subdivision of $P$ as a mixed subdivision arising from combinatorially equivalent subdivisions of polytopes $(P_i')$ generated from this basis. The individual facets arise therefore sections of Cayley polytopes of co-compact Minkowski-equivalent polysimplices in the $P_i'$, to which we can apply Lemma~\ref{decomp2} to reduce the index of some of the lattices generated by Cayley polytopes while leaving the others the same. We then change $\beta_1, \dots, \beta_r$ again and repeat until the index of each Cayley polytope is one.
\end{proof}

\subsection{Polystable refinement, continued}

We now turn to refinements of Theorem~\ref{polysubdv}, the most important of which is the generalization to complexes.

\subsubsection{Polystable refinement of complexes}

Given two polyhedral complexes $X$ and $X'$, we say that $X'$ \emph{refines} $X$ if every cell of $X'$ is contained in a cell of $X$. We prove the following result.

\begin{theor} \label{compsubdv}
Let $X$ be a polyhedral complex where every cell is a $(\bfZ,V)$-polytope. Then there exists a polyhedral complex $X'$ which refines $X$ such that every cell of $X'$ is a $(\bfZ,V)$-polytope and a product of semistable simplices.
\end{theor}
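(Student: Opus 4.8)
The plan is to carry out the proof of Theorem~\ref{polysubdv} ``in a family over $X$''. Two structural facts make this possible. First, if a height function is chosen on the vertex set of a polyhedral complex, the regular subdivisions it induces on the individual cells automatically restrict correctly to common faces, and hence glue to a subdivision of the whole complex; so any step carried out by a regular refinement is automatically compatible across cells. Second, Lemmas~\ref{decompint} and~\ref{decomp2} are already phrased relative to an ambient complex and are asserted to produce regular (indeed c-lex) refinements. Thus the only genuinely new work is a global form of the starting point -- Proposition~\ref{decomp} and Lemma~\ref{polysimp}.

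First I would globalize Proposition~\ref{decomp}. Let $E=E(X)$ be the set of all edges occurring in cells of $X$. For each $e\in E$ fix a rational parallel segment $e_0$ and set $\ell_0(e)=\ell(e)/\ell(e_0)\in V_{>0}$; as in the proof of Proposition~\ref{decomp} choose a $\KK$-basis $\{\beta_1\..\beta_r\}\subset V_{>0}$ of $\mathrm{span}_\KK\{\ell_0(e):e\in E\}$ with every $\ell_0(e)$ a positive $\KK$-combination, write $\ell_0(e)=\sum_i c_i(e)\beta_i$, and put $\eta_i(e)=c_i(e)\ell(e_0)$. The computation of Proposition~\ref{decomp}, performed on each $2$-face of each cell, shows that every $\eta_i$ is $2$-balanced with respect to every cell of $X$; applying Proposition~\ref{2bal} cell by cell and using that $P(\eta_i|_P)$ depends only on the edge vectors and the graph of $P$ (so that faces go to faces), one obtains rational polyhedral complexes $X_1\..X_r$, each combinatorially isomorphic to $X$, with $P=\beta_1P_1+\dots+\beta_rP_r$ for every cell $P$ of $X$, compatibly with face inclusions. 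Since the Cayley polytope of a face of $P$ is a face of $\Cay(P_{[r]})$, the Cayley polytopes of the cells of $X$ then assemble into a genuine polyhedral complex $\Cay(X_{[r]})$, carrying a projection to $\Delta_r$ whose fibres are the Minkowski sums $\sum\lambda_iP_i$ as in~\eqref{eq:fib}. A generic height function on the finite vertex set $\bigcup_{P,i}(S(P_i)\times\{e_i\})$ induces a compatible family of fine regular triangulations of all the $\Cay(P_{[r]})$, hence a fine mixed subdivision of every cell $P$, compatible along faces -- this is the complex version of Lemma~\ref{polysimp}, i.e. a polysimplicial refinement of $X$.

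Next I would run the index reduction globally. As in the proof of Theorem~\ref{polysubdv}, a generic small perturbation of $\{\beta_1\..\beta_r\}$ (small enough to keep the positivity used above) makes all summands appearing in $\Cay(X_{[r]})$ co-compact. Then, choosing a facet $F$ of the current Cayley complex whose index $N$ is maximal among all its facets, Lemma~\ref{decomp2} produces a distortion factor $\widetilde c_{[r]}$; for any entrywise multiple $c_{[r]}$ of it, replacing $X_i$ by $c_iX_i$ and $\beta_i$ by $\beta_i/c_i$ (which leaves $X=\sum\beta_iX_i$ unchanged and is absorbed by the evident rescaling isomorphism of Cayley complexes) we obtain a \emph{regular} refinement of $\Cay(X_{[r]})$ in which every facet of $\st_F X$ has index $<N$ and every other facet keeps its index. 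Regularity makes this refinement glue to a refinement of the complex, and since no index increases, the pair consisting of the maximal index and the number of facets achieving it strictly decreases in lexicographic order at each step. Re-perturbing the $\beta_i$ between steps to restore co-compactness, the process terminates with a unimodular triangulation of a Cayley complex of co-compact rational polysimplices; by~\eqref{eq:fib} this triangulation induces on every cell of $X$ a mixed subdivision into products of semistable simplices, compatible along faces, which is the required complex $X'$.

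The main obstacle is the bookkeeping of this globalization: one must verify that the cell-by-cell Minkowski decomposition of the second step is genuinely compatible with the face poset of $X$, that the distortion factors and rescalings in the third step can be applied simultaneously to all cells without disturbing the combinatorial identifications of summands, and that the successive small perturbations of the $\beta_i$ neither destroy co-compactness elsewhere nor undo index reductions already achieved. These verifications are of the same nature as in the single-polytope case -- and Lemma~\ref{decomp2} was formulated precisely so as to accept a Cayley complex rather than a single Cayley polytope -- but organising them into a terminating induction over $X$ is where the content lies.
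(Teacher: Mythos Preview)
Your proposal is correct and follows essentially the same route as the paper: globalize Proposition~\ref{decomp} by working with the full edge set $E(X)$ and a single $\KK$-basis $\{\beta_1,\dots,\beta_r\}$, obtain compatible Minkowski decompositions of all cells, induce a polysimplicial refinement via a single generic height function, and then reduce indices globally with Lemma~\ref{decomp2}. The only presentational difference is that you phrase the middle step as choosing a height function on the vertex set of an assembled Cayley complex $\Cay(X_{[r]})$, whereas the paper chooses a single function $f:S(X)\to\bfR$ and pulls it back to each summand through the graph isomorphisms $\psi_i^C$ of Proposition~\ref{2bal}; since the vertices of each $P_i$ are in canonical bijection with those of $P$, these two formulations are equivalent, and the compatibility check along shared faces is the same in both.
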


\begin{proof}
Let $L$ be the set of all edge lengths of all edges of $X$. As in the proof of Proposition~\ref{decomp}, we can choose a $\KK$-basis $\{\beta_1,\dotsc,\beta_r\} \subset V_{>0}$ of $V$ such that $L$ is contained in the positive span of $\{\beta_1,\dotsc,\beta_r\}$. Also as in that proof, for each cell $C$ of $X$, there are 2-balanced functions $\eta_1^C$, \dots, $\eta_r^C$ with respect to $C$ such that $C = \beta_1C(\eta_1^C) + \dotsb + \beta_rC(\eta_r^C)$, where each $C(\eta_i^C)$ is a $\KK$-polytope. We may assume that each of these summands is subdivided in a regular unimodular triangulation as the Knudsen-Mumford-Waterman construction extends to triangulations of polyhedral complexes.

It is clear from the construction of the $\eta_i^C$ that when two cells $C$, $C'$ share an edge $e$, we have $\eta_i^{C}(e) = \eta_i^{C'}(e)$ for all $i$. Thus, for all cells $C$ and $C'$ which share a face $F$ and for all $i$, we have that $C(\eta_i^{C})$ and $C'(\eta_i^{C'})$ share the common face (up to translation) $F(\eta_i^F)$, where $\eta_i^F$ is the common restriction of $\eta_i^C$ and $\eta_i^{C'}$ to the edges of $F$.

Now, suppose we have a function $f : S(X) \to \bf R$. For each cell $C$ of $X$ and for all $i = 1$, \dots, $r$, let $\psi_i^C$ be the isomorphism from $G(\beta_i C(\eta_i^C))$ to $G(C)$ given by Proposition~\ref{2bal}. Then for generic $f$, the functions $f \circ \psi_1^C$, \dots, $f \circ \psi_r^C$ induce a fine mixed subdivision $X_C$ of $C$. As in the proof of Lemma~\ref{polysimp}, the cells of $X_C$ are all $(\bfZ,V)$-polytopes and products of simplices. Moreover, for each face $F$ of $C$, the subdivision of $F$ given by $X_C$ is induced by $f \circ \psi_1^F$, \dots, $f \circ \psi_r^F$, where $\psi_i^F$ is the isomorphism from $G(\beta_i F(\eta_i^F))$ to $G(F)$. Hence for all cells $C$ and $C'$ sharing a common face, we have that $X_C$ and $X_{C'}$ agree on that face. Thus the collection $\bigcup X_C$, where the union ranges over all cells $C$ of $X$, gives a polysimplicial subdivision. We can now transform it into a polystable subdivision using Lemma~\ref{decomp2} as before.
\end{proof}

\subsubsection{Regular refinement}

Recall the definition of regular subdivision from Section~\ref{sec:regsubdv}. In the case where $X$ is a subdivision of a polytope, we can guarantee the refinement in Theorem~\ref{compsubdv} to be regular:

\begin{theor} \label{regref}
Let $X$ be a $(\bfZ,V)$-subdivision of a polytope $P$. Then there exists a regular polystable subdivision $X'$ which refines $X$. Moreover, there is a function $f$ which induces $X'$ such that $P^f$ is a $(\bfZ,V)$-polytope.
\end{theor}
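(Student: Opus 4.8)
The plan is to revisit the proofs of Theorem~\ref{polysubdv} and Theorem~\ref{compsubdv} and to carry two extra bookkeeping items through every step: (a) the subdivision produced is \emph{regular} as a subdivision of $P$, and (b) it is induced by a convex function $f$ that is $(\bfZ,V)$-affine on each cell — which, for the natural $V$-affine structure on $\bfR^{n+1}$ in which the last coordinate is integral, is exactly the condition that makes $P^f$ a $(\bfZ,V)$-polytope. Several ingredients already come in regular form: a regular mixed subdivision is regular by definition (§\ref{sec:regsubdv}), and the index-reducing refinements of Cayley polytopes in Lemma~\ref{decompint} and Lemma~\ref{decomp2} are explicitly asserted to exist in regular form with the relevant Cayley complex triangulated {c-lex}. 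So the genuinely new work is (i) to globalize the per-cell constructions over the subdivision $X$ of $P$ while keeping regularity on all of $P$, and (ii) to keep the inducing function $(\bfZ,V)$-affine cell by cell.

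For the polysimplicial stage, recall that in the proof of Theorem~\ref{compsubdv} each cell $C$ of $X$ is written as $C=\beta_1C(\eta_1^C)+\dots+\beta_rC(\eta_r^C)$ with $C(\eta_i^C)$ a $\KK$-polytope, compatibly on shared faces. I would choose, on each of the complexes $Y_i=\{C(\eta_i^C)\}_{C}$, a generic \emph{rational}-valued lifting function $g_i$; after dilating by $\beta_i$, the function $y\mapsto\beta_i\,g_i(\beta_i^{-1}y)$ on $\beta_iC(\eta_i^C)$ has rational gradient and constant term in $V$ on each linearity piece. Hence the induced regular fine mixed subdivision $X_C$ of $C$ has all cells cut out by $(\bfZ,V)$-affine inequalities, and its inducing function $F_C$ is convex and $(\bfZ,V)$-affine on each cell; the $F_C$ agree on shared faces because the decomposition data and the $g_i$ do. The point where convexity of $P$ is essential is the passage from the $F_C$ to a single convex function on $P$ inducing $\bigcup_CX_C$: I would assemble, over all cells $C$, the Cayley complexes $\mathrm{T}\big((\beta_iC(\eta_i^C))_i\big)$ into one polyhedral complex over $P$, take a regular triangulation of its total space with a rational lift that refines the cell structure — this is Lemma~\ref{polysimp} carried out globally rather than cell by cell — and restrict. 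This yields a polysimplicial $(\bfZ,V)$-subdivision of $P$ which refines $X$, is regular on $P$, and carries a convex inducing function that is $(\bfZ,V)$-affine on each cell; in particular its lift is a $(\bfZ,V)$-polytope.

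From polysimplicial to polystable I would run the argument of the proof of Theorem~\ref{polysubdv} through Lemma~\ref{decomp2}. At each stage one has a regular subdivision $S$ of $P$ with a convex inducing function $g$, $(\bfZ,V)$-affine on each cell, together with regular per-cell $(\bfZ,V)$-refinements $S_C$ supplied by Lemma~\ref{decomp2} whose {c-lex} triangulations agree on shared faces. By the standard patching lemma — choose a global function $h$ on the vertices of $\bigcup_CS_C$ restricting on each $C$ to a function inducing $S_C$, and $\varepsilon\in V_{>0}$ small enough, which exists because $V\neq 0$ contains arbitrarily small positive elements — the function $g+\varepsilon h$ is convex, $(\bfZ,V)$-affine on each cell, and induces $\bigcup_CS_C$; the integer distortion factors $c_{[r]}$ of Lemma~\ref{decomp2} only rescale by integers and affect neither rationality of facet normals nor $V$-integrality of heights. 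Iterating until every index equals $1$ produces the regular polystable refinement $X'$ and a function $f$ inducing it with $P^f$ a $(\bfZ,V)$-polytope.

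The main obstacle is the globalization in the second paragraph: a refinement of $X$ that is regular on each cell of $X$ need not be regular on $P$, and one really must use that $P$ is convex, i.e., that the cell-wise inducing functions $F_C$ can be fitted together into a single globally convex function — concretely, that the mixed-subdivision combinatorics can be organized as one regular, $(\bfZ,V)$-lifted subdivision of $P$ rather than a patchwork. A second, more routine but ever-present point is to verify that none of the dilations appearing in the construction — by the $\beta_i\in V$ at the polysimplicial stage, by the integers $c_{[r]}$ at the polystable stage — spoils either rationality of facet normals or $V$-integrality of the vertex heights of the lift, which is exactly why the generic lifting data are kept rational and are only ever scaled by elements of $V$ or by integers.
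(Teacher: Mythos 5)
Your plan re-runs the whole construction while carrying regularity and $(\bfZ,V)$-affineness of the lift through every step; the paper instead modularizes the theorem into three separate propositions, and the comparison exposes two genuine gaps in your version. The first is the one you flag yourself but do not resolve: the input $X$ is an \emph{arbitrary} $(\bfZ,V)$-subdivision, so there is no convex function on $P$ adapted to $X$ to serve as the base of your ``$g+\varepsilon h$'' patching, and your proposed fix --- assembling the per-cell Cayley complexes and ``regularly triangulating the total space'' --- only reproduces the problem one level up: the union of Cayley polytopes over the cells of $X$ is itself a polyhedral complex glued along the (possibly non-regular) combinatorics of $X$, and a compatible family of convex certificates on its members still does not glue to a single convex function over $P$. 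The paper disposes of this with a one-line device you are missing: refine $X$ by the arrangement of all affine hyperplanes spanned by facets of cells of $X$; any arrangement-induced subdivision is automatically regular and is still a $(\bfZ,V)$-subdivision, and from then on every coarsening in sight is regular, so the standard $f_0+\epsilon f'$ patching applies.

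The second gap is in the last clause of the theorem. In your patching step $g+\varepsilon h$ you take $\varepsilon\in V_{>0}$ and $h$ an essentially arbitrary vertex function inducing the per-cell refinements; the affine interpolation of such an $h$ over a cell with vertices in $V^n$ generically has irrational gradient (already for a segment $[0,\beta]$ with $h(0)=0$ and $h(\beta)=1$ the slope is $1/\beta$), so the lower facets of $P^{g+\varepsilon h}$ have irrational slopes and $P^{g+\varepsilon h}$ is not a $(\bfZ,V)$-polytope. The paper avoids this by decoupling the two requirements: regularity is certified by an arbitrary real-valued function, and then, \emph{separately}, one shows that every regular $(\bfZ,V)$-subdivision admits a $(\bfZ,V)$ inducing function --- by transporting the 2-balanced Minkowski decomposition $C=\sum_i\beta_iC(\eta_i^C)$ to the lifted polytope, showing that each rational summand subdivision $X_i$ of $P_i$ is itself regular, invoking the fact that a regular rational subdivision is cut out by a rational linear system and hence has a rational inducing function $f_i$, and taking $\sum_i\beta_iP_i^{f_i}$, whose facet slopes stay rational because domain and range are dilated by the same $\beta_i$. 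Your proposal has the right instinct at the polysimplicial stage (you dilate $g_i$ by $\beta_i$ in both variables) but loses it at the polystable patching stage.
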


The proof follows from the following three propositions.

\begin{prop}
For every $(\bfZ,V)$-subdivision $X$ of a polytope $P$, there is a regular $(\bfZ,V)$-subdivision which refines $X$.
\end{prop}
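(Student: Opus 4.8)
The plan is to refine $X$ by the arrangement of all facet hyperplanes occurring in $X$, and then to write down an explicit convex piecewise-linear lifting certifying that this refinement is regular. Concretely, let $\mathcal H=\{H_1,\dots,H_M\}$ be the finite family of all hyperplanes of $\bfR^n$ that support a facet of some cell of $X$, enlarged by the facet hyperplanes of $P$ itself. Since every cell of $X$ is a $(\bfZ,V)$-polytope, each $H_i$ can be written as $H_i=\{x:\langle a_i,x\rangle=b_i\}$ with $a_i\in\bfZ^n$; moreover $b_i\in V$, because $H_i$ passes through some vertex of a $(\bfZ,V)$-polytope, which lies in $V^n$. Let $X'$ be the subdivision of $P$ whose maximal cells are the closures of the connected components of $P\setminus\bigcup_i H_i$, together with all of their faces.

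First I would record that $X'$ is of the required type and refines $X$. Each maximal cell of $X'$ is cut out of $P$ by inequalities of the form $\langle a_i,x\rangle\le b_i$ or $\langle a_i,x\rangle\ge b_i$, hence is again a $(\bfZ,V)$-polytope, and $X'$ is a polyhedral complex covering $P$ by the usual properties of hyperplane arrangements. For the refinement claim: $\bigcup_i H_i\supseteq\bigcup_{C\in X}\partial C$, so the interior of any maximal cell $D$ of $X'$ is disjoint from the boundary of every cell of $X$; being connected and contained in $P=\bigcup_{C\in X}C$, it must lie in $\mathrm{int}(C)$ for a single cell $C$ of $X$, whence $D=\overline{\mathrm{int}(D)}\subseteq C$. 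Passing to faces, every cell of $X'$ lies in a cell of $X$, i.e.\ $X'$ refines $X$.

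Next I would certify regularity by a lift. Let $g(x):=\sum_{i=1}^M\lvert\langle a_i,x\rangle-b_i\rvert$, a convex piecewise-linear function on $P$ which is affine on each maximal cell of $X'$. Put $S:=S(X')$, which satisfies $\conv(S)=P$ since $S\supseteq S(P)$, and set $f:=g|_S$. Since $g$ is affine on each maximal cell $D$ of $X'$ and $D$ has its vertices in $S$, the graph of $g$ over $P$ lies in $P^f$; by convexity of $g$ it is exactly the lower boundary of $P^f$. Hence the lower facets of $P^f$ are the graphs of $g$ over the maximal cells of $X'$, and they project precisely onto those cells, so $X'$ is the regular subdivision induced by $f$. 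Because $a_i\in\bfZ^n$, $b_i\in V$ and $g$ takes values in $V$, the lift $P^f$ is itself a $(\bfZ,V)$-polytope — the extra statement that will feed into the proof of Theorem~\ref{regref}.

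The argument is short and presents no serious obstacle: it is essentially the classical observation that a hyperplane-arrangement subdivision is regular, together with the remark that intersecting $(\bfZ,V)$-polytopes with $V$-level rational hyperplanes stays within the class of $(\bfZ,V)$-polytopes. The only point that requires a line of care is the assertion that the arrangement refinement genuinely refines $X$ — that no facet hyperplane of one cell of $X$ passes through the interior of another cell — which is exactly what the disjointness from $\bigcup_{C\in X}\partial C$ above handles; note that even the face-to-face property of $X$ is not needed for this step.
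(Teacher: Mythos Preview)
Your proof of the proposition itself is correct and takes the same route as the paper: refine $X$ by the arrangement of all facet hyperplanes of its cells, and note that arrangement subdivisions are regular. You supply more detail than the paper, in particular an explicit convex piecewise-linear lift $g(x)=\sum_i\lvert\langle a_i,x\rangle-b_i\rvert$; the paper simply asserts that any hyperplane-arrangement subdivision is regular.

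However, your final assertion --- that $P^f$ is itself a $(\bfZ,V)$-polytope, so that this already supplies the input needed for Theorem~\ref{regref} --- is wrong. The lower faces of $P^f$ do have integer normals (they are graphs of $\sum_i\pm(\langle a_i,x\rangle-b_i)$ with $a_i\in\bfZ^n$), but the upper faces need not. For a counterexample take $n=1$, $V=\bfQ+\bfQ\sqrt{2}$, $P=[0,2]$, and $X$ the subdivision into $[0,\sqrt{2}]$ and $[\sqrt{2},2]$. Then $X'=X$, and with your $g$ the upper edge of the triangle $P^f$ runs from $(0,g(0))$ to $(2,g(2))$ with slope $1-\sqrt{2}\notin\bfQ$. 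The paper establishes the existence of a $(\bfZ,V)$-lift as a separate proposition by a genuinely different and longer argument (decomposing the subdivision along a $\bfQ$-basis of $V$ into regular rational subdivisions, finding rational lifts of those, and recombining).

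A minor quibble: the parenthetical in your last paragraph about ``no facet hyperplane of one cell passing through the interior of another'' misidentifies the issue --- such a hyperplane may certainly pass through the interior of another cell, and this is harmless (the arrangement simply overcuts). Your earlier argument for refinement is correct and does not rely on any such assumption.
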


\begin{proof}
Without loss of generality, we may assume $P$ is full-dimensional in $\bfR^n$. Let $\mathcal A$ be the collection of all affine hyperplanes spanned by facets of cells of $X$. Then $\mathcal A$ induces a subdivision $X_{\mathcal A}$ of $P$ given by the collection of closures of connected components of $P - \bigcup_{H \in \mathcal A} H$. Clearly this is a $(\bfZ,V)$-subdivision, and every subdivision induced in this way by a collection of hyperplanes is regular.
\end{proof}

\begin{prop}
For every regular $(\bfZ,V)$-subdivision $X$ of a polytope $P$, there is a regular polystable subdivision which refines $X$.
\end{prop}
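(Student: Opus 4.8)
The plan is to re-run the construction from the proof of Theorem~\ref{compsubdv}, this time carrying a lifting function along and checking at each stage that the refinement produced is regular. Since $X$ is regular, fix once and for all a function $g\colon S(X)\to\bfR$ inducing it. As in loc.cit., I would choose a common $\KK$-basis $\{\beta_1\..\beta_r\}\subset V_{>0}$ adapted to all edge lengths of $X$, the $2$-balanced functions $\eta_i^C$ with $C=\sum_i\beta_iC(\eta_i^C)$ for every cell $C$ of $X$ (compatible along shared faces, each $C(\eta_i^C)$ a $\KK$-polytope), and a generic $f\colon S(X)\to\bfR$; the functions $f\circ\psi_i^C$ then cut out fine mixed subdivisions $X_C$ of the cells which glue to a polysimplicial refinement $X^{\rm ps}$ of $X$, and finally I would reduce $X^{\rm ps}$ to a polystable refinement $X'$ by iterating Lemma~\ref{decomp2} exactly as in the last paragraph of the proof of Theorem~\ref{polysubdv}.

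The one new ingredient I would isolate is a bookkeeping lemma on regular refinements: \emph{if $\Sigma$ is a regular subdivision of a polytope induced by $h$, and $\Sigma'$ refines $\Sigma$ so that its restriction to every cell $\sigma\in\Sigma$ is regular and the cell liftings can be chosen to agree (up to affine functions) on common faces, then $\Sigma'$ is regular, being induced by $h+\varepsilon\omega'$ for all sufficiently small $\varepsilon>0$}, where $\omega'$ is the function on $S(\Sigma')$ glued from the cell liftings and $h$ is extended to $S(\Sigma')$ by piecewise-linear interpolation over $\Sigma$. Each $X_C$ is regular by \S\ref{sec:regsubdv} --- it is cut out by the lower facets of $\sum_i\beta_iC(\eta_i^C)^{h_i^C}$ for the lift functions $h_i^C$ induced by $f$ through $\psi_i^C$ --- and because $\eta_i^\bullet$ and $f$ are global data on $X$, these liftings agree on common faces; so the lemma makes $X^{\rm ps}$ a regular refinement of $X$. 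The ``moreover'' clauses of Lemma~\ref{decomp2} say that each index-reduction step is regular and c-lex on the boundary, so the successive cell liftings again glue; a second application of the lemma, now with $X^{\rm ps}$ in place of $\Sigma$, shows that $X'$ is regular. Since $X'$ refines $X^{\rm ps}$ which refines $X$, it is the desired regular polystable subdivision.

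I expect the bookkeeping lemma to be the main obstacle. The cell-wise lifting functions are only canonical up to an affine function on each cell, so one must verify that the specific choices coming from the global data $g$, $f$ and the $\eta_i^\bullet$ are genuinely consistent on overlaps --- that $\omega'$ is a well-defined function on $S(X^{\rm ps})$ (and likewise at the second stage) rather than merely consistent up to a cell-by-cell affine correction. Moreover, the vertex set grows at every stage (new lattice points appear both in the mixed subdivisions and in the Cayley refinements of Lemma~\ref{decomp2}), so the interpolation of the coarser lifting function and the choice of a small enough $\varepsilon$ need a little care, and one must trace through that ``regular when restricted to the Cayley complex'' in Lemma~\ref{decomp2} really yields a regular subdivision of the underlying cell. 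Termination of the index-reduction loop is routine --- the index strictly drops and there are finitely many cells --- exactly as in the proof of Theorem~\ref{polysubdv}.
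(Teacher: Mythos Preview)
Your approach is essentially identical to the paper's. The paper also invokes the proof of Theorem~\ref{compsubdv} to obtain a polystable refinement $X'$ whose restriction $X_C$ to each cell $C$ of $X$ is regular, chooses cell-wise lifting functions $f_C$ that agree on shared vertices (this is exactly your consistency check, and it holds for the reason you give: the data $f$ and $\eta_i^\bullet$ are global on $X$), glues them to a single $f'\colon S(X')\to\bfR$, and then shows that $f_0+\varepsilon f'$ induces $X'$ for small $\varepsilon>0$ --- precisely your bookkeeping lemma. The only cosmetic difference is that the paper extends the coarse lifting $f_0$ to the new vertices by $0$ rather than by piecewise-linear interpolation, and it does not separate the polysimplicial and polystable stages as you do; neither difference is substantive.
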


\begin{proof}
By the proof of Theorem~\ref{compsubdv}, there is a polystable refinement $X'$ of $X$ such that for every cell $C$ of $X$, the collection of cells of $X'$ contained in $C$ form a regular subdivision $X_C$ of $C$. For each cell $C$ of $X$, let $f_{C} : S(X_C) \to \bfR$ be a function which induces $X_C$. Additionally, we can choose the $f_{C}$ such that if two subdivisions $X_C$ and $X_{C'}$ share a vertex $v$, then $f_{C}(v) = f_{C'}(v)$. Hence, there is a function $f' : S(X') \to \bf R$ such that for all $C$, we have that $f'$ restricted to $S(X_C)$ is $f_C$.

Let $f_0$ be a function which induces $X$. Let $f : S(X') \to \bfR$ be the function which restricts to $f_0$ on the vertices of $X$ and equals 0 otherwise. Then for small enough $\epsilon > 0$, $f + \epsilon f'$ induces $X'$. Thus $X'$ is regular.
\end{proof}

\begin{prop}
For every regular $(\bfZ,V)$-subdivision $X$ of a polytope $P$, there is a function $f$ which induces $X$ such that $P^f$ is a $(\bfZ,V)$-polytope.
\end{prop}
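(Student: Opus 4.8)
The plan is to build $f$ by controlling the lower boundary of $P^f$ (which must project to $X$) and the upper boundary separately, exploiting that every cell of $X$ is a $(\bfZ,V)$-polytope, so that the internal walls of $X$ are rational. Since $X$ is regular there is a convex piecewise-linear function $F_0$ on $P$ whose domains of linearity are exactly the cells of $X$; in fact every height function $f\colon S(X)\to\bfR$ inducing $X$ has the form $f=F|_{S(X)}$ for such a convex $F$ (the lower hull of a lift is always convex and, since $f$ induces $X$, every $s\in S(X)$ is a vertex of the lower hull), and the set $C_X$ of such $f$ is a relatively open convex cone, stable under adding affine functions. For convex $F$ the upper boundary of $P^f$ is the graph of the concave hull of $\{F(w):w\in S(P)\}$, hence the upper facets of $P^f$ coincide with the upper facets of $\conv\{(w,f(w)):w\in S(P)\}$; the remaining, ``vertical'', facets of $P^f$ lie over facets of the $(\bfZ,V)$-polytope $P$ and so have rational normals automatically. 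Therefore $P^f$ will be a $(\bfZ,V)$-polytope as soon as: (i) $f(S(X))\subset V$; (ii) for every maximal cell $C$ of $X$ the affine function agreeing with $f$ on $S(C)$ has rational gradient (this makes the lower facets rational); and (iii) $\conv\{(w,f(w)):w\in S(P)\}$ is a $(\bfZ,V)$-polytope.

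Conditions (i) and (ii) are obtained by a rationality argument. If maximal cells $C,C'$ of $X$ meet along a common facet $\Phi$, then $\Phi$ is a facet of the $(\bfZ,V)$-polytope $C$, so $\mathrm{aff}(\Phi)$ has a rational normal $\nu_\Phi$, and the cell-gradients $a_C$ of $f$ satisfy $a_C-a_{C'}\in\bfR\nu_\Phi$. The admissible families $(a_C)$ thus form a \emph{rational} linear subspace $L$ of $(\bfR^n)^{\{\text{cells}\}}$, inside which the families actually inducing $X$ (those with nonzero jumps of the correct sign across the internal walls) form a relatively open subcone; it is nonempty, since it contains the gradients of $F_0$, hence it contains rational points. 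Fixing a rational $(a_C)$ in it together with one height value in $V$ determines $f$; because the vertices of $X$ lie in $V^n$, all heights are then in $V$, so (i) and (ii) hold.

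What remains is to arrange (iii) compatibly with (i)--(ii), and this is the delicate part; I would do it by induction on $d=\dim P$, the case $d\le 1$ being elementary (choose increasing rational slopes and propagate a single height value in $V$). The boundary complex $\partial X$ is a regular $(\bfZ,V)$-subdivision of $\partial P$; processing the faces of $P$ in increasing dimension and patching along common faces exactly as in the proof of Theorem~\ref{compsubdv}, one uses the (suitably strengthened) inductive hypothesis to produce a lift of $\partial X$ to a $(\bfZ,V)$-polytope in which, for \emph{every} face $\Phi$ of $P$, the lifted vertices of $\Phi$ span a $(\bfZ,V)$-polytope; taking $\Phi=P$ yields (iii). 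One then extends the resulting heights on $S(\partial X)$ to heights on all of $S(X)$ by a convex $f$ inducing $X$, and finally performs the adjustments of (i) and (ii) \emph{relative to} the fixed boundary data: the relevant gradient subspace and its sign-cone remain rational and relatively open, because every wall of $X$ meeting $\partial P$ lies in a rational hyperplane and the boundary gradients were already chosen rational by induction.

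The main obstacle is precisely this extension step: showing that the inductively constructed ``nice'' lift of $\partial X$ extends to a convex piecewise-linear function on $P$ whose linearity domains are the cells of $X$ (equivalently, that the prescribed boundary trace lies in the image of $C_X$ under restriction to $S(\partial X)$, with room left for the rationality adjustments). This is where regularity of $X$ enters essentially — a convex $F_0$ inducing $X$ exists, and one must deform its boundary trace to the prescribed one within the cone of convex functions inducing $X$ — and it must be arranged so as not to destroy the conclusions of (i) and (ii). Everything else is soft: density of $V$ in $\bfR$, openness of the cones $C_X$ and of the sign-cones, and the standard correspondence between regular subdivisions and lower faces of lifts.
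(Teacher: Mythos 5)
Your reduction of the statement to conditions (i)--(iii) is correct, and your argument for (i) and (ii) is sound: the walls of $X$ are facets of $(\bfZ,V)$-cells and hence have rational normals, so the gradient data of convex piecewise-linear functions inducing $X$ sweeps out a relatively open cone in a \emph{rational} linear subspace; it is nonempty by regularity, hence contains a rational point, and propagating one height in $V$ through the $V$-rational vertices then gives $f(S(X))\subset V$ with rational lower-facet slopes. The vertical facets are also handled correctly. The genuine gap is condition (iii). Controlling the lower hull says nothing about the upper hull: the upper facets of $P^f$ are the upper facets of $\conv\{(w,f(w)):w\in S(P)\}$, and these are governed by the heights at the vertices of $P$ \emph{together with the global concave envelope}, which is an entirely different combinatorial object from $X$. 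Your proposed induction over $\partial X$ produces boundary heights with good properties, but the crucial step --- that this prescribed boundary trace extends to a convex function on $P$ whose linearity domains are exactly the cells of $X$, with enough residual freedom to keep (i) and (ii) --- is exactly the point you flag as an ``obstacle'' and do not prove. There is no reason the two systems of constraints (rational gradients on the interior cells of $X$, propagated from a single base height, versus rationality of the concave envelope over $S(P)$) are simultaneously satisfiable by your construction; already for a rectangle $[0,q]\times[0,q']$ with $q/q'\notin\bfQ$ the upper-hull condition forces nontrivial congruences on the corner heights modulo $q\bfQ$ and $q'\bfQ$ that the lower-hull argument does not provide. So as written this is a sketch with its central step missing, not a proof.

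The paper avoids the upper-hull problem entirely by a different mechanism, and it is worth internalizing: using Proposition~\ref{decomp} one writes $P=\beta_1P_1+\dotsb+\beta_rP_r$ with each $P_i$ rational and $\{\beta_i\}$ a $\bfQ$-basis of $V$ in $V_{>0}$, shows that $X$ decomposes correspondingly into subdivisions $X_i$ of the $P_i$, proves (this is the first lemma, via a 2-balancing argument on the lower edges of $P^f$) that each $X_i$ is \emph{regular}, and then picks a rational $f_i$ inducing $X_i$ with $P_i^{f_i}$ a $\bfQ$-polytope --- which is easy because the defining linear system is rational. The lift of $P$ is then taken to be $\beta_1P_1^{f_1}+\dotsb+\beta_rP_r^{f_r}$. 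Since this is a positive $V$-combination of rational polytopes, \emph{all} of its facets --- upper, lower and vertical --- automatically have rational slopes and its vertices lie in $V^{n+1}$ by Proposition~\ref{Vsum}; no separate analysis of the upper hull is needed. If you want to salvage your direct approach, you would need to supply precisely the missing extension-and-compatibility argument; otherwise the Minkowski decomposition is the tool that closes the gap.
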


\begin{proof}
As in the proof of Theorem~\ref{compsubdv}, there is a $\KK$-basis $\{\beta_1,\dotsc,\beta_r\} \subset V_{>0}$ of $V$ satisfying the following: For each cell $C$ of $X$, there are 2-balanced functions $\eta_1^C$,~\dots,~$\eta_r^C$ with respect to $C$ such that $C = \beta_1C(\eta_1^C) + \dotsb + \beta_rC(\eta_r^C)$ and each $C(\eta_i^C)$ is a $\KK$-polytope. Moreover, the proof implies the following:
\begin{compactenum}[(i)]
\item For each $i=1$,\dots,$r$ there are functions $\eta_i: E(X) \to \bfR_{>0}$ such that for each cell $C$ of $X$, $\eta_i$ restricted $E(C)$ is $\eta_i^C$.
\item Let $\eta_i^P$ be the restriction of $\eta_i$ to $E(P)$. Then the collection $X_i := \{ C(\eta_i^C) \}_{C \in X}$ forms (after translation) a subdivision of $P_i := P(\eta_i^P)$.
\end{compactenum}
We now prove the following two lemmas.

\begin{lem}
For each $i$, $X_i$ is a regular subdivision of $P_i$.
\end{lem}

\begin{proof}
Let $f$ be a function which induces $X$. Let $E^f$ denote the set of lower edges of $P^f$. (A lower edge is an edge which is a lower face, as defined in Section~\ref{sec:regsubdv}.) Fix $i$, and define a function $\eta_i^f : E^f \to \bfR_{>0}$ by
\[
\eta_i^f(e) = \eta_i(\pi(e)) \frac{\ell(e)}{\ell(\pi(e))},
\]
where $\pi$ is defined as in Section~\ref{sec:regsubdv}. Now, suppose $F$ is a 2-dimensional lower face of $P^f$ with vertices $v_1$, \dots, $v_k$ and edges $v_1v_2$, \dots, $v_kv_1$. Let
\[
\epsilon := \eta_i^f(v_1v_2) \frac{v_2-v_1}{\lVert v_2-v_1 \rVert} + \eta_i^f(v_2v_3) \frac{v_3-v_2}{\lVert v_3-v_2 \rVert} + \dotsb + \eta_i^f(v_kv_1) \frac{v_1-v_k}{\lVert v_1-v_k \rVert}.
\]
We claim that $\epsilon = \vec{0}$. Indeed, we have
\begin{align*}
\pi(\epsilon) &= \eta_i(\pi(v_1v_2)) \frac{\pi(v_2)-\pi(v_1)}{\lVert \pi(v_2)-\pi(v_1) \rVert} + \dotsb + \eta_i(\pi(v_kv_1)) \frac{\pi(v_1)-\pi(v_k)}{\lVert \pi(v_1)-\pi(v_k) \rVert} \\
&= 0
\end{align*}
because $\pi(F)$ is a 2-face of $X$ and $\eta_i$ is 2-balanced. On the other hand, $\epsilon$ is parallel to $F$, and $F$ lies in a hyperplane $\{x \in \bfR^{n+1} : \phi(x) = c\}$ with $\phi(0,\dots,0,1) > 0$. Hence, $\pi(\epsilon) = 0$ implies $\epsilon = 0$, as desired.

Thus, using the proof of Proposition~\ref{2bal}, we can construct a polytope $P'$ with vertex set $S'$, lower edge set $E'$, and lower graph $G' = (S',E')$ such that there is a graph isomorphism $\psi : G^f \to G'$, where $G^f$ is the lower graph of $P^f$, and
\[
\psi(v) - \psi(u) = \eta_i^f(uv) \frac{v-u}{\lVert v-u \rVert}
\]
for every lower edge $uv$ of $P^f$. The collection of projections of lower facets of $P'$ under $\pi$ is precisely the subdivision $X_i$, as desired.
\end{proof}

By construction, the subdivisions $X_i$ can be translated to be $\KK$-subdivisions. We then have the following.

\begin{lem}
For each $i$, there is a function $f_i$ which induces $X_i$ such that $P_i^{f_i}$ is a $\KK$-polytope.
\end{lem}

\begin{proof}
For any function $f: S(X_i) \to \bf R$, the condition that $f$ induces $X_i$ can be expressed as a system of linear equations and inequalities on its values: Namely, that for every ($n$-dimensional) cell $C$ of $X_i$, we have that $C^f$ spans an $n$-dimensional affine subspace of $\bfR^{n+1}$, and for every vertex $v$ of $X_i$ not in $C$, $(v,f(v))$ lies strictly above this subspace (where ``above'' means in the (0,...,0,1) direction). Since $X_i$ is regular, this system has a solution. Moreover, since $X_i$ is a $\KK$-subdivision, all of the linear equations can be taken to have rational coefficients, and hence the system has a rational solution $f_i$. Then $P_i^{f_i}$ is a $\KK$-polytope, as desired.
\end{proof}

We can now complete the proof. Let $f_1$, \dots, $f_r$ be as in the previous Lemma. Then $\beta_1 P_1^{f_1}  + \dotsb + \beta_r P_r^{f_r}$ is a $(\bfZ,V)$-polytope, and the collection of projections of lower facets of this polytope under $\pi$ is precisely $X$.
\end{proof}

\subsubsection{Maps}

Recall that a $(\bfZ,V)$-affine map is a map $\bfR^n \to \bfR^m$ of the form $x \mapsto Ax + v$, where $A \in S^{m \times n}$ and $v \in L$. A morphism of $(\bfZ,V)$-polytopes $P$, $Q$ is a map $P \to Q$ induced by a $(\bfZ,V)$-affine map. If $X$ and $Y$ are polyhedral subdivisions, a map $X \to Y$ is a \emph{polyhedral map} if the image of every cell of $X$ is contained in a cell of $Y$.

\begin{theor}
Let $P$, $Q$ be $(\bfZ,V)$-polytopes, $m : P \to Q$ a morphism of $(\bfZ,V)$-polytopes, and $Y$ a $(\bfZ,V)$-subdivision of $Q$. Then there exist regular polystable subdivisions $X'$, $Y'$ of $P$ and $Q$, respectively, where $Y'$ refines $Y$, such that the induced map $m' : X' \to Y'$ is a polyhedral map.
\end{theor}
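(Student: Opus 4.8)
The plan is to reduce the statement to a combination of Theorem~\ref{regref} applied to $Q$ and a relative version of the same machinery applied to $P$, carrying the map $m$ along throughout. The key observation is that $m$ is $(\bfZ,V)$-affine, so it interacts well with the edge-length combinatorics underlying Propositions~\ref{decomp} and \ref{2bal}: the pushforward of an edge of $P$ under $m$ is either collapsed to a point or is parallel to a $(\bfZ,V)$-direction, and in the latter case its length differs from that of its image by a positive rational factor (because $A$ has rational entries). First I would apply Theorem~\ref{regref} to $Y$, obtaining a regular polystable refinement $Y'$ of $Y$ together with a lifting function $g$ on $Q$ with $Q^g$ a $(\bfZ,V)$-polytope. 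Then I would pull $Y'$ back along $m$: the preimages $m^{-1}(C)$ of cells $C\in Y'$, intersected with the cells of an initial $(\bfZ,V)$-subdivision of $P$ (say the one cut out by the hyperplanes spanned by facets of cells of $Y'$ pulled back under $m$, which are $(\bfZ,V)$-affine hyperplanes), give a $(\bfZ,V)$-subdivision $X_0$ of $P$ such that $m\colon X_0\to Y'$ is already a polyhedral map.

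**Next**, the task is to polystably refine $X_0$ without destroying the polyhedral-map property, i.e.\ so that each cell of the refinement $X'$ still maps into a single cell of $Y'$. Here I would run the proof of Theorem~\ref{compsubdv}/\ref{regref} on $X_0$, but with one crucial modification: when choosing the $\KK$-basis $\{\beta_1\..\beta_r\}\subset V_{>0}$ and the $2$-balanced functions $\eta_i^C$ on the cells $C$ of $X_0$, I would use a \emph{common} basis that simultaneously works for the edge lengths of $X_0$ and for the edge lengths of $Y'$, and I would choose the regular lifting function $f$ on $S(X_0)$ of the form $f = (g\circ m) + \epsilon f'$ for small $\epsilon$, where $g\circ m$ is the pullback of the lifting function $g$ realizing $Y'$ and $f'$ is a generic function realizing a polystable refinement as in Theorem~\ref{regref}. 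Since $g\circ m$ is constant-to-first-order along fibers of $m$ over the interiors of cells of $Y'$, for small $\epsilon$ the induced subdivision $X'$ refines $X_0$ (hence maps cellwise into $Y'$) while being polystable and regular; the fine mixed subdivisions on each cell $C$ are then, as in the proof of Theorem~\ref{compsubdv}, $(\bfZ,V)$-polytopes that are products of semistable simplices. Finally the index-reduction step via Lemma~\ref{decomp2} is applied to $X'$ to pass from polysimplicial to genuinely polystable; this step only subdivides further and only uses lattice points, so it preserves the property that each new cell lies inside a cell of $Y'$, hence $m'\colon X'\to Y'$ is a polyhedral map.

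**The main obstacle** I expect is the compatibility of the lattice-theoretic index-reduction (Lemma~\ref{decomp2}) with the map: one must ensure that when a facet $F$ of a Cayley polytope over a cell of $X'$ is subdivided to improve its index, the distortion factors $(\widetilde c_{[r]})$ and the resulting subdivision can be chosen \emph{uniformly} over all cells of $X'$ that share faces, \emph{and} in a way that does not force a corresponding distortion on $Y'$ that would wreck the polystability already achieved there. The resolution is that $m$ being $(\bfZ,V)$-affine means the lattice $\Lambda$ generated by a cell of $X'$ maps into the lattice generated by its image cell in $Y'$ with a well-controlled (rational) comportment, so increasing edge lengths on the source by integer factors $c_i$ can be matched by the fact that we are free to rescale the $\beta_i$'s globally — exactly the "change $\beta_1\..\beta_r$ again and repeat" step at the end of the proof of Theorem~\ref{polysubdv}, now carried out simultaneously on source and target. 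One runs this alternation until every Cayley polytope occurring over a cell of $X'$ \emph{and} over a cell of $Y'$ has index one; termination follows as in Theorem~\ref{polysubdv} because the maximal index strictly decreases. The remaining verifications — that $m'$ sends cells into cells, that both $X'$ and $Y'$ are regular, and that $P^{f}$ and $Q^{g}$ can be taken to be $(\bfZ,V)$-polytopes — are routine given the propositions preceding Theorem~\ref{regref}.
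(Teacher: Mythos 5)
Your opening move is exactly the paper's: apply Theorem~\ref{regref} to get a regular polystable refinement $Y'$ of $Y$, and pull it back along $m$ to obtain a $(\bfZ,V)$-subdivision $X_0=\{m^{-1}(C)\}_{C\in Y'}$ of $P$ for which $m\:X_0\to Y'$ is already polyhedral. But you then miss the observation that finishes the proof in one line: \emph{any} refinement of $X_0$ automatically preserves the polyhedral-map property. If $X'$ refines $X_0$, each cell of $X'$ is contained in some cell $m^{-1}(C)$ of $X_0$, hence its image under $m$ is contained in $C\in Y'$; there is nothing to protect. So the paper simply applies Theorem~\ref{regref} once more, to $X_0$, and is done. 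There is no need for a common basis $\{\beta_1\..\beta_r\}$ serving both $P$ and $Q$, no need for the coupled lifting function $g\circ m+\epsilon f'$, and no interaction whatsoever between the index-reduction steps on the two sides.

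Beyond being unnecessary, the third paragraph of your argument is actually incoherent as stated: you propose to alternate distortions between source and target ``until every Cayley polytope occurring over a cell of $X'$ \emph{and} over a cell of $Y'$ has index one.'' But $Y'$ was fixed at the outset and $X_0$ was defined as its pullback; going back and modifying $Y'$ would invalidate $X_0$ and force the whole construction to restart, and your termination claim for this alternation is not justified. The worry that refining $X_0$ could ``force a corresponding distortion on $Y'$'' is unfounded for the reason above: refinement only shrinks cells, so the target subdivision never needs to be touched again. The correct proof is: $Y'$ by Theorem~\ref{regref}; $X_0$ by pullback; $X'$ a regular polystable refinement of $X_0$ by Theorem~\ref{regref}; done.
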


\begin{proof}
By Theorem~\ref{regref}, there is a regular polystable subdivision $Y'$ which refines $Y$. Let $X := \{ m^{-1}(C) \}_{C \in Y'}$. Then $X$ is a $(\bfZ,V)$-subdivision of $P$. By Theorem~\ref{regref}, there is a regular polystable subdivision $X'$ which refines $X$. Then $X'$ and $Y'$ satisfy the conclusion of the theorem.
\end{proof}

\section{Applications to log schemes}\label{lastsec}

\subsection{Monoidal subdivisions of log schemes}
We will work with log structures defined in a topology $\tau$, which can be Zariski, \'etale or flat, though we are mainly interested in the first two cases. We only consider quasi-coherent integral log structures (recalled below).

\subsubsection{Charts}
A (global) affine chart for the log structure $M_X$ consists of an integral monoid $P$ and a homomorphism $P\to\Gamma(\calO_X)$ such that the associated log structure is $M_X$. Equivalently, one can present a chart as a strict morphism of log schemes $X\to\Spec(\bfZ[P])$, or as a map of monoidal spaces $\pi\:(X,M_X)\to\Spec(P)$ such that the log structure associated with $\pi^*(P)$ is $M_X$. The latter approach can be conveniently globalized as follows: a {\em monoscheme chart} of $X$ is a morphism $\pi:(X,M_X)\to(Z,M_Z)$, where the target is a monoscheme and $M_X$ is the log structure associated with $\pi^*(M_Z)$. In particular, the latter notion allows to work with disjoint unions.

\subsubsection{Quasi-coherence}
Recall that a log scheme is {\em quasi-coherent} if its log structure possesses charts $\tau$-locally. This happens if and only if it possesses a {\em $\tau$-chart} as follows: a strict morphism of log schemes $X'\to X$ which is a $\tau$-covering on the level of schemes and a monoscheme chart $X'\to Z$ for $M_{X'}$.

\subsubsection{Monoidal pullbacks}
The following result defines pullbacks of morphisms of monoschemes with respect to monoscheme charts.

\begin{lem}\label{monopullback}
Assume that $X$ is a log scheme, $X\to Z$ is a monoscheme chart, and $Z'\to Z$ is a morphism of monoschemes.

\begin{compactenum}[(i)] \item There exists a universal log scheme $X'$ over $X$ such that the composed map of monoidal spaces $X'\to Z$ factors through $Z'\to Z$. We call $X'\to X$ the {\em pullback} of $Z'\to Z$ and use the symbolical notation $X'=X\times_ZZ'$.

\item The pullbacks are compatible in the following sense: if $Y\to X$ is a strict morphism of log schemes then $Y\times_ZZ'=Y\times_XX'$.
\end{compactenum}
\end{lem}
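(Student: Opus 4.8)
The plan is to construct $X'$ by scheme-theoretic fiber products in the affine case and to glue via the universal property. First I would reduce to the affine situation: cover $Z$ by affine open monoschemes $\Spec(P_i)$; since the chart condition ``$M_X$ is the log structure associated with $\pi^*(M_Z)$'' is local on $X$, each open $\pi^{-1}(\Spec(P_i))$ carries an affine chart $P_i\to\Gamma(\calO_{\pi^{-1}(\Spec(P_i))})$. Replacing $Z'$ by its restriction $Z'\times_Z\Spec(P_i)$ and covering the latter by affine opens $\Spec(Q_{ij})$, and noting that any solution of the stated universal problem is canonical and compatible with restriction to opens (so the local constructions will glue), it suffices to treat $Z=\Spec(P)$ and $Z'=\Spec(Q)$ with $\theta\:P\to Q$ the corresponding homomorphism.

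In that case the chart is a strict morphism $X\to\Spec(\bfZ[P])$, and I set $X'\:=X\times_{\Spec(\bfZ[P])}\Spec(\bfZ[Q])$ as a scheme, equipped with the log structure $M_{X'}$ associated with the prelog chart $Q\to\Gamma(\calO_{X'})$; then $Q\to\Gamma(M_{X'})$ is a monoscheme chart $X'\to\Spec(Q)=Z'$ whose composition with $\theta$ recovers the composed map of monoidal spaces $X'\to X\to Z$, so the latter factors through $Z'$. For the universal property, let $T$ be a log scheme with a morphism $T\to X$ such that $(T,M_T)\to Z$ factors through $Z'$; this factorization amounts to a homomorphism $Q\to\Gamma(M_T)$ over $P\to\Gamma(M_T)$. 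Composing with the structure map $M_T\to\calO_T$ gives $Q\to\Gamma(\calO_T)$ over $P\to\Gamma(\calO_T)$, hence a scheme morphism $f\:T\to X'$ over $X$; and since $f^*M_{X'}$ is the log structure associated with the resulting prelog chart $Q\to\calO_T$, the universal property of associated log structures produces a morphism $f^*M_{X'}\to M_T$ making $f$ a morphism of log schemes over $X$. It is moreover the unique such morphism realising the factorization through $Z'$; this proves (i), and gluing the affine pieces gives the general case.

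For (ii), if $Y\to X$ is strict then $M_Y$ is the pullback of $M_X$, hence the log structure associated with the pullback of $M_Z$ along the composite $Y\to Z$, so $Y\to Z$ is again a monoscheme chart. Both $Y\times_ZZ'$ (formed with this chart) and $Y\times_XX'$ then represent the same functor on log schemes over $Y$: by part (i) applied over $X$, a morphism $T\to Y$ of log schemes for which $(T,M_T)\to Z$ factors through $Z'$ is the same datum as a morphism $T\to Y$ together with a factorization of $T\to X$ through $X'$, i.e.\ a morphism $T\to Y\times_XX'$; hence $Y\times_ZZ'=Y\times_XX'$. I expect the only genuine difficulty to be bookkeeping: matching ``the composed map of monoidal spaces factors through $Z'$'' with the existence of a compatible chart $Q\to\Gamma(M_T)$ (using that $\Spec(Q)$ represents the appropriate functor on monoidal spaces) and invoking the associated-log-structure adjunction to pass from scheme morphisms to log morphisms; once this dictionary is set up, the reduction and the gluing are formal.
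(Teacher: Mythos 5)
Your construction is exactly the paper's: reduce to the affine case, set $X'=X\times_{\Spec(\bfZ[P])}\Spec(\bfZ[Q])$ with the log structure associated to the chart $Q$, verify the universal property via the prelog/log adjunction, and glue using compatibility with localization; part (ii) likewise follows by comparing universal properties for strict $Y\to X$. The proposal is correct and simply spells out the details that the paper's proof leaves as ``clear''.
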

\begin{proof}
In the affine case $X=X'$, $Z=\Spec(P)$ and $Z'=\Spec(P')$, we simply set $X'=X\times_{\Spec(\bfZ[P])}\Spec(\bfZ[P'])$. Clearly, this definition satisfies the universal property (i), and (i) holds for affine strict morphisms $Y\to X$. Furthermore, the affine construction is compatible with localizations of $P$ and $P'$, hence it uniquely extends to the case when $Z$ and $Z'$ are arbitrary monoschemes, and, again, it is clear this construction satisfies (i), and the pullback compatibility holds for strict morphisms $Y\to X$.
\end{proof}

\subsubsection{Monoidal morphisms}
We say that a morphism $X'\to X$ of log schemes is {\em monoidal} if $\tau$-locally it is a monoidal pullback. Concretely, this means that there exists a $\tau$-chart $Y\to X$, $Y\to Z$ such that the base change $X'\times_XY\to Y$ is the pullback of a morphism of monoschemes $Z'\to Z$. If, in addition, $Z'\to Z$ can be chosen birational, a partial subdivision, or a subdivision, then we say that $X'\to X$ is {\em monoidally birational}, a {\em partial monoidal subdivision}, or a {\em monoidal subdivision}, respectively.

\subsubsection{Fans of log schemes}
By a {\em fan} of a log scheme $X$ we mean a morphism of monoidal spaces $\oX:=(X,\oM_{X})\to F$ such that $F$ is a fan and $f^*(M_F)=\oM_{X}$.

\begin{lem}\label{fanlem}
Let $X$ be a quasi-coherent log scheme.
\begin{compactenum}[(i)]
\item  Any monoscheme chart $X\to Z$ induces a fan $\oX\to F=\oZ$.

\item Conversely, any affine fan $\oX\to F=\oSpec(Q)$ is induced from an affine chart $X\to\Spec(P)$ with $\oP=Q$.
\end{compactenum}
\end{lem}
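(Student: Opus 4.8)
To prove (i), the plan is to \emph{sharpen} the chart. Recall that a monoscheme chart of $X$ is a morphism of monoidal spaces $\pi\:(X,M_X)\to(Z,M_Z)$ whose associated log structure $\pi^*(M_Z)$ equals $M_X$. The comorphism $\pi^{-1}(M_Z)\to M_X$ is a homomorphism of sheaves of monoids, hence carries units to units, so the composite $\pi^{-1}(M_Z)\to M_X\onto\oM_X$ annihilates $\pi^{-1}(M_Z^\times)$ and factors through $\pi^{-1}(\oM_Z)$. This factorization is the comorphism of a morphism of monoidal spaces $\ol\pi\:\oX\to\oZ$ with the same underlying continuous map as $\pi$, and $\oZ$ is a fan by definition. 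It then remains to check that $\ol\pi$ exhibits $\oM_X$ as the pullback $\ol\pi^{*}(\oM_Z)$, so that it is a fan of $X$; this is local on $X$, so I would reduce to $Z=\Spec(P)$ with $X$ affine, where the chart condition describes each stalk $M_{X,x}$ as the stalk of the log structure associated to the prelog structure $P\to\Gamma(\calO_X)$. Passing to sharpenings identifies $\oM_{X,x}$ with the quotient of $\oM_{Z,\pi(x)}$ generated by the images of $P$, which is exactly the stalk of $\ol\pi^{*}(\oM_Z)$. I expect this part to be routine bookkeeping.

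For (ii), the plan is to lift the fan back to a chart using global sections. The affine fan $f\:\oX\to F=\oSpec(Q)$ induces on global sections a homomorphism $q\:Q=\Gamma(F,M_F)\to\Gamma(X,\oM_X)$. Set $\calM:=\Gamma(X,M_X)$; the exact sequence $1\to\calO_X^\times\to M_X\to\oM_X\to1$ of sheaves on $X$ gives a homomorphism $\rho\:\calM\to\Gamma(X,\oM_X)$ with $\ker\rho=\Gamma(X,\calO_X^\times)=\calM^\times$. I would then put $P:=\calM\times_{\Gamma(X,\oM_X)}Q$, take as candidate chart the composite $P\to\calM\to\Gamma(\calO_X)$, check that $P$ is integral with $P^\times=\calM^\times$, and verify that running the construction of (i) on $X\to\Spec(P)$ returns $f$ --- which, granting $\oP=Q$, follows because $q$ is a chart for $\oM_X$ in the fan sense.

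The step I expect to be the main obstacle is establishing $\oP=Q$, equivalently the surjectivity of the projection $P\to Q$: an element $a\in Q$ lifts to $\calM$ exactly when the $\calO_X^\times$-pseudotorsor cut out by $q(a)$ inside $M_X$ has a global section, and the obstruction lies in $H^1(X,\calO_X^\times)=\mathrm{Pic}(X)$ (it is the image of $a$ under $Q\xrightarrow{q}\Gamma(X,\oM_X)\to\mathrm{coker}(\rho)\into\mathrm{Pic}(X)$). This obstruction vanishes when $X$ is local, and more generally after restricting $X$ to a small enough open on which the fan remains affine; as the notion of monoidal chart is itself $\tau$-local this loses nothing. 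So the substantive content of the lemma is part (i) together with checking that the charts produced locally in (ii) are compatible with the prescribed fan.
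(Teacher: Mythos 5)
Part (i) of your proposal is correct and is essentially the paper's argument: since a monoscheme chart is a local morphism of monoidal spaces, passing from the prelog structure $\pi^{-1}(M_Z)\to\calO_X$ to its associated log structure changes only the units and leaves the sharpenings of the stalks untouched, so $\oM_{X,x}=\oM_{Z,\pi(x)}$ and $\oX\to\oZ$ is a fan. For (ii), your construction $P=\Gamma(M_X)\times_{\Gamma(\oM_X)}Q$ is also exactly the paper's, as is the verification that $X\to\Spec(P)$ is a chart once one knows $\oP=Q$ (on underlying sets the map agrees with the fan, and the sharpened stalk maps are then isomorphisms).

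The divergence is precisely at $\oP=Q$, i.e.\ the surjectivity of $P\to Q$, and here your diagnosis is in fact sharper than the paper's proof, which asserts without comment that the sharpening of $P\to M$ is $Q\to\oM$. You are right that each $a\in Q$ contributes an $\calO_X^\times$-torsor (the preimage of $q(a)$ in $M_X$) whose class in $H^1_\tau(X,\calO_X^\times)$ obstructs lifting $q(a)$ to $\Gamma(M_X)$. The problem is that your proposed fix --- shrink $X$ until the obstruction dies --- does not prove the lemma as written, which asks for a single global affine chart on all of $X$; and the global statement can genuinely fail. Take $X$ a Dedekind scheme, or $\bfP^1$, with the divisorial log structure along a non‑principal prime divisor $D$: the fan $\oX\to\oSpec(\bfN)$ is affine, but a global chart $P\to\Gamma(\calO_X)$ with $\oP=\bfN$ would have to send a generator to a global equation of $D$, which does not exist (for $\bfP^1$, $\Gamma(\calO_X)=k$). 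Note that affineness of $X$ does not rescue the claim either, since $\mathrm{Pic}$ of an affine scheme need not vanish. So you cannot fold the localization into the remark that "this loses nothing": the honest output of your argument is a corrected, local form of (ii) --- every point has a $\tau$-neighbourhood over which the given affine fan is induced by an affine chart with $\oP=Q$ --- which is what the subsequent applications (Theorem~\ref{fanpullback}, Theorem~\ref{monoidbirth}, both local on $X$, glued via the uniqueness in Lemma~\ref{birlem}) actually require. State that weaker claim explicitly, or supply a hypothesis under which the relevant torsors are globally trivial; as it stands, your final paragraph quietly substitutes a different statement for the one to be proved.
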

\begin{proof}
Choose a point $x\in X$ and let $z\in Z$ be its image. Then $M_{X,x}\to\calO_{X,x}$ is the log structure associated with the pre-log structure $M_{Z,z}\to \calO_{X,x}$. The functor from pe-log structures to log structures modifies the units, but keep the sharpenings unchanged.  Therefore, $\oM_{X,x}=\oM_{Z,z}$ and we obtain that $\oX\to\oZ$ is a fan. This proves (i).

For (ii), set $M=\Gamma(M_{X})$. Then the affine fan is determined by the homomorphism of global sections $Q\to\oM$. Setting $P=M\times_\oM Q$ we obtain a homomorphism of monoids $P\to M$ whose sharpening is $Q\to\oM$. It remains to show that the map $f\:(X,M_X)\to Z=\Spec(P)$ corresponding to $P\to M\to\Gamma(\calO_{X})$ is a chart. For this we should show that for any $x\in X$ with image $z=f(x)$ the log structure associated with $M_{Z,z}\to\calO_{X,x}$ coincides with $M_{X,x}$, and it suffices to show that the sharpening of $M_{Z,z}\to M_{X,x}$ is an isomorphism. On the level of sets, $f$ coincides with the fan $X\to F$. Hence $\ol{M_{Z,z}}=\oM_{F,z}=\oM_{X,x}$, as required.
\end{proof}

\subsubsection{Fan pullbacks}
In general, there is no natural way to associate to a morphism of fans $h\:F'\to F$ a pullback morphism of log schemes $X'\to X$, and the obstacle is in finding a canonical lifting of $h$ to a morphism of monoschemes. The following result is based on the fact that birational morphisms of fans lift to monoschemes uniquely.

\begin{theor}\label{fanpullback}
There is a unique up to unique isomorphism construction that given a fan $\oX\to F$ and a birational morphism of fans $F'\to F$ outputs a birationally monoidal morphism $X'\to X$, whose source will be symbolically denoted $X'=X\times_FF'$, so that the following compatibility conditions are satisfied:
\begin{compactenum}[(i)]
\item If $Y\to X$ is a strict morphism, then $Y\times_FF'=Y\times_XX'$.

\item If $\oX\to F$ lifts to a monoscheme chart $X\to Z$ with $\oZ=F$, and $Z'\to Z$ is the birational lift of $F'\to F$ (Corollary~\ref{bircor}) then $X'=X\times_ZZ'$.
\end{compactenum}
\end{theor}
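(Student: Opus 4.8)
The plan is to build $X'$ by lifting the data, $\tau$-locally on $X$, to a monoscheme chart, carrying out the construction there with Corollary~\ref{bircor} and Lemma~\ref{monopullback}, and then gluing; the two compatibility conditions will simultaneously make the construction essentially unique. So the global statement will be produced by descent from a purely local, chart-level construction.

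\emph{Local construction.} First I would work $\tau$-locally, where the fan $\oX\to F$ lifts to a monoscheme chart. Covering $F$ by affine fans and applying Lemma~\ref{fanlem}(ii), one obtains $\tau$-locally a monoscheme chart $X\to Z$ with $\oZ=F$ that induces the given fan $\oX\to F$ (via Lemma~\ref{fanlem}(i)). By Corollary~\ref{bircor} the birational morphism $F'\to F=\oZ$ lifts uniquely (up to unique isomorphism) to a birational morphism of monoschemes $Z'\to Z$, which is of (locally) finite type iff $F'\to F$ is. Put $X':=X\times_Z Z'$ as in Lemma~\ref{monopullback}. By construction $X'\to X$ is a monoidal pullback of the birational morphism $Z'\to Z$, hence birationally monoidal, and condition~(ii) holds tautologically. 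Condition~(i), $Y\times_F F'=Y\times_X X'$ for strict $Y\to X$, is then the base-change compatibility of Lemma~\ref{monopullback}(ii), once one also observes that forming the birational lift $Z'\to Z$ and the pullback commutes with the strict morphism — which holds because both operations are compatible with localization of monoschemes, the compatibility already exploited in the proofs of Corollary~\ref{bircor} and Lemma~\ref{monopullback}.

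\emph{Chart independence and gluing.} The real content is that $X\times_Z Z'$, as a log scheme over $X$, depends up to canonical isomorphism only on the fan $\oX\to F$ and on $F'\to F$, not on the chosen chart $X\to Z$. Granting this, the $\tau$-local pieces $X\times_{Z_i}Z_i'$ carry canonical identifications over the overlaps (using~(i) to pass to the overlaps), these satisfy the cocycle condition because they are canonical, and $\tau$-descent of log schemes produces the global $X'\to X$ together with~(i) and~(ii). To establish the independence one reduces, $\tau$-locally, to comparing two monoscheme charts $W\to Z_1$ and $W\to Z_2$ of the same log scheme $W$ inducing the same fan $\ol{W}\to F$ with $\oZ_1=\oZ_2=F$. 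Here I would, after shrinking, dominate the two by a common monoscheme chart $W\to Z_3$ equipped with morphisms of charts $Z_3\to Z_1$, $Z_3\to Z_2$ that induce the identity of $F$ on sharpenings; then, since birational morphisms are stable under base change and the equivalence in Corollary~\ref{bircor} is functorial, the birational lift $Z_3'\to Z_3$ is the pullback of each $Z_i'\to Z_i$, and the transitivity and base-change compatibility of Lemma~\ref{monopullback} give $W\times_{Z_3}Z_3'=W\times_{Z_i}Z_i'$ for $i=1,2$, so the two constructions agree canonically.

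\emph{Uniqueness and the main obstacle.} For uniqueness: if $X\mapsto X''$ is any construction satisfying~(i) and~(ii), then $\tau$-locally it lifts a chart and~(ii) forces $X''=X\times_Z Z'$ there, while~(i) together with the canonicity of the chart-comparison isomorphisms forces the gluing data to coincide, so $X''\cong X'$ canonically. Everything here is formal bookkeeping on Lemma~\ref{fanlem}, Corollary~\ref{bircor} and Lemma~\ref{monopullback}, \emph{except} the chart-independence step: the hard part will be producing, $\tau$-locally, a common dominating chart whose sharpening is still $F$. This is the analogue, in the present non-fine relative setting, of the familiar fact that two charts of a log structure admit a common refinement — and, as the example of two charts $\mathbf N\to M_W$ differing by a unit of infinite order shows, the naive fibre product of charts need not work, so this is where the quasi-coherence hypothesis (and a genuine $\tau$-localization) must be used. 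Once that refinement is in hand, the descent and the verification of~(i)--(ii) are routine.
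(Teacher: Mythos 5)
Your construction is the same as the paper's: localize to the case of an affine $F$, lift the fan to a monoscheme chart by Lemma~\ref{fanlem}(ii), lift $F'\to F$ to a birational $Z'\to Z$ by Lemma~\ref{birlem}(i), set $X'=X\times_ZZ'$ as in Lemma~\ref{monopullback}, and glue after checking independence of the chart. The one step you leave open --- producing, for two charts with sharpening $F$, a common dominating chart whose sharpening is still $F$ --- is in fact already supplied by the proof of Lemma~\ref{fanlem}(ii): for an affine fan given by $Q\to\oM$ with $M=\Gamma(M_X)$, the chart constructed there is $P_{\max}=M\times_{\oM}Q$, and any chart $P\to M$ with $\oP=Q$ factors canonically as $P\to P_{\max}\to M$ with $P\to P_{\max}$ inducing the identity on $Q$; so $Z_{\max}=\Spec(P_{\max})$ dominates every chart inducing the given fan, no further $\tau$-localization or fibre product of charts is needed, and your base-change argument closes the comparison (for $Z_{\max}\to Z$ an isomorphism on sharpenings, the integral base change $Z'\times_ZZ_{\max}$ is birational over $Z_{\max}$ with sharpening $F'\to F$, hence coincides with the canonical lift by the uniqueness in Lemma~\ref{birlem}(i)). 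Alternatively, and more cheaply, Lemma~\ref{birlem}(ii) shows that $X\times_ZZ'$ represents the functor of log schemes $T$ over $X$ for which the induced map of monoidal spaces $\oT\to F$ factors through $F'$ --- a description making no reference to the chart --- which gives the chart-independence and the uniqueness in one stroke; this is the content of the unlabelled remark following Lemma~\ref{fansublem}. Also note that condition~(ii) of the theorem quantifies over \emph{all} monoscheme charts lifting the fan, so it is not tautological from the construction but is precisely the chart-independence you then establish.
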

\begin{proof}
It is easy to see that the question is local on $F$, so we can assume that $F=\oSpec(P)$. By Lemma~\ref{fanlem}(ii), the fan chart $\oX\to F$ can be lifted to a monoscheme chart $X\to Z$ with $\oZ=X$. By Lemma~\ref{birlem}(i), the morphism $F'\to F$ lifts uniquely to a birational morphism of monoschemes $Z'\to Z$, and we set $X'=X\times_ZZ'$. Moreover, comparing different monoscheme charts and using uniqueness of liftings of $F'\to F$, we obtain that $X'$ depends only on $F'\to F$ and hence can be denoted $X'=X\times_FF'$. Clearly, this construction satisfies (ii), and pullback compatibility (i) reduces to its monoscheme analog from Lemma~\ref{monopullback}(i).
\end{proof}

The lifting from fans to monoschemes preserves (partial) subdivisions by Corollary~\ref{subdivcor}. Therefore, the above proof also implies the following property of pullbacks:

\begin{lem}\label{fansublem}
Keep notation of Theorem~\ref{fanpullback} and assume that $F'\to F$ is a subdivision or a partial subdivision. Then $X\times_FF'\to X$ is a monoidal subdivison or a monoidal partial subdivision, respectively.
\end{lem}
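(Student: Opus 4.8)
The plan is to localize on $F$ so as to land in exactly the affine situation treated in the proof of Theorem~\ref{fanpullback}, and then to quote Corollary~\ref{subdivcor}. Being a monoidal subdivision (resp.\ a monoidal partial subdivision) is a condition that is local on $X$ in the topology $\tau$, and the fan structure map $\oX\to F$ carries any open cover of $F$ to an open cover of $X$; hence it suffices to prove the assertion after replacing $F$ by an affine open $\oSpec(P)$ and $X$ by its preimage. So I may assume $F=\oSpec(P)$.

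In this case, as in the proof of Theorem~\ref{fanpullback}, Lemma~\ref{fanlem}(ii) lifts the fan chart $\oX\to F$ to a monoscheme chart $X\to Z$ with $\oZ=F$, and Corollary~\ref{bircor} lifts $F'\to F$ uniquely to a birational morphism of monoschemes $Z'\to Z$; by construction $X\times_FF'=X\times_ZZ'$. Now I would invoke Corollary~\ref{subdivcor}: since $F'\to F$ is a subdivision (resp.\ a partial subdivision), its birational lift $Z'\to Z$ is again a subdivision (resp.\ a partial subdivision) of monoschemes.

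Finally, I would check the definition of a monoidal (partial) subdivision directly. The monoscheme chart $X\to Z$ is in particular a (Zariski, hence $\tau$-) chart for $M_X$, so one may take $Y=X$ in the definition of a monoidal morphism. Then $(X\times_FF')\times_XY=X\times_ZZ'$ is the pullback along $X\to Z$ of the morphism of monoschemes $Z'\to Z$, which is a subdivision (resp.\ a partial subdivision). Hence $X\times_FF'\to X$ is a monoidal subdivision (resp.\ a monoidal partial subdivision), as claimed.

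I do not expect a serious obstacle here: the content lies entirely in the already-established facts that the fan-to-monoscheme lifting is unique (Corollary~\ref{bircor}) and preserves the property of being a (partial) subdivision (Corollary~\ref{subdivcor}). The only points requiring (routine) care are the locality reduction on $F$ and the observation that the chart $X\to Z$ itself may serve as the $\tau$-chart witnessing monoidality, so that no passage to a genuinely nontrivial $\tau$-cover is needed.
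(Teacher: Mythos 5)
Your proof is correct and follows exactly the paper's route: the paper derives this lemma by observing that the lifting $F'\to F\rightsquigarrow Z'\to Z$ preserves (partial) subdivisions by Corollary~\ref{subdivcor} and then reusing the local construction from the proof of Theorem~\ref{fanpullback}, which is precisely your localization on $F$, lift via Lemma~\ref{fanlem}(ii) and Corollary~\ref{bircor}, and identification $X\times_FF'=X\times_ZZ'$. Your explicit check that the chart $X\to Z$ itself serves as the $\tau$-chart witnessing monoidality is a detail the paper leaves implicit, and it is right.
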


The following remark will not be used, so we skip an easy justification.

\begin{rem}
It is easy to see that $X'=X\times_FF'$ is the universal log scheme over $X$ such that the map of monoidal spaces $\oX'\to F$ factors through $F'$. In particular, our notion of fan pullback agrees with that of Kato, and hence generalizes Kato's definition in few aspects: log schemes are only assumed to be quasi-coherent and morphism of fans are only assumed to be birational.
\end{rem}

\subsubsection{Properties of monoidal pullbacks}
Now we will check that certain properties of morphisms of fans or monoschemes are transformed to their scheme-theoretic analogs.

\begin{theor}\label{monoidbirth}
Assume that $X$ is a quasi-coherent log scheme, and we are given either a monoschemes chart $X\to F$ or a fan $\oX\to F$ of $X$. Assume, furthermore, that $f\:F'\to F$ is a birational morphism and $h\:X'=X\times_FF'\to X$ is the induced monoidally birational morphism.

\begin{compactenum}[(i)]
\item If $f$ is of finite type then $h$ is a logarithmically smooth morphism.

\item If $f$ is a subdivision (resp. a partial subdivision) then $h$ is proper (resp. separated).
\end{compactenum}
\end{theor}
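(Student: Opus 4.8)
The plan is to reduce both assertions to the toric picture over $\bfZ$ and then combine the approximation results of \S\ref{monsec} with classical toric geometry. Since logarithmic smoothness, properness and separatedness are all $\tau$-local on the base, the definition of a monoidal morphism reduces us to the case of a monoscheme chart: if we start from a fan $\oX\to F$, then Lemma~\ref{fanlem}(ii) lifts it $\tau$-locally to a chart $X\to Z$ with $\oZ=F$, and Theorem~\ref{fanpullback}(ii) together with Corollaries~\ref{bircor} and~\ref{subdivcor} identifies $X\times_FF'$ with $X\times_ZZ'$ for the birational lift $Z'\to Z$ of $F'\to F$, which is of finite type (resp. a subdivision, resp. a partial subdivision) exactly when $f$ is. So we may assume given an affine chart $X\to Z=\Spec(P)$ and $X'=X\times_ZZ'$ for a birational finite-type morphism of monoschemes $g\:Z'\to Z$. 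Writing $\widetilde Z$ and $\widetilde{Z'}$ for the associated $\bfZ$-toric schemes (with their natural log structures), the chart gives a strict morphism $X\to\widetilde Z$ and $h$ is the base change of $\widetilde{Z'}\to\widetilde Z$ along it; as all three properties are stable under base change, it suffices to prove that $\widetilde{Z'}\to\widetilde Z$ is log smooth when $g$ is of finite type, proper when $g$ is a subdivision, and separated when $g$ is a partial subdivision.

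For part (i) we work locally on $Z'$, so $Z'=\Spec(P')$ with $P'$ finitely generated over $P$ and $P^\gp\toisom(P')^\gp$. By Lemma~\ref{approxlem} the morphism $P\to P'$ is the base change of a finitely generated morphism of \emph{fine} monoids $P_0\to P_0'$, and this morphism is again birational: its group cokernel equals $(P')^\gp/P^\gp=0$, and its group kernel injects into $\ker(P_0^\gp\to P^\gp)=0$. Hence $\Spec(\bfZ[P_0'])\to\Spec(\bfZ[P_0])$ is a birational finite-type morphism of fine log schemes, and it is log smooth — in fact log \'etale — because $P_0^\gp\to(P_0')^\gp$ is an isomorphism, so the group-theoretic hypothesis of Kato's chart criterion holds trivially and the remaining scheme-theoretic condition is vacuous. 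Since the integral pushout $(P\oplus_{P_0}P_0')^\int$ of Lemma~\ref{approxlem} is precisely the integral base change, $\widetilde{Z'}\to\widetilde Z$ is obtained from this log smooth morphism by base change in the category of integral log schemes, hence is log smooth. (Equivalently, one verifies Kato's criterion directly for $h$ with charts $P$ on $X$ and $P'$ on $X'$: the transition map on groups is an isomorphism, and $X'\to X\times_{\widetilde Z}\Spec(\bfZ[P'])$ is the identity.)

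For part (ii), first observe that the base change of a (partial) subdivision of monoschemes is again a (partial) subdivision: applying $\Mor(\Spec R,-)$ for a valuative monoid $R$ and using that $\Mor(\Spec R,Z\times_{Z_0}Z_0')=\Mor(\Spec R,Z)\times_{\Mor(\Spec R,Z_0)}\Mor(\Spec R,Z_0')$, bijectivity (resp. injectivity) is inherited from the morphism before base change. So by Lemma~\ref{approxlem} we may assume $P$ is fine and $g$ a subdivision (resp. a partial subdivision) of fine monoschemes; for such $g$, the morphism $\widetilde{Z'}\to\widetilde Z$ is proper (resp. separated) by the classical behaviour of toric subdivisions, which can be extracted from the monoscheme formalism of \cite[Section~II.1]{Ogus-logbook} (see also \cite[Section~9]{Kato-toric}): via Theorem~\ref{exactlem} one dominates $g$ by blow-ups and is reduced to blow-ups (which induce proper morphisms of the associated $\bfZ$-toric schemes) and to isomorphisms, while the integralization in $(P\oplus_{P_0}P_0')^\int$ only inserts the closed immersion $\Spec(\bfZ[(P\oplus_{P_0}P_0')^\int])\to\Spec(\bfZ[P\oplus_{P_0}P_0'])$, which is harmless for properness and separatedness. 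Base changing back, $h$ is proper (resp. separated), and it is of finite type because $g$ is.

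I expect the main difficulty to be the passage from the classical fine toric picture to the merely quasi-coherent, non-fine setting of the theorem: one must know that logarithmic smoothness is well behaved — available through the chart or infinitesimal lifting criterion and stable under base change — for integral log schemes that are not fine, and that the integralization appearing in the approximation lemma does not spoil this. The approximation lemma is exactly what makes the reduction to the classical fine case possible; once there, the morphism is even log \'etale, and properness and separatedness of toric subdivisions are standard.
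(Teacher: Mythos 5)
Your reduction to an affine monoscheme chart and to the induced morphism of toric schemes over $\bfZ$ is the same as the paper's, and your part (i) is correct and essentially the paper's argument (the paper simply notes that $P^\gp=P_i^\gp$ forces log smoothness; your detour through Lemma~\ref{approxlem} and Kato's chart criterion makes the non-fine issue explicit, and your verification that the fine model $P_0\to P_0'$ is again birational is sound). The problem is in part (ii), where your route diverges from the paper's and has a genuine gap: the sentence ``the base change of a (partial) subdivision is again a (partial) subdivision, so by Lemma~\ref{approxlem} we may assume $P$ is fine and $g$ a subdivision of fine monoschemes'' runs the implication backwards. Lemma~\ref{approxlem} produces \emph{some} finite-type morphism $Z_0'\to Z_0$ of fine monoschemes whose base change is $g$; the stability of condition (S) under base change would let you go from a subdivision upstairs to one downstairs, but you are given the subdivision downstairs and need it (or at least properness of $\widetilde{Z_0'}\to\widetilde{Z_0}$) upstairs. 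That is a descent statement along the filtered system of fine submonoids of $P$, which you neither state nor prove, and without it the appeal to ``classical behaviour of toric subdivisions'' has nothing to apply to. The subsequent exactification sketch is also incomplete as written: after dominating $g$ by a blow-up via Theorem~\ref{exactlem} you still have to descend properness (resp.\ separatedness) of $\widetilde{Z'_I}\to\widetilde Z$ along the proper surjection $\widetilde{Z'_I}\to\widetilde{Z'}$, which itself presupposes separatedness of $\widetilde{Z'}\to\widetilde Z$.

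The paper avoids all of this by checking the valuative criterion directly, with no reduction to the fine case: $Y=\Spec(\bfZ[P])$ and $Y'$ are integral with common generic point $\Spec(K)$, $K=\mathrm{Frac}(\bfZ[P^\gp])$, so properness and separatedness of the finite-type morphism $Y'\to Y$ may be tested only against valuation rings $R$ with fraction field $K$ (\cite[Ch.~II, Ex.~4.5]{Hartshorne}). For such $R$, the monoid $R\cap P^\gp$ is a valuative monoid containing $P$, and $\bfZ[P_i]\subseteq R$ if and only if $P_i\subseteq R\cap P^\gp$; hence factorings of $\Spec(R)\to Y$ through $Y'$ correspond bijectively to factorings of $\Spec(R\cap P^\gp)\to\Spec(P)$ through $Z'$, and condition (S) in the definition of a (partial) subdivision supplies exactly one (resp.\ at most one) such factoring. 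I would recommend either adopting this direct argument or, if you want to keep your route, proving the missing descent of the subdivision property (or of properness of the associated toric morphism) to a sufficiently large fine submonoid of $P$.
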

\begin{proof}
Clearly, both claims are local on $F$. So, we can assume that $X'\to X$ is the pullback of a morphism of monoschemes $Z'\to\Spec(P)$. (In the case of fans, we use Lemma~\ref{fanlem}(ii) and Corollary~\ref{subdivcor} to lift fans to monoschemes.) Since $F$ is of finite type, $Z'$ is covered by open affines $\Spec(P_i)$, $1\le i\le n$, where each $P_i$ is finitely generated over $P$. Therefore, $X'\to X$ is the pullback of a morphism $Y'\to Y=\Spec(\bfZ[P])$, where $Y'$ is glued from $Y_i=\Spec(\bfZ[P_i])$, $1\le i\le n$.

The morphisms $Y_i\to Y$ are of finite type. Moreover, since $P^\gp=P_i^\gp$, these morphisms are automatically log smooth, and hence $X'\to X$ is log smooth, thereby proving (i).

Note that $Y$ and $Y'$ are integral with generic point $\Spec(K)$ for $K=\mathrm{Frac}(\bfZ[P^\gp])$, and hence in the properness or separatedness criteria for $Y'\to Y$ it suffices to consider the valuation rings $R$ with $K=\Spec(R)$. For example, see \cite[Chapter II, Exercise 4.5]{Hartshorne} or \cite[Proposition 3.2.3]{temrz} applied to $\Spec(K)\to Y'\to Y$.

Assume that $R$ is a valuation ring of $K$ such that $\Spec(K)\into Y$ factors through $\Spec(R)$. Then $R\cap P^\gp$ is a valuative monoid of $P^\gp$ containing $P$, and $P_i\subseteq R\cap P^\gp$ if and only if $\bfZ[P_i]\subseteq R$. Therefore, there is a one to one correspondence between the factorings of $\Spec(R)\to Y$ through $Y'$ and factorings of $\Spec(R\cap P^\gp)\to\Spec(P)$ through $Z'$. If $f$ is a subdivision (resp. a partial subdivision) then there exists precisely (resp. at most) one such factoring, and we obtain that $Y'\to Y$ is proper (resp. separated). This proves (ii).
\end{proof}

Given a log scheme $X$ let $X_\mathrm{tr}$ denote the locus on which the log structure is trivial in the sense that $M_X|_{X_\mathrm{tr}}=\calO_X^\times|_{X_\mathrm{tr}}$.

\begin{cor}\label{monoidbircor}
Any monoidal subdivision $f\:X'\to X$ is a proper morphism, which is an isomorphism over $X_\mathrm{tr}$. In particular, if $X_\mathrm{tr}$ is dense then $f$ is a modification.
\end{cor}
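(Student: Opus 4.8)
The plan is to establish the two properties of $f$ separately — that $f$ is proper, and that $f$ is an isomorphism over $X_\mathrm{tr}$ — and then to read off the last sentence formally. Both properties are local for the topology $\tau$ on $X$ (properness descends along a strict $\tau$-covering, and being an isomorphism over a fixed open subset is likewise $\tau$-local, since trivial loci are compatible with strict base change), so throughout one may assume that $f$ is a single monoidal pullback: there is a monoscheme chart $X\to Z$ and a subdivision $Z'\to Z$ of monoschemes with $X'=X\times_ZZ'$. If one instead starts from a fan $\oX\to F$ and a subdivision $F'\to F$, I would first lift it to this monoscheme situation by Lemma~\ref{fanlem}(ii) and Corollary~\ref{subdivcor}, using Theorem~\ref{fanpullback}(ii) to identify $X\times_FF'$ with $X\times_ZZ'$.

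Properness is then exactly Theorem~\ref{monoidbirth}(ii) applied to the subdivision $Z'\to Z$.

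For the behaviour over the trivial locus I would argue as in the proof of Theorem~\ref{monoidbirth}. One may assume $Z=\Spec(P)$, cover $Z'$ by affines $\Spec(P_i)$ with $P_i$ finitely generated over $P$ and $P_i^\gp=P^\gp$, and realize $f$ as the pullback along the chart $X\to Y:=\Spec(\bfZ[P])$ of the morphism $Y'\to Y$, where $Y'$ is glued from the $Y_i:=\Spec(\bfZ[P_i])$. The open torus $T:=\Spec(\bfZ[P^\gp])\subseteq Y$ is precisely the locus where the standard log structure of $Y$ is trivial. Since $P_i^\gp=P^\gp$, the preimage of $T$ in each $Y_i$ is $T$ itself and the induced map $Y_i\to Y$ is the identity there, so $Y'\to Y$ is an isomorphism over $T$. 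On the other hand, since $M_X$ is the pullback of the log structure of $Y$ and the chart map is a morphism of local monoidal/ringed spaces, $X_\mathrm{tr}$ is exactly the preimage of $T$; hence $f$, being a base change of $Y'\to Y$, is an isomorphism over $X_\mathrm{tr}$.

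Finally, if $X_\mathrm{tr}$ is dense in $X$ then $f$ is a proper morphism restricting to an isomorphism over the dense open $X_\mathrm{tr}$, which is what it means for $f$ to be a modification. The only point requiring genuine care — everything else being formal bookkeeping with $\tau$-local reductions and the passage between fan and monoscheme charts — is the identification of $X_\mathrm{tr}$ with the preimage of the torus $T$ together with the verification that $Y'\to Y$ is an isomorphism over $T$; this is the substitute, in our non-fine relative setting, for the familiar fact that a toric modification is an isomorphism over the open torus.
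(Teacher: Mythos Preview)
Your proof is correct and follows the same approach as the paper. The paper's own proof is a single sentence invoking descent of properness along a faithfully flat base change and Theorem~\ref{monoidbirth}; it does not spell out the isomorphism over $X_\mathrm{tr}$ at all, so your explicit identification of $X_\mathrm{tr}$ with the preimage of the torus $T$ and the check that $Y'\to Y$ is an isomorphism over $T$ fill in a point the paper leaves implicit.
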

\begin{proof}
Since a morphism is proper if and only if its faithfully flat base change is proper, this follows from Theorem~\ref{monoidbirth}.
\end{proof}

\subsection{Log varieties over valuation rings}

\subsubsection{Notation}
Fix, now, a valuation ring $\calO$ and let $K$ be the fraction field. We assume that $\calO$ is of height one and the group of values $V=K^\times/\calO^\times$ is divisible. Without restriction of generality, we also fix an ordered embedding $V\into\bfR$. We provide $S=\Spec(\calO)$ with the log structure given by $R=\calO\setminus\{0\}$. Note that $\oR=V_{\ge 0}$.

\subsubsection{Log varieties}\label{logvarsec}
By a {\em log variety over $S$} we mean a quasi-coherent log scheme $X$ over $S$ such that the underlying morphism of schemes $S\to X$ is flat of finite presentation and each homomorphism of monoids $\oR\to\oM_{X,x}$ is injective and finitely generated. By default we assume that the topology $\tau$ is \'etale. If $\tau$ is Zariski then we say that $X$ is a Zariski log variety.

For any $R$-monoid $P$ set $\bfA_P:=S\otimes_{\bfZ[R]}\bfZ[P]$ with the log structure induced by $P$ over $S$. This is a Zariski log variety when $P$ is integral and $R\into P$ is of finite presentation. An arbitrary log variety $\tau$-locally admits a strict morphism to some $\bfA_P$ as above. A log variety $X$ is called {\em log smooth} if $\tau$-charts $f_i\:X_i\to\bfA_{P_i}$ can be chosen to be \'etale morphisms.

\begin{rem}
For shortness we adopt this ad hoc definition. It is easy to see that one can only require in the definition that $f_i$ are smooth.
\end{rem}

\subsubsection{Existence of fans}
In order to apply the results of \S\ref{combsec} to log varieties, we will need to use global fans. So our next goal is to provide criteria when such fans exist. First, we will recall essentially known material for fine log schemes and then transfer it to log varieties over $\calO$ by use of an approximation.

\subsubsection{Zariski subdivision}
First, recall that any fine log scheme can be monoidally modified to a Zariski one.

\begin{lem}\label{zarlem}
Let $X$ be a fine log scheme. Then there exists a monoidal subdivision $X'\to X$ such that $X'$ is a Zariski log scheme.
\end{lem}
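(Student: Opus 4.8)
The plan is to take $X'$ to be the canonical monoidal subdivision of $X$ induced by the \emph{barycentric subdivision} of its (local) fans; this rigidifies the characteristic sheaf enough to make it descend to the Zariski site. Recall that a quasi-coherent fine log scheme is Zariski precisely when its characteristic sheaf $\oM_X$ is pulled back from the Zariski site, and, by a standard criterion \cite[Ch.~III]{Ogus-logbook}, this holds if and only if for each point $x\in X$ and each geometric point $\ol x$ over $x$ the group $G_{k(x)}=\mathrm{Gal}(k(\ol x)/k(x))$ acts trivially on the stalk $\oM_{X,\ol x}$. So the task is to kill these monodromy actions, and passing to the barycentric subdivision does exactly that because it destroys all nontrivial symmetries of the local combinatorics.

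First I would construct the subdivision. Since $X$ is fine, choose an \'etale covering $\{X_i\to X\}$ together with charts $P_i\to M_{X_i}$ with $P_i$ fine; by Lemma~\ref{fanlem} these give fans $\oX_i\to F_i$. The barycentric subdivision $F_i'\to F_i$ is an iterated star subdivision, hence a composition of blow ups, so it is a subdivision by Lemma~\ref{properblowup} and the stability of subdivisions under composition; it is functorial in the obvious sense (it commutes with localizations and is equivariant for the automorphisms of the monoid at each point). Consequently the local monoidal subdivisions $X_i\times_{F_i}F_i'\to X_i$ produced by Theorem~\ref{fanpullback} and Lemma~\ref{fansublem} are canonically isomorphic over the overlaps $X_i\times_XX_j$ and therefore glue to a monoidal subdivision $h\:X'\to X$; since blow ups of fine monoschemes are fine, $X'$ is again a fine log scheme.

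It then remains to verify the Zariski criterion for $X'$. Fix $x'\in X'$ with image $x=h(x')$, and pick a geometric point $\ol x'$ over $x'$ lying over the geometric point $\ol x$ of $X$; put $Q:=\oM_{X,\ol x}$. Through the natural map of decomposition groups, the action of $G_{k(x')}$ on $\oM_{X',\ol x'}$ factors through an automorphism $\alpha$ of $Q$, and $\oM_{X',\ol x'}$ is canonically the toric monoid $M_\tau$ of a cone $\tau$ of the barycentric subdivision of $\oSpec(Q)$. By construction of the barycentric subdivision, $\tau$ is simplicial and its rays are canonically indexed by a chain of faces $G_0\subsetneq\cdots\subsetneq G_m$, the ray for $G_j$ being spanned by the barycenter of $G_j$. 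Since $\ol x'$ lies over the point corresponding to $\tau$, the automorphism $\alpha$ preserves $\tau$, hence each $G_j$, hence each barycenter; as a linear automorphism fixing the generators of a simplicial cone is the identity on it, $\alpha$ acts trivially on $M_\tau=\oM_{X',\ol x'}$. Thus $\oM_{X'}$ descends to the Zariski site, i.e.\ $X'$ is a Zariski log scheme.

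The main obstacle is the bookkeeping of the last paragraph: one must pin down the precise form of the Zariski criterion (which we borrow from \cite[Ch.~III]{Ogus-logbook}) and, above all, the combinatorial description of the geometric points of $X'$ lying over a fixed $x\in X$, together with their stalks and the induced $G_{k(x)}$-action, in terms of the barycentric subdivision of $\oSpec(\oM_{X,\ol x})$; the functoriality of the barycentric subdivision needed for the gluing must also be spelled out carefully. A shorter but less elementary route is to invoke Gabber's absolute monoidal resolution \cite[Theorem~3.3.16]{Illusie-Temkin}, which produces a log modification that resolves the monoidal structure and is in particular Zariski.
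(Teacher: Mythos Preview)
Your argument is correct, but it takes a genuinely different route from the paper.  The paper's proof is a two–liner: first replace $X$ by its saturation $X^{\sat}$ (a monoidal subdivision), reducing to the fs case, and then cite Nizio\l's result \cite[Theorem~5.4]{Niziol}, which produces a Zariski monoidal subdivision of any fs log scheme by a canonical sequence of log blow ups along ideals generated by indecomposable elements.  No barycentric subdivision appears here; in the paper that tool is reserved for the \emph{next} lemma (Lemma~\ref{zarlem2}), where it is used---after the log structure is already Zariski---to show that a global fan exists.

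What you do instead is push the barycentric subdivision earlier and use it to kill the \'etale monodromy directly.  The core of your argument (an automorphism of $Q=\oM_{X,\ol x}$ preserving a maximal cone $\tau$ of the barycentric subdivision must fix its rays, hence fixes $\tau$ pointwise, hence acts trivially on $\oM_\tau$) is sound, and it works for fine, not just fs, monoids since torsion in $Q^{\gp}$ lands in $M_\tau^\times$ and disappears upon sharpening.  The payoff is a self-contained proof that avoids citing Nizio\l{} and, in effect, merges the content of Lemmas~\ref{zarlem} and~\ref{zarlem2} into a single step.  The cost is the bookkeeping you yourself flag: making precise the identification of $\oM_{X',\ol x'}$ with $\oM_\tau$, checking that the Galois action on $\oM_{X',\ol x'}$ is induced by an automorphism of $Q$ preserving $\tau$ (this follows from equivariance of $h^{\#}$ and the fact that $M_\tau$ is the preimage of $\oM_\tau$ in $Q^{\gp}$), and verifying that the barycentric subdivision is functorial enough for the \'etale-local pullbacks to glue.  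None of these is problematic, but they are real details that the paper sidesteps by outsourcing to \cite{Niziol}.
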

\begin{proof}
If $X$ is fs then this is proved in \cite[Theorem~5.4]{Niziol}. It remains to use that monoidal subdivisions are preserved by compositions and the saturation $X^\sat\to X$ is a monoidal subdivision.
\end{proof}

\subsubsection{Fans of fine log schemes}
Clearly, any Zariski log scheme locally possesses a fan. Furthermore, if a noetherian Zariski log scheme $X$ possesses a global fan, then there exists an initial fan $X\to F$, and $F$ is obtained by gluing fans $\oSpec(\oM_{X,\eta_i})$, where $\eta_i$ are generic points of the logarithmic strata of $X$. For a proof see, for example, \cite[Proposition~4.7]{Ulirsch}. In general, a Zariski log scheme $X$ possesses an open covering $X=\cup X_i$ such that $X_i$ and hence also $X_{ij}=X_i\cap X_j$ possess fans. Let $X_i\to F_i$ and $X_{ij}\to F_{ij}$ be the initial fans. Note that maps $F_{ij}\to F_i$ are local isomorphisms. If the diagram $\{F_{ij}\to F_i\}_{i,j}$ possesses a colimit $F$ such that all maps $F_i\to F$ are local isomorphisms, then $F$ is a global fan of $X$. In this case we say that the colimit $F$ is {\em nice}. Global fan does not exist in general, see \cite[Example~B.1]{Gross-Siebert}.

To analyse the situation it is more illustrative to switch to the equivalent language of rational polyhedral complexes. For simplicity, let us denote them $F_i$ too. We follow \cite[\S2.6]{ACP}. Clearly, the colimit $\Sigma$ of $\{F_{ij}\to F_i\}$ exists as a topological space. Each face $S$ of $F_i$ is mapped in $\Sigma$ to a quotient $S/G_S$ by a certain group of automorphisms. If all groups $G_S$ are trivial, then $\Sigma$ acquires an induced structure of a polyhedral complex $F$, which makes it a nice colimit. (In such case, one says that the log scheme $X$ has no monodromy.) In particular, it easily follows that $B(\Sigma)$ always possesses a natural structure of a polyhedral complex, which is the nice colimit of the barycentric subdivision $\{B(F_{ij})\to B(F_i)\}_{i,j}$.

\begin{rem}
In \cite[\S2.6]{ACP} one defines the category of generalized polyhedral complexes whose elements are colimits of diagrams as above. In particular, any Zariski log scheme possesses a fan in the sense of a generalized polyhedral complex. The above result actually states that the barycentric subdivision of any generalized cone complex is a usual cone complex.
\end{rem}

\begin{lem}\label{zarlem2}
Let $X$ be a fine log scheme. Then there exists a monoidal subdivision $X'\to X$ such that $X'$ is a Zariski log scheme possessing a global fan.
\end{lem}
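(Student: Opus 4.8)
The plan is to combine Lemma~\ref{zarlem} with the discussion of generalized polyhedral complexes and barycentric subdivisions recalled just above the statement. Since monoidal subdivisions are stable under composition, Lemma~\ref{zarlem} reduces us to the following: given a \emph{Zariski} fine log scheme $X$, produce a monoidal subdivision $X'\to X$ with $X'$ Zariski and carrying a global fan. I would take $X'\to X$ to be the log scheme induced by the barycentric subdivision of the generalized fan of $X$.

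First I would set up the local data. Cover $X$ by affine opens $X_i$, each carrying an initial fan $X_i\to F_i$ (use Lemma~\ref{fanlem}); on the overlaps $X_{ij}=X_i\cap X_j$ take the initial fans $X_{ij}\to F_{ij}$, so that the transition maps $F_{ij}\to F_i$ are local isomorphisms. The barycentric subdivision $B(F_i)\to F_i$ is a subdivision of fans, and it is compatible with the local isomorphisms $F_{ij}\to F_i$, giving canonical identifications $B(F_{ij})=B(F_i)\times_{F_i}F_{ij}=B(F_j)\times_{F_j}F_{ij}$. Hence the $B(F_i)$ assemble into a diagram $\{B(F_{ij})\to B(F_i)\}_{i,j}$, and by the remark recalled above its colimit is a genuine polyhedral complex $F$, the nice colimit of this diagram; in particular each $B(F_i)\to F$ is a local isomorphism.

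Next I would globalize using the fan pullback of Theorem~\ref{fanpullback}. Put $X_i':=X_i\times_{F_i}B(F_i)$; by Lemma~\ref{fansublem} each $X_i'\to X_i$ is a monoidal subdivision. Transitivity of the fan pullback together with the identifications $B(F_{ij})=B(F_i)\times_{F_i}F_{ij}$ gives $X_{ij}\times_{F_i}B(F_i)=X_{ij}\times_{F_{ij}}B(F_{ij})=X_{ij}\times_{F_j}B(F_j)$, and compatibility~(i) of Theorem~\ref{fanpullback} applied to the strict open immersion $X_{ij}\into X_i$ identifies this with $X_i'|_{X_{ij}}$; so $X_i'$ and $X_j'$ restrict to the same log scheme over $X_{ij}$. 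Thus the $X_i'$ glue to a log scheme $X'$ with a monoidal subdivision $X'\to X$, and since each $B(F_i)\to F$ is a local isomorphism the fan charts $X_i'\to B(F_i)$ glue to a global fan $X'\to F$. Finally $X'$ is Zariski: writing $X_i$ as a union of affine opens $U$ with monoscheme charts $U\to Z_U$ lifting $U\to F_i|_U$ (Lemma~\ref{fanlem}(ii)), and lifting $B(F_i)|_U\to F_i|_U$ to a birational morphism of monoschemes $Z_U'\to Z_U$ (Corollary~\ref{bircor}), Theorem~\ref{fanpullback}(ii) gives $X'|_U=U\times_{Z_U}Z_U'$, which is a Zariski monoscheme chart.

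The only genuinely delicate points are the two compatibilities in the second paragraph: that barycentric subdivision commutes with the local isomorphisms $F_{ij}\to F_i$, and --- the crucial input, which is exactly the remark recalled above --- that the colimit of the barycentrically subdivided diagram is an honest cone complex with all structure maps local isomorphisms. Everything after that is formal gluing via the fan pullback functor and its compatibility with strict morphisms.
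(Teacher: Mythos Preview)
Your proposal is correct and follows essentially the same route as the paper: reduce to the Zariski case via Lemma~\ref{zarlem}, then take the barycentric subdivision of the local initial fans and invoke the fact that the diagram $\{B(F_{ij})\to B(F_i)\}$ has a nice colimit. The paper's proof is terser---it simply declares that the barycentric monoidal subdivision $X'$ of $X$ has $B(F_i)$ as local fans and cites the nice-colimit remark---whereas you spell out the gluing of the $X_i'=X_i\times_{F_i}B(F_i)$ via Theorem~\ref{fanpullback} and verify the Zariski property explicitly; but the underlying idea is identical.
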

\begin{proof}
By Lemma~\ref{zarlem} we can assume that $X$ is Zariski. Let us prove that the barycentric monoidal subdivision $X'$ of $X$ possesses a global fan, as required. Choose an open covering $X=\cup X_i$ such that each $X_i$ possesses a fan, and let $F_i$ and $F_{ij}$ be the initial fans of $X_i$ and $X_{ij}=X_i\cap X_j$. Essentially by the definition, the barycentric subdivisions $B(F_i)$ and $B(F_{ij})$ are fans of $X_i$ and $X_{ij}$ in $X'$, and it remains to recall that the diagram $\{B(F_{ij})\to B(F_i)\}_{i,j}$ possesses a nice colimit by the discussion above.
\end{proof}

\subsubsection{Approximation}
The construction of a subdivision $X'\to X$ in \cite[Theorem~5.4]{Niziol} is canonical, and it goes by blowing up ideals generated by certain indecomposable elements of the fs monoids $\oM_{X,x}$. This does not apply directly to a log variety $X$ over $R$ because the monoid $\oM_{X,x}$ is too large. It is unclear if there is a canonical Zariski subdivision in this case, though one can construct a non-canonical one by blowing up large enough ideals. Instead of working this out, we will reduce to the fine case using the following approximation result.

\begin{theor}\label{approxth}
Assume that $\calO$ is a valuation ring, and let $\{R_i\}_{i\in I}$ denote the family of fine submonoids of $R=\calO\setminus\{0\}$. Let $S$ and $S_i$ denote $\Spec(\calO)$ provided with the log structures $R$ and $R_i$, respectively. Then for any log variety $X=(X,M_X)$ over $S$ there exists $i\in I$ and a morphism of fine log schemes $X_i=(X,M_i)\to S_i$ with an isomorphism $X=X_i\times_{S_i}S$, where the product is taken in the category of integral log schemes.
\end{theor}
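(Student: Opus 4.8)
The plan is to run an approximation argument on two interlocking layers --- a monoidal layer, governed by Lemma~\ref{approxlem}, and a scheme-theoretic layer, governed by the standard limit formalism for finitely presented schemes and morphisms (EGA~IV,~\S8) --- and then to glue the resulting local models. First I would localize: the underlying morphism $X\to S$ is of finite presentation and $S$ is affine, so $X$ is quasi-compact, and hence by the definition of a log variety there is a finite family of strict \'etale morphisms $X_\alpha\to X$ ($1\le\alpha\le n$) with $X_\alpha$ affine, covering $X$, each admitting a strict morphism of finite presentation $X_\alpha\to\bfA_{P_\alpha}$ in which $R\into P_\alpha$ is an injective, finitely presented homomorphism of integral monoids. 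Put $X_{\alpha\beta}:=X_\alpha\times_XX_\beta$ and record the associated descent datum --- the two projections $X_{\alpha\beta}\to X_\alpha$ and $X_{\alpha\beta}\to X_\beta$, the tautological gluing isomorphisms, and the finitely many cocycle identities --- which recovers $X$ as a colimit.

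Next comes the monoidal layer. Applying Lemma~\ref{approxlem} to each injection $R\into P_\alpha$ produces an index $i_\alpha\in I$ and a finitely generated integral $R_{i_\alpha}$-monoid $Q_\alpha$ with $(Q_\alpha\oplus_{R_{i_\alpha}}R)^\int\cong P_\alpha$. Since $\{R_i\}$ is filtered, fix a single index $i$ dominating all the $i_\alpha$ and pass to the $R_i$-models $Q'_\alpha:=(Q_\alpha\oplus_{R_{i_\alpha}}R_i)^\int$. We then have, over $S_i$, the fine log schemes $\bfA'_\alpha:=S_i\otimes_{\bfZ[R_i]}\bfZ[Q'_\alpha]$, and the equality $(Q'_\alpha\oplus_{R_i}R)^\int=P_\alpha$ coming from Lemma~\ref{approxlem} says precisely that $\bfA'_\alpha\times_{S_i}S=\bfA_{P_\alpha}$ in the category of integral log schemes. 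This identity, together with Lemma~\ref{approxlem} itself, is the only genuinely log-theoretic input beyond the limit formalism for schemes.

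Finally I would descend and glue the scheme data. Because $P_\alpha$ is the filtered colimit, over the indices $i'\ge i$, of the $R_{i'}$-base changes of $Q_\alpha$ (again by Lemma~\ref{approxlem}), the underlying scheme of $\bfA_{P_\alpha}$ is the corresponding filtered limit of the underlying schemes of the $\bfA'_\alpha$. Hence, by the limit theorems of EGA~IV,~\S8 --- finite presentation descends to a finite level, as do morphisms between finitely presented objects and equalities among such morphisms --- after enlarging $i$ the finitely presented $\bfA_{P_\alpha}$-scheme $X_\alpha$ descends to a finitely presented $\bfA'_\alpha$-scheme $X'_\alpha$; endowing $X'_\alpha$ with the log structure pulled back from $\bfA'_\alpha$ makes it a fine log scheme over $S_i$ with $X'_\alpha\times_{S_i}S\cong X_\alpha$, both sides being strict over $\bfA_{P_\alpha}$ and matched by construction. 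After finitely many further enlargements of $i$, the overlaps $X_{\alpha\beta}$, their two projections, the gluing isomorphisms, and the finitely many cocycle identities all descend as well; gluing the $X'_\alpha$ over $S_i$ along this descended datum yields a fine log scheme $X_i\to S_i$, flat and of finite presentation on underlying schemes, and base change along $S\to S_i$ in integral log schemes recovers the gluing of the $X_\alpha$, that is $X_i\times_{S_i}S\cong X$.

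I expect the main obstacle to be the coherence of the gluing rather than any single step: one must approximate the pieces $X_\alpha$, the overlaps $X_{\alpha\beta}$, and all the transition data \emph{compatibly}, repeatedly enlarging the single index $i$, and --- most delicately --- one must arrange that the monoidal models $Q'_\alpha$ and $Q'_\beta$ become compatible over $X_{\alpha\beta}$. The latter is exactly what the ``moreover'' clause of Lemma~\ref{approxlem}, the uniqueness of the approximating monoid after passage to a larger fine submonoid, is designed to deliver. Once these finitely many compatibilities are secured, the remainder is the routine limit formalism.
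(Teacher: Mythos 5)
Your monoidal layer is exactly the paper's argument: cover $X$ by strict \'etale affine charts, apply Lemma~\ref{approxlem} to each chart monoid $P_\alpha$ to descend it to a fine $R_i$-monoid $Q'_\alpha$, and invoke the uniqueness (``moreover'') clause of that lemma to make the descended monoids agree on overlaps after enlarging $i$. The gap is your third layer, the EGA~IV,~\S8 descent of the underlying schemes, which is both unnecessary and problematic. It is unnecessary because $S$ and $S_i$ have the \emph{same} underlying scheme $\Spec(\calO)$ --- nothing about the base ring varies with $i$, only the log structure $R_i\subseteq R$ does --- and accordingly the theorem asks for $X_i=(X,M_i)$ with the underlying scheme of $X$ unchanged. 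One simply takes $M_i$ on $X_\alpha$ to be the log structure associated with the prelog structure $Q'_\alpha\to P_\alpha\to\Gamma(\calO_{X_\alpha})$ on the \emph{same} scheme $X_\alpha$; this is automatically fine, and the base change $X_i\times_{S_i}S$ in integral log schemes then only modifies the log structure back to $M_X$. Your construction instead replaces $X_\alpha$ by a genuinely different scheme $X'_\alpha$ over $\bfA'_\alpha$ (the morphism $\bfA_{P_\alpha}\to\bfA'_\alpha$ is not an isomorphism), so your final $X_i$ would not have underlying scheme $X$.

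It is problematic because the concluding step --- ``gluing the $X'_\alpha$ over $S_i$ along the descended datum'' --- is not justified: the $X_\alpha\to X$ form an \'etale cover, not a Zariski one, and \'etale descent of \emph{schemes} is not effective in general (a priori you only produce an algebraic space). Moreover the descents of an overlap $X_{\alpha\beta}$ live over the two different varying bases $\bfA'_\alpha$ and $\bfA'_\beta$, and the limit formalism does not directly compare them; the uniqueness clause of Lemma~\ref{approxlem} concerns monoids only and does not resolve this scheme-level compatibility. The repair is to discard the scheme-theoretic layer entirely: keep $X$ fixed and descend only the log structures $M_{k,i}$, which are sheaves on $X_\tau$ together with maps to $\calO_X$, so \'etale descent is automatic once Lemma~\ref{approxlem} makes them agree on the products $X_{k_1}\times_X X_{k_2}$ for $i$ large. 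That is precisely the paper's proof.
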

\begin{proof}
Assume first that $X$ possesses an affine global chart $P\to\Gamma(\calO_X)$ with a finitely presented $R$-monoid $P$. By Lemma~\ref{approxlem}, $P=(P_j\oplus_{R_j}R)^\int$ for a large enough $j$ and an integral finitely generated $R_j$-monoid $P_j$. Therefore, we can take $i=j$ and $M_i$ the log structure associated with $R_i\to\Gamma(\calO_X)$.

In general, there exists a strict \'etale covering $\coprod_{k=1}^n X_k\to X$ such that each $X_k$ possesses a global affine chart. By the affine case, choosing $i\in I$ large enough we can provide each $X_k$ with a log structure $M_{k,i}$ such that $X_{k,i}=(X_k,M_{k,i})$ is a fine log scheme and $X_k=X_{k,i}\times_{S_i}S$. Moreover, by the second claim of Lemma~\ref{approxlem}, any two choices of $M_{k,i}$ become isomorphic after increasing $i$. Therefore, for a large enough $j\ge i$ the log structures of $X_{k,j}=X_{k,i}\times_{S_i}S_j$ agree on the products $X_{k_1}\times_{X}X_{k_2}$ and hence give rise to a required fine log structure $M_j$ on the whole $X_j$.
\end{proof}

\subsubsection{Global fans}\label{zarthproof}
Now we can extend the Zariski subdivision theorem to log varieties over $S$.

\begin{theor}\label{zarth}
Let $X$ be a log variety over $\calO$. Then there exists a monoidal subdivision $X'\to X$ such that $X'$ is a Zariski log variety that possesses a global fan $X'\to F$.
\end{theor}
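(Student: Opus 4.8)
The plan is to reduce to the fine case by approximation and then base change. By Theorem~\ref{approxth} there exist $i\in I$ and a fine log scheme $X_i=(X,M_i)$ over $S_i$ together with an isomorphism $X=X_i\times_{S_i}S$ of integral log schemes. Applying Lemma~\ref{zarlem2} to the fine log scheme $X_i$ yields a monoidal subdivision $g_i\:X_i'\to X_i$ such that $X_i'$ is a Zariski log scheme possessing a global fan $\phi_i\:\oX_i'\to F_i$. I then put $X':=X_i'\times_{S_i}S$; equivalently $X'=X_i'\times_{X_i}X$, so that $X'\to X$ is obtained from $g_i$ by the (non-strict) base change along $X\to X_i$. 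It remains to check three things: that $X'\to X$ is a monoidal subdivision, that $X'$ is a Zariski log variety over $\calO$, and that $X'$ carries a global fan.

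For the first point I work $\tau$-locally, so that $g_i$ is the pullback of a subdivision of fine monoschemes $Z_i'\to Z_i$ along a monoscheme chart $X_i\to Z_i$ through which the structure map $X_i\to S_i$ factors via $\Spec(R_i)$. Setting $Z:=Z_i\times_{\Spec(R_i)}\Spec(R)$ (fiber product of monoschemes, i.e.\ integral pushout of monoids), the morphism $X\to Z$ is a monoscheme chart for $X$, the morphism $Z':=Z_i'\times_{Z_i}Z\to Z$ is again a subdivision, and $X'=X\times_ZZ'$. That a base change of a subdivision of monoschemes is a subdivision follows directly from the characterization $(S)$: for any valuative monoid $R'$ the functor $\Mor(\Spec(R'),-)$ commutes with fiber products, so bijectivity (resp.\ injectivity) is preserved; finite type and birationality are clearly preserved as well. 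Hence $X'\to X$ is a monoidal subdivision, so by Corollary~\ref{monoidbircor} it is proper and an isomorphism over $X_{\mathrm{tr}}$, and by Theorem~\ref{monoidbirth}(i) it is log smooth. Zariski-ness of $X'$ is inherited since $M_{X'}$ is generated Zariski-locally by $M_{X_i'}$, which is Zariski, together with the globally defined monoid $R$; and $X'$ is a log variety over $\calO$ since $\oM_{X',x}$ is finitely generated over $\oR$ (being generated by $\oM_{X_i',x}$, fine over $\oR_i$, together with $\oR$), the map $\oR\to\oM_{X',x}$ is injective, and the underlying scheme of $X'$ is flat of finite presentation over $\calO$ — over the height-one valuation ring $\calO$ the last statement reduces to torsion-freeness, which is checked on the standard local models $\bfA_P$ and survives the pullback.

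For the third point, recall that $S$ and $S_i$ themselves have fans $\oS\to\oSpec(R)$ and $\oS_i\to\oSpec(R_i)$, and that there is a morphism $\oSpec(R)\to\oSpec(R_i)$ of fans. I take
\[
F\ :=\ F_i\times_{\oSpec(R_i)}\oSpec(R)
\]
in the category of fans (glued from $\oSpec$ of integral $R$-pushouts of monoids). Since $F_i$ is a global fan and the fiber product is a local operation compatible with gluing, $F$ is again a global fan; the fan structure map $\oX'\to F$ is obtained by base changing $\phi_i$, once one knows that forming $X'$ from $X_i'$ is compatible with sharpening, i.e.\ that $\oX'=\oX_i'\times_{\oS_i}\oS$ and that $\oM_{X',x}$ is the pullback of $M_F$. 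This compatibility is the technical heart of the argument: sharpening does not commute with pushouts of monoids in general, but here one uses that the value-group sequence $1\to\calO^\times\to K^\times\to V\to1$ splits — $V$ being a $\bfQ$-vector space and hence injective as a $\bfZ$-module — so that $R$ splits as $\calO^\times\oplus\oR$ and likewise each fine $R_i$ and each $M_{X_i',x}$ split off their units; all the relevant pushouts can then be computed after splitting off units, which reduces the claim to the sharp situation where it is transparent.

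I expect the main obstacle to be precisely this last compatibility — transporting the global fan of $X_i'$ across the enlargement of the base log structure from the fine monoid $R_i$ to the full valuative monoid $R$ — together with the routine but slightly delicate verification that all the defining features of a log variety over $\calO$ (flatness and finite presentation of the underlying scheme, in particular) are preserved by the monoidal pullback. Everything genuinely combinatorial or fine-log-theoretic has been absorbed into Lemma~\ref{zarlem2} and Theorem~\ref{approxth}, so what is left is exactly to show that the passage from $R_i$ to $R$ is harmless, which is where the hypotheses on $\calO$ (height one, divisible value group, algebraically closed fraction field) enter.
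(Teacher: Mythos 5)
Your proposal is correct and follows exactly the paper's argument: approximate by a fine log scheme $X_i$ over $S_i$ via Theorem~\ref{approxth}, apply Lemma~\ref{zarlem2} to get a Zariski monoidal subdivision $X'_i\to X_i$ with global fan $F_i$, and base change to obtain $X'=X'_i\times_{X_i}X=X'_i\times_{S_i}S$ with fan $F_i\times_{\oSpec(R_i)}\oSpec(R)$. The paper's own proof is three sentences and omits the verifications you spell out (base change of a subdivision via criterion $(S)$, Zariski-ness, and the sharpening/pushout compatibility for the fan), so your write-up is simply a more detailed version of the same route.
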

\begin{proof}
Find an isomorphism $X=X_i\times_{S_i}S$ as in Theorem~\ref{approxth}. By Lemma~\ref{zarlem2} there exists a monoidal subdivision $X'_i\to X_i$ such that $X'_i$ is Zariski and possesses a fan $F_i$. Then $X'=X'_i\times_{X_i}X$ is a monoidal subdivision of $X$, and since $X'=X'_i\times_{S_i}S$, we also see that the log structure of $X'$ is Zariski and $F_i\times_{\oSpec(R_i)}\oSpec(R)$ is its global fan.
\end{proof}

\subsubsection{Monoidally polystable subdivisions}
A log variety $X$ over $S$ is called {\em monoidally polystable} at a point $x\in X$ if the $R$-toric monoid $\oM_{X,x}$ is polystable. If, in addition, $X$ is log smooth over $S$ at $x$ then we say that $X$ is {\em polystable} at $x$. Naturally, $X$ is {\em monoidally polystable} or {\em polystable} if it is so at all points.

\begin{rem}
By \S\ref{polymonsec}, $X$ is polystable at $x$ if and only if \'etale locally over $x$ it admits a strict \'etale morphism to a model polystable log variety of the form $$\Spec(R[u_1^{\pm 1}\..u_m^{\pm 1},t_{0,0}\..t_{0,n_0}\..t_{l,0}\..t_{l,n_l}]/(t_{1,0}\cdot\dots\cdot t_{1,n_1}-\pi_1\..t_{l,0}\cdot\dots\cdot t_{l,n_l}-\pi_l))$$
with the log structure generated by $t_{ij}$ over $R$. So, on the level of schemes our notion of polystability over $S$ agrees with the one introduced by Berkovich in \cite[Section~1]{bercontr}. However, our definition works, more generally, with log schemes and addresses the case of a non-trivial horizontal log structure corresponding to $t_{0,0}\..t_{0,n_0}$.
\end{rem}

We say that the log structure on $X$ is {\em vertical} if its restriction on the generic fiber $X_\eta$ is trivial.

\begin{theor}\label{monoidalth}
Let $\calO$ be a valuation ring of height 1 whose group of values is divisible and let $X$ be a log variety. Assume that the log structure of $X$ is vertical. Then there exists a monoidal subdivision $X'\to X$ such that $X'$ is monoidally polystable. In particular, if $X$ is log smooth over $S$, then $X'$ is polystable over $S$.
\end{theor}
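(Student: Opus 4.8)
The plan is to reduce, via Theorem~\ref{zarth}, to a Zariski log variety carrying a global $R$-toric fan, feed the associated $V$-polyhedral complex into the combinatorial Theorem~\ref{compsubdv}, and transport the resulting polystable subdivision back to log schemes through the equivalences of Section~\ref{fansec}. First I would apply Theorem~\ref{zarth} to get a monoidal subdivision $X_1\to X$ with $X_1$ Zariski and equipped with a global fan $\oX_1\to F$. The monoids $\oM_{F,z}=\oM_{X_1,x}$ are finitely generated and flat over $\oR$, but possibly neither saturated nor torsion-free; killing the torsion of $\oM_F^\gp$ and then saturating are birational finite-type modifications of $F$, and since a valuative monoid is torsion-free and saturated they satisfy condition $(\oS)$, hence are subdivisions of $F$ (Lemma~\ref{subdivlem}). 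Pulling back along $\oX_1\to F$ via Theorem~\ref{fanpullback} and Lemma~\ref{fansublem} yields a monoidal subdivision $X_2\to X_1$ whose global fan $F_2$ is an $R$-toric --- hence faithful --- fan. Because the log structure of $X$ is vertical, $X_\eta\subseteq X_\mathrm{tr}$, so by Corollary~\ref{monoidbircor} all the morphisms above are isomorphisms over $X_\eta$; in particular the log structure of $X_2$ is still vertical.

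Next I would show that verticality makes the $V$-polyhedral complex ${\mathcal P}=\otau_1$ associated to $F_2$ (Theorem~\ref{nonembeqth}) consist of bounded cells, i.e.\ of $(\bfZ,V)$-polytopes. The cell of ${\mathcal P}$ at a point $z\in F_2$ is $(\sigma_{\oM_{F_2,z}})^{*}\cap W_1$, and it is bounded exactly when $v_\pi$ lies in the interior of $\sigma:=\sigma_{\oM_{F_2,z}}$. A proper face $G$ of $\sigma$ corresponds to a generization $z'$ of $z$ with $\oM_{F_2,z'}\ne 0$ (as $\oM_{F_2,z}$ is saturated); if $v_\pi\in G$, then $v_\pi$ maps to $0$ in the sharp monoid $\oM_{F_2,z'}$, so for any point $p$ of $X_2$ over $z'$ the image of $\pi$ in $M_{X_2,p}$ is a unit, whence $\pi$ is invertible at $p$ and $p\in X_\eta$; verticality then gives $0=\oM_{X_2,p}=\oM_{F_2,z'}$, a contradiction. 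Hence $v_\pi\in\mathrm{int}(\sigma)$ for every $z\in F_2$, so every cell of ${\mathcal P}$ is a $(\bfZ,V)$-polytope (if an embedded, strict complex is preferred, first replace ${\mathcal P}$ by its barycentric subdivision).

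Now I would apply Theorem~\ref{compsubdv} to ${\mathcal P}$, obtaining a subdivision ${\mathcal P}'\to{\mathcal P}$ all of whose cells are $(\bfZ,V)$-polytopes and products of semistable simplices, that is, polystable $V$-polyhedra. By Theorems~\ref{nonembeqth} and~\ref{equivsubdiv} this corresponds to a subdivision of $R$-toric fans $F'\to F_2$, and $F'$ is polystable by Lemma~\ref{simpollem}. Setting $X'=X_2\times_{F_2}F'$, Theorem~\ref{fanpullback} and Lemma~\ref{fansublem} show that $X'\to X_2$, and hence $X'\to X$, is a monoidal subdivision; since $\oM_{X',x'}=\oM_{F',z'}$ is polystable at every point, $X'$ is monoidally polystable (and, $X_\eta$ being dense, $X'\to X$ is a modification by Corollary~\ref{monoidbircor}). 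Finally, if $X$ is log smooth over $S$, then the monoidally birational finite-type morphism $X'\to X$ is log smooth by Theorem~\ref{monoidbirth}(i), so $X'$ is log smooth over $S$, and together with monoidal polystability this is precisely polystability of $X'$ over $S$.

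The genuinely hard part --- producing polystable subdivisions of $(\bfZ,V)$-polyhedral complexes --- is Theorem~\ref{compsubdv}, which is already in hand. In the deduction above the delicate steps are the middle one, namely showing that verticality of the log structure is exactly what forces the cells of the associated $V$-polyhedral complex to be bounded (the surjectivity of $\oX_2\to F_2$ and the dictionary between faces of $\sigma_{\oM_{F_2,z}}$ and generizations of $z$ need to be spelled out carefully here), and the bookkeeping that ``product of semistable simplices'' in Theorem~\ref{compsubdv} coincides with the notion of polystability of fans from Lemma~\ref{simpollem}, so that a polystable combinatorial subdivision really does yield a monoidally polystable $X'$.
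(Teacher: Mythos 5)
Your proposal is correct and follows essentially the same route as the paper: Theorem~\ref{zarth} to obtain a global fan, verticality to see that the associated polyhedral complex has bounded cells, Theorem~\ref{compsubdv} for the polystable subdivision, and the fan pullback of Theorem~\ref{fanpullback} to return to log schemes. You supply more detail than the paper's four-line proof at two points --- arranging the fan to be $R$-toric (saturated and torsion-free) before invoking the polyhedral dictionary of Theorem~\ref{nonembeqth}, and the argument that verticality forces $v_\pi$ into the interior of each cone so that the cells are genuine $(\bfZ,V)$-polytopes --- both of which the paper leaves implicit.
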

\begin{proof}
First, by Theorem~\ref{zarth} there exists a monoidal subdivision $X''\to X$ such that $X''$ possesses a global fan $X''\to F$. In particular, $X''_\eta=X_\eta$ has trivial log structure and hence the fan $F$ corresponds to a polyhedral complex whose cells are bounded. Then by Theorem~\ref{compsubdv} there exists a subdivision $F'\to F$ such that $F'$ is polystable. Since $F'$ is a global fan of the monoidal subdivision $X'=X''\times_FF'$ of $X$, we obtain that $X'$ is monoidally polystable, as required.
\end{proof}

\begin{rem}
For simplicity we only considered polyhedral complexes with bounded cells in \S\ref{combsec}, but the theory can be extended to the unbounded case. As a corollary one would be able to drop the assumption that $X_\eta\subseteq X_{\mathrm{tr}}$ in Theorem~\ref{monoidalth}.
\end{rem}

\subsubsection{Polystable $p$-alteration theorem}
Finally, we combine our main result with the $p$-alteration theorem to obtain the following application, where as in \cite[\S4.1.2]{tame-distillation} an {\em alteration} means a proper, surjective, maximally dominating, generically finite morphism:

\begin{theor}\label{appth}
Assume that $\calO$ is a valuation ring of height 1 and residual characteristic exponent $p$. Then for any scheme $X$ flat and of finite presentation over $\calO$ there exist an extension of valuation rings $\calO\subseteq\calO'$ and a $p$-alteration $X'\to X\otimes_\calO\calO'$ such that the extension $K'/K$ of fields of fractions is finite and $X'$ provided with the log structure induced from the closed fiber $X'_s$ is polystable over $\calO'$.
\end{theor}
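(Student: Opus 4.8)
The plan is to reduce, via the $p$-alteration theorem, to a log smooth model over some finite extension of $\calO$, then pass to the divisible hull of the value group in order to apply Theorem~\ref{monoidalth}, and finally descend the resulting polystable model back down to a finite extension of $K$ by a spreading-out argument.

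First I would invoke the $p$-alteration theorem \cite[1.2.9]{tame-distillation}: there is a finite extension $K_1/K$, a valuation ring $\calO_1$ of $K_1$ dominating $\calO$, and a $p$-alteration $X_1\to X\otimes_\calO\calO_1$ such that $X_1$, equipped with the log structure induced by its closed fiber, is log smooth over $\calO_1$ (this is exactly step~(1), realized by a $p$-alteration rather than a modification). The ring $\calO_1$ is again of height one with residual characteristic exponent $p$. Next I would fix a valuation ring $\calO_\infty$ of an algebraic closure $\overline K$ of $K$ dominating $\calO_1$; then $\overline K$ is algebraically closed, the value group of $\calO_\infty$ is $\bfQ\otimes V$ and in particular divisible, $\calO_\infty$ is still of height one with residual characteristic exponent $p$, and $\calO_\infty=\varinjlim_j\calO_j$ is the filtered colimit of the valuation rings $\calO_j=\calO_\infty\cap K_j$ attached to the finite subextensions $K_j/K_1$ of $\overline K/K_1$. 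Base-changing, $X_\infty:=X_1\otimes_{\calO_1}\calO_\infty$, with the log structure pulled back from $X_1$, is log smooth over $\calO_\infty$, hence is a log variety over $S_\infty=\Spec(\calO_\infty)$ in the sense of \S\ref{logvarsec}; moreover its log structure is trivial on the generic fiber (the closed fiber being disjoint from it), so it is vertical, and $X_\infty\to X\otimes_\calO\calO_\infty$ is still a $p$-alteration since properness, surjectivity, maximal dominance and the generic degree are preserved by the flat base change $\calO_1\to\calO_\infty$.

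Now I would apply Theorem~\ref{monoidalth} to $X_\infty$: since $\calO_\infty$ is of height one with divisible value group and $X_\infty$ has vertical log structure, there is a monoidal subdivision $Y_\infty\to X_\infty$ with $Y_\infty$ monoidally polystable, and, $X_\infty$ being log smooth, $Y_\infty$ is polystable over $\calO_\infty$. By Corollary~\ref{monoidbircor} (together with Theorem~\ref{monoidbirth}) the morphism $Y_\infty\to X_\infty$ is proper and an isomorphism over the locus where the log structure is trivial, which, by verticality of the log structure, contains the dense generic fiber; in particular $Y_\infty\to X_\infty$ is a modification, so the composite $Y_\infty\to X_\infty\to X\otimes_\calO\calO_\infty$ is again a $p$-alteration, of the same $p$-power degree. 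Because this modification is an isomorphism away from the closed fiber, the log structure of $Y_\infty$ is again vertical and, $Y_\infty$ being polystable, agrees with the log structure induced by its closed fiber (as described in \S\ref{polymonsec}).

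Finally I would descend everything from $\calO_\infty$ to a finite level. The scheme $Y_\infty$, the morphism $Y_\infty\to X\otimes_\calO\calO_\infty$, the log structure of $Y_\infty$, and the finitely many strict \'etale charts to model polystable log varieties $\Spec(\calO_\infty[\dots]/(\dots t_{k,0}\cdots t_{k,n_k}-\pi_k\dots))$ witnessing its polystability are all of finite presentation, and the finitely many elements $\pi_k\in\mathfrak m_{\calO_\infty}$ occurring lie in some $\calO_j$; so by the standard spreading-out techniques there is an index $j$, a valuation ring $\calO':=\calO_j$ of $K':=K_j$, and a model $X'\to X\otimes_\calO\calO'$ of $Y_\infty\to X\otimes_\calO\calO_\infty$ over $\calO'$ such that $X'$, with the log structure induced from its closed fiber $X'_s$, is polystable over $\calO'$, and such that $X'\to X\otimes_\calO\calO'$ is a $p$-alteration (properness, surjectivity, maximal dominance and the generic degree being constructible in the limit). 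Since $K'/K$ is finite, this gives the assertion. The main obstacle I expect is precisely this last descent step: one must verify that both ``being a $p$-alteration'' and ``being polystable over the base'' are already realized over some $\calO_j$, which requires care in spreading out the log structures and the witnessing strict \'etale charts; a minor additional point to check is that the base change of a log smooth log variety along $\calO_1\to\calO_\infty$ remains log smooth (and a log variety in the technical sense), where divisibility of the target value group is convenient.
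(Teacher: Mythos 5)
Your overall route is the same as the paper's: produce a log smooth model by a $p$-alteration, arrange for the value group to be divisible, apply Theorem~\ref{monoidalth} to get a monoidally polystable (hence polystable) subdivision, observe that the subdivision is a modification by Corollary~\ref{monoidbircor} so the composite is still a $p$-alteration, and descend to a finite extension of $K$ by approximation. Your reordering (passing to the algebraic closure in the middle and descending at the end, rather than reducing to $K=\oK$ at the start) is harmless.

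The one step that does not work as written is the very first one. You invoke \cite[Theorem~1.2.9]{tame-distillation} as if it directly produced a finite extension $K_1/K$, a valuation ring $\calO_1$ of $K_1$ and a $p$-alteration $X_1\to X\otimes_\calO\calO_1$ with $X_1$ log smooth over $\calO_1$. That theorem is a statement about finite type morphisms over noetherian (quasi-excellent) bases, and it alters both the source and the target; it does not apply to $X\to\Spec(\calO)$, because a height-one valuation ring with non-discrete (let alone divisible) value group is not noetherian. The statement you want is true, but establishing it is precisely where the bulk of the paper's proof lies: one writes $\calO$ as a filtered union of subrings $\calO_i$ finitely generated over $\bfQ$, $\bfZ_{(p)}$ or $\bfF_p$, descends $X$ to a finite type $S_i$-scheme $X_i$ by approximation, applies \cite[Theorem~1.2.9]{tame-distillation} to $X_i\to S_i$ to obtain $p$-alterations $X_i'\to X_i$ and $S_i'\to S_i$ with $X_i'\to S_i'$ log smooth, and then lifts $\Spec(\calO)\to S_i$ through the proper map $S_i'\to S_i$ (possible after a finite extension of $K$, or automatically once $K$ has been replaced by its algebraic closure, as the paper does first) and pulls back, checking that the resulting $X'\to X$ is again a $p$-alteration. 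Since you already deploy exactly this approximation technique for the final descent of polystability, the repair is to use it at the outset as well; but as written the opening step cites the theorem for a statement it does not contain, and that is where the real technical content of the proof sits.
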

The condition on the log structure means that $M_{X'}=\calO_{X'}\cap i_*\calO^\times_{X'_\eta}$. In particular, the generic fiber $X'_\eta$ is a smooth $K'$-variety with the trivial log structure.
\begin{proof}
Let $\ocalO$ be an extension of $\calO$ to an algebraic closure $\oK$. If $X\otimes_\calO\ocalO$ admits such an alteration $\oX$, then the latter is induced from an alteration $X'\to X\otimes_\calO\calO'$ for a large enough finite extension $K'/K$ and $\calO'=\ocalO\cap K'$. Also, it is easy to see that after an additional increasing of $K'$ one achieves that $X'$ is polystable over $\calO'$. So, we can safely assume in the sequel that $K=\oK$.

Next, depending on the characteristics of $K$ and $\calO/\mathfrak{m}_\calO$, we present $\calO$ as the filtered union of subrings $\calO_i$, $i\in I$ finitely generated over $\bfQ$, $\bfZ_{(p)}$ or $\bfF_p$. In particular, $S=\Spec(\calO)$ is the filtered limit of affine schemes $S_i=\Spec(\calO_i)$. By approximation theory (see \cite[$\rm IV_2$, \S8]{ega}), for a large enough $i\in I$, that we now fix, there exists an $S_i$-scheme $X_i$ of finite type such that $X=X_i\times_{S_i}S$.

By \cite[Theorem~1.2.9]{tame-distillation} there exist $p$-alterations $X'_i\to X_i$ and $S'_i\to S_i$ and log structures on $S'_i$ and $X'_i$, induced by an appropriate divisor on $S'_i$ and its preimage in $X'_i$, such that $X'_i\to S'_i$ is log smooth. Since $K$ is assumed to be algebraically closed and $S'_i\to S_i$ is proper, the morphism $S\to S_i$ factors through $S'_i$. Furthermore, $\calM_{S'_i}\into\calO_{S'_i}\setminus\{0\}$, hence $S\to S'_i$ upgrades to a morphism of log schemes for the standard log structure $R=\calO\setminus\{0\}$ on $S$. Setting $X'=X'_i\times_{S'_i}S$ we obtain a log smooth log variety $X'$ over $\calO$, whose log structure is induced by the closed fiber $X'_s$. The morphisms $X'\to S$ and $X'\to X_i$ induce a morphism $X'\to X_i\times_{S_i}S=X$, which is easily seen to be a $p$-alteration too. Finally, by Theorem~\ref{monoidalth} we can further replace $X'$ by its modification (even a monoidal subdivision) so that it becomes polystable over $S$.
\end{proof}

\begin{rem}
In the case of residual characteristic zero, $X'\to X\otimes_\calO\calO'$ is a modification, and it suffices to use \cite[Theorem~2.1]{AK} instead of \cite[Theorem~1.2.9]{tame-distillation}.
\end{rem}

\bibliographystyle{myamsalpha}
\bibliography{log_smooth}

\end{document}